\newtheorem{theorem}{Theorem}
\newtheorem{corollary}[theorem]{Corollary}
\newtheorem{definition}[theorem]{Definition}
\newtheorem{example}[theorem]{Example}
\newtheorem{lemma}[theorem]{Lemma}
\newtheorem{proposition}[theorem]{Proposition}
\newtheorem{remark}[theorem]{Remark}
\newenvironment{proof}[1][Proof]{\noindent\textbf{#1.} }{\ \rule{0.5em}{0.5em}}
\begin{document}

\title{On Partial Smoothness, Tilt Stability and the $\mathcal{VU}$--Decomposition}
\author{A. C. Eberhard\thanks{Email addresses of the authors: andy.eb@rmit.edu.au; yluo@rmit.edu.au and
shuai0liu@gmail.com\newline
This research was in part supported by the ARC Discovery grant no.
DP120100567.}, Y. Luo and S. Liu}
\date{}
\maketitle

\begin{abstract}
Under the assumption of prox-regularity and the presence of a tilt stable
local minimum we are able to show that a $\mathcal{VU}$ like decomposition
gives rise to the existence of a smooth manifold on which the function in
question coincides locally with a smooth function.
\end{abstract}

\section{Introduction}

The study of substructure of nonsmooth functions has led to an enrichment of
fundamental theory of nonsmooth functions \cite{Hare:1, Hare:2, Hare:3,
Lem:1, Lewis:1,Lewis:2, Miller:1}. Fundamental to this substructure is the
presence of manifolds along which the restriction of the nonsmooth function
exhibits some kind of smoothness. In the case of \textquotedblleft partially smooth function\textquotedblright\ \cite{Lewis:1}
an axiomatic approach is used to describe the local structure that is
observed in a number of important examples \cite{Lewis:1, Lewis:2}. In \cite{Lewis:2}
it is shown that the study of tilt stability can be enhanced for the class of
partially smooth functions. In the
theory of the \textquotedblleft$\mathcal{U}$-Lagrangian\textquotedblright\ and the associated \textquotedblleft$\mathcal{VU}$
decomposition\textquotedblright\ \cite{Lem:1, Mifflin:2003} the existence of a smooth manifold substructure is proven for
some special classes of functions \cite{Mifflin:2003, Mifflin:2005}. In the extended theory the presence of so
called \textquotedblleft fast tracks\textquotedblright\ is assumed and these
also give rise to similar manifold substructures \cite{Miller:1, Mifflin:2004}. The $\mathcal{U}$
-Lagrangian is reminiscent of a partial form of \textquotedblleft tilt minimisation\textquotedblright\
\cite{rock:7} and this
observation has motivated this study. As fast tracks and related concepts such as 
 \textquotedblleft identifiable constraints\textquotedblright are designed to aid the
design of methods for the solution of nonsmooth minimization
problems \cite{Wright:1993, Miller:1, Mifflin:2002, Mifflin:2004,
Mifflin:2005,Hare:2014}, it seems appropriate to ask what additional
structure does the existence of a tilt stable local minimum give to the
study of the $\mathcal{VU}$ decomposition \cite{Lem:1}? This is the subject
of the paper.
In the following discussion we denote the extended reals by $\mathbb{R}_{\infty }:=\mathbb{R}\cup \left\{
+\infty \right\} .$ If not otherwise stated we will consider a lower
semi-continuous, extended--real--valued function $f:\mathbb{R}^{n}\rightarrow \mathbb{R}_{\infty }$. 
We denote the limiting subdifferential of
Mordukhovich, Ioffe and Kruger by $\partial f$.

Tilt stability was first studied in \cite{rock:7} for the case of $f$ being
both \textquotedblleft prox-regular\textquotedblright\  at $\bar{x}\ $ for $\bar{z}\in \partial f(\bar{x})$ and 
\textquotedblleft subdifferentially continuous\textquotedblright\  at
$\left( \bar{x},\bar{z}\right)$, in
the sense of Rockafellar and Poliquin \cite{polrock:1}. In \cite{rock:7} a characterisation
of tilt stability is made in terms of certain second order sufficient
optimality conditions. Such optimality conditions have been studied in 
\cite{rock:7, Boris:4, eberhard:8, eberhard:9}. In \cite{eberhard:6, eberhard:8}
it is shown that second order information provided by the coderivative is
closely related to another second order condition framed in terms of the
\textquotedblleft limiting subhessian\textquotedblright\   \cite{ebpenot:2, eberhard:1, eberhard:6}.
These may be thought of as the robust$\backslash$limiting version of symmetric matrices associated 
with a lower, supporting Taylor expansion with a first order component ${z}$ and
second order component  $Q$ (a symmetric matrix). The limiting pairs
$(\bar{z}, \bar{Q})$ are contained in the so called  \textquotedblleft subjet\textquotedblright\  \cite{Crandall:1992} and the second
order components $\bar{Q}$ associated with a given $\bar{z} \in \partial f(\bar{x})$  are contained 
in the limiting subhessian $\underline{\partial}^2 f(\bar{x}, \bar{z})$,  \cite{ebpenot:2, ebioffe:4,  eberhard:9}.
These have been extensively studied and possess a robust calculus similar to that which
exists for the limiting subdifferential \cite{ebioffe:4, eberhard:7}.  One can view
the \textquotedblleft best curvature\textquotedblright\   approximation in the direction $h$ for the function $f$ at $(\bar{x},\bar{z})$ to be
$q(\underline{\partial}^2 f(\bar{x},\bar{z}))(h):= \sup \{\langle Qh, h\rangle \mid Q \in \underline{\partial}^2 f(\bar{x},\bar{z}) \}$,
where we denote by $\langle u, h\rangle$ the usual Euclidean inner product of two vectors
$u, h \in \mathbb{R}^n$.

To complete our
discussion we consider the  $\mathcal{VU}$ decomposition \cite{Lem:1}.  When
 $\operatorname{rel}$-$\operatorname{int} \partial f\left( \bar{x}\right) \neq \emptyset $ we can take $\bar{z}\in \operatorname{rel}$-$\operatorname{int}
 \partial f\left( \bar{x}\right) $ and define $\mathcal{V}:=\operatorname{	span}\left\{ \partial f\left( \bar{x}\right) -\bar{z}\right\} $ and
$\mathcal{U}:=\mathcal{V}^{\perp }$.
The $\mathcal{V}$-space is thought to
capture the directions of nonsmoothness of $f$ at $\bar{x}$ while the $\mathcal{U}$ is thought
to capture directions of smoothness. When $\mathcal{U}^{2}:= \operatorname{dom} 
q(\underline{\partial }^{2}f(\bar{x},\bar{z}))(\cdot )$ is a linear subspace that is
contained in $\mathcal{U}$, we call $\mathcal{U}^{2}$ the second order component of 
$\mathcal{U}$ and  in Lemma \ref{lem:sharp}  we give quite mild condition under
which this is indeed the case.  When $\mathcal{U}^{2}=\mathcal{U}$ we say that a fast-track 
exists at $\bar{x}$ for $\bar{z}\in \partial f\left( \bar{x}\right) $.

 In this paper we
investigate whether the existence of a tilt stable local minimum provides
extra information regarding the existence of a smooth manifold within which a
smooth function interpolates the values of the $f$. We are able to show the
following positive results.
Recall that we say $f$ is quadratically minorised when there exists a quadratic function
$q\left( x\right) :=\alpha -\frac{R}{2}\left\Vert x-\bar{x}\right\Vert ^{2}$
such that $q\leq f$ (globally). All balls $B^{X}_{\varepsilon} (0) := \{ x \in X \mid \|x\| \leq \varepsilon\}$ are closed. 

\begin{theorem}
\label{thm:1}Consider $f:\mathbb{R}^{n}\rightarrow \mathbb{R}_{\infty }$ is
a proper lower semi-continuous function, quadratically minorised, and
prox-regular at $\bar{x}$ for $0\in \partial f(\bar{x})$. Suppose in
addition $f$ admits a nontrivial subspace $\mathcal{U}^{2}:= \operatorname{dom} \left(
\underline{\partial }^{2}f\left( \bar{x},0\right) \right) (\cdot)$ and that $f$ 
has a tilt stable local minimum at $\bar{x}$. Then $\mathcal{U}^2 \subseteq \mathcal{U}$ and for $g\left( w\right) :=
\left[ \operatorname{co}h\right] \left( w\right) $, $h(w):=f(\bar{x}+w)$ and $\{v\left( u\right) \}=
\operatorname{argmin}_{v^{\prime }\in \mathcal{V}^{2}\cap B_{\varepsilon }\left( 0\right) }f\left( \bar{x}+u+v^{\prime
}\right) :\mathcal{U}^{2}\rightarrow \mathcal{V}^{2}:=(\mathcal{U}^2)^{\perp}$, 
there exists a $\delta>0$ such that 
we have $g\left(
u+v\left( u\right) \right) =f\left( \bar{x}+u+v\left( u\right) \right) $ and
$\nabla _{u}g\left( u+v\left( u\right) \right) $ existing as Lipschitz
function for $u\in B_{\delta }^{\mathcal{U}^{2}}\left( 0\right) $. 
\end{theorem}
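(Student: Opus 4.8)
The plan is to realise the claimed manifold as the image, under the tilt--minimiser map, of a ball of tilts drawn from $\mathcal{U}^{2}$, and then to transport convexity and smoothness from the convex hull $g=\operatorname{co}h$ along this image. I would first dispose of the inclusion $\mathcal{U}^{2}\subseteq \mathcal{U}$ by invoking Lemma \ref{lem:sharp}, checking that prox--regularity at $\bar{x}$ for $0$ together with quadratic minorisation supplies the mild hypotheses required there; intuitively, a direction carrying finite best lower curvature $q(\underline{\partial}^{2}f(\bar{x},0))(\cdot)<\infty$ cannot lie in the span of $\partial f(\bar{x})-\bar z$, so $\mathcal{U}^2\subseteq\mathcal U$. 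Next I would bring in tilt stability through the Poliquin--Rockafellar characterisation \cite{rock:7, polrock:1}: the localised tilt--minimiser map $M_f(z):=\operatorname{argmin}_{x}\{f(x)-\langle z,x\rangle\}$ is single--valued and Lipschitz on a neighbourhood of $0$ with $M_f(0)=\bar{x}$, and the underlying coderivative is positive definite with some modulus $\kappa>0$.

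The core of the argument is to identify the partial minimiser $v(u)$ with the $\mathcal{V}^{2}$--component of $M_f(z)$ for tilts $z\in\mathcal U^2$. For such $z$, the inclusion $z\in\partial f(M_f(z))$ forces $\operatorname{proj}_{\mathcal V^2}z=0$, which is exactly the first--order condition for the restricted problem $\min_{v'\in\mathcal V^2}f(\bar x+u+v')$ at $u=\operatorname{proj}_{\mathcal U^2}(M_f(z)-\bar x)$; prox--regularity makes $f+\tfrac{r}{2}\|\cdot-\bar x\|^2$ convex on a neighbourhood, so on each $\mathcal V^2$--slice this critical point is the unique interior minimiser, giving $M_f(z)-\bar x=u+v(u)$. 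The positive definiteness of the coderivative, restricted to $\mathcal U^2$, makes $z\mapsto u$ bi--Lipschitz near $0$; inverting it shows $u\mapsto z(u)\in\mathcal U^2$ and hence $u\mapsto v(u)$ are single--valued and Lipschitz, and by shrinking to a ball $B_\delta^{\mathcal U^2}(0)$ I can keep $z(u)$ small and $v(u)$ interior to $B_\varepsilon(0)$, so no boundary multiplier intrudes.

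With the manifold $\{u+v(u)\}$ described as a sheet of tilt minimisers, the coincidence $g(u+v(u))=f(\bar x+u+v(u))$ follows from conjugacy. Since $g=h^{**}$ we have $g^{*}=h^{*}$, so at any global minimiser $w^\ast$ of $h-\langle z,\cdot\rangle$ one computes $h(w^\ast)=\langle z,w^\ast\rangle-h^{*}(z)=\langle z,w^\ast\rangle-g^{*}(z)\le g(w^\ast)\le h(w^\ast)$, forcing $g(w^\ast)=h(w^\ast)$. The delicate point, and what I expect to be the main obstacle, is that $M_f(z)$ delivers only a \emph{local} tilt minimiser, whereas the conjugacy identity needs $w^\ast=u+v(u)$ to minimise $h-\langle z,\cdot\rangle$ \emph{globally}; I would close this gap by combining quadratic minorisation (which keeps $g$ proper and its affine minorant at $z$ genuine) with the uniform second--order growth supplied by tilt stability, showing the local supporting quadratic at $M_f(z)$ already meets the global convex minorant $g$, so that the local minimiser is a genuine contact point of $h$ with $g$.

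Finally I would obtain the $C^{1,1}$ conclusion from the convexity of $g$. On $B_\delta^{\mathcal U^2}(0)$ the slice minimisers of $g$ and of $h$ coincide (both equal $v(u)$, by the previous step), so $\varphi(u):=g(u+v(u))=\min_{v'\in\mathcal V^2}g(u+v')$ is a partial minimisation of a convex function and is therefore convex in $u$. The envelope rule for such partial minimisations gives $\partial\varphi(u)=\{\operatorname{proj}_{\mathcal U^2}\zeta:\zeta\in\partial g(u+v(u)),\ \operatorname{proj}_{\mathcal V^2}\zeta=0\}$, which by the single--valuedness coming from tilt stability is the singleton $\{z(u)\}$. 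Hence $\varphi$ is differentiable with $\nabla_u g(u+v(u))=\nabla\varphi(u)=z(u)$, and since $u\mapsto z(u)$ was already shown to be Lipschitz, the gradient is Lipschitz on $B_\delta^{\mathcal U^2}(0)$, as required.
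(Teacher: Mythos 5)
Your outline gets the framing right (reduce to the convex function $g=\operatorname{co}h$, identify $v(u)$ with the $\mathcal{V}^{2}$-part of the tilt-minimiser, use conjugacy to get $g(u+v(u))=f(\bar x+u+v(u))$, the last point being handled in the paper by Proposition \ref{prop:co} and Lemma \ref{lem:conv}), but the central step is asserted rather than proved, and it is precisely the step that carries all the difficulty. Tilt stability gives you that $z\mapsto m_{f}(z)$, hence $z\mapsto u(z):=P_{\mathcal{U}^{2}}(m_{f}(z)-\bar x)$, is single-valued and Lipschitz; it does \emph{not} give you that this map is bi-Lipschitz, nor, equivalently, that its inverse $u\mapsto z(u)$ is single-valued --- and single-valuedness of $u\mapsto z(u)$ is exactly the differentiability of $\operatorname{co}k_{v}=L^{\varepsilon}_{\mathcal{U}^{2}}$ that the theorem asserts. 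A one-dimensional convex function such as $u\mapsto u^{2}+|u|$ already has a tilt-stable minimum at $0$ (the tilt-minimiser map is identically $0$ for $|z|\leq 1$) while being nondifferentiable there: positive-definiteness of the coderivative of $\partial f$ bounds the Lipschitz constant of $m_{f}$ from above but says nothing against the collapse of a whole ball of tilts onto a single point $u$. What rules this out in the theorem is the \emph{other} half of the hypothesis, namely that $\mathcal{U}^{2}=\operatorname{dom}q(\underline{\partial}^{2}f(\bar x,0))(\cdot)$ is nontrivial, i.e.\ that the lower curvature of $f$ is finite in $\mathcal{U}^{2}$ directions. Via prox-regularity and quadratic minorisation this yields the uniform upper bound of Lemma \ref{lem:boundprox} on the subhessians of $\operatorname{co}h$ along the sheet, which by conjugate duality (Proposition \ref{prop:subhessianinverse}, Lemma \ref{lem:48}) becomes a uniform \emph{lower} bound $\langle\nabla^{2}k_{v}^{\ast}(z^{k}),pp^{T}\rangle\geq 1/M$ on the Hessians of $k_{v}^{\ast}$, which in turn rules out $0\in D^{\ast}(\nabla k_{v}^{\ast})(0,0)(p)$ for $p\neq 0$ and so gives tilt stability of $k_{v}^{\ast}$; only then does strong metric regularity of $\partial k_{v}^{\ast}$ deliver $(\partial k_{v}^{\ast})^{-1}=\partial\operatorname{co}k_{v}$ single-valued and Lipschitz. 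Your proposal never uses the finiteness of $q(\underline{\partial}^{2}f(\bar x,0))$ on $\mathcal{U}^{2}$ after the first paragraph, so this chain is missing entirely, and your final paragraph is circular: the ``single-valuedness coming from tilt stability'' that you invoke to make $\partial\varphi(u)$ a singleton is single-valuedness of $z\mapsto u$, not of $u\mapsto z$.

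Two smaller points. Prox-regularity at $\bar x$ for $0$ does \emph{not} make $f+\tfrac{r}{2}\|\cdot-\bar x\|^{2}$ convex on a neighbourhood --- it is strictly weaker than para-convexity, as the paper emphasises --- so your identification of $v(u)$ with $P_{\mathcal{V}^{2}}\left[m_{f}(z)-\bar x\right]$ should instead go through the convexification of Proposition \ref{prop:co} and the argmin bookkeeping of Propositions \ref{prop:LU} and \ref{prop:m}. And the local-versus-global issue you flag for the conjugacy identity is real but is already settled by Proposition \ref{prop:co}; it is not the main obstacle. The main obstacle is the curvature bound above.
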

That is,
$\mathcal{M}:=\left\{ \left( u,v\left( u\right) \right) \mid u\in
B_{\varepsilon }^{\mathcal{U}^{2}}\left( 0\right) \right\} $ is a manifold
on which the restriction to $\mathcal{M}$ of function $g$ coincides with
a smooth $C^{1,1}$ function of $u \in \mathcal{U}$ (tilt stability ensures local uniqueness of the function $v(\cdot)$).
Assuming a little more we obtain the smoothness of $v$ and in addition the
smoothness of the manifold.

\begin{theorem}
\label{thm:2}Consider $f:\mathbb{R}^{n}\rightarrow \mathbb{R}_{\infty }$ is
a proper lower semi-continuous function, quadratically minorised and
prox-regular at $\bar{x}$ for $0\in \partial f(\bar{x})$. Suppose in
addition that $\mathcal{U}^{2} =\mathcal{U}$ is a linear subspace (i.e. $\mathcal{U}$ admits a fast track),  $f$
has a tilt stable local minimum at $\bar{x}$ for $0\in \operatorname{rel}$-$\operatorname{int}\partial f\left( \bar{x}\right) $ and  $\partial ^{\infty }f\left( \bar{x}+u+v\left( u\right) \right)
=\left\{ 0\right\} $ for  $  v\left( u\right)  \in  
\operatorname{argmin}_{v^{\prime }\in \mathcal{V}\cap B_{\varepsilon }\left(
	0\right) }\left\{ g\left( u+v^{\prime }\right) \right\} : \mathcal{U}\rightarrow \mathcal{V}$, 
$u \in B_{\varepsilon }^{\mathcal{U}}\left( 0\right)$. Then 
there exists a $\varepsilon >0$ such that for $g\left( w\right) :=\left[
\operatorname{co}h\right] \left( w\right) $  the function
defined below is a $C^{1,1}\left( B_{\varepsilon }^{\mathcal{U}}\left( 0\right)
\right) $  smooth function
\begin{eqnarray*}
u\mapsto g\left( u+v\left( u\right) \right) &=&f\left( \bar{x}+u+v\left(
u\right) \right) \quad \text{where } \\
\nabla _{w}g\left( u+v\left( u\right) \right) &=&\left( e_{\mathcal{U}},\nabla v\left( u\right) \right) ^{T}\partial g\left( u+v\left( u\right)
\right)
\end{eqnarray*}
($e_{\mathcal{U}}$ is the identity operator on $\mathcal{U}$). Moreover
if we suppose we have a $\delta >0$ (with $\delta \leq \varepsilon$) such that for all $z_{\mathcal{V}}\in
B_{\delta }\left( 0\right) \cap \mathcal{V}\subseteq \partial _{\mathcal{V}}f\left( \bar{x}\right) $ we have a common
\begin{equation}
\{v\left( u\right) \} = \operatorname{argmin}_{v\in \mathcal{V}\cap B_{\varepsilon
}\left( 0\right) }\left\{ f\left( \bar{x}+u+v\right) -\langle z_{\mathcal{V}},v\rangle \right\}  \label{eqn:1}
\end{equation}
for all $u\in B_{\varepsilon }\left( 0\right) \cap \mathcal{U}$. Then $\mathcal{M}:=\left\{ \left( u,v\left( u\right) \right) \mid u\in
B_{\varepsilon }^{\mathcal{U}}\left( 0\right) \right\} $ is a $C^1$ - smooth
manifold on which $u\mapsto f\left( \bar{x}+u+v\left( u\right) \right) $ is $C^{1,1}
\left( B_{\delta }^{\mathcal{U}}\left( 0\right) \right) $ smooth and $u\mapsto v\left( u\right) $ is continuously differentiable.
\end{theorem}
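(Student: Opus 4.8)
The plan is to treat the two assertions of Theorem~\ref{thm:2} separately: the first-order smoothness and gradient formula follow quickly from Theorem~\ref{thm:1}, while the genuinely new work is the $C^1$ regularity of $v$ and of $\mathcal{M}$.

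For the first assertion I would note that, since $\mathcal{U}^{2}=\mathcal{U}$ is a fast track, Theorem~\ref{thm:1} applies verbatim with $\mathcal{U}^{2}$ replaced by $\mathcal{U}$ and already delivers a radius $\varepsilon>0$, a single-valued inner minimizer $u\mapsto v(u)$ (single-valuedness coming from tilt stability), the identity $g(u+v(u))=f(\bar{x}+u+v(u))$, and a Lipschitz gradient of $u\mapsto g(u+v(u))$, which is the $C^{1,1}$ claim. The hypothesis $\partial^{\infty}f(\bar{x}+u+v(u))=\{0\}$ I would use to guarantee that $g=\operatorname{co}h$ is finite and locally Lipschitz along the manifold, so that $\partial g$ is nonempty, compact and convex and the subdifferential chain rule is available. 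For the displayed gradient formula, writing $\zeta=(\zeta_{\mathcal{U}},\zeta_{\mathcal{V}})\in\partial g(u+v(u))$ one has $(e_{\mathcal{U}},\nabla v(u))^{T}\zeta=\zeta_{\mathcal{U}}+(\nabla v(u))^{T}\zeta_{\mathcal{V}}$; the minimality of $v(u)$ over $\mathcal{V}$ forces $0$ to lie in the $\mathcal{V}$-projection of $\partial g(u+v(u))$, so choosing the selection with $\zeta_{\mathcal{V}}=0$ collapses the expression to $\zeta_{\mathcal{U}}$, and the standard $\mathcal{U}$-Lagrangian support-function argument shows that $\zeta_{\mathcal{U}}+(\nabla v(u))^{T}\zeta_{\mathcal{V}}$ is independent of the selection. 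In Part~1 this holds wherever $v$ is differentiable (a.e. by Rademacher, hence everywhere by continuity of $\nabla_{u}[g(u+v(u))]$); in Part~2 it will hold classically once $v\in C^{1}$.

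For the manifold statement I would first exploit the common-minimizer hypothesis \eqref{eqn:1}. Writing the first-order condition for the inner problem $\min_{v\in\mathcal{V}}\{f(\bar{x}+u+v)-\langle z_{\mathcal{V}},v\rangle\}$ at its solution $v(u)$ gives $z_{\mathcal{V}}\in\pi_{\mathcal{V}}\partial f(\bar{x}+u+v(u))$ for every $z_{\mathcal{V}}\in B_{\delta}(0)\cap\mathcal{V}$, whence $B_{\delta}(0)\cap\mathcal{V}\subseteq\pi_{\mathcal{V}}\partial g(\bar{x}+u+v(u))$. This identifies $\mathcal{M}$ with the locus near $\bar{x}$ on which the subdifferential stays ``thick'' in the $\mathcal{V}$-directions, i.e. the relative-interior condition $0\in\operatorname{rel-int}\partial f(\bar{x})$ propagates along $\mathcal{M}$. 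Next I would invoke tilt stability to obtain the single-valued Lipschitz localization $M(z):=\operatorname{argmin}_{w}\{g(\bar{x}+w)-\langle z,w\rangle\}$ near $z=0$ with $M(0)=\bar{x}$; since $g=\operatorname{co}h$ is convex and the minimizer is unique, $M=\nabla g^{*}$ is the gradient of the conjugate. Expressing the full minimizer through the two-stage minimization (inner over $\mathcal{V}$ giving $v(u)$, outer over $\mathcal{U}$ solving $\nabla_{u}[g(u+v(u))]=z_{\mathcal{U}}$) recovers $\bar{x}+u+v(u)$ as $M(z_{\mathcal{U}},0)$, with $u$ the Lipschitz solution of the outer equation.

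The remaining and hardest step is to upgrade $v$ from Lipschitz to continuously differentiable. I would apply an implicit-function argument to the inclusion $0\in G(u,v):=\pi_{\mathcal{V}}\partial g(\bar{x}+u+v)$, solved by $v=v(u)$. Tilt stability supplies strong metric regularity of $\partial g$ at $(\bar{x},0)$, which restricts to strong metric regularity of $G$ in $v$ and hence (via Dontchev--Rockafellar) a single-valued Lipschitz solution map, reproving Lipschitzness intrinsically. To obtain $C^{1}$ I would analyse the graphical (proto-)derivative of this solution map: the common-minimizer condition \eqref{eqn:1}, which states that $v(u)$ is unmoved by every $\mathcal{V}$-tilt in $B_{\delta}(0)$, is precisely what I would use to force the graphical derivative of $G^{-1}$ to be single-valued and linear, so that $v$ is proto-differentiable with a linear derivative; continuity of this derivative (again from the robustness inherent in tilt stability) then yields $v\in C^{1}$. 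With $v$ continuously differentiable, $u\mapsto(u,v(u))$ is a $C^{1}$ immersion whose image is $\mathcal{M}$, so $\mathcal{M}$ is a $C^{1}$ manifold with tangent space the graph of $\nabla v(u)$, and the $C^{1,1}$ conclusion for $u\mapsto f(\bar{x}+u+v(u))$ is inherited from Part~1.

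The main obstacle is exactly this last upgrade from $C^{0,1}$ to $C^{1}$: the $C^{1,1}$ regularity of the $\mathcal{U}$-Lagrangian only yields a Lipschitz $v$ through the Lipschitz inverse-function theorem, and a priori $v$ could exhibit genuine corners. The indispensable extra input is \eqref{eqn:1}, and the delicate point is converting ``the inner minimizer is insensitive to all small $\mathcal{V}$-tilts'' into single-valuedness and linearity of the graphical derivative of the optimality map, thereby excluding such corners. I would expect the bookkeeping around proto-differentiability of $\partial g$, and the interplay between the $\mathcal{V}$-projection $\pi_{\mathcal{V}}\partial g$ and the full subdifferential, to be where the technical care is concentrated.
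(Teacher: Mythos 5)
Your treatment of the first assertion is sound and matches the paper's route: invoke the conclusions of Theorem \ref{thm:1}, use $\partial^{\infty}f(\bar{x}+u+v(u))=\{0\}$ as the qualification condition (the paper passes it to $g$ via regularity of the convex function $g=\operatorname{co}h$ and $\partial^{\infty}g\subseteq\partial^{\infty}f$), and apply the chain rule to get the single-valued Lipschitz map $\{\nabla_{u}g(u+v(u))\}=(e_{\mathcal{U}}\oplus\partial v(u))^{T}\partial g(u\oplus v(u))$.

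The gap is in the last and hardest step, the upgrade of $v$ from Lipschitz to $C^{1}$, which you explicitly leave as a plan rather than a proof. You correctly extract from (\ref{eqn:1}) that $B_{\delta}(0)\cap\mathcal{V}\subseteq\partial_{\mathcal{V}}g(u+v(u))$, but the mechanism you then propose --- strong metric regularity and single-valued linear proto-derivatives for the map $G(u,v):=\pi_{\mathcal{V}}\partial g(\bar{x}+u+v)$ --- is problematic precisely because of that thickness: $0$ lies in the relative interior of $\pi_{\mathcal{V}}\partial g(\bar{x}+u+v(u))$, so the inclusion $0\in G(u,v)$ does not isolate $v(u)$, and tilt stability gives strong metric regularity of $\partial g$ as a whole, not of its $\mathcal{V}$-projection. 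The conversion of ``insensitivity to all small $\mathcal{V}$-tilts'' into linearity of a graphical derivative is asserted but never carried out, and it is exactly the crux. The paper's actual argument is much more direct and needs no proto-differentiability: since $\operatorname{cone}[\partial_{\mathcal{V}}g(u+v(u))]\supseteq\mathcal{V}$ while $(e_{\mathcal{U}}\oplus\partial v(u))^{T}\partial g(u\oplus v(u))$ is single valued, two distinct elements $V_{1},V_{2}$ of the (Clarke) generalized Jacobian $\partial v(u)\subseteq\mathcal{V}$ would produce distinct values $\zeta_{\mathcal{U}}+V_{i}^{T}\zeta_{\mathcal{V}}$ as $\zeta_{\mathcal{V}}$ ranges over a ball in $\mathcal{V}$, a contradiction; hence $\partial v(u)$ is a singleton, $\nabla v(u)$ exists, and continuity of $u\mapsto\nabla v(u)$ follows from outer semicontinuity of $\partial v$ together with single-valuedness. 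You have all the ingredients for this pincer (the thick $\mathcal{V}$-subdifferential and the single-valued chain-rule expression) but never combine them; as written, the $C^{1}$ claim for $v$, and hence the $C^{1}$-manifold conclusion, is not established.
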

\smallskip

We are also able to produce a lower Taylor approximation for $f$ that holds locally at all points 
inside $\mathcal{M}$, see Corollary \ref{cor:53}. These results differ from those present in the literature in that we impose common structural assumptions on $f$ found elsewhere in the literature on stability of local minima \cite{Drusvy:1,rock:7}, rather than imposing very special structural properties,  as is the approach of \cite{Hare:2014,Wright:1993,Mifflin:2003,Mifflin:2004}. Moreover, we do not assume the a-priori existence of any kind of smoothness of the underlying manifold, as is done in the axiomatic approach in \cite{Lewis:2}, but let smoothness arise from a graded set of assumptions which progressively enforce greater smoothness. In this way the roles of these respective assumptions are  clarified. Finally we note that it is natural in this context to study $C^{1,1}$ smoothness rather than the $C^2$ smoothness used in other works such as \cite{Lewis:2,Mifflin:2002,Miller:1}.

\section{Preliminaries}

The following basic concepts are used repeatedly throughout the paper.

\begin{definition}
	\label{lim:subhessian} Suppose $f:\mathbb{R}^{n}\rightarrow \mathbb{R}_{\infty }$ is a lower semi--continuous function.
	
	\begin{enumerate}
		\item Denote by $\partial _{p}f(\bar{x})$ the proximal subdifferential,
		which consists of all vectors $z$ satisfying $f(x)\geq f(\bar{x})+\langle
		z,x-\bar{x}\rangle -\frac{r}{2}\Vert x-\bar{x}\Vert ^{2}$ in some
		neighbourhood of $\bar{x}$, for some $r\geq 0$, where $\Vert \cdot \Vert $
		denotes the Euclidean norm. Denote by $S_{p}(f)$ the points in the domain of
		$f$ at which $\partial _{p}f(x)\neq \emptyset $.
		
		\item The limiting subdifferential \cite{M06a, rock:6} at $x$ is given by
		\begin{equation*}
		\partial f(x)=\limsup_{x^{\prime }\rightarrow _{f}x}\partial _{p}f(x^{\prime
		}):=\{z\mid \exists z_{v}\in \partial _{p}f(x_{v}),x_{v}\rightarrow _{f}x \text{, with }z_{v}\rightarrow z\},
		\end{equation*}
		where $x^{\prime }\rightarrow _{f}x$ means that $x^{\prime }\rightarrow x$
		and $f(x^{\prime })\rightarrow f(x)$.
\item  The singular
limiting subdifferential is given by
\begin{align*}
\partial ^{\infty }f(x)& =\limsup_{x^{\prime }\rightarrow _{f}x}\!{}^{\infty
}\,\partial _{p}f(x^{\prime }) \\
& :=\{z\mid \exists z_{v}\in \partial _{p}f(x_{v}),x_{v}\rightarrow _{f}x \text{, with }\lambda _{v}\downarrow 0\text{ and }\lambda
_{v}z_{v}\rightarrow z\}.
\end{align*}	

	\end{enumerate}
\end{definition}

\subsection{The $\mathcal{VU}$ decomposition}\label{sec:VU}

Denote the convex hull of a set $C\subseteq \mathbb{R}^{n}$ by $\operatorname{co}C$.
The convex hull of a function $f:\mathbb{R}^{n}\rightarrow \mathbb{R}_{\infty }$
is denoted by $\operatorname{co}f$ and corresponds to the proper
lower-semi-continuous function whose epigraph is given by
$\overline{\operatorname{co	}\operatorname{epi}f}$. In this section we will use a slightly
weaker notion of the $\mathcal{VU}$ decomposition. When  $\operatorname{rel}$-$\operatorname{int}
\operatorname{co}\partial f\left( \bar{x}\right) \neq \emptyset $ we can take $\bar{z}\in \operatorname{rel}$-$\operatorname{int}
\operatorname{co}\partial f\left( \bar{x}\right) $ and define $\mathcal{V}:=\operatorname{	span}
\left\{ \operatorname{co}\partial f\left( \bar{x}\right) -\bar{z}\right\} $ and
$\mathcal{U}:=\mathcal{V}^{\perp }$.

Under the $\mathcal{VU}$ decomposition \cite{Lem:1} for a given $\bar{z}\in
\operatorname{rel}$-$\operatorname{int}\operatorname{co}\partial f(\bar{x})$ we have, by definition,
\begin{equation}
\bar{z}+B_{\varepsilon }\left( 0\right) \cap \mathcal{V}\subseteq \operatorname{co}\partial f\left( \bar{x}\right) 
\quad \text{for some $\varepsilon >0$.}
\label{neqn:6}
\end{equation}
One can then decompose $\bar{z}=\bar{z}_{\mathcal{U}}+\bar{z}_{\mathcal{V}}$
so that when $w=u+v\in \mathcal{U}\oplus \mathcal{V}$ we have $\langle \bar{z},w\rangle =\langle \bar{z}_{\mathcal{U}},
u\rangle +\langle \bar{z}_{\mathcal{V}},v\rangle .$ Indeed we may decompose into the direct
 sum $x=x_{\mathcal{U}}+x_{\mathcal{V}}\in \mathcal{U}\oplus \mathcal{V}$ and use the
following norm for this decomposition $\left\Vert x-\bar{x}\right\Vert
^{2}:=\left\Vert x_{\mathcal{U}}-\bar{x}_{\mathcal{U}}\right\Vert
^{2}+\left\Vert x_{\mathcal{V}}-\bar{x}_{\mathcal{V}}\right\Vert ^{2}.$ As all norms are equivalent
we will at times prefer to use  $\{B^{\mathcal{U}}_{\varepsilon} (\bar{x}_{\mathcal{U}})
\oplus B^{\mathcal{V}}_{\varepsilon} (\bar{x}_{\mathcal{V}}) \}_{\varepsilon > 0}$ which more directly 
reflects the direct sum  $\mathcal{U}\oplus \mathcal{V}$,  where each $B^{(\cdot)}_{\varepsilon} (\cdot)$ is a closed ball of radius $\varepsilon >0$, in their respective space.

Denote the projection onto the subspaces $\mathcal{U}$ and $\mathcal{V}$ by $P_{\mathcal{U}}\left( \cdot \right) $ and $P_{\mathcal{V}}\left( \cdot
\right) $, respectively. Denote by $f|_{\mathcal{U}}$ the restriction of $f$
to the subspace $\mathcal{U}$, $\partial _{\mathcal{V}}f(\bar{x}):=P_{\mathcal{V}}(\partial f(\bar{x}))$ and $\partial _{\mathcal{U}}f(\bar{x}
):=P_{\mathcal{U}}(\partial f(\bar{x}))$.  Let $\delta _{C}(x)$ denote the indicator function of a
set $C$, $\delta _{C}(x)=0$ iff $x\in C$ and $+\infty $ otherwise. Let $f^{\ast }$ denote the convex conjugate of a function $f$.

\begin{remark}
The condition (\ref{neqn:6}) implies one can take $\mathcal{V}:=\operatorname{span}
\left\{ \operatorname{co}\partial f\left( \bar{x}\right) -\bar{z}\right\} =\operatorname{affine}$-$\operatorname{hull}
\left[ \operatorname{co}\partial f\left( \bar{x}\right) \right]
-\bar{z}$ which is independent of the choice of $\bar{z}\in \operatorname{co}\partial f\left( \bar{x}\right) $. 
Moreover, as was observed in \cite[Lemma
2.4]{Miller:1} we have $\bar{z}_{\mathcal{U}}=P_{\operatorname{affine}\text{-}\operatorname{hull}
	\operatorname{co}\partial f\left( \bar{x}\right) }\left( 0\right) $ (see part \ref{part:3} below).
\end{remark}

\begin{proposition}
\label{prop:reg}Suppose $f:\mathbb{R}^{n}\rightarrow \mathbb{R}_{\infty }$
is a proper lower semi--continuous function with (\ref{neqn:6}) holding.

\begin{enumerate}
\item We have
\begin{equation}
\mathcal{U}=\left\{ u\mid -\delta _{\partial f(\bar{x})}^{\ast }(-u)=\delta
_{\partial f(\bar{x})}^{\ast }(u)\right\} .  \label{neqn:4}
\end{equation}

\item \label{part:3} We have
\begin{equation}
\partial f\left( \bar{x}\right) =\left\{ \bar{z}_{\mathcal{U}}\right\}
\oplus \partial _{\mathcal{V}}f\left( \bar{x}\right) .  \label{prop:6:3}
\end{equation}

\item \label{part:4} Suppose there exists  $\varepsilon >0$
such that for all $z_{\mathcal{V}}\in B_{\varepsilon }\left( \bar{z}_{\mathcal{V}}\right) \cap \mathcal{V}\subseteq \partial _{\mathcal{V}}f\left(
\bar{x}\right) $ there is a common
\begin{equation*}
v\left( u\right) \in \operatorname{argmin}_{v\in \mathcal{V}\cap B_{\varepsilon
}\left( 0\right) }\left\{ f\left( \bar{x}+u+v\right) -\langle z_{\mathcal{V}},v\rangle \right\} \cap 
\operatorname{int} B_{\varepsilon} (0)
\end{equation*}
for all $u\in B_{\varepsilon }\left( 0\right) \cap \mathcal{U}$. Then we
have
\begin{equation}
\operatorname{cone}\left[ \partial _{\mathcal{V}}f\left( \bar{x}+u+v\left( u\right)
\right) -\bar{z}_{\mathcal{V}}\right] \supseteq \mathcal{V}.  \label{neqn:26}
\end{equation}

\item \label{part:2} If we impose the addition assumption that $f$ is (Clarke) regular at $\bar{x}$, 
$\bar{z}\in \partial f\left( \bar{x}\right) $  and $\partial ^{\infty }f\left(
\bar{x}\right) \cap \mathcal{V} =\left\{ 0\right\} $. Then the function
\begin{equation*}
	H_{\mathcal{U}}\left( \cdot \right) :=f\left( \bar{x}+\cdot \right) :\mathcal{U}
	\rightarrow \mathbb{R}_{\infty }
\end{equation*}
is strictly differentiable at $0$ and single valued with $\partial H_{\mathcal{U}}\left(
0\right) =\left\{ \bar{z}_{\mathcal{U}}\right\}$ 
and $H_{\mathcal{U}}$ (as a function defined on $\mathcal{U}$) is continuous with $H_{\mathcal{U}}$
and $-H_{\mathcal{U}}$ (Clarke) regular
functions at $0$ (in the sense of \cite{rock:6}).
\end{enumerate}
\end{proposition}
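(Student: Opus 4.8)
The plan is to dispatch the four assertions in increasing order of difficulty, the crux being the last. For the characterisation (\ref{neqn:4}) I would argue purely with support functions, using that $\delta_{\partial f(\bar{x})}^{\ast}=\delta_{\operatorname{co}\partial f(\bar{x})}^{\ast}$ since support functions are insensitive to (closed convex) hulls. The identity $-\delta_{\partial f(\bar{x})}^{\ast}(-u)=\delta_{\partial f(\bar{x})}^{\ast}(u)$ reads $\inf_{z\in\partial f(\bar{x})}\langle z,u\rangle=\sup_{z\in\partial f(\bar{x})}\langle z,u\rangle$, i.e. $z\mapsto\langle z,u\rangle$ is constant on $\partial f(\bar{x})$, equivalently $\langle z-\bar{z},u\rangle=0$ for every $z\in\partial f(\bar{x})$; this says exactly $u\perp\operatorname{span}\{\partial f(\bar{x})-\bar{z}\}$, and since spans are unaffected by convex hulls this span is $\mathcal{V}$, so the solution set is $\mathcal{V}^{\perp}=\mathcal{U}$. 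Assertion (\ref{prop:6:3}) is then immediate: for any $z\in\partial f(\bar{x})\subseteq\operatorname{co}\partial f(\bar{x})$ we have $z-\bar{z}\in\operatorname{co}\partial f(\bar{x})-\bar{z}\subseteq\mathcal{V}$, so $P_{\mathcal{U}}z=P_{\mathcal{U}}\bar{z}=\bar{z}_{\mathcal{U}}$; thus every subgradient shares the $\mathcal{U}$-component $\bar{z}_{\mathcal{U}}$ and the direct-sum decomposition follows, consistent with the Remark's identity that $\bar{z}_{\mathcal{U}}$ is the projection of the origin onto the affine hull of $\operatorname{co}\partial f(\bar{x})$.

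For (\ref{neqn:26}) I would apply Fermat's rule to the inner minimisation. Since $v(u)$ lies in $\operatorname{int}B_{\varepsilon}(0)$ it is an unconstrained local minimiser over $\mathcal{V}$ of $v\mapsto f(\bar{x}+u+v)-\langle z_{\mathcal{V}},v\rangle$, giving $z_{\mathcal{V}}\in\partial_{\mathcal{V}}f(\bar{x}+u+v(u)):=P_{\mathcal{V}}\partial f(\bar{x}+u+v(u))$, the passage from the subdifferential of the $\mathcal{V}$-restriction to the projection being the one place a mild qualification is invoked. As the common minimiser $v(u)$ serves every $z_{\mathcal{V}}\in B_{\varepsilon}(\bar{z}_{\mathcal{V}})\cap\mathcal{V}$, we obtain $B_{\varepsilon}(\bar{z}_{\mathcal{V}})\cap\mathcal{V}\subseteq\partial_{\mathcal{V}}f(\bar{x}+u+v(u))$, so $\partial_{\mathcal{V}}f(\bar{x}+u+v(u))-\bar{z}_{\mathcal{V}}\supseteq B_{\varepsilon}(0)\cap\mathcal{V}$; generating the cone of a relatively open ball about $0$ in $\mathcal{V}$ recovers all of $\mathcal{V}$, which is (\ref{neqn:26}).

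The heart is the final assertion. I would write $H_{\mathcal{U}}=f(\bar{x}+\cdot)\circ\iota$ with $\iota:\mathcal{U}\hookrightarrow\mathbb{R}^{n}$ the inclusion, whose adjoint is $\iota^{\ast}=P_{\mathcal{U}}$, and invoke the linear-composition (restriction) calculus. Its qualification condition is precisely that the only $y\in\partial^{\infty}f(\bar{x})$ with $P_{\mathcal{U}}y=0$ is $y=0$, i.e. $\partial^{\infty}f(\bar{x})\cap\mathcal{V}=\{0\}$, exactly the stated hypothesis. Under it $H_{\mathcal{U}}$ inherits Clarke regularity from $f$, and $\partial H_{\mathcal{U}}(0)=P_{\mathcal{U}}(\partial f(\bar{x}))=\{\bar{z}_{\mathcal{U}}\}$ by (\ref{prop:6:3}), a singleton. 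For a Clarke regular function the subderivative coincides with the support function of its regular subdifferential, so a bounded—here singleton—subdifferential renders the subderivative finite sublinear, forcing $\partial^{\infty}H_{\mathcal{U}}(0)=\{0\}$, i.e. strict continuity (local Lipschitzness) of $H_{\mathcal{U}}$ near $0$. A locally Lipschitz function whose Clarke subdifferential at a point is a single vector is strictly differentiable there with that vector as gradient, so $\nabla H_{\mathcal{U}}(0)=\bar{z}_{\mathcal{U}}$ and $H_{\mathcal{U}}$ is single valued; the resulting exact first-order expansion at $0$ then yields Clarke regularity of $-H_{\mathcal{U}}$ as well.

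The main obstacle is precisely this last cluster of deductions: verifying that the restriction/composition subdifferential rule applies with equality and transmits Clarke regularity under the qualification $\partial^{\infty}f(\bar{x})\cap\mathcal{V}=\{0\}$, and then converting the singleton subdifferential first into strict continuity and finally into strict differentiability. The two external facts carrying the weight are the implication ``regular plus bounded subdifferential $\Rightarrow\partial^{\infty}=\{0\}$'' and the Clarke characterisation of strict differentiability by single-valuedness of the subdifferential. Establishing regularity of $-H_{\mathcal{U}}$ is the subtlest point, since—unlike that of $H_{\mathcal{U}}$—it does not follow from a one-sided restriction rule but must be read off from the two-sided first-order expansion supplied by strict differentiability.
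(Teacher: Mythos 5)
Your proof is correct and follows the paper's argument essentially step for step: parts (1)--(3) are identical (support-function constancy forcing $u\perp\operatorname{span}\{\partial f(\bar{x})-\bar{z}\}$, the containment $\partial f(\bar{x})\subseteq\bar{z}+\mathcal{V}$ giving the common $\mathcal{U}$-component, and Fermat's rule applied to the common minimiser $v(u)\in\operatorname{int}B_{\varepsilon}(0)$ to place the whole ball $B_{\varepsilon}(\bar{z}_{\mathcal{V}})\cap\mathcal{V}$ inside $\partial_{\mathcal{V}}f(\bar{x}+u+v(u))$). In part (4) you package the restriction as the composition $f(\bar{x}+\cdot)\circ\iota$ with adjoint $P_{\mathcal{U}}$, where the paper writes $H=f(\bar{x}+\cdot)+\delta_{\mathcal{U}}$ and uses the sum rule together with a sandwich of $\hat{d}H(0)$ between $-\delta^{\ast}_{\partial f(\bar{x})}(-u)$ and $\delta^{\ast}_{\partial f(\bar{x})}(u)$, but both routes rest on the same qualification $\partial^{\infty}f(\bar{x})\cap\mathcal{V}=\{0\}$, reach the singleton $\{\bar{z}_{\mathcal{U}}\}$ via part (2), obtain $\partial^{\infty}H_{\mathcal{U}}(0)=\{0\}$ from the regularity of $f$, and close with the same subdifferential characterisation of strict differentiability, so the difference is one of packaging only.
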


\begin{proof}
(1) If $u\in \mathcal{U}$ then by construction we have
\begin{equation}
-\delta _{\partial f(\bar{x})}^{\ast }(-u)=-\delta _{\operatorname{co}\partial f(\bar{x})}^{\ast }(-u)=\delta _{\operatorname{co}\partial f(\bar{x})}^{\ast
}(u)=\delta _{\partial f(\bar{x})}^{\ast }(u)  \label{neqn:5}
\end{equation}
giving the containment of $\mathcal{U}$ in the right hand side of (\ref{neqn:4}). For $u$ satisfying (\ref{neqn:5}) 
then $\langle z-\bar{z},u\rangle =0$ for all $z\in \operatorname{co}\partial f(\bar{x})$. That is, $u\perp
\lbrack \operatorname{co}\partial f(\bar{x})-\bar{z}]$ and hence $u\perp \mathcal{V}=\mathcal{U}^{\perp }$ verifying $u\in \mathcal{U}$.

(2) Since $\partial f(\bar{x})\subseteq \bar{z}+\mathcal{V}=\bar{z}_{\mathcal{U}}+\mathcal{V}$ always have  $\partial f\left( \bar{x}\right) =\left\{
\bar{z}_{\mathcal{U}}\right\} \oplus \partial _{\mathcal{V}}f\left( \bar{x}\right) .$

(3) When $v\left( u\right) \in \operatorname{argmin}_{v\in \mathcal{V}\cap
B_{\varepsilon }\left( 0\right) }\left\{ f\left( \bar{x}+u+v\right) -\langle
z_{\mathcal{V}},v\rangle \right\} $ for all $u\in B_{\varepsilon }\left(
0\right) \cap \mathcal{U}$ and $z_{\mathcal{V}}\in B_{\varepsilon }\left(
\bar{z}_{\mathcal{V}}\right) \cap \mathcal{V}$ we have, due to the necessary
optimality conditions, that
\begin{equation*}
z_{\mathcal{V}}\in \partial _{\mathcal{V}}f\left( \bar{x}+u+v\left( u\right)
\right)
\end{equation*}
and hence $B_{\varepsilon }\left( \bar{z}_{\mathcal{V}}\right) \cap \mathcal{V}\subseteq \partial _{\mathcal{V}}f\left( \bar{x}+u+v\left( u\right)
\right) $ giving (\ref{neqn:26}).

(4) For $h\left( \cdot \right) :=f\left( \bar{x}+\cdot \right) $ define $H =h
+\delta _{\mathcal{U}} $ so $h\left( u\right) =H\left( u\right) $ when $u\in
\mathcal{U}$. Then as $\partial ^{\infty }f\left( \bar{x}\right) \cap \mathcal{V} =\left\{
0\right\} $, by \cite[Corollary 10.9]{rock:6} we have
\begin{equation*}
\partial H\left( 0\right) \subseteq \partial f\left( \bar{x}\right) + N_{\mathcal{U}}\left( 0 \right) =\partial f\left( \bar{x}\right) +\mathcal{\ V}.
\end{equation*}
Then restricting to $\mathcal{U}$ we have $P_{\mathcal{U}} \partial H\left( 0\right)
\subseteq \partial_{\mathcal{U}}
f\left( \bar{x} \right)$.  Then for $u \in \mathcal{U}$ we have  $\delta_{\partial H(0)}^{\ast} (u) = \delta_{P_{\mathcal{U}}\partial H(0)}^{\ast} (u) \leq  \delta _{\partial f(\bar{x})}^{\ast }(u)$ and so 
\begin{equation*}
-\delta _{\partial f(\bar{x})}^{\ast }(-u)\leq -\hat{d}H(0)(-u)\leq \hat{d}
H(0)(u)\leq \delta _{\partial f(\bar{x})}^{\ast }(u). 
\end{equation*}
As $f$ is regular at $\bar{x}$ we have $\partial^{\infty} f (\bar{x}) = 0^{+} (\partial f (\bar{x}))$ where  the later corresponds to the recession directions of the convex set 
$\partial f (\bar{x})$ (see \cite[Theorem 8.49]{rock:6}).  Then we have $0^+ (\partial f (\bar{x}))  \subseteq \mathcal{V}$. [Take $u \in 0^+ (\partial f (\bar{x}))$ and $z \in \operatorname{rel-int}
\partial f (\bar{x})$. Then by \cite[Theorem 6.1]{rock:1} we have $z + u \in  \operatorname{rel-int}
\partial f (\bar{x})$ and hence $u \in \mathcal{V}$.] Thus for $u \in \mathcal{U} \subseteq (0^+ (\partial f (\bar{x})))^{\circ}$  we have 
$$
\hat{d}H (0)(u) := \limsup_{x \to 0, t \downarrow 0} \inf_{u^{\prime}\to
	u} \frac{1}{t} (f(x+tu^{\prime})-f(x) ) =\delta_{\partial H(0)}^{\ast} (u)=\delta_{P_{\mathcal{U}}\partial H(0)}^{\ast} (u),
$$
see \cite[Definition 8.16, Exercise 8.23]{rock:6}.
It follows that  $-\hat{d}H(0)(-u)=\hat{d}H(0)(u)$ for all $u\in \mathcal{U}$.
Restriction of $H$ to the subspace $\mathcal{U}$, (denoted this function by $H_{\mathcal{U}}$) 
we have   $\partial^{\infty} H_{\mathcal{U}} (0)  \subseteq \partial ^{\infty }f\left( \bar{x}\right)
\cap \mathcal{U}   = \{0\}$ then by
\cite[Theorem 9.18]{rock:6} we have  
$\partial H_{\mathcal{U}} \left( 0\right) $ a singleton
with $H_{\mathcal{U}}$ continuous at $0$ and $H_{\mathcal{U}}$ and $-H_{\mathcal{U}}$
(Clarke) regular. As $\bar{z}_{\mathcal{U}
}\in \partial H_{\mathcal{U}}\left( 0\right) $ we have $\partial H_{\mathcal{U}}\left( 0\right)
=\left\{
\bar{z}_{\mathcal{U}}\right\} $, so $\partial_{\mathcal{U}} f (\bar{x})
=\left\{ \bar{z}_{\mathcal{U}}\right\} $.
\end{proof}

\section{A Primer on Subjets and Subhessians \label{sec:3}}

We will have need to discuss second order behaviour in this paper and as a
consequence it will be useful to define a refinement of this decomposition
that takes into account such second order variations. In most treatments of
the $\mathcal{VU}$ decomposition one finds that by restricting $f$ to
$\mathcal{M}:=\{(u,v(u)) \mid u \in \mathcal{U}\}$ not only do we find
$f$ is smooth we also find that there is better second order behaviour as well \cite{Lem:1}.
This is also
often associated with smooth manifold substructures. Let $\mathcal{S}(n)$ denote the set of symmetric $n\times n$ matrices (endowed with the
Frobenius norm and inner product) for which $\langle Q,hh^{T}\rangle
=h^{T}Qh $. Denote the cone of positive semi-definite matrices by $\mathcal{P}(n)$ and
  $\Delta_{2}f(x,t,z,u):=2\frac{f(x+tu)-f(x)-t\langle z,u\rangle }{t^{2}}$.

\begin{definition} \label{def:6}
Suppose $f:\mathbb{R}^{n}\rightarrow \mathbb{R}_{\infty }$ is a lower
semi--continuous function.

\begin{enumerate}
		\item \label{part:1}The function $f$ is said to be twice sub-differentiable
	(or possess a subjet) at $x$ if the following set is nonempty;
	\begin{equation*}
		\partial ^{2,-}f(x)=\{(\nabla \varphi (x),\nabla ^{2}\varphi
		(x))\,:\,f-\varphi \,\text{ has a local minimum at }\,\,x\text{ with }
		\,\,\varphi \in {\mathcal{C}}^{2}{\mathcal{(}}\mathbb{R}^{n})\}.
	\end{equation*}
	The subhessians at $(x,z)\in \operatorname{graph}\partial f$ are given by $\partial
	^{2,-}f(x,z):=\{Q\in \mathcal{S}(n)\mid (z,Q)\in \partial ^{2,-}f(x)\}$.
	
	\item The limiting subjet of $f$ at $x$ is defined to be: $\underline{\partial }^{2}f(x)=\limsup_{u\rightarrow ^{f}x}\partial ^{2,-}f(u)$ and the
	associated limiting subhessians for $z\in \partial f\left( x\right) $ are $	\underline{\partial }^{2}f(x,z)=\left\{ Q\in \mathcal{S}\left( n\right) \mid
	\left( z,Q\right) \in \underline{\partial }^{2}f(x)\right\} $.
	
	\item We define the rank one barrier cone for $\underline{\partial }^{2}f(x,z)$ as
\begin{equation*}
		b^{1}(\underline{\partial }^{2}f(x,z)):=\{h\in \mathbb{R}^{n}\mid q\left(
		\underline{\partial }^{2}f(x,z)\right) (h):=\sup \left\{ \langle Qh,h\rangle
		\mid Q\in \underline{\partial }^{2}f(x,z)\right\} <\infty \}.	
\end{equation*}
\item  Denoting $S_{2}(f)=\{x\in \operatorname{dom}\,(f)\mid \nabla ^{2}f(x) \text{
exists} \} $, then the limiting Hessians at $(\bar{x}, \bar{z})$ are given
by:
\begin{eqnarray*}
\overline{D}^{2}f(\bar{x},\bar{z}) &=&\{Q\in \mathcal{S}(n)\mid
Q=\lim_{n\rightarrow \infty }\nabla ^{2}f(x_{n}) \\
&&\qquad \text{where }\{x_{n}\}\subseteq S_{2}(f)\text{, }x_{n}\rightarrow
^{f}\bar{x}\text{ and }\nabla f(x_{n})\rightarrow \bar{z}\}.
\end{eqnarray*}

\item Define the second order Dini-directional derivative of $f$ by $f_{\_}^{\prime \prime }(\bar{x},z,h)=\liminf_{t\downarrow 0,u\rightarrow
h}\Delta _{2}f(\bar{x},t,z,u)$.
\end{enumerate}
\end{definition}

Define $\partial ^{2,+}f(x,z):= -\partial ^{2,-}(-f)(x,-z)$ then when $Q\in
\partial ^{2,-}f(x,z)\cap \partial ^{2,+}f(x,z)$ it follows that $Q=\nabla
^{2}f\left( x\right) $ and $z=\nabla f\left( x\right) $. If $f_{\_}^{\prime
\prime }(\bar{x},z,h)$ is finite then $f_{\_}^{\prime }(\bar{x},h):=\liminf_{{{t\downarrow 0}} \atop {{u\rightarrow h }}}\frac{1}{t}(f(\bar{x}+tu)-f(\bar{x}))=\langle
z,h\rangle $. It must be stressed that these second order objects may not
exist everywhere but as $\partial ^{2,-}f(x)$ is non--empty on a dense
subset of its domain \cite{Crandall:1992} when $f$ is lower semi--continuous then at worst so are
the limiting objects. In finite dimensions this concept is closely related
to the proximal subdifferential  (as we discuss below). The subhessian is always a closed convex
set of matrices while $\underline{\partial }^{2}f(\bar{x},z)$ may not be
convex (just as $\partial _{p}f(\bar{x})$ is convex while $\partial f(\bar{x})$ often is not).

A function $f$ is \emph{para-concave} around $\bar{x}$ when there exists a $c>0$ and a ball $B_{\varepsilon }\left( \bar{x}\right) $ within which the
function $x\mapsto $ $f\left( x\right) -\frac{c}{2}\left\Vert x\right\Vert
^{2}$ is finite concave (conversely $f$ is para-convex around $\bar{x}$ iff $-f$ is para-concave around $\bar{x}$). If a function is para--concave or para--convex we have (by Alexandrov's
theorem) the set $S_{2}(f)$ is of full Lebesgue measure in $\operatorname{dom}\,f$.  A function is $C^{1,1}$  when $\nabla f $ exists and satisfies a Lipschitz property. In \cite[Lemma 2.1]{eberhard:6}, it is noted that $f$ is locally $C^{1,1}$ iff $f$ is
simultaneously a locally para-convex and para-concave function. The next
observation was first made in \cite[Prposition 4.2]{ebpenot:2} and later
used in \cite[Proposition 6.1]{ebioffe:4}.

\begin{proposition}[\protect\cite{ebioffe:4}, Proposition 6.1]
\label{prop:ebpenot}If $f$ is lower semi--continuous then for $z\in \partial
f(\bar{x})$ we have
\begin{equation}
\overline{D}^{2}f(\bar{x},z)-\mathcal{P}(n)\subseteq \underline{\partial }^{2}f(\bar{x},z).  \label{ebneqn:8}
\end{equation}
If we assume in addition that $f$ is continuous and a para--concave function
around $\bar{x}$ then equality holds in (\ref{ebneqn:8}).
\end{proposition}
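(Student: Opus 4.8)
My plan is to prove the two inclusions separately, since the containment \eqref{ebneqn:8} holds for every lower semi-continuous $f$ and is the soft direction, whereas equality genuinely needs the para-concave geometry and is the substantial part. Throughout I will use that the limiting subjet $\underline{\partial}^{2}f(\bar{x})=\limsup_{u\to^{f}\bar{x}}\partial^{2,-}f(u)$ is an outer (Kuratowski--Painlev\'e) limit, hence the associated $\underline{\partial}^{2}f(\bar{x},z)$ is a closed set of matrices.

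For \eqref{ebneqn:8} I would fix $Q\in\overline{D}^{2}f(\bar{x},z)$ and $P\in\mathcal{P}(n)$ and write $Q=\lim_{k}\nabla^{2}f(x_{k})$ with $x_{k}\to_{f}\bar{x}$, $\nabla f(x_{k})\to z$ and $x_{k}\in S_{2}(f)$. At each $x_{k}$ the second-order Taylor expansion holds, so for a fixed $\varepsilon>0$ the quadratic $\varphi_{k}(y):=f(x_{k})+\langle\nabla f(x_{k}),y-x_{k}\rangle+\tfrac{1}{2}\langle(\nabla^{2}f(x_{k})-P-\varepsilon I)(y-x_{k}),y-x_{k}\rangle$ satisfies $f(y)-\varphi_{k}(y)=\tfrac{1}{2}\langle(P+\varepsilon I)(y-x_{k}),y-x_{k}\rangle+o(\|y-x_{k}\|^{2})\ge 0$ near $x_{k}$, because $P+\varepsilon I\succ 0$ dominates the $o(\|\cdot\|^{2})$ remainder, with equality at $x_{k}$. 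Hence $(\nabla f(x_{k}),\nabla^{2}f(x_{k})-P-\varepsilon I)\in\partial^{2,-}f(x_{k})$. Letting $k\to\infty$ and using the outer-limit definition of the limiting subjet gives $Q-P-\varepsilon I\in\underline{\partial}^{2}f(\bar{x},z)$; closedness of this set and $\varepsilon\downarrow 0$ then yield $Q-P\in\underline{\partial}^{2}f(\bar{x},z)$, which is \eqref{ebneqn:8}.

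For equality I would prove the reverse inclusion $\underline{\partial}^{2}f(\bar{x},z)\subseteq\overline{D}^{2}f(\bar{x},z)-\mathcal{P}(n)$. Take $Q\in\underline{\partial}^{2}f(\bar{x},z)$, realised by $C^{2}$ functions $\varphi_{k}$ touching $f$ from below at $x_{k}\to_{f}\bar{x}$ with $\nabla\varphi_{k}(x_{k})=z_{k}\to z$ and $\nabla^{2}\varphi_{k}(x_{k})=Q_{k}\to Q$. Set $g:=f-\tfrac{c}{2}\|\cdot\|^{2}$, concave near $\bar{x}$, so that $u:=-g$ is convex (hence semiconvex) and $-\psi_{k}:=-(\varphi_{k}-\tfrac{c}{2}\|\cdot\|^{2})$ touches $u$ from above at $x_{k}$, i.e. $u-(-\psi_{k})$ has a local maximum there. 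I would then apply Jensen's lemma (the Alexandrov maximum-principle perturbation device): there exist twice-differentiability points $y_{k}\to x_{k}$ of $u$, equivalently of $f$ since the two differ by a smooth quadratic, with $\nabla^{2}u(y_{k})\preceq\nabla^{2}(-\psi_{k})(x_{k})+\tfrac{1}{k}I$ and $\nabla u(y_{k})\to\nabla u(x_{k})$, which translate to $\nabla^{2}f(y_{k})\succeq Q_{k}-\tfrac{1}{k}I$ and $\nabla f(y_{k})\to z_{k}$. Passing to a diagonal sequence $y_{k}\to_{f}\bar{x}$ with $\nabla f(y_{k})\to z$, concavity of $g$ forces $\nabla^{2}f(y_{k})\preceq cI$ from above while the Jensen bound gives $\nabla^{2}f(y_{k})\succeq Q_{k}-\tfrac{1}{k}I\to Q$ from below; the sequence is therefore bounded, and extracting $\nabla^{2}f(y_{k})\to Q'$ gives $Q'\in\overline{D}^{2}f(\bar{x},z)$ with $Q'\succeq Q$, so $Q=Q'-(Q'-Q)\in\overline{D}^{2}f(\bar{x},z)-\mathcal{P}(n)$.

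The main obstacle is the reverse inclusion, and inside it the Jensen/Alexandrov step. Alexandrov's theorem by itself only yields twice-differentiability on a full-measure set, so the real difficulty is that the contact point $x_{k}$ need not be such a good point, and one must perturb to nearby $y_{k}$ while \emph{simultaneously} controlling the gradient $\nabla f(y_{k})\to z_{k}$ and securing the Hessian lower bound $\nabla^{2}f(y_{k})\succeq Q_{k}-\tfrac{1}{k}I$. This is precisely where para-concavity is indispensable: the semiconvexity of $-g$ both legitimises the maximum-principle perturbation argument and supplies the uniform upper bound $cI$ that makes the Hessian sequence precompact. The first inclusion, in contrast, is purely a Taylor-expansion-plus-closedness argument and requires no regularity beyond lower semicontinuity.
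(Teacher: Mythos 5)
The paper does not actually prove this proposition: it is imported verbatim from \cite{ebioffe:4} (Proposition 6.1), and the only proof supplied in the appendix is for Proposition \ref{limpara}, which merely \emph{uses} this result. So there is no in-paper argument to compare yours against; judged on its own terms, your proof is correct and is essentially the standard argument behind the cited result. The forward inclusion --- Taylor expansion at points of $S_{2}(f)$, the $\varepsilon I$ slack to dominate the $o(\Vert\cdot\Vert^{2})$ remainder so that the quadratic genuinely minorises $f$, then closedness of the outer limit as $\varepsilon\downarrow 0$ --- is exactly right and needs nothing beyond lower semicontinuity. For the reverse inclusion, passing to the convex function $u=\tfrac{c}{2}\Vert\cdot\Vert^{2}-f$ and invoking Jensen's lemma is the right device, and the two-sided Loewner bound $Q_{k}-\tfrac{1}{k}I\preceq\nabla^{2}f(y_{k})\preceq cI$ correctly gives precompactness of the Hessian sequence. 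Two details you should make explicit rather than leave implicit: (i) Jensen's lemma requires a \emph{strict} local maximum, so you must first replace $-\psi_{k}$ by $-\psi_{k}+\delta_{k}\Vert\cdot-x_{k}\Vert^{2}$ with $\delta_{k}\downarrow 0$; this is harmless and is absorbed into your $\tfrac{1}{k}I$ slack, but it is the step that legitimises the measure-theoretic perturbation. (ii) The continuity hypothesis is used precisely to guarantee $f(y_{k})\rightarrow f(\bar{x})$ along the diagonal sequence, which is required for membership in $\overline{D}^{2}f(\bar{x},z)$; since that is exactly where the extra assumption enters, it deserves a sentence. (Also, your parenthetical $\nabla u(y_{k})\rightarrow\nabla u(x_{k})$ is legitimate because $u$, being convex and touched from above by a $C^{2}$ function at $x_{k}$, is in fact differentiable there --- worth a word, since a priori $x_{k}$ need not be a point of differentiability.)
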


A weakened form of para-convexity is prox-regularity.

\begin{definition} [\protect\cite{polrock:1}]
\label{def:proxreg}Let the function $f:\mathbb{R}^{n}\rightarrow \mathbb{R}_{\infty }$ be finite at $\bar{x}$.

\begin{enumerate}
\item The function $f$ is prox--regular at $\bar{x}$ for $\bar{z}$ with
respect to $\varepsilon >0$ and $r \geq0 $, where $\bar{z}\in \partial f(\bar{x})$, if $f $ is locally lower semi--continuous at $\bar{x}$ and
\begin{equation*}
f(x^{\prime })\geq f(x)+\langle z,x^{\prime }-x\rangle -\frac{r}{2}\Vert
x^{\prime }-x\Vert ^{2}
\end{equation*}
whenever $\Vert x^{\prime }-\bar{x}\Vert \leq \varepsilon $ and $\Vert x-\bar{x}\Vert \leq \varepsilon $ and $\left\vert f(x)-f(\bar{x})\right\vert
\leq \varepsilon \ $ with $\Vert z-\bar{z}\Vert \leq \varepsilon $ and $z\in
\partial f(x)$.

\item The function $f$ is subdifferentially continuous at $\bar{x}$ for $\bar{z}$, where $\bar{z}\in \partial f(\bar{x})$, if for every $\delta >0$
there exists $\varepsilon >0$ such that $\left\vert f(x)-f(\bar{x}
)\right\vert \leq \delta $ whenever $|x-\bar{x}|\leq \varepsilon $ and $|z-
\bar{z}|\leq \varepsilon $ with $z\in \partial f(x).$
\end{enumerate}
\end{definition}

\begin{remark}
 In this paper we adopt the convention that limiting subgradients must exist at $\bar{x}$ to invoke this
definition.
We say that $f$ is prox-regular at $\bar{x}$ iff it is
prox-regular with respect to each $\bar{z}\in \partial f\left( \bar{x}\right) $ (with respect to some $\varepsilon >0$ and $r\geq 0$).
\end{remark}
\begin{remark}\label{rem:jets}
We shall now discuss a well known alternative characterisation of $(z,Q)\in
\partial ^{2,-}f(\bar{x})$, see \cite{ebpenot:2}. By taking the $\varphi \in
C^{2}(\mathbb{R}^{n})$ in Definition \ref{def:6} and expanding
using a Taylor expansion we may equivalently assert that there exists a $\delta >0$ for which
\begin{equation}
f(x)\geq f(\bar{x})+\langle z,x-\bar{x}\rangle +\frac{1}{2}h^{T}Qh+o(\Vert x-\bar{x}\Vert )\quad \text{ for all }x\in B_{\delta }(\bar{x}),
\label{taylor}
\end{equation}
where $o\left( \cdot \right) $ is the usual Landau small order notation. It
is clear from (\ref{taylor}) that we have $(z,Q)\in \partial^{2,-}f(\bar{x}) $ implies $z\in \partial _{p}f(\bar{x})$ as
\begin{equation*}
f(x)\geq f(\bar{x}) + \langle z,x-\bar{x}\rangle -\frac{r}{2}\Vert x-\bar{x}\Vert \quad \text{ for all }x\in B_{\delta }(\bar{x})
\end{equation*}
when $r>\Vert Q\Vert _{F}$ and $\delta >0$ sufficiently reduced. Moreover $z\in \partial _{p}f(\bar{x})$ implies $(z,-rI)\in \partial ^{2,-}f(\bar{x})$. 
From the definition of prox-regularity at $\bar{x}$ for $\bar{z}$ (and the
choice of $x=\bar{x}$) we conclude that we must have $\bar{z}\in \partial
_{p}f(\bar{x})$ and hence $\partial ^{2,-}f(\bar{x},\bar{z})\neq \emptyset $. Moreover the definition of prox-regularity implies the limiting
subgradients are actually proximal subgradients locally i.e. within an "$f$-attentive neighbourhood of $\bar{z}$" \cite{polrock:1}. When $f$ is subdifferentially
continuous we may drop the $f$-attentiveness and claim $B_{\delta }(\bar{z})\cap \partial f(\bar{x})=B_{\delta }(\bar{z})\cap \partial _{p}f(\bar{x})$
for some sufficiently small $\delta >0$. The example 4.1 of \cite{Lewis:2} show that this neighbourhood can reduce to a singleton $\{\bar{z}\}$. When we have a tilt stable local minimum at $\bar{x}$ or $\bar{z} \in \operatorname{rel-int} \partial f (\bar{x})$ then this situation cannot occur.
\end{remark}

\begin{remark}
	\label{rem:limhess}We denote $(x^{\prime },z^{\prime })\rightarrow
	_{S_{p}(f)}(\bar{x},z)$ to mean $x^{\prime }\rightarrow ^{f}\bar{x}$, $\
	z^{\prime }\in \partial _{p}f(x^{\prime })$ and $z^{\prime }\rightarrow z$.
	As $\partial ^{2,-}f(x^{\prime },z^{\prime })\neq \emptyset $ iff $z^{\prime
	}\in \partial _{p}f\left( x^{\prime }\right) $ it follows via an elementary
	argument that
	\begin{equation*}
	\underline{\partial }^{2}f(\bar{x},\bar{z})=\limsup_{(x^{\prime },z^{\prime
		})\rightarrow _{S_{p}(f)}(\bar{x},\bar{z})}\partial ^{2,-}f(x^{\prime
	},z^{\prime }).
	\end{equation*}
\end{remark}

Denote the recession directions of a convex set $C$ by $0^{+}C$. Noting that
$\langle Q,uv^{T}\rangle =v^{T}Qu$ one may see the motivation for the
introduction of the rank-1 support in (\ref{taylor}). The rank-1 support $q\left( \mathcal{A}\right) (u,v):=\sup \left\{ \langle Q,uv^{T}\rangle \mid
Q\in \mathcal{A}\right\} $ for a subset $\mathcal{A}\subseteq \mathcal{S}\left( n\right) $, in our case $\mathcal{A}=\underline{\partial }^{2}f(\bar{x},z)$. We see from (\ref{taylor}) that when we have $Q\in \partial ^{2,-}f(\bar{x},\bar{z})$ then $Q-P\in \partial ^{2,-}f(\bar{x},\bar{z})$ for any $n\times n$ positive semi-definite matrix $P \in \mathcal{P}(n)$. Thus we always have $-\mathcal{P}(n)\subseteq
0^{+}\partial ^{2,-}f(\bar{x},\bar{z})$ where $\partial ^{2,-}f(\bar{x},\bar{z}) \subseteq \underline{\partial }^{2}f(\bar{x},\bar{z})$.

\begin{theorem}[\protect\cite{eberhard:1}, Theorem 1]
\label{ebthm:rank:1} Let $g:\mathbb{R}^{n}\rightarrow \mathbb{R}_{\infty }$
be proper (i.e. $g(u)\neq -\infty $ anywhere) and $\operatorname{dom}g\neq \emptyset
$. For $u,v\in \mathbb{R}^{n}$, define $q(u,v)=\infty $ if $u$ is not a
positive scalar multiple of $v$ or vice versa, and $q(\alpha u,u)=q(u,\alpha
u)=\alpha g(u)$ for any $\alpha \geq 0$. Then $q$ is a rank one support of a
set $\mathcal{A}\subseteq \mathcal{S}(n)$ with $-\mathcal{P}(n)\subseteq
0^{+}\mathcal{A}$ if and only if

\begin{enumerate}
\item $g$ is positively homogeneous of degree 2.

\item $g$ is lower semicontinuous.

\item $g(-u)=g(u)$ (symmetry).
\end{enumerate}
\end{theorem}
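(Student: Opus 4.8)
The plan is to prove the two implications separately, with the forward (necessity) direction being a formal computation and the reverse (sufficiency) direction carrying all the analytic weight.

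For necessity, suppose $q=q(\mathcal{A})$ for some $\mathcal{A}\subseteq\mathcal{S}(n)$ with $-\mathcal{P}(n)\subseteq 0^{+}\mathcal{A}$. Restricting to the diagonal $v=u$ and using $\langle Q,uu^{T}\rangle=u^{T}Qu$, one reads off $g(u)=q(u,u)=\sup\{u^{T}Qu\mid Q\in\mathcal{A}\}$. All three properties then drop out of this single formula: positive homogeneity of degree two because $(tu)^{T}Q(tu)=t^{2}u^{T}Qu$ for $t\geq0$; symmetry because $(-u)^{T}Q(-u)=u^{T}Qu$; and lower semicontinuity because $g$ is a pointwise supremum of the continuous quadratic forms $u\mapsto u^{T}Qu$. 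The recession hypothesis is not needed here.

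For sufficiency I would construct the set explicitly as $\mathcal{A}:=\{Q\in\mathcal{S}(n)\mid u^{T}Qu\leq g(u)\text{ for all }u\in\mathbb{R}^{n}\}$, a closed convex set (an intersection of the closed half-spaces $\langle Q,uu^{T}\rangle\leq g(u)$). Three things must be checked. First, $\mathcal{A}\neq\emptyset$: since $g$ is lsc and proper it is bounded below on the unit sphere by some $m\in\mathbb{R}$, and homogeneity of degree two gives $g(u)\geq m\|u\|^{2}$, so $Q=-tI\in\mathcal{A}$ for $t\geq-m$. Second, the recession condition: for $Q\in\mathcal{A}$, $P\in\mathcal{P}(n)$, $t\geq0$ one has $u^{T}(Q-tP)u=u^{T}Qu-t\,u^{T}Pu\leq g(u)$, so $Q-tP\in\mathcal{A}$ and $-\mathcal{P}(n)\subseteq 0^{+}\mathcal{A}$. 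Third, that $q(\mathcal{A})$ reproduces $q$, which splits into a diagonal claim $\sup\{u^{T}Qu\mid Q\in\mathcal{A}\}=g(u)$ and an off-diagonal claim $q(\mathcal{A})(u,v)=+\infty$ whenever $u,v$ are not nonnegative multiples of one another. The off-diagonal claim I expect to be routine from the recession structure: writing $\langle Q-tP,uv^{T}\rangle=\langle Q,uv^{T}\rangle-t\langle P,\tfrac{uv^{T}+vu^{T}}{2}\rangle$ and noting that $\tfrac{uv^{T}+vu^{T}}{2}$ is indefinite exactly when $u,v$ are independent or negative multiples, one picks $P\in\mathcal{P}(n)$ as a projection onto a negative eigenvector to send the supremum to $+\infty$; for $u,v$ positive multiples this matrix is positive semidefinite, no blow-up occurs, and the value $q(\mathcal{A})(u,\alpha u)=\alpha\sup_{Q}u^{T}Qu=\alpha g(u)$ follows from the diagonal claim.

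The diagonal claim is the heart of the matter. The inequality $\leq$ is immediate from the definition of $\mathcal{A}$; for $\geq$ I must produce, for each fixed $u_{0}$, matrices $Q\in\mathcal{A}$ with $u_{0}^{T}Qu_{0}$ arbitrarily close to $g(u_{0})$, or a genuine supporting $Q$. The natural way to organise this is to pass to $\mathcal{S}(n)$: symmetry makes $uu^{T}\mapsto g(u)$ well-defined on the rank-one positive semidefinite cone (since $uu^{T}=(-u)(-u)^{T}$ and $g(u)=g(-u)$), and homogeneity of degree two makes the resulting function $\gamma$, set to $+\infty$ off that cone, positively homogeneous of degree one in the matrix variable. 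Then $\sigma:=\operatorname{cl}\operatorname{co}\gamma$ is a closed sublinear function, its associated closed convex set is exactly the $\mathcal{A}$ above, and an element of $\partial\sigma(u_{0}u_{0}^{T})$ is the desired supporting matrix, provided $\sigma(u_{0}u_{0}^{T})=g(u_{0})$. The main obstacle is precisely establishing this last equality, i.e. that convexification and closure do not lower the value at rank-one matrices. The convexification step is controlled by the fact that $u_{0}u_{0}^{T}$ spans an extreme ray of $\mathcal{P}(n)$: any representation $u_{0}u_{0}^{T}=\sum_{i}\lambda_{i}v_{i}v_{i}^{T}$ as a convex combination of rank-one positive semidefinite matrices forces each $v_{i}\in\operatorname{span}(u_{0})$, whence homogeneity gives $\sum_{i}\lambda_{i}g(v_{i})=g(u_{0})$ and $\operatorname{co}\gamma(u_{0}u_{0}^{T})=g(u_{0})$. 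The remaining closure step is where lower semicontinuity of $g$ enters, guaranteeing that taking a $\liminf$ along sequences $X_{k}\to u_{0}u_{0}^{T}$ (whose approximants need not be rank one) cannot produce a value below $g(u_{0})$; controlling this limit carefully is the technical crux. The degenerate case $g(u_{0})=+\infty$ is handled in parallel, showing instead that $\sup\{u_{0}^{T}Qu_{0}\mid Q\in\mathcal{A}\}=+\infty$.
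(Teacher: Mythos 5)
The paper does not actually prove this statement: it is imported verbatim from \cite{eberhard:1}, so there is no internal argument to compare yours against, and your proposal has to be judged on its own terms. Its architecture is sound. The necessity direction is correct as written; in the sufficiency direction the explicit construction of $\mathcal{A}$, its nonemptiness via the quadratic minorant $g(u)\geq m\Vert u\Vert ^{2}$, the recession inclusion, and the off--diagonal blow--up obtained by picking $P=ww^{T}$ for a negative eigenvector $w$ of $\tfrac{1}{2}(uv^{T}+vu^{T})$ are all fine. The one step you leave as a sketch --- that the closure in $\sigma :=\operatorname{cl}\operatorname{co}\gamma$ cannot lower the value at $u_{0}u_{0}^{T}$ --- does go through, and can be finished as follows: by Carath\'{e}odory and positive homogeneity each competitor $X_{k}\rightarrow u_{0}u_{0}^{T}$ admits a representation $X_{k}=\sum_{i=1}^{N}v_{i}^{k}(v_{i}^{k})^{T}$ with $N$ fixed and $\sum_{i}g(v_{i}^{k})\leq \operatorname{co}\gamma (X_{k})+1/k$; the trace identity $\sum_{i}\Vert v_{i}^{k}\Vert ^{2}=\operatorname{tr}X_{k}$ bounds the $v_{i}^{k}$, so along a subsequence $v_{i}^{k}\rightarrow v_{i}$ with $\sum_{i}v_{i}v_{i}^{T}=u_{0}u_{0}^{T}$, whence the extreme--ray rigidity gives $v_{i}=t_{i}u_{0}$ with $\sum_{i}t_{i}^{2}=1$; finally the uniform minorant $g(v_{i}^{k})\geq m\Vert v_{i}^{k}\Vert ^{2}$ rules out cancellation between terms, so lower semicontinuity yields $\liminf_{k}\sum_{i}g(v_{i}^{k})\geq \sum_{i}g(t_{i}u_{0})=g(u_{0})$, where symmetry is needed to evaluate $g(t_{i}u_{0})$ for $t_{i}<0$. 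Two minor caveats: the supremum over $\mathcal{A}$ need not be attained, so the appeal to an element of $\partial \sigma (u_{0}u_{0}^{T})$ should be replaced by the identity $\sup_{Q\in \mathcal{A}}\langle Q,u_{0}u_{0}^{T}\rangle =\sigma (u_{0}u_{0}^{T})$, which already follows once $\sigma$ is identified as the support function of $\mathcal{A}$; and the degenerate pairs $(u,0)$ with $u\neq 0$ force the convention $0\cdot g(u)=0$ even when $g(u)=+\infty$, an edge case glossed over in the statement itself.
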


For the sets $\mathcal{A}\subseteq \mathcal{S}(n)$ described in Theorem \ref{ebthm:rank:1} one only needs to consider the support defined on $\mathbb{R}^{n}$ by $q\left( \mathcal{A}\right) (h):=\sup \left\{ \langle
Q,hh^{T}\rangle \mid Q\in \mathcal{A}\right\} $. On reflection it is clear
that all second order directional derivative possess properties 1. and 3. of
the above theorem and those that are topologically well defined possess 2.
as well. We call
\begin{equation*}
\mathcal{A}^{1}:=\{Q\in \mathcal{S}(n)\mid q(\mathcal{A})(h)\geq \langle
Q,hh^{T}\rangle \text{, }\forall h\}
\end{equation*}
the symmetric rank--1 hull of $\mathcal{A}\subseteq \mathcal{S}(n)$. Note
that by definition $q(\mathcal{A})(h)=q(\mathcal{A}^{1})(h)$. When $\mathcal{A}=\mathcal{A}^{1}$, we say $\mathcal{A}$ is a symmetric rank--1
representer. Note that if $Q\in \mathcal{A}^{1}$, then $Q-P\in \mathcal{A}^{1}$ for $P\in \mathcal{P}(n)$ so always $-\mathcal{P}(n)\subseteq 0^{+}\mathcal{A}
$. The rank one barrier cone for a symmetric rank-1 representer is denoted
by $b^{1}(\mathcal{A}):=\{h\in \mathbb{R}^{n}\mid q\left( \mathcal{A}\right)
(h)<\infty \}$. Note that  rank-1 support is an even, positively homogeneous
degree 2 function (i.e. $q\left( \mathcal{A}\right) (h)=q\left( \mathcal{A}\right) (-h)$ and $q\left( \mathcal{A}\right) (th)=t^{2}q\left( \mathcal{A}\right) (h)$). Moreover its domain is the union of a cone $C:= \operatorname{dom} f^{\prime\prime}_{-} (\bar{x}, \bar{z}, \cdot)$ and its negative
i.e.
\begin{equation}
\operatorname{dom}q\left( \mathcal{A}\right) (\cdot ):=b^{1}(\mathcal{A})=C\cup
\left( -C\right) .  \label{neqn:eb34}
\end{equation}
In the first order case we have $\delta^{\ast}_{\partial _{p}f(\bar{x})} (h) \leq
f_{\_}^{\prime }(\bar{x},h)$. A related second order inequality was first observed
in \cite{eberhard:1} \vspace*{-0.2cm}
\begin{equation*}
\vspace*{-0.2cm}q\left( \partial ^{2,-}f(\bar{x},z)\right) (u)=\min
\{f_{\_}^{\prime \prime }(\bar{x},z,u),f_{\_}^{\prime \prime }(\bar{x}
,z,-u)\}=f_{s}^{\prime \prime }\left( \bar{x},z,u\right)
:=\liminf_{t\rightarrow 0,u^{\prime }\rightarrow u}\Delta
_{2}f(x,t,z,u^{\prime })\text{.}
\end{equation*}
Hence if we work with subjets we are in effect dealing with objects dual to
the lower, symmetric, second-order epi-derivative $f_{\_}^{\prime \prime }(\bar{x},z,\cdot)$. Many text book examples of these
quantities can be easily constructed. Moreover there exists a robust calculus for the limiting
subjet \cite{eberhard:7, ebioffe:4}. Furthermore as noted in example 51 of \cite{eberhard:8} the
 qualification condition  for the sum rule for the limiting subjet can hold while for the same problem the basic qualification condition for the sum rule for the limiting (first order) subdifferential can fail to hold. This demonstrates the value of considering  pairs $(z,Q)$.

\begin{example} 	\label{Ex:1}Consider the convex function on $\mathbb{R}^{2}$ given by
$
	f(x,y)=\left\vert x-y\right\vert.
$
	Take $\left(  x,y\right)  =(0,0)$ and $z=(0,0)\in\partial f(0,0)$ then
	$Q=\left(
	\begin{array}
	[c]{cc}%
	\alpha & \gamma\\
	\gamma & \beta
	\end{array}
	\right)  \in\partial^{2,-}f \left(  (0,0)\,(0,0)\right)  $ iff locally
	around $(0,0)$ we have
	\[
	\left\vert x-y\right\vert \geq\frac{1}{2}\left(
	\begin{array}
	[c]{cc}%
	x & y
	\end{array}
	\right)  \left(
	\begin{array}
	[c]{cc}%
	\alpha & \gamma\\
	\gamma & \beta
	\end{array}
	\right)  \left(
	\begin{array}
	[c]{c}%
	x\\
	y
	\end{array}
	\right)  =\frac{1}{2}\left(  \alpha x^{2}+2\gamma xy+\beta y^{2}\right)
	+o\left(  \left\Vert \left(  x,y\right)  \right\Vert ^{2}\right)  \text{.}%
	\]
	This inequality only bites when $x=y$ in which case
	\[
	0\geq\frac{x^{2}}{2}\left(  \alpha+2\gamma+\beta\right)  +o\left(
	x^{2}\right)  \quad\text{or \quad}0\geq\alpha+2\gamma+\beta+\frac{o\left(
		x^{2}\right)  }{x^{2}}\quad\text{so \quad}0\geq\alpha+2\gamma+\beta\text{.}%
	\]
	Consequently
	\[
	\partial^{2,-}f\left(  (0,0)\,(0,0)\right)  =\left\{  Q=\left(
	\begin{array}
	[c]{cc}%
	\alpha & \gamma\\
	\gamma & \beta
	\end{array}
	\right)  \mid0\geq\alpha+2\gamma+\beta\right\}  .
	\]
	The extreme case is when $\alpha+2\gamma+\beta=0$ and two examples of $Q$
	attaining this extremal value are:
	\[
	Q_{1}=\alpha\left(
	\begin{array}
	[c]{cc}%
	1 & 0\\
	0 & -1
	\end{array}
	\right)  \quad\text{and \quad}Q_{2}=\alpha\left(
	\begin{array}
	[c]{cc}%
	1 & -1\\
	-1 & 1
	\end{array}
	\right)  \text{.}%
	\]
 Also
	\begin{align*}
	q\left(  \partial^{2,-}f \left(  (0,0)\,(0,0)\right)  \right)  (h_{1}%
	,h_{2})  &  =\left\{
	\begin{array}
	[c]{cc}%
	0 & \text{if }h_{1}=h_{2}\\
	+\infty & \text{otherwise}%
	\end{array}
	\right\}  = f^{\prime\prime}_{s} (0,0),(0,0),(h_1,h_2)) \\
	\text{and so\quad}b^{1}\left(  \partial^{2,-}f \left(  (0,0)\,(0,0)\right)
	\right)   &  =\left\{  \left(  h_{1},h_{2}\right)  \mid h_{1}=h_{2}\right\}
	\subsetneq\mathbf{R}^{2}.
	\end{align*}
\end{example}

\begin{remark} \label{rem:rankone}
	Knowing the rank-1 barrier cone of a rank-1 representer $\mathcal{A}$ tells us a lot about it's structure. This is no small part to the fact that it consists only of symmetric matrices.  This  discussion has been carried out in quite a bit of detail in \cite{eberhard:7}.   From convex analysis we know that
	the barrier cone (the points at which the support function is finite valued) is polar to the recession directions. In  \cite[Lemma 14]{eberhard:7} it is shown that for a rank-1 representer (using the Frobenious inner product on $\mathcal{S}$ (n))
	this corresponds to $(0^+ \mathcal{A})^{\circ} = \mathcal{P} (b^1 (\mathcal{A}))
	:= \{ \sum_{i \in F} u_i u_i^T \mid u_i \in b^1 (\mathcal{A})\,  \text{for a finite index set $F$}\}$.
	Moreover in \cite[Lemma 24]{eberhard:7} it is shown that 
	$ \mathcal{P} (b^1 (\mathcal{A}))^{\circ} \cap \mathcal{P} (n) = 
	\mathcal{P} (b^1 (\mathcal{A})^{\perp})$. Denoting $\mathcal{U}^2 := b^1 (\mathcal{A})$
	and $\mathcal{V}^2 =  (\mathcal{U}^2)^{\perp}$ we deduce that 
	$\mathcal{P}(\mathcal{V}^2) = (0^+ \mathcal{A}) \cap \mathcal{P} (n)$. This 
	explains why $q(\mathcal{A}) (w) = +\infty$ when $w \notin \mathcal{U}^2$. 
	Since we always have  $-\mathcal{P} (\mathcal{V}^2) \subseteq -\mathcal{P} (n) \subseteq 0^{+} \mathcal{A}$ it follows that $\mathcal{P}(\mathcal{V}^2) -\mathcal{P}(\mathcal{V}^2) \subseteq 0^{+} \mathcal{A}$. Furthermore we find that for any $w = w_{\mathcal{U}^2} + w_{\mathcal{V}^2}$ we then have for $\mathcal{S}(\mathcal{V}^2)$, denoting the symmetric 
	linear mapping from $\mathcal{V}^2$ into $\mathcal{V}^2$, that 
	\[
	\mathcal{A}w =\mathcal{A}w_{\mathcal{U}^2} + \mathcal{A}w_{\mathcal{V}^2}
	\supseteq \mathcal{A}w_{\mathcal{U}^2} + [\mathcal{P}(\mathcal{V}^2) -\mathcal{P}(\mathcal{V}^2)]w_{\mathcal{V}^2}=  \mathcal{A}w_{\mathcal{U}^2} + \mathcal{S}(\mathcal{V}^2)w_{\mathcal{V}^2}. 
	\]

\end{remark}
\subsection{A second order $\mathcal{VU}$ decomposition }

The result \cite{eberhard:6}, Corollary 6.1 contains a number of  observations that
characterise the rank-1 support of the limiting subhessians. We single out the following
which is of particular interest for this paper.

\begin{proposition}[\protect\cite{eberhard:6}, Corollary 6.1]
\label{limpara} Suppose that $f:\mathbb{R}^{n}\rightarrow \mathbb{R}_{\infty
}$ is quadratically minorised and is prox--regular at $\bar{x}\ $ for $\bar{z}\in \partial f(\bar{x})$ with respect to $\varepsilon $ and $r.$ Then $h\mapsto q\left( \underline{\partial }^{2}f(\bar{x},\bar{z})\right) (h)+r\Vert h\Vert ^{2}$ is convex.
\end{proposition}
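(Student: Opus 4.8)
The plan is to strip off the constant $r$ by a smooth perturbation and then show that what remains is the curvature function of an object whose subdifferential is locally monotone, where convexity becomes available. First I would set $g(x):=f(x)+\frac{r}{2}\|x-\bar{x}\|^{2}$. Since $\varphi:=\frac{r}{2}\|\cdot-\bar{x}\|^{2}\in C^{2}$ has $\nabla\varphi(\bar{x})=0$ and $\nabla^{2}\varphi\equiv rI$, adding $\varphi$ shifts every subjet at a point $x$ by $(\,r(x-\bar{x}),rI\,)$; passing to the outer limit along $(x,z)\rightarrow_{S_{p}(f)}(\bar{x},\bar{z})$ as in Remark \ref{rem:limhess} (and using $r(x-\bar{x})\to 0$) gives $\underline{\partial}^{2}g(\bar{x},\bar{z})=\underline{\partial}^{2}f(\bar{x},\bar{z})+rI$, whence $q(\underline{\partial}^{2}g(\bar{x},\bar{z}))(h)=q(\underline{\partial}^{2}f(\bar{x},\bar{z}))(h)+r\|h\|^{2}$. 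A direct manipulation of the prox-regular inequality (completing the square in $\frac{r}{2}\|x'-\bar{x}\|^{2}-\frac{r}{2}\|x-\bar{x}\|^{2}$) also shows that $g$ is prox-regular at $\bar{x}$ for $\bar{z}$ with constant $0$, i.e. $g(x')\ge g(x)+\langle z',x'-x\rangle$ for $z'\in\partial g(x)$ on the relevant $f$-attentive localization; equivalently the localized subgradient mapping of $g$ is monotone \cite{polrock:1}. It therefore suffices to prove that $q(\underline{\partial}^{2}g(\bar{x},\bar{z}))(\cdot)$ is convex whenever $g$ is prox-regular with constant $0$ and quadratically minorised.

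By Theorem \ref{ebthm:rank:1}, the function $\psi:=q(\underline{\partial}^{2}g(\bar{x},\bar{z}))$ is already positively homogeneous of degree two, even and lower semicontinuous, so proving convexity reduces to the midpoint inequality $\psi(a+b)\le 2\psi(a)+2\psi(b)$ for all $a,b$. To supply this I would pass to the Moreau envelope $e_{\lambda}g(x):=\inf_{w}\{g(w)+\frac{1}{2\lambda}\|x-w\|^{2}\}$, which is proper and finite near $\bar{x}$ for small $\lambda>0$ because $g$ is quadratically minorised, and which is para-concave there: indeed $e_{\lambda}g-\frac{1}{2\lambda}\|\cdot\|^{2}$ is an infimum of affine functions of $x$, hence concave. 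Being para-concave and continuous, Proposition \ref{prop:ebpenot} applies with equality, so $\underline{\partial}^{2}(e_{\lambda}g)(y,w)=\overline{D}^{2}(e_{\lambda}g)(y,w)-\mathcal{P}(n)$, and every limiting Hessian $H\in\overline{D}^{2}(e_{\lambda}g)$ satisfies $H\preceq\frac{1}{\lambda}I$ by para-concavity; consequently the envelope's rank-1 support is governed by genuine Hessian limits and is manifestly convex. The remaining task is to transport this convexity back to $g$: under prox-regularity the proximal mapping $p_{\lambda}$ is single-valued and Lipschitz \cite{polrock:1}, $\nabla e_{\lambda}g=\lambda^{-1}(I-p_{\lambda})$ at $\bar{y}:=\bar{x}+\lambda\bar{z}$, and there is a known Riccati-type correspondence $Q\mapsto Q(I+\lambda Q)^{-1}$ between $\underline{\partial}^{2}g(\bar{x},\bar{z})$ and the envelope's limiting Hessians \cite{eberhard:6,ebioffe:4}. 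I would argue that, being an operator-monotone bijection between the two curvature ranges, this correspondence carries the convexity of the envelope's curvature function to $\psi$, exactly as described by the representer calculus in Remark \ref{rem:rankone}.

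The main obstacle is this last second-order transfer. One must make rigorous the correspondence between $\underline{\partial}^{2}g(\bar{x},\bar{z})$ and the limiting Hessians of $e_{\lambda}g$, and then check that the outer limit defining $\underline{\partial}^{2}$ over \emph{varying} base points $(x,z)$ (Remark \ref{rem:limhess}), together with the symmetrisation built into $q(\mathcal{A})(h)=q(\mathcal{A})(-h)$, is compatible with passing convexity through the limit. This is precisely where prox-regularity with a \emph{uniform} constant over the localization, rather than a mere pointwise lower curvature bound $q(\partial^{2,-}g(x,z))(u)\ge 0$, is indispensable: the uniformity is what allows the convexity of each finite-level Hessian approximation to survive the outer-limit construction of Remark \ref{rem:rankone} and deliver the midpoint inequality for the full support $\psi$.
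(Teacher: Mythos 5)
Your overall strategy --- regularise via the Moreau envelope, establish convexity of the curvature function there, and pass back to $f$ --- is the same route the paper takes in Appendix A, but two steps are genuinely broken. First, the convexity of the envelope's curvature function does not come from para-concavity. You argue that $H\preceq\frac{1}{\lambda}I$ for every limiting Hessian $H$ of $e_{\lambda}g$ and conclude that the rank-1 support is ``manifestly convex''; but an \emph{upper} bound on the Hessians gives nothing: $h\mapsto\sup_{H}\langle Hh,h\rangle$ over a family of indefinite matrices is generally non-convex (take $H_{1}=\operatorname{diag}(1,-1)$ and $H_{2}=\operatorname{diag}(-1,1)$, whose supremum is $|h_{1}^{2}-h_{2}^{2}|$). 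What is needed is a uniform \emph{lower} bound $H\succeq-cI$, i.e. para-\emph{convexity} of the envelope, so that each $\langle (H+cI)h,h\rangle$ is a positive semi-definite quadratic form and the supremum of convex functions stays convex. That lower bound is exactly what prox-regularity supplies (the paper quotes \cite[Theorem 5.2]{polrock:1}: $g_{\lambda}$ is para-convex with a modulus that tends to $r$ as $\lambda\rightarrow\infty$); the para-concavity you invoke is automatic for any infimal convolution and carries no information about $r$ at all. The paper in fact proves the para-convex case first at the level of second-order difference quotients (each quotient plus the quadratic correction is a convex function of $h$, and upper epi-limits preserve convexity), which sidesteps Hessians entirely.

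Second, the transfer from $e_{\lambda}g$ back to $f$ is not established. You name the correspondence $Q\mapsto Q(I+\lambda Q)^{-1}$ and assert that it ``carries the convexity across,'' then concede that this is the main obstacle; it is, and nothing in Remark \ref{rem:rankone} supplies it --- a matrix-valued bijection between two sets of subhessians does not transport convexity of their rank-1 supports. The paper closes this gap differently: it shows $q(\underline{\partial}^{2}g_{\lambda}(0,0))(h)+c_{\lambda}\Vert h\Vert^{2}$ is convex for each $\lambda$ (using that $g_{\lambda}$ is $C^{1,1}$, so the second-order circ derivative coincides with the rank-1 support), and then invokes the variational convergence $\limsup_{\lambda\rightarrow\infty}\underline{\partial}^{2}g_{\lambda}(0,0)=\underline{\partial}^{2}g(0,0)$ together with Proposition \ref{ebcor:var}, so that the target $q(\underline{\partial}^{2}f(\bar{x},\bar{z}))(h)+r\Vert h\Vert^{2}$ is an upper epi-limit of convex functions and hence convex. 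Without a substitute for that limiting argument your proof does not go through. (Your preliminary reduction $g:=f+\frac{r}{2}\Vert\cdot-\bar{x}\Vert^{2}$ is harmless but buys nothing, since the same envelope machinery is still required afterwards.)
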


\begin{proof}
 For the convenience of the reader we provide a
self contained proof of this in the Appendix A.
\end{proof}

\begin{corollary}
Suppose that $f$ is quadratically minorised and is prox--regular at $\bar{x}$
for $\bar{z}\in \partial f(\bar{x})$ with respect to $\varepsilon $ and $r.$
Then $b^{1}(\underline{\partial }^{2}f(\bar{x},\bar{z}))$ is a linear
subspace of $\mathbb{R}^{n}$.
\end{corollary}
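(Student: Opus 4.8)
The plan is to read off the subspace structure of $b^{1}(\underline{\partial}^{2}f(\bar{x},\bar{z}))$ directly from the convexity already supplied by Proposition \ref{limpara}, combined with the homogeneity and evenness of the rank-1 support recorded just before (\ref{neqn:eb34}). Write $\mathcal{A}:=\underline{\partial}^{2}f(\bar{x},\bar{z})$ and $\phi(h):=q(\mathcal{A})(h)+r\Vert h\Vert^{2}$. First I would note that $\mathcal{A}\neq\emptyset$: prox-regularity at $\bar{x}$ for $\bar{z}$ forces $\bar{z}\in\partial_{p}f(\bar{x})$ and hence $\partial^{2,-}f(\bar{x},\bar{z})\neq\emptyset$ (Remark \ref{rem:jets}), while $\partial^{2,-}f(\bar{x},\bar{z})\subseteq\mathcal{A}$. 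Consequently $q(\mathcal{A})(h)>-\infty$ for every $h$, so $q(\mathcal{A})(\cdot)$ is a proper extended-real-valued function (and $q(\mathcal{A})(0)=0$, so $0\in b^{1}(\mathcal{A})$).

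The key step is to identify the barrier cone with the effective domain of a convex function. Since $h\mapsto r\Vert h\Vert^{2}$ is finite on all of $\mathbb{R}^{n}$, the convex function $\phi$ of Proposition \ref{limpara} satisfies $\operatorname{dom}\phi=\operatorname{dom}q(\mathcal{A})(\cdot)=b^{1}(\mathcal{A})$. The effective domain of a convex function is convex, so $b^{1}(\mathcal{A})$ is a convex set. It then remains to upgrade convexity to linearity using the two structural properties of the rank-1 support: it is even, $q(\mathcal{A})(-h)=q(\mathcal{A})(h)$, and positively homogeneous of degree $2$, $q(\mathcal{A})(th)=t^{2}q(\mathcal{A})(h)$. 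Evenness gives $b^{1}(\mathcal{A})=-b^{1}(\mathcal{A})$, and degree-$2$ homogeneity gives that $b^{1}(\mathcal{A})$ is a cone. A convex symmetric cone is automatically a linear subspace: for $h_{1},h_{2}\in b^{1}(\mathcal{A})$, convexity places $\tfrac{1}{2}(h_{1}+h_{2})$ in $b^{1}(\mathcal{A})$, and the cone property then places $h_{1}+h_{2}=2\cdot\tfrac{1}{2}(h_{1}+h_{2})$ in $b^{1}(\mathcal{A})$, giving closure under addition; closure under arbitrary scalar multiples follows from the cone and symmetry properties. Equivalently, the representation $b^{1}(\mathcal{A})=C\cup(-C)$ from (\ref{neqn:eb34}) together with convexity forces $C$ to coincide with the subspace it spans.

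There is no genuine obstacle here once Proposition \ref{limpara} is available; essentially all of the content is carried by that convexity statement, and the corollary is its immediate structural consequence. The only points requiring a little care are the verification that $\mathcal{A}$ is nonempty, so that $q(\mathcal{A})(\cdot)$ is proper and its domain is well behaved, and the elementary observation that a convex cone invariant under $h\mapsto -h$ is a subspace. Both are routine, so I would keep this argument short rather than belabour it.
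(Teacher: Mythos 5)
Your proposal is correct and follows essentially the same route as the paper: convexity of $b^{1}(\underline{\partial }^{2}f(\bar{x},\bar{z}))=\operatorname{dom}q(\underline{\partial }^{2}f(\bar{x},\bar{z}))(\cdot)$ is read off from Proposition \ref{limpara}, and the representation $C\cup(-C)$ of (\ref{neqn:eb34}) together with the evenness and degree-$2$ homogeneity of the rank-1 support upgrades this convex symmetric cone to a subspace. Your additional check that the limiting subhessian is nonempty (so the support is proper) is a routine but welcome bit of bookkeeping that the paper leaves implicit.
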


\begin{proof}
Note that $b^{1}(\underline{\partial }^{2}f(\bar{x},\bar{z}))=\operatorname{dom}[q\left( \underline{\partial }^{2}f(\bar{x},\bar{z})\right) (\cdot )]$ is
convex under the assumption of Proposition \ref{limpara}. Let $C$ be the
cone given in (\ref{neqn:eb34}) then $b^{1}(\underline{\partial }^{2}f(\bar{x},\bar{z}))=\operatorname{co}(C\cup (-C))=\operatorname{span}C$. As $b^{1}(\underline{\partial }^{2}f(\bar{x},\bar{z}))$ is a symmetric convex cone it is a
subspace.
\end{proof}

\smallskip

\begin{definition}
Let the function $f:\mathbb{R}^{n}\rightarrow \mathbb{R}_{\infty }$ be
finite at $\bar{x}$. When $b^{1}(\underline{\partial }^{2}f(\bar{x},\bar{z}
)) $ is a linear subspace of $\mathbb{R}^{n}$ and $b^{1}(\underline{\partial
}^{2}f(\bar{x},\bar{z}))\subseteq \mathcal{U}$ we call $\mathcal{U}
^{2}:=b^{1}(\underline{\partial }^{2}f(\bar{x},\bar{z}))$ a second order
component of the $\mathcal{U}$-space.
\end{definition}

We will now justify this definition via the following results.

\begin{lemma}
\label{lem:sharp}Suppose $f:\mathbb{R}^{n}\rightarrow \mathbb{R}_{\infty }$
is quadratically minorised and is prox--regular at $\bar{x}\ $ for $\bar{z}\in \partial f(\bar{x})$ with respect to $\varepsilon $ and $r.$ Suppose in
addition that $\bar{z}\in \operatorname{rel}$-$\operatorname{int}\partial f(\bar{x})$. Then
for any $\beta \geq 0$ there is $\varepsilon ^{\prime }>0$ (independent of $\beta $) and a $\epsilon _{\beta }>0$ ($\beta $ dependent) such that we have
$f\left( \bar{x}+u+v\right) \geq f\left( \bar{x}\right) +\langle \bar{z},u+v\rangle +\frac{\beta }{2}\left\Vert v\right\Vert ^{2}-\frac{r}{2}\left\Vert u\right\Vert ^{2}$ whenever $v\in B_{\epsilon _{\beta }}\left(
0\right) $ and $u\in B_{\varepsilon ^{\prime }}\left( 0\right) $.

Moreover we have
\begin{equation}
\mathcal{U}^{2}\subseteq \mathcal{U}=\left\{ h\mid -\delta _{\partial f(\bar{x})}^{\ast }(-h)=\delta _{\partial f(\bar{x})}^{\ast }(h)=\langle \bar{z},h\rangle \right\} .  \label{eqn:44}
\end{equation}
\end{lemma}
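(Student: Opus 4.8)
The plan is to prove the three assertions in turn, deriving the inclusion $\mathcal{U}^{2}\subseteq\mathcal{U}$ cheaply from the sharp minorant. First I would establish the minorant. Since $\bar{z}\in\operatorname{rel-int}\partial f(\bar{x})$ there is $\varepsilon_{0}>0$ (which we may take $\le\varepsilon$) with $\bar{z}+B_{\varepsilon_{0}}(0)\cap\mathcal{V}\subseteq\partial f(\bar{x})$. For each $v^{\ast}\in B_{\varepsilon_{0}}(0)\cap\mathcal{V}$ I apply prox-regularity at $\bar{x}$ for $\bar{z}$ (with its single constant $r$) to the subgradient $z=\bar{z}+v^{\ast}\in\partial f(\bar{x})$ and base point $x=\bar{x}$, which is legitimate because $\|z-\bar{z}\|=\|v^{\ast}\|\le\varepsilon_{0}\le\varepsilon$. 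This gives
\begin{equation*}
f(\bar{x}+u+v)\ge f(\bar{x})+\langle\bar{z}+v^{\ast},u+v\rangle-\tfrac{r}{2}\|u+v\|^{2}
\end{equation*}
whenever $\|u+v\|\le\varepsilon$. Using $u\in\mathcal{U}=\mathcal{V}^{\perp}$ and $v,v^{\ast}\in\mathcal{V}$ we have $\langle v^{\ast},u+v\rangle=\langle v^{\ast},v\rangle$ and $\|u+v\|^{2}=\|u\|^{2}+\|v\|^{2}$; taking the supremum over $v^{\ast}\in B_{\varepsilon_{0}}(0)\cap\mathcal{V}$ turns the middle term into $\varepsilon_{0}\|v\|$, so
\begin{equation*}
f(\bar{x}+u+v)\ge f(\bar{x})+\langle\bar{z},u+v\rangle+\varepsilon_{0}\|v\|-\tfrac{r}{2}\|u\|^{2}-\tfrac{r}{2}\|v\|^{2}.
\end{equation*}
The decisive observation is that the \emph{linear} gain $\varepsilon_{0}\|v\|$ produced by the relative interior dominates any prescribed quadratic: as soon as $\|v\|\le 2\varepsilon_{0}/(r+\beta)$ one has $\varepsilon_{0}\|v\|-\tfrac{r}{2}\|v\|^{2}\ge\tfrac{\beta}{2}\|v\|^{2}$, giving the claimed bound. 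Taking $\varepsilon'=\varepsilon_{0}/2$ (independent of $\beta$) and $\epsilon_{\beta}=\min\{2\varepsilon_{0}/(r+\beta),\,\varepsilon_{0}/2\}$ keeps $\|u+v\|\le\varepsilon_{0}\le\varepsilon$, so the inequalities above are valid throughout.

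The displayed description of $\mathcal{U}$ is then immediate and merely re-runs Proposition \ref{prop:reg}(1). Since $\mathcal{U}=\mathcal{V}^{\perp}=(\partial f(\bar{x})-\bar{z})^{\perp}$, a vector $h$ lies in $\mathcal{U}$ iff $\langle z,h\rangle=\langle\bar{z},h\rangle$ for every $z\in\partial f(\bar{x})$, i.e. iff $\langle\cdot,h\rangle$ is constant on $\partial f(\bar{x})$; this is precisely $\delta_{\partial f(\bar{x})}^{\ast}(h)=\langle\bar{z},h\rangle=-\delta_{\partial f(\bar{x})}^{\ast}(-h)$.

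For the inclusion, fix $h\in\mathcal{U}^{2}=b^{1}(\underline{\partial}^{2}f(\bar{x},\bar{z}))$ and write $h=h_{\mathcal{U}}+h_{\mathcal{V}}$ with $h_{\mathcal{V}}=P_{\mathcal{V}}(h)$. Substituting $x=\bar{x}+tu'$ into the minorant --- for both signs of $t$, which only interchanges $\pm u'$ and leaves the quadratic bound unchanged --- yields $\Delta_{2}f(\bar{x},t,\bar{z},u')\ge\beta\|P_{\mathcal{V}}u'\|^{2}-r\|P_{\mathcal{U}}u'\|^{2}$, and passing to the $\liminf$ as $(t,u')\to(0,h)$ gives $f_{s}^{\prime\prime}(\bar{x},\bar{z},h)\ge\beta\|h_{\mathcal{V}}\|^{2}-r\|h_{\mathcal{U}}\|^{2}$ for every $\beta\ge0$. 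If $h_{\mathcal{V}}\ne0$, letting $\beta\to\infty$ forces $f_{s}^{\prime\prime}(\bar{x},\bar{z},h)=+\infty$; but since $\partial^{2,-}f(\bar{x},\bar{z})\subseteq\underline{\partial}^{2}f(\bar{x},\bar{z})$ and $q(\partial^{2,-}f(\bar{x},\bar{z}))(\cdot)=f_{s}^{\prime\prime}(\bar{x},\bar{z},\cdot)$ we have $q(\underline{\partial}^{2}f(\bar{x},\bar{z}))(h)\ge q(\partial^{2,-}f(\bar{x},\bar{z}))(h)=+\infty$, contradicting $h\in b^{1}(\underline{\partial}^{2}f(\bar{x},\bar{z}))$. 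Hence $h_{\mathcal{V}}=0$ and $h\in\mathcal{U}$.

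The hard part will be the first step: one has to recognize that membership of $\bar{z}$ in the \emph{relative interior} of $\partial f(\bar{x})$ forces sharp (linear, not merely quadratic) growth of $f$ in the $\mathcal{V}$ directions, and then arrange the two radii so that $\varepsilon'$ stays independent of $\beta$ while only the $\mathcal{V}$-radius $\epsilon_{\beta}$ contracts as $\beta\to\infty$. Once the minorant is in hand, the second-order conclusion is a one-line passage to the $\liminf$.
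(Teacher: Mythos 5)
Your proof is correct and follows essentially the same route as the paper: apply prox-regularity at the base point $\bar{x}$ uniformly over the relative ball of subgradients $\bar{z}+B_{\varepsilon_0}(0)\cap\mathcal{V}$, take the supremum over $v^{\ast}$ to extract the linear gain $\varepsilon_0\|v\|$, and absorb it into $\tfrac{\beta}{2}\|v\|^{2}$ on a $\beta$-dependent $\mathcal{V}$-radius. Your derivation of $\mathcal{U}^{2}\subseteq\mathcal{U}$ via $f_{s}^{\prime\prime}$ and $q(\partial^{2,-}f(\bar{x},\bar{z}))(\cdot)=f_{s}^{\prime\prime}(\bar{x},\bar{z},\cdot)$ is just the primal phrasing of the paper's observation that $\beta P_{\mathcal{V}}^{T}P_{\mathcal{V}}-rP_{\mathcal{U}}^{T}P_{\mathcal{U}}$ is a subhessian for every $\beta$, so the two arguments coincide in substance.
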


\begin{proof}
By the prox-regularity of $f$ at $\bar{x}$ for $\bar{z}\in \partial f(\bar{x})$ with respect to $\varepsilon $ and $r>0$ we have $B_{\delta }(\bar{z}
)\cap \partial f(\bar{x})=B_{\delta }(\bar{z})\cap \partial _{p}f(\bar{x})$
for some sufficiently small $\delta >0$. Thus $\bar{z}\in \operatorname{rel}$-$\operatorname{int}\partial _{p}f(\bar{x})$ and there exists a $\varepsilon ^{\prime }\leq
\min \{\varepsilon ,\delta \}$ such that $\bar{z}+\varepsilon ^{\prime
}B_{1}\left( 0\right) \cap \mathcal{V}\subseteq \partial _{p}f(\bar{x})$ and
$r>0$ such that for $u+v\in B_{\varepsilon ^{\prime }}^{\mathcal{U}}\left( 0\right) \times
B_{\varepsilon ^{\prime }}^{\mathcal{V}}\left( 0\right) $ we have
\begin{eqnarray}
f\left( \bar{x}+u+v\right) &\geq &f\left( \bar{x}\right) +\langle
z,u+v\rangle -\frac{r}{2}\left[ \left\Vert u\right\Vert ^{2}+\left\Vert
v\right\Vert ^{2}\right] \quad \text{for all }z\in \bar{z}+\varepsilon
^{\prime }B_{1}\left( 0\right) \cap \mathcal{V}\   \notag \\
&\geq &f\left( \bar{x}\right) +\langle \bar{z}_{\mathcal{V}},v\rangle
+\langle \bar{z}_{\mathcal{U}},u\rangle +\left( \varepsilon ^{\prime }-\frac{r\left\Vert v\right\Vert }{2}\right) \left\Vert v\right\Vert -\frac{r}{2}
\left\Vert u\right\Vert ^{2}  \text{\  for  }v\in \varepsilon ^{\prime
}B_{1}\left( 0\right) \cap \mathcal{V}  \label{neqn:14} \\
&\geq &f\left( \bar{x}\right) +\langle \bar{z}_{\mathcal{V}},u+v\rangle +\frac{\beta }{2}\left\Vert v\right\Vert ^{2}-\frac{r}{2}\left\Vert
u\right\Vert ^{2}\quad \text{for all }v\in \min \{\varepsilon ^{\prime },\frac{2\varepsilon ^{\prime }}{\beta +r}\}B_{1}\left( 0\right) \cap \mathcal{V},  \notag
\end{eqnarray}
where the last inequality holds due to the fact that $\varepsilon^{\prime}-\frac{r \Vert v \Vert}{2} \geq \beta \Vert v \Vert$. 
Now choose $\epsilon _{\beta }=\min \{\varepsilon ^{\prime },\frac{2\varepsilon ^{\prime }}{\beta +r}\}$.

This inequality implies that for all $\beta >0$ we have
$
\beta I\in P_{\mathcal{V}}^{T}\partial^{2,-}f(\bar{x},\bar{z})P_{\mathcal{V}}
$
and hence when $P_{\mathcal{V}} h \ne 0$ (or $h \notin \mathcal{U}$) we have
$ q\left( \underline{\partial }^{2}f(\bar{x},\bar{z})\right) (h) = +\infty$ and so
$h \notin \mathcal{U}^2$.
\end{proof}

\smallskip

\begin{remark}
This result may hold trivially with both $\mathcal{U} = \mathcal{U}^2 =\{0\}$.  Consider the function $f :\mathbb{R}^2 \to \mathbb{R}$ given by:
\[
f\left(  x,y\right)  =\left\{
\begin{array}
[c]{lc}%
\max\left\{0,x+y\right\}   & :\text{for }x\leq0\text{, }y\geq0\\
\max\left\{0,-x+y\right\}   & :\text{for }x\geq0\text{, }y\geq0\\
\max\left\{0,x-y\right\}   & :\text{for }x\leq0\text{, }y\leq0\\
\max\left\{0,-x-y\right\}   & :\text{for }x\geq0\text{, }y\leq0
\end{array}
\right.
\]
and take $\bar{x} = (0,0)$. Then
$\partial f\left(  0,0\right)  \supseteq \left\{  \left(  0,0\right)  ,\left(
1,1\right)  ,\left(  -1,1\right)  ,\left(  1,-1\right)  ,\left(  -1,-1\right)
\right\}  $ and $\mathcal{U}=\left\{  0\right\}  $ with  $\mathcal{V}
=\mathbb{R}^{2}.$ We have $f$ is prox-regular at $\bar{x}=(0,0)$ for $\bar{z} = (0,0)$
and quadratically memorised (by the zero quadratic). We have $\mathcal{U}^{2}=\left\{  0\right\}  $ as we have $Q_{1}
=\pm\beta\left(  1,1\right)  \left(
\begin{array}
[c]{c}%
1\\
1
\end{array}
\right)  =\pm\beta\left(
\begin{array}
[c]{cc}%
1 & 1\\
1 & 1
\end{array}
\right)  $ and $Q_{2}=\pm\beta\left(  -1,1\right)  \left(
\begin{array}
[c]{c}%
-1\\
1
\end{array}
\right)  =\pm\beta\left(
\begin{array}
[c]{cc}%
1 & -1\\
-1 & 1
\end{array}
\right)  $ with $Q_{1}$, $Q_{2}\in\underline{\partial}^{2}f\left( ( 0,0), (0,0) \right)
$ for all $\beta\geq0$ (approach $(0,0)$ along $x=y$ and $y=-x$ for
$z\rightarrow0$) . Then $q\left(  \underline{\partial}^{2}f\left(  (0,0), (0,0) \right)
\right)  \left(  u,w\right)  =+\infty\geq\beta\max\left\{  \left(
-u+w\right)  ^{2},\left(  u+w\right)  ^{2}\right\}  $ for all $\left(
u,w\right)  \neq\left(  0,0\right)  $ and $\beta\geq 0$.

We note that the examples developed in \cite[Exampls 2, 3]{Mifflin:2004:2} show that the 
assumption that $\bar{z}\in \operatorname{rel}$-$\operatorname{int}\partial f(\bar{x})$
is necessary for Lemma \ref{lem:sharp} to hold. 
\end{remark}

We finish by
generalizing the notion of "fast track" \cite{Lem:1}. 

\begin{definition}
We say $f$ possesses a "fast track" at $\bar{x}$ iff there exists $\bar{z}
\in \partial f\left( \bar{x}\right) $ for which
\begin{equation*}
\mathcal{U}^{2}=b^{1}( \underline{\partial }^{2}f(\bar{x},\bar{z}))=\mathcal{U}.
\end{equation*}
\end{definition}

In the next section after we have introduced the localised $\mathcal{U}$-Lagrangian we will justify this definition further. From Proposition \ref{prop:ebpenot} we see that $\mathcal{U}^{2}=b^{1}(\underline{\partial }^{2}f(\bar{x},\bar{z}))$ provides the subspace within which the eigen-vectors of
the limiting Hessians remain bounded.

\begin{lemma}
\label{lem:boundprox} Suppose $f$ is quadratically minorised and
prox-regular at $\bar{x}$ for $\bar{z}\in \partial f(\bar{x})$
which possesses a nontrivial second order component $\mathcal{U}^{2}\subseteq \mathcal{U}$. Then for all $\left\{ x_{k}\right\} \subseteq
S_{2}(f)$, $x_{k}\rightarrow ^{f}\bar{x}$ with $z_{k}\rightarrow \bar{z}$
and all $h\in \mathcal{U}^{2}$ there is a uniform bound $M>0$ such that for $Q_{k}\in \partial ^{2,-}f\left( x_{k},z_{k}\right) $ we have
\begin{equation}
\langle Q_{k},hh^{T}\rangle \leq M\Vert h\Vert ^{2}\quad \text{ for }k\text{
sufficiently large}.  \label{neqn:34}
\end{equation}
\end{lemma}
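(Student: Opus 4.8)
The plan is to reduce the statement to a bound on the limiting Hessians and then argue by contradiction using the recession structure of the limiting subhessian. First I would note that since $x_k\in S_2(f)$ the proximal subgradient at $x_k$ is unique, so $z_k=\nabla f(x_k)$ and $\partial^{2,-}f(x_k,z_k)=\{Q\in\mathcal{S}(n)\mid Q\preceq\nabla^2 f(x_k)\}$. Hence $\langle Q_k,hh^{T}\rangle\le\langle\nabla^2 f(x_k),hh^{T}\rangle=h^{T}\nabla^2 f(x_k)h$, and it suffices to bound $h^{T}\nabla^2 f(x_k)h$ from above for $h\in\mathcal{U}^2$. Prox-regularity of $f$ at $\bar x$ for $\bar z$ with modulus $r$ supplies the proximal inequality based at each $x_k$ (with $z_k$ near $\bar z$), so by the elementary observation recorded in Remark \ref{rem:jets} we have $(z_k,-rI)\in\partial^{2,-}f(x_k)$, i.e. $\nabla^2 f(x_k)\succeq -rI$. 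Writing $G_k:=\nabla^2 f(x_k)+rI\succeq 0$, the whole problem reduces to showing that $h^{T}G_kh$ stays bounded for $h\in\mathcal{U}^2$.

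I would dispose of the bounded case first. If $\{G_k\}$ has a bounded subsequence, pass to a convergent one $G_k\to G\succeq 0$; then $\nabla^2 f(x_k)\to G-rI$, which lies in $\overline{D}^2 f(\bar x,\bar z)$ and hence, by Proposition \ref{prop:ebpenot}, in $\underline{\partial}^2 f(\bar x,\bar z)$. For $h\in\mathcal{U}^2=b^1(\underline{\partial}^2 f(\bar x,\bar z))$ the rank-one support $q(\underline{\partial}^2 f(\bar x,\bar z))(h)$ is finite, so $h^{T}(G-rI)h\le q(\underline{\partial}^2 f(\bar x,\bar z))(h)<\infty$ and the bound follows. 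For the uniformity over $h$, I would invoke Proposition \ref{limpara}: the map $h\mapsto q(\underline{\partial}^2 f(\bar x,\bar z))(h)+r\|h\|^2$ is convex and finite on the subspace $\mathcal{U}^2$, hence continuous and bounded on the unit sphere of $\mathcal{U}^2$, producing a single constant $M$ good for every $h\in\mathcal{U}^2$.

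The substance is the unbounded case $\|G_k\|_{F}\to\infty$. Here I would normalise $\hat G_k=G_k/\|G_k\|_{F}\to\hat G\succeq 0$ with $\|\hat G\|_{F}=1$, and aim to show that $\hat G$ is a recession direction of (the rank-one hull of) $\underline{\partial}^2 f(\bar x,\bar z)$. The device is the spectral truncation $H_k^{(N)}$ of $\nabla^2 f(x_k)$ at level $N$: it satisfies $-rI\preceq H_k^{(N)}\preceq NI$, lies in $\partial^{2,-}f(x_k,z_k)$ (being $\preceq\nabla^2 f(x_k)$), and being bounded converges along a subsequence to an element of $\underline{\partial}^2 f(\bar x,\bar z)$; letting $N$ grow then places $\hat G$ in $0^{+}\underline{\partial}^2 f(\bar x,\bar z)\cap\mathcal{P}(n)$, which by Remark \ref{rem:rankone} equals $\mathcal{P}(\mathcal{V}^2)$. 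Thus the leading blow-up direction $\hat G$ is supported on $\mathcal{V}^2=(\mathcal{U}^2)^{\perp}$, whence $h^{T}\hat G h=0$ for all $h\in\mathcal{U}^2$.

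The hard part will be closing the gap left by the normalisation, and I expect it to be the main obstacle. The quantity to be bounded is $h^{T}G_kh=\|G_k\|_{F}\,h^{T}\hat G_k h$, an indeterminate product of $\|G_k\|_{F}\to\infty$ and $h^{T}\hat G_k h\to 0$; knowing merely that the leading direction sits in $\mathcal{V}^2$ does not by itself bound $h^{T}G_kh$, since the curvature felt by $h\in\mathcal{U}^2$ can be concealed in the sub-leading cross terms coupling $\mathcal{U}^2$ and $\mathcal{V}^2$. My plan to resolve this is a finite scale-separation: peel off the leading $\mathcal{P}(\mathcal{V}^2)$ recession direction, renormalise the residual $G_k-\|G_k\|_{F}\hat G$, and repeat; in finite dimension this terminates after finitely many scales, and at each stage the extracted limit is again forced into $\mathcal{P}(\mathcal{V}^2)$ by the same truncation argument together with Proposition \ref{prop:ebpenot}, while the convexity from Proposition \ref{limpara} keeps the residual controlled on $\mathcal{U}^2$. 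Reconciling the finiteness of $q(\underline{\partial}^2 f(\bar x,\bar z))(\cdot)$ on $\mathcal{U}^2$ with the possible anisotropic blow-up of the Hessians in directions tilting out of $\mathcal{U}^2$ is precisely where the prox-regularity hypothesis must do its work, and is the crux of the argument.
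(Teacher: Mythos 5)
Your reduction to the Hessians $\nabla ^{2}f(x_{k})$, your treatment of the bounded case, and your use of Proposition \ref{limpara} to extract a single constant on the unit sphere of $\mathcal{U}^{2}$ all match the paper's argument. The genuine gap is the unbounded case. You correctly identify that knowing the normalised limit $\hat{G}$ lies in $\mathcal{P}(\mathcal{V}^{2})$ does not control the indeterminate product $\Vert G_{k}\Vert _{F}\,h^{T}\hat{G}_{k}h$, but the ``finite scale-separation'' you offer to close this is not carried out: you do not explain why the peeling terminates with a controlled residual, nor why the cross terms coupling $\mathcal{U}^{2}$ and $\mathcal{V}^{2}$ --- exactly the terms you flag as dangerous --- are killed at every scale. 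As written, the proposal does not establish the lemma.

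The idea you are missing is that the unbounded case never needs to be confronted at the level of matrix norms, because the quantity to be bounded is a scalar pairing against the fixed rank-one matrix $hh^{T}$ with $h\in \mathcal{U}^{2}$. For $x_{k}\in S_{2}(f)$ one has $\langle Q_{k},hh^{T}\rangle \leq h^{T}\nabla ^{2}f(x_{k})h=q\left( \partial ^{2,-}f(x_{k},z_{k})\right) (h)$, and the variational convergence of rank-one supports (Proposition \ref{ebcor:var}, specialised in (\ref{neqn:47}) via Remark \ref{rem:limhess}) gives $\limsup_{k}h^{T}\nabla ^{2}f(x_{k})h\leq q\left( \underline{\partial }^{2}f(\bar{x},\bar{z})\right) (h)$; the inner $\inf_{u\rightarrow h}$ is harmless since $u\mapsto u^{T}\nabla ^{2}f(x_{k})u$ is continuous. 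Since $q\left( \underline{\partial }^{2}f(\bar{x},\bar{z})\right) (\cdot )$ is finite on $\mathcal{U}^{2}$ and, by Proposition \ref{limpara} plus compactness, bounded by some $K$ on the unit ball of $\mathcal{U}^{2}$, positive homogeneity of degree two yields $\langle Q_{k},hh^{T}\rangle \leq 2K\Vert h\Vert ^{2}$ for $k$ large. In other words, for fixed $h\in \mathcal{U}^{2}$ the scalar sequence $h^{T}\nabla ^{2}f(x_{k})h$ simply cannot blow up; only the behaviour of $\nabla ^{2}f(x_{k})$ in directions seen by $\mathcal{V}^{2}$ can, and that is invisible to $\langle \cdot ,hh^{T}\rangle $. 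This is essentially how the paper argues --- it first proves $\langle Q,hh^{T}\rangle \leq K\Vert h\Vert ^{2}$ for every $Q\in \underline{\partial }^{2}f(\bar{x},\bar{z})$ and every $h\in \mathcal{U}^{2}$, then passes to the limit with $M=2K$ --- and it dissolves the bounded/unbounded dichotomy on which your proposal founders.
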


\begin{proof}
We have for all $Q\in \underline{\partial }^{2}f(\bar{x},\bar{z})$ and any $h\in \mathcal{U}^{2}$ that
\begin{equation*}
\langle Q,hh^{T}\rangle \leq q\left( \underline{\partial }^{2}f(\bar{x},\bar{z})\right) (h)<+\infty .
\end{equation*}
As $f$ is prox-regular, by Proposition \ref{limpara} $q\left( \underline{\partial }^{2}f(\bar{x},\bar{z})\right) (\cdot )+r\Vert \cdot \Vert ^{2}$ is
convex and finite valued on $\mathcal{U}^{2}$, a closed subspace and
therefore is locally Lipschitz. Thus $q\left( \underline{\partial }^{2}f(\bar{x},\bar{z})\right) (\cdot )$ is locally Lipschitz continuous on $\mathcal{U}^{2}$. Moreover a compactness argument allows us to claim it is
Lipschitz continuous on the unit ball inside the space $\mathcal{U}^{2}$ and
thus obtains a maximum, over the unit ball restricted to the space $\mathcal{U}^{2}$. Hence
\begin{equation*}
\max_{\left\{ h\in \mathcal{U}^{2}\mid \left\Vert h\right\Vert \leq
1\right\} }q\left( \underline{\partial }^{2}f(\bar{x},\bar{z})\right)
(h)\leq K
\end{equation*}
for some $K>0$. On multiplying by $\Vert h\Vert ^{2}$ for $h\in \mathcal{U}^{2}$ and using the positive homogeneity of degree 2 of the rank-1 support
results in following inequality
\begin{equation*}
\langle Q,hh^{T}\rangle \leq q\left( \underline{\partial }^{2}f(\bar{x},\bar{z})\right) (h)\leq K\left\Vert h\right\Vert ^{2}
\end{equation*}
for all $Q\in \underline{\partial }^{2}f(\bar{x},\bar{z})$ and any $h\in
\mathcal{U}^{2}$. Take an arbitrary sequence $(x_{k},z_{k})\rightarrow
_{S_{p}(f)}(\bar{x},\bar{z})$ and $Q_{k}\in \partial ^{2,-}f(x_{k},z_{k})$
with $Q_{k}\rightarrow Q\in \underline{\partial }^{2}f(\bar{x},\bar{z})$
then by taking $M=2K$ we have
\begin{equation*}
\langle Q_{k},hh^{T}\rangle \leq M\left\Vert h\right\Vert ^{2}\quad \text{for }k\text{ sufficiently large. }
\end{equation*}
Moreover any sequence $\left\{ x_{k}\right\} \subseteq
S_{2}(f)$, $x_{k}\rightarrow ^{f}\bar{x}$ with $z_{k}\rightarrow \bar{z}$ has 
$(x_{k},z_{k})\rightarrow_{S_{p}(f)}(\bar{x},\bar{z})$. 
\end{proof}

\subsection{Some Consequences for Coderivatives of $C^{1,1}$ Functions}
As usual we have denoted the indicator function of a set $\mathcal{A}$ by $\delta _{\mathcal{A}}(Q)$ which equals zero if $Q\in \mathcal{A}$ and $+\infty $ otherwise. In general for the recession directions $0^{+}\mathcal{A}^{1}\supseteq -\mathcal{P}(n)$. Consequently the convex support function $\delta _{\mathcal{A}^{1}}^{\ast }\left( P\right) :=\sup \left\{ \langle
Q,P\rangle :=\operatorname{tr}QP\mid Q\in \mathcal{A}^{1}\right\} =+\infty $ if $P\notin \mathcal{P}(n)$. It is noted in \cite[Proposition 4]{eberhard:1} that $0^{+}\mathcal{A}^{1}=-\mathcal{P}(n)$ iff $q(\mathcal{A})(h)<+\infty $ for all $h$.

\begin{lemma}[\protect\cite{eberhard:5}, Lemma 7]
\label{lem:clo}For any $\mathcal{A}\subseteq \mathcal{S}(n)$, then $\operatorname{co}
\left( \mathcal{A}-\mathcal{P}(n)\right) =\mathcal{A}^{1}\text{.}$
\end{lemma}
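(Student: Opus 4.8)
The plan is to prove the two inclusions separately, treating $\mathcal{A}^{1}$ as the reference object since it is manifestly closed and convex: being the intersection $\bigcap_{h}\{Q\mid \langle Q,hh^{T}\rangle \le q(\mathcal{A})(h)\}$ of closed half-spaces (those indexed by $h\notin b^{1}(\mathcal{A})$ being vacuous), it is a closed convex set with $-\mathcal{P}(n)\subseteq 0^{+}\mathcal{A}^{1}$. Because $\operatorname{co}$ denotes the a priori non-closed convex hull while $\mathcal{A}^{1}$ is closed, I would first note that the identity is to be read modulo closure; when $\mathcal{A}$ is bounded the Minkowski sum $\operatorname{co}\mathcal{A}-\mathcal{P}(n)=\operatorname{co}(\mathcal{A}-\mathcal{P}(n))$ is already closed (compact set plus closed convex cone) so no closure is needed, and in the unbounded case one simply works with $\overline{\operatorname{co}}(\mathcal{A}-\mathcal{P}(n))$.

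First I would dispatch the easy inclusion $\overline{\operatorname{co}}(\mathcal{A}-\mathcal{P}(n))\subseteq \mathcal{A}^{1}$. For $Q\in \mathcal{A}$ and $P\in \mathcal{P}(n)$ we have, for every $h$, that $\langle Q-P,hh^{T}\rangle =\langle Q,hh^{T}\rangle -h^{T}Ph\le \langle Q,hh^{T}\rangle \le q(\mathcal{A})(h)$, so $Q-P\in \mathcal{A}^{1}$; thus $\mathcal{A}-\mathcal{P}(n)\subseteq \mathcal{A}^{1}$, and taking the closed convex hull of the left-hand side preserves the inclusion since $\mathcal{A}^{1}$ is already closed and convex.

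The substance is the reverse inclusion $\mathcal{A}^{1}\subseteq \overline{\operatorname{co}}(\mathcal{A}-\mathcal{P}(n))$, which I would establish by comparing support functions and invoking the bipolar theorem. Using self-duality of the PSD cone (so that the support function of $-\mathcal{P}(n)$ is $\delta_{\mathcal{P}(n)}$) together with additivity of support functions over Minkowski sums, one computes
\[
\delta^{\ast}_{\overline{\operatorname{co}}(\mathcal{A}-\mathcal{P}(n))}(P)=\delta^{\ast}_{\mathcal{A}}(P)+\delta_{\mathcal{P}(n)}(P),
\]
so by bipolarity $\overline{\operatorname{co}}(\mathcal{A}-\mathcal{P}(n))=\{Q\mid \langle Q,P\rangle \le \delta^{\ast}_{\mathcal{A}}(P)\text{ for all }P\in \mathcal{P}(n)\}$. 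Since $q(\mathcal{A})(h)=\delta^{\ast}_{\mathcal{A}}(hh^{T})$, the set $\mathcal{A}^{1}$ imposes exactly these constraints but only at the rank-one matrices $P=hh^{T}$. Hence the whole lemma reduces to showing that for $Q\in \mathcal{A}^{1}$ and any $P\in \mathcal{P}(n)$ one still has $\langle Q,P\rangle \le \delta^{\ast}_{\mathcal{A}}(P)$; equivalently $\delta^{\ast}_{\mathcal{A}^{1}}(P)\le \delta^{\ast}_{\mathcal{A}}(P)$ on $\mathcal{P}(n)$ (the reverse inequality being immediate from the easy inclusion).

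This last inequality is where the real work lies and is the main obstacle. A fixed spectral decomposition $P=\sum_{i}\lambda_{i}u_{i}u_{i}^{T}$ does not suffice: it yields only $\langle Q,P\rangle \le \sum_{i}\lambda_{i}q(\mathcal{A})(u_{i})$, and this bound can strictly exceed $\delta^{\ast}_{\mathcal{A}}(P)$ (for $\mathcal{A}=\{\operatorname{diag}(1,0),\operatorname{diag}(0,1)\}$ and $P=I$ it gives $2$ versus $\delta^{\ast}_{\mathcal{A}}(I)=1$). What is needed is an \emph{adaptive} rank-one decomposition realizing the identity $\delta^{\ast}_{\mathcal{A}}(P)=\min\{\sum_{i}q(\mathcal{A})(g_{i})\mid \sum_{i}g_{i}g_{i}^{T}=P\}$; given such a (near-)optimal decomposition, $\langle Q,P\rangle =\sum_{i}\langle Q,g_{i}g_{i}^{T}\rangle \le \sum_{i}q(\mathcal{A})(g_{i})=\delta^{\ast}_{\mathcal{A}}(P)$ closes the argument. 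I would prove this decomposition identity as strong duality for the conic program defining $\delta^{\ast}_{\mathcal{A}^{1}}(P)$: the inequality $\sum_{i}q(\mathcal{A})(g_{i})\ge \langle Q^{\prime},P\rangle$ valid for every $Q^{\prime}\in \mathcal{A}$ (whence $\ge \delta^{\ast}_{\mathcal{A}}(P)$) is routine, while the existence of a decomposition attaining this value is the crux, to be secured by a compactness argument together with a Carathéodory bound on the number of rank-one terms in $\mathcal{S}(n)$. Assembling the pieces, $\delta^{\ast}_{\mathcal{A}^{1}}=\delta^{\ast}_{\mathcal{A}}+\delta_{\mathcal{P}(n)}=\delta^{\ast}_{\overline{\operatorname{co}}(\mathcal{A}-\mathcal{P}(n))}$, and two closed convex sets with equal support functions coincide.
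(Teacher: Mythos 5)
First, a point of reference: the paper does not prove this statement --- Lemma \ref{lem:clo} is imported verbatim from \cite{eberhard:5}, so there is no internal proof to compare yours against and I can only assess your argument on its own terms. Your framework is the natural one and much of it is sound: the easy inclusion $\mathcal{A}-\mathcal{P}(n)\subseteq \mathcal{A}^{1}$ is correct, the observation that $\mathcal{A}^{1}$ is closed and convex with $-\mathcal{P}(n)\subseteq 0^{+}\mathcal{A}^{1}$ is correct, the support-function identity $\delta ^{\ast }_{\overline{\operatorname{co}}\left( \mathcal{A}-\mathcal{P}(n)\right) }=\delta ^{\ast }_{\mathcal{A}}+\delta _{\mathcal{P}(n)}$ is correct, and your remark that a fixed spectral decomposition of $P$ cannot work (with a correct counterexample) shows you have located the crux accurately.

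The gap is that your ``key identity'' $\delta ^{\ast }_{\mathcal{A}}(P)=\min \{\sum_{i}q(\mathcal{A})(g_{i})\mid \sum_{i}g_{i}g_{i}^{T}=P\}$ is not a stepping stone toward the lemma: it \emph{is} the lemma in dual clothing, and it is precisely the part you do not prove. Writing $\phi (P)$ for that infimum, one checks directly that $\langle Q,P\rangle \leq \phi (P)$ for all $P$ holds if and only if $\langle Q,gg^{T}\rangle \leq q(\mathcal{A})(g)$ for all $g$ (sum the one-term inequalities in one direction; take one-term decompositions in the other), so $\operatorname{cl}\phi =\delta ^{\ast }_{\mathcal{A}^{1}}$ and asserting $\phi =\delta ^{\ast }_{\mathcal{A}}$ on $\mathcal{P}(n)$ is verbatim the assertion $\delta ^{\ast }_{\mathcal{A}^{1}}=\delta ^{\ast }_{\mathcal{A}}$ on $\mathcal{P}(n)$ that you set out to establish. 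The proposed ``compactness argument together with a Caratn\'eodory bound'' could at best show that the infimum is \emph{attained}; it gives no reason why its \emph{value} should equal $\delta ^{\ast }_{\mathcal{A}}(P)$ rather than exceed it. And it can exceed it: take $n=2$ and $\mathcal{A}=\{uu^{T}-vv^{T}\mid (u,v)\ \text{an orthonormal pair in }\mathbb{R}^{2}\}$. Then $q(\mathcal{A})(h)=\Vert h\Vert ^{2}$, so $I\in \mathcal{A}^{1}$, while every element of $\operatorname{co}\left( \mathcal{A}-\mathcal{P}(2)\right) $ has nonpositive trace since each $uu^{T}-vv^{T}$ is traceless; dually, $\phi (I)=\operatorname{tr}I=2$ whereas $\delta ^{\ast }_{\mathcal{A}}(I)=0$. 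Thus the identity you rely on fails for a general (even compact) $\mathcal{A}\subseteq \mathcal{S}(n)$, the argument cannot be completed at this level of generality, and any correct proof --- including whatever is in \cite{eberhard:5} --- must exploit structural hypotheses on $\mathcal{A}$ beyond those recorded in the statement as quoted here. You should track down the precise hypotheses and definitions used in the cited source before attempting to close this step.
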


 For any multi--function $F:\mathbb{R}^{n}\rightrightarrows \mathbb{R}^{m}$ we denote its graph by $\operatorname{Graph}F:=\left\{ (x,y)\mid y\in F(x)\right\} $. The \emph{Mordukhovich coderivative\/} is defined as
\begin{equation*}
D^{\ast }F(x,y)(w):=\{p\in \mathbb{R}^{n}\mid (p,-w)\in \partial \delta _{\operatorname{Graph}\,F}(x,y):=N_{\operatorname{Graph}\,F}(x,y)\}
\end{equation*}
and a second order object $D^{\ast }\left( \partial f\right) (\bar{x},\bar{z})(h)$
is obtained by applying this construction to $F\left( x\right) =\partial
f\left( x\right) $ for $\bar{z} \in \partial f (\bar{x})$. 
We can combine this observation with \cite[Theorem 13.52]{rock:6} that gives
a characterisation of the convex hull of the coderivative in terms of
limiting Hessians for a $C^{1,1}$ function $f$.

\begin{corollary}
	\label{coderiv} Suppose $f$ is locally $C^{1,1}$ around $x$ then the
	Mordukhovich coderivative satisfies
	\begin{eqnarray}
	\operatorname{co}D^{\ast }(\partial f)(x, z)(h) &=&\operatorname{co}\{Ah\mid A=\lim_{k}\nabla
	^{2}f(x^{k})\text{ for some }x\text{$^{k}$($\in S_{2}(f)$)$\rightarrow $}x
	\text{ with }\nabla f\left( x^{k}\right) \rightarrow z\}  \label{neqn:23} \\
	&=&\operatorname{co}\left[ \overline{D}^{2}f(x,z)h\right] =\left[ \operatorname{co}\overline{D}^{2}f(x,z)\right] h\subseteq \left[ \left( \overline{D}^{2}f(x,z)\right)
	^{1}\right] h.  \notag
	\end{eqnarray}
	and
	\begin{equation*}
	\delta _{D^{\ast }(\partial f)(x|z)(h)}^{\ast }\left( h\right) =q\left(
	\underline{\partial }^{2}f( {x},z)\right) \left( h\right) =q\left(
	\overline{D}^{2}f(x,z)\right) \left( h\right) .
	\end{equation*}
\end{corollary}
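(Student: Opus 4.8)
The plan is to treat the two displayed assertions separately: to build the first chain of equalities from a single invocation of \cite[Theorem 13.52]{rock:6}, and then to deduce the support-function identity from that chain together with Proposition \ref{prop:ebpenot}. First I would record that, since $f$ is locally $C^{1,1}$, the mapping $\partial f$ coincides locally with the single-valued Lipschitz map $\nabla f$, so $z=\nabla f(x)$ and the requirement $\nabla f(x^{k})\to z$ is exactly the one appearing in the definition of $\overline{D}^{2}f(x,z)$. Applying \cite[Theorem 13.52]{rock:6} to this Lipschitz gradient mapping then gives
\[
\operatorname{co}D^{*}(\partial f)(x,z)(h)=\operatorname{co}\{Ah\mid A=\lim_{k}\nabla^{2}f(x^{k}),\ x^{k}\in S_{2}(f),\ x^{k}\to x,\ \nabla f(x^{k})\to z\},
\]
the symmetry of each $\nabla^{2}f(x^{k})$ being what allows the transpose implicit in the coderivative of a Lipschitz map to be dropped in the limit. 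By the very definition of $\overline{D}^{2}f(x,z)$, the set of limit matrices $A$ is $\overline{D}^{2}f(x,z)$, yielding $\operatorname{co}[\overline{D}^{2}f(x,z)h]$; and since $A\mapsto Ah$ is linear, convex hull commutes with it, so $\operatorname{co}[\overline{D}^{2}f(x,z)h]=[\operatorname{co}\overline{D}^{2}f(x,z)]h$. The final inclusion is immediate once I note that $(\overline{D}^{2}f(x,z))^{1}$ is defined by the linear constraints $\langle Q,hh^{T}\rangle\le q(\overline{D}^{2}f(x,z))(h)$, hence is convex and contains $\overline{D}^{2}f(x,z)$ (equivalently, invoke Lemma \ref{lem:clo}), so that $\operatorname{co}\overline{D}^{2}f(x,z)\subseteq(\overline{D}^{2}f(x,z))^{1}$.

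For the support-function identity I would use that a support function is unchanged under passage to the convex hull, so $\delta^{*}_{D^{*}(\partial f)(x|z)(h)}(h)=\delta^{*}_{\operatorname{co}D^{*}(\partial f)(x|z)(h)}(h)$. Writing a generic element of the convex hull as $Ah$ with $A\in\operatorname{co}\overline{D}^{2}f(x,z)$ and using $\langle Ah,h\rangle=\langle A,hh^{T}\rangle$, the supremum becomes $\sup\{\langle A,hh^{T}\rangle\mid A\in\operatorname{co}\overline{D}^{2}f(x,z)\}$; as a linear functional attains the same supremum over a set and over its convex hull, this equals $q(\overline{D}^{2}f(x,z))(h)$, which is the right-hand equality.

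It then remains to identify $q(\overline{D}^{2}f(x,z))(h)$ with $q(\underline{\partial}^{2}f(x,z))(h)$, and this is the only genuinely delicate point: I must upgrade the inclusion of Proposition \ref{prop:ebpenot} to an equality. This is precisely where the $C^{1,1}$ hypothesis is essential, since a locally $C^{1,1}$ function is simultaneously locally para-convex and para-concave by \cite[Lemma 2.1]{eberhard:6}; being continuous and para-concave around $x$, Proposition \ref{prop:ebpenot} yields $\underline{\partial}^{2}f(x,z)=\overline{D}^{2}f(x,z)-\mathcal{P}(n)$. Taking rank-one supports, the contribution of the subtracted cone is $\sup_{P\in\mathcal{P}(n)}(-\langle P,hh^{T}\rangle)=\sup_{P\in\mathcal{P}(n)}(-h^{T}Ph)=0$, attained at $P=0$ because $hh^{T}$ is positive semidefinite, so $q(\underline{\partial}^{2}f(x,z))(h)=q(\overline{D}^{2}f(x,z))(h)$, closing the chain. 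I expect the main obstacle to be not any single estimate but the bookkeeping in the first step, namely correctly passing from the transpose-laden coderivative of the Lipschitz map $\nabla f$ to left multiplication $A\mapsto Ah$ via symmetry of the limiting Hessians, and tracking which equalities survive convexification; the remaining steps are essentially computations with the rank-one support already developed in the preceding lemmas.
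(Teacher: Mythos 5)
Your proposal is correct and follows essentially the same route as the paper's proof: the first chain via \cite[Theorem 13.52]{rock:6}, restatement as $\overline{D}^{2}f(x,z)$, commuting the convex hull with the linear map $A\mapsto Ah$, and Lemma \ref{lem:clo} for the final inclusion; and the support-function identity via the equality case of Proposition \ref{prop:ebpenot} (available because $C^{1,1}$ implies local para-concavity) together with the observation that subtracting $\mathcal{P}(n)$ leaves the rank-one support at $h$ unchanged. The only difference is cosmetic ordering — you establish $\delta^{\ast}_{D^{\ast}(\partial f)(x|z)(h)}(h)=q(\overline{D}^{2}f(x,z))(h)$ first and then match it to $q(\underline{\partial}^{2}f(x,z))(h)$, whereas the paper runs the chain in the opposite direction through the rank-one hulls.
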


\begin{proof}
	The first equality of (\ref{neqn:23}) follows from \cite[Theorem 13.52]
	{rock:6} and the second a restatement in terms of $\overline{D}^{2}f(x,z)$.
	The third equality follows from preservation of convexity under a linear
	mapping. Clearly $\operatorname{co}\overline{D}^{2}f(x,z)\subseteq \operatorname{co}\left[
	\overline{D}^{2}f(x,z)-\mathcal{P}(n)\right] =\overline{D}^{2}f(x,z)^{1}$ by
	Lemma \ref{lem:clo}. Moreover we must have by Proposition \ref{prop:ebpenot}
	and the linearity of $Q\mapsto \langle Q,hh^{T}\rangle $ that 
	\begin{eqnarray*}
		q\left( \underline{\partial }^{2}f( {x},z)\right) \left( h\right)
		&=&q\left( \underline{\partial }^{2}f( {x},z)^{1}\right) \left( h\right)
		=q\left( \overline{D}^{2}f(x,z)^{1}\right) \left( h\right) =q\left(
		\overline{D}^{2}f(x,z)-\mathcal{P}(n)\right) \left( h\right) \\
		&=&\sup \left\{ \langle v,h\rangle \mid v\in \operatorname{co}\left[ \overline{D}
		^{2}f(x,z)h\right] \right\} =\sup \left\{ \langle v,h\rangle \mid v\in \operatorname{co}D^{\ast }(\partial f)(x, z)(h)\right\} \\
		&=&\sup \left\{ \langle v,h\rangle \mid v\in D^{\ast }(\partial f)(x, z)(h)\right\}
		=\delta _{D^{\ast }(\partial f)(x, z)(h)}^{\ast }\left( h\right) .
	\end{eqnarray*}
\end{proof}

A central assumption in this paper will be the presence of the following notion of local minimizer.

\begin{definition} [\protect\cite{rock:7}]
	\label{def:tilt}A point $\bar{x}$ gives a tilt stable local
	minimum of a function $f:\mathbb{R}^{n}\rightarrow \mathbb{R}_{\infty }$ if $f\left( \bar{x}\right) $ is finite and there exists an $\varepsilon >0$ such
	that the mapping
	\begin{equation}
		m_{f}:v\mapsto \operatorname{argmin}_{\left\Vert x-\bar{x}\right\Vert \leq
			\varepsilon }\left\{ f\left( x\right) -\langle x,v\rangle \right\}
		\label{eqn:101}
	\end{equation}
	is single valued and Lipschitz on some neighbourhood of $0$ with $	m_{f}\left( 0\right) =\bar{x}$.
\end{definition}

 In \cite[Theorem 1.3]{rock:7} a criterion
for tilt stability was given in terms of second order construction based on
the coderivative of the subdifferential. Assume the first--order condition $0\in \partial f(\bar{x})$ holds. In \cite{rock:7} the second order sufficiency condition
\begin{equation}
\forall \left\Vert h\right\Vert =1\text{, }p\in D^{\ast }\left( \partial
f\right) (\bar{x}, 0)(h)\text{ we have }\langle p,h\rangle   >0
\label{neqn:2}
\end{equation}
is studied and shown to imply a tilt--stable local minimum when $f$ is both subdifferentially continuous and 
prox-regular at $\bar{x}$ for $\bar{z}\in \partial f(\bar{x})$.
We  may reinterpreting the condition (\ref{neqn:2}) for $C^{1,1}$ functions. Indeed thanks to Corollary \ref{coderiv} condition (\ref{neqn:2}) is equivalent to the following.

\begin{corollary}\label{cor:equiv}
	If $f$ is locally $C^{1,1}$ around $x$ then condition (\ref{neqn:2}) is
	equivalent to the existence of $\beta >0$ such that:
	\begin{equation*}
	\forall Q\in \overline{D}^{2}f(x,0)\quad \text{we have }\langle
	Q,hh^{T}\rangle  \geq \beta >0 \quad \text{for all } \| h \|=1.
	\end{equation*}
\end{corollary}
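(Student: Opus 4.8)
The plan is to unpack both sides of the claimed equivalence through Corollary \ref{coderiv} and then recognize that the positivity condition on the coderivative is literally a statement about the rank-1 support of the limiting Hessians. The starting point is that for a locally $C^{1,1}$ function, Corollary \ref{coderiv} gives the key identity
\[
\delta^{\ast}_{D^{\ast}(\partial f)(x,0)(h)}(h) = q\left(\overline{D}^{2}f(x,0)\right)(h) = \sup\left\{\langle Q, hh^{T}\rangle \mid Q \in \overline{D}^{2}f(x,0)\right\}.
\]
First I would observe that the second-order sufficiency condition (\ref{neqn:2}), namely $\langle p, h\rangle > 0$ for all $\|h\|=1$ and all $p \in D^{\ast}(\partial f)(\bar{x},0)(h)$, is equivalent to its \emph{infimal} version $\inf\{\langle p,h\rangle \mid p \in D^{\ast}(\partial f)(x,0)(h)\} > 0$ for each such $h$. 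Using $\inf\{\langle p,h\rangle\} = -\sup\{\langle p,-h\rangle\} = -\delta^{\ast}_{D^{\ast}(\partial f)(x,0)(h)}(-h)$, and invoking the evenness of the rank-1 support (recorded in the excerpt as $q(\mathcal{A})(h)=q(\mathcal{A})(-h)$), this infimum equals $-q(\overline{D}^{2}f(x,0))(h)$. Hence condition (\ref{neqn:2}) is equivalent to $q(\overline{D}^{2}f(x,0))(h) < 0$ for all $\|h\|=1$; equivalently, $\langle Q, hh^{T}\rangle < 0$ cannot be uniformly attained with the wrong sign, so one needs the reverse formulation in terms of a positive lower bound.

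The delicate point I expect to be the main obstacle is disentangling the sign and the order of quantifiers correctly. The coderivative set $D^{\ast}(\partial f)(x,0)(h)$ need not be symmetric, so I must be careful that condition (\ref{neqn:2}) quantifies over \emph{all} $p$ in the set for \emph{each} fixed unit $h$, which translates to controlling the support function in direction $h$ rather than $-h$. The cleanest route is to note that by the convexity-preservation in Corollary \ref{coderiv} we have $\operatorname{co} D^{\ast}(\partial f)(x,0)(h) = [\operatorname{co}\overline{D}^{2}f(x,0)]h$, and that for any $h$ the quantity controlling (\ref{neqn:2}) is $\min\{\langle p,h\rangle \mid p \in \operatorname{co} D^{\ast}(\partial f)(x,0)(h)\}$; since this minimum over the convex hull equals the minimum over the original set, no information is lost. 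Then $\min\{\langle Qh, h\rangle \mid Q \in \overline{D}^{2}f(x,0)\} > 0$ for all $\|h\|=1$ is exactly the asserted condition.

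Finally I would supply the compactness step that upgrades the pointwise strict inequality to a uniform $\beta > 0$. Since $\overline{D}^{2}f(x,0)$ is a bounded set of symmetric matrices (boundedness of the limiting Hessians follows from the local $C^{1,1}$ property, which bounds $\|\nabla^{2}f\|$ wherever it exists), the map $h \mapsto \min\{\langle Q, hh^{T}\rangle \mid Q \in \overline{D}^{2}f(x,0)\}$ is continuous on the unit sphere, which is compact. A strictly positive continuous function on a compact set attains a positive minimum $\beta > 0$, giving $\langle Q, hh^{T}\rangle \geq \beta$ for all $Q \in \overline{D}^{2}f(x,0)$ and all $\|h\|=1$. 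The reverse implication is immediate, since $\langle Q, hh^{T}\rangle \geq \beta > 0$ for every $Q$ forces $\langle p, h\rangle \geq \beta > 0$ for every $p \in D^{\ast}(\partial f)(x,0)(h) \subseteq \overline{D}^{2}f(x,0)h$, recovering (\ref{neqn:2}). The main care needed throughout is matching the evenness/symmetry bookkeeping of the rank-1 support against the possibly non-symmetric coderivative image.
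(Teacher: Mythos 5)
Your second and third paragraphs constitute a correct proof and are essentially the paper's own argument: pass to the convex hull using $\operatorname{co}D^{\ast}(\partial f)(x,0)(h)=[\operatorname{co}\overline{D}^{2}f(x,0)]h$ from Corollary \ref{coderiv}, observe that strict positivity of $\langle p,h\rangle$ over a set is equivalent to strict positivity over its convex hull (by linearity in $Q$), and then extract a uniform $\beta>0$ by compactness. Two small points there: for the pointwise minimum $\min\{\langle Q,hh^{T}\rangle\mid Q\in\overline{D}^{2}f(x,0)\}$ to be attained (and hence strictly positive at each $h$) you need $\overline{D}^{2}f(x,0)$ to be \emph{closed} as well as bounded, which it is as a set of limits and which the paper invokes explicitly; and the inclusion $D^{\ast}(\partial f)(x,0)(h)\subseteq\overline{D}^{2}f(x,0)h$ used in your reverse implication should read $D^{\ast}(\partial f)(x,0)(h)\subseteq[\operatorname{co}\overline{D}^{2}f(x,0)]h$, which costs nothing since the bound $\langle Q,hh^{T}\rangle\geq\beta$ survives convex combinations.

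Your first paragraph, however, contains a genuine sign error and should be excised. The identity in Corollary \ref{coderiv} is $\delta^{\ast}_{D^{\ast}(\partial f)(x,0)(h)}(h)=q(\overline{D}^{2}f(x,0))(h)$ with the \emph{same} $h$ appearing both as the argument of the coderivative and as the direction of the support function. When you write $\inf\{\langle p,h\rangle\}=-\delta^{\ast}_{D^{\ast}(\partial f)(x,0)(h)}(-h)$ you are evaluating the support function of the set $D^{\ast}(\partial f)(x,0)(h)$ in the direction $-h$ while the coderivative is still evaluated at $h$; via the convex-hull identity this equals $\sup\{\langle Qh,-h\rangle\}=-\inf_{Q}\langle Q,hh^{T}\rangle$, which is \emph{not} $-q(\overline{D}^{2}f(x,0))(h)=-\sup_{Q}\langle Q,hh^{T}\rangle$. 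The evenness $q(\mathcal{A})(h)=q(\mathcal{A})(-h)$ is evenness in $h$ and cannot convert a supremum over $Q$ into an infimum over $Q$. Consequently your intermediate claim that (\ref{neqn:2}) is equivalent to $q(\overline{D}^{2}f(x,0))(h)<0$ is false: it asserts negative definiteness where positive definiteness is meant, and is already contradicted by $f(x)=\Vert x\Vert^{2}$, for which (\ref{neqn:2}) holds while $q(\overline{D}^{2}f(0,0))(h)=2\Vert h\Vert^{2}>0$. The correct reformulation is that (\ref{neqn:2}) is equivalent to $\inf\{\langle Q,hh^{T}\rangle\mid Q\in\overline{D}^{2}f(x,0)\}>0$ for each unit $h$, which is exactly what your second paragraph establishes directly and what the paper proves.
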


\begin{proof}
	By a simple convexity argument (\ref{neqn:2}) is equivalent to $\langle
	v,h\rangle   >0$ for all $v\in \operatorname{co}D^{\ast }(\partial
	f)(x|0)(h)=\left[ \operatorname{co}\overline{D}^{2}f(x,0)\right] h$ from which we
	have an equivalent condition that $\langle Qh,h\rangle   >0$ for
	all $Q\in \operatorname{co} \overline{D}^{2}f(x,0).$ But $\langle Qh,h\rangle
	=\langle Q,hh^{T}\rangle $ (the Frobenius inner product) and linearity in $Q$
	gives $\langle Qh,h\rangle >0$ for all $Q\in \overline{D}^{2}f(x,0)$ as an equivalent condition.
	Finally we note that $\overline{D}^{2}f(x,0)$ is closed and uniformly bounded due to the local
	Lipschitzness of the gradient $x \mapsto \nabla f(x)$ so via a compactness argument $\langle Qh,h\rangle \geq \beta >0$ for some $\beta >0$.
\end{proof}

\begin{remark}
	It would be interesting to have characterisation of subjets for functions other than 
	those that are $C^{1,1}$ smooth, in order to compare with their corresponding second order coderivative. 
	Consider a characterisation of the coderivative for a class of functions found in 
	\cite[Coroillary 5.4, Theorem 5.3]{Lewis:2} (which are not a $C^{1,1}$ functions). 
	Then we have:
	\[
	D^{\ast} (\partial f )(\bar{x} , 0 ) (w) 
	=\left\{
	\begin{array}
	[c]{cl}
	\nabla^2_{\mathcal{M}} f( \bar{x}) w + N_{\mathcal{M}} (\bar{x})  & :\text{for } w \in T_{\mathcal{M}} ( \bar{x}) \\
	\emptyset & :\text{for }   w \not\in T_{\mathcal{M}} ( \bar{x}) . \\
	\end{array}
	\right.
	\]
	In this context of this paper we have  $\mathcal{M} := \{ (u ,  v (u) )  \mid u \in \mathcal{U} \}$ and if we assume this  is a $C^2$ smooth manifold we have  $\nabla^2_{\mathcal{M}} f( \bar{x}) w  
	= \frac{d^2}{dt^2} f(\bar{x} + tw + v(tw) )|_{t=0}$ with $T_{\mathcal{M}} ( \bar{x}) 
	= \mathcal{U}^2$ and $N_{\mathcal{M}} (\bar{x}) = \mathcal{V}^2$. 
	It seems possible that the calculus provided by \cite{ebioffe:4, eberhard:7} could provide an 
	avenue to calculate $\underline{\partial}^2 f (\bar{x}, 0 )$ for this class of functions. 
\end{remark}

\section{The localised $\mathcal{U}^{\prime}$-Lagrangian}

For the remainder of the paper we will assume $\bar{z} \in \operatorname{rel}$-$\operatorname{int}\partial f\left( \bar{x}\right) \neq \emptyset $ and so $\mathcal{V}:=\operatorname{span}
\left\{ \partial f\left( \bar{x}\right) -\bar{z}\right\} $, $\mathcal{U} = (\mathcal{V})^{\perp}$, as  defined in \cite{Lem:1, Mifflin:2003} and 
coinciding with the space defined in section \ref{sec:VU}.  When
discussing tilt stability we will to assume $\bar{z}=0\in \partial f\left(
\bar{x}\right) $. Then we define the localised $\mathcal{U}^{\prime }$-Lagrangian, for any subspace $\mathcal{U}^{\prime }\subseteq \mathcal{U}$
and some $\varepsilon >0$, to be the function
\begin{equation*}
L_{\mathcal{U^{\prime }}}^{\varepsilon }\left( u\right) :=\left\{
\begin{array}{cc}
\inf_{v^{\prime }\in \mathcal{V^{\prime }}\cap B_{\varepsilon }\left(
0\right) }\left\{ f\left( \bar{x}+u+v^{\prime }\right) -\langle \bar{z}_{\mathcal{V}^{\prime }},v^{\prime }\rangle \right\} & \text{for }u\in
\mathcal{U}^{\prime }\cap B_{\varepsilon }\left( 0\right):= B_{\varepsilon }^{\mathcal{U^{\prime }}}\left( 0\right) \\
+\infty & \text{otherwise}
\end{array}
\right.
\end{equation*}
where $\mathcal{V^{\prime }}:=\mathcal{U^{\prime }}^{\perp }$. Let
\begin{equation}
v\left( u\right) \in \operatorname{argmin}_{v^{\prime }\in \mathcal{V^{\prime }}\cap
B_{\varepsilon }\left( 0\right) }\left\{ f\left( \bar{x}+u+v^{\prime
}\right) -\langle \bar{z}_{\mathcal{V^{\prime }}},v^{\prime }\rangle
\right\} .  \label{eqn:78}
\end{equation}
This Lagrangian differs from the modification introduced by Hare \cite{Hare:3} in that $L_{\mathcal{U^{\prime }}}^{\varepsilon }\left( \cdot
\right) $ is locally well defined on $\mathcal{U^{\prime }}$ due to the
introduction of the ball $B_{\varepsilon }^{\mathcal{V^{\prime }}}\left(
0\right) =\mathcal{V^{\prime }}\cap B_{\varepsilon }\left( 0\right) $ over
which the infimum is taken. Hare assumes a quadratic minorant to justify a
finite value for a sufficiently large regularization parameter used in the
so-called quadratic sub-Lagrangian. Define for $u\in \mathcal{U^{\prime }}$
and $v\left( \cdot \right) :\mathcal{U^{\prime }}\rightarrow B_{\varepsilon }^{\mathcal{V^{\prime }}}\left( 0\right) $ the auxiliary functions
\begin{eqnarray*}
k_{v}\left( u\right) := &&h\left( u+v\left( u\right) \right) -\langle \bar{z}_{\mathcal{V^{\prime }}},u+v\left( u\right) \rangle \\
\text{where\quad }h\left( w\right) := &&f\left( \bar{x}+w\right) +\delta _{B_{\varepsilon }^{\mathcal{U^{\prime }}}\left( 0\right) 
\oplus B_{\varepsilon }^{\mathcal{V^{\prime }}}\left( 0\right) }\left( w\right) .
\end{eqnarray*}
Then
\begin{equation*}
L_{\mathcal{U^{\prime }}}^{\varepsilon }\left( u\right) :=\inf_{v^{\prime
}\in \mathcal{V^{\prime }}}\left\{ h\left( u+v^{\prime }\right) -\langle
\bar{z}_{\mathcal{V}^{\prime }},v^{\prime }\rangle \right\} .
\end{equation*}
When $v(\cdot )$ is chosen as in (\ref{eqn:78}) we have $L_{\mathcal{U^{\prime }}}^{\varepsilon }\left( u\right) =k_{v}(u)$ with both infinite
outside $B_{\varepsilon }^{\mathcal{U^{\prime }}}\left( 0\right) $.

\begin{lemma} \label{lem:27}
Suppose $f:\mathbb{R}^{n}\rightarrow \mathbb{R}_{\infty }$ is a proper lower
semi-continuous function and assume $v(\cdot )$ is chosen as in (\ref{eqn:78}). The conjugate of $k_{v}:\mathcal{U}^{\prime }\rightarrow \mathbb{R}_{\infty }$ with respect to $\mathcal{U^{\prime }}$ is given by
\begin{equation}
k_{v}^{\ast }\left( z_{\mathcal{U^{\prime }}}\right) :=\sup_{u\in \mathcal{U^{\prime }}}\left\{ \langle u,z_{\mathcal{U^{\prime }}}\rangle -k_{v}\left(
u\right) \right\} =h^{\ast }\left( z_{\mathcal{U^{\prime }}}+\bar{z}_{\mathcal{V^{\prime }}}\right) =\left( L_{\mathcal{U^{\prime }}}^{\varepsilon
}\right) ^{\ast }\left( z_{\mathcal{U^{\prime }}}\right) .  \label{neqn:33}
\end{equation}
\end{lemma}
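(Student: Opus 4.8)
The plan is to reduce the whole statement to one direct computation of the Fenchel conjugate $h^{\ast}$ at the shifted point $z_{\mathcal{U}^{\prime }}+\bar{z}_{\mathcal{V}^{\prime }}$, exploiting the orthogonal splitting $\mathbb{R}^{n}=\mathcal{U}^{\prime }\oplus \mathcal{V}^{\prime }$. Conceptually, $L_{\mathcal{U}^{\prime }}^{\varepsilon }$ is nothing but the partial-minimization (marginal) function obtained from $h(\cdot )-\langle \bar{z}_{\mathcal{V}^{\prime }},\cdot \rangle $ by minimizing out the $\mathcal{V}^{\prime }$-variable, and the conjugate of such a marginal is the conjugate of the underlying function evaluated on the appropriate shift. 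First I would dispose of the last equality for free: the discussion preceding the lemma shows that the choice (\ref{eqn:78}) forces $k_{v}(u)=L_{\mathcal{U}^{\prime }}^{\varepsilon }(u)$ for every $u$, both being $+\infty $ off $B_{\varepsilon }^{\mathcal{U}^{\prime }}(0)$; since the two functions coincide on $\mathcal{U}^{\prime }$ their conjugates are identical, giving $k_{v}^{\ast }(z_{\mathcal{U}^{\prime }})=(L_{\mathcal{U}^{\prime }}^{\varepsilon })^{\ast }(z_{\mathcal{U}^{\prime }})$ with no further work.

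The substance is the first equality. I would begin from $h^{\ast }(z_{\mathcal{U}^{\prime }}+\bar{z}_{\mathcal{V}^{\prime }})=\sup_{w}\{\langle z_{\mathcal{U}^{\prime }}+\bar{z}_{\mathcal{V}^{\prime }},w\rangle -h(w)\}$ and write $w=u+v^{\prime }$ according to the decomposition $\mathcal{U}^{\prime }\oplus \mathcal{V}^{\prime }$. Because $u\in \mathcal{U}^{\prime }=(\mathcal{V}^{\prime })^{\perp }$, $z_{\mathcal{U}^{\prime }}\in \mathcal{U}^{\prime }$ and $\bar{z}_{\mathcal{V}^{\prime }}\in \mathcal{V}^{\prime }$, the two cross terms vanish and $\langle z_{\mathcal{U}^{\prime }}+\bar{z}_{\mathcal{V}^{\prime }},u+v^{\prime }\rangle =\langle z_{\mathcal{U}^{\prime }},u\rangle +\langle \bar{z}_{\mathcal{V}^{\prime }},v^{\prime }\rangle $. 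The supremum then factors into an outer $\sup_{u\in \mathcal{U}^{\prime }}$ of $\langle z_{\mathcal{U}^{\prime }},u\rangle $ plus an inner $\sup_{v^{\prime }\in \mathcal{V}^{\prime }}\{\langle \bar{z}_{\mathcal{V}^{\prime }},v^{\prime }\rangle -h(u+v^{\prime })\}$.

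I would then recognize the inner supremum as $-L_{\mathcal{U}^{\prime }}^{\varepsilon }(u)$: by the second (equivalent) definition $L_{\mathcal{U}^{\prime }}^{\varepsilon }(u)=\inf_{v^{\prime }\in \mathcal{V}^{\prime }}\{h(u+v^{\prime })-\langle \bar{z}_{\mathcal{V}^{\prime }},v^{\prime }\rangle \}$, so negating turns that infimum into exactly the required supremum. Substituting leaves $h^{\ast }(z_{\mathcal{U}^{\prime }}+\bar{z}_{\mathcal{V}^{\prime }})=\sup_{u\in \mathcal{U}^{\prime }}\{\langle z_{\mathcal{U}^{\prime }},u\rangle -L_{\mathcal{U}^{\prime }}^{\varepsilon }(u)\}=(L_{\mathcal{U}^{\prime }}^{\varepsilon })^{\ast }(z_{\mathcal{U}^{\prime }})$, which together with the first paragraph closes the chain of three equalities.

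There is no deep obstacle here; the only delicate points are bookkeeping ones around the indicator built into $h$ and the two different ground spaces of the conjugates. I would check that $\delta _{B_{\varepsilon }^{\mathcal{U}^{\prime }}(0)\oplus B_{\varepsilon }^{\mathcal{V}^{\prime }}(0)}$ enforces the right constraint sets: its $\mathcal{V}^{\prime }$-component restricts the inner infimum to $\mathcal{V}^{\prime }\cap B_{\varepsilon }(0)$ (matching the domain in the definition of $L_{\mathcal{U}^{\prime }}^{\varepsilon }$), while its $\mathcal{U}^{\prime }$-component forces $h(u+v^{\prime })=+\infty $ for every $v^{\prime }$ once $u\notin B_{\varepsilon }^{\mathcal{U}^{\prime }}(0)$, so the inner supremum is $-\infty $ there and the outer supremum correctly sees $L_{\mathcal{U}^{\prime }}^{\varepsilon }(u)=+\infty $. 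I would also flag that $k_{v}^{\ast }$ and $(L_{\mathcal{U}^{\prime }}^{\varepsilon })^{\ast }$ are conjugates taken over $\mathcal{U}^{\prime }$, whereas $h^{\ast }$ is the conjugate over all of $\mathbb{R}^{n}$, and that it is precisely the orthogonal splitting $\mathcal{U}^{\prime }\oplus \mathcal{V}^{\prime }$ together with the vanishing of the cross terms that reconciles the two.
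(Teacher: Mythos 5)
Your proposal is correct and follows essentially the same route as the paper: both rest on the identity $k_{v}=L_{\mathcal{U}^{\prime }}^{\varepsilon }$ forced by the choice (\ref{eqn:78}), the orthogonal splitting $w=u+v^{\prime }$ with vanishing cross terms, and the collapse of the iterated supremum/infimum into the joint supremum defining $h^{\ast }$. The only difference is cosmetic — you start from $h^{\ast }$ and work toward $(L_{\mathcal{U}^{\prime }}^{\varepsilon })^{\ast }$, whereas the paper starts from $k_{v}^{\ast }$ — and your extra care with the indicator built into $h$ is a correct reading of the paper's setup.
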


\begin{proof}
By direct calculation we have
\begin{eqnarray*}
k_{v}^{\ast }\left( z_{\mathcal{U^{\prime}}}\right) &=&\sup_{u\in \mathcal{U^{\prime}} }\left\{ \langle u,z_{\mathcal{U^{\prime}}}\rangle -\left\{
h\left( u+v\left( u\right) \right) -\langle \bar{z}_{\mathcal{V^{\prime}}
},u+v\left( u\right) \rangle \right\} \right\} \\
&=&\sup_{u\in \mathcal{U^{\prime}}}\left\{ \langle u,z_{\mathcal{U^{\prime}}
}\rangle -\min_{v^{\prime }\in \mathcal{V^{\prime}}}\left\{ h\left(
u+v^{\prime }\right) -\langle \bar{z}_{\mathcal{V^{\prime}}},u+
v^{\prime}\rangle \right\} \right\} \\
&=&\sup_{\left( u,v^{\prime }\right) \in \mathcal{U^{\prime}}\oplus \mathcal{V^{\prime}} }\left\{ \langle u+v^{\prime},z_{\mathcal{U^{\prime}}}+ \bar{z}_{\mathcal{V^{\prime}}}\rangle 
-h\left( u+v^{\prime }\right) \right\} =h^{\ast }\left(
z_{\mathcal{U^{\prime}}}+\bar{z}_{\mathcal{V^{\prime}}}\right)
\end{eqnarray*}
as $\langle z_{\mathcal{U^{\prime}}},v^{\prime }\rangle =0$ for all $v^{\prime }\in \mathcal{V^{\prime}}$. Also
\begin{eqnarray*}
k_{v}^{\ast }\left( z_{\mathcal{U^{\prime}}}\right) &=& \sup_{u\in \mathcal{U^{\prime}} }\left\{ \langle u,z_{\mathcal{U^{\prime}}}\rangle
-\min_{v^{\prime }\in \mathcal{V^{\prime}} }\left\{ h\left( u+v^{\prime
}\right) -\langle \bar{z}_{\mathcal{V^{\prime}} },u+v^{\prime }\rangle
\right\} \right\} \\
&=&\sup_{u\in \mathcal{U^{\prime}}}\left\{ \langle u,z_{\mathcal{U^{\prime}}
}\rangle -L_{\mathcal{U^{\prime}}}^{\varepsilon }\left( u\right) \right\}
=\left( L_{\mathcal{U^{\prime}} }^{\varepsilon }\right) ^{\ast } \left( z_{\mathcal{U^{\prime}}}\right) .
\end{eqnarray*}
\end{proof}

When we assume $\bar{x}$ gives a tilt stable local minimum of a function $f:\mathbb{R}^{n}\rightarrow \mathbb{R}_{\infty }$ we shall choose the $\varepsilon >0$ to be consistent with the definition of  tilt stability at $\bar{x}$ for the neighbourhood
\begin{equation*}
B_{\varepsilon }^{\mathcal{U^{\prime }}}\left( \bar{x}_{\mathcal{U^{\prime }}}\right) \oplus B_{\varepsilon }^{\mathcal{V^{\prime }}}\left( \bar{x}_{\mathcal{V^{\prime }}}\right) :=\left\{ \left( x_{\mathcal{U^{\prime }}},x_{\mathcal{V^{\prime }}}\right) \in \mathcal{U^{\prime }}\oplus \mathcal{V^{\prime }}\mid \left\Vert x_{\mathcal{U^{\prime }}}-\bar{x}_{\mathcal{U^{\prime }}}\right\Vert \leq \varepsilon \text{ and }\left\Vert x_{\mathcal{V^{\prime }}}-\bar{x}_{\mathcal{V^{\prime }}}\right\Vert \leq \varepsilon
\right\} 
\end{equation*}
where $\varepsilon$ is reduced to contain the above neighbourhood in a larger ball
$\{x \in \mathbb{R}^n \mid \| x - \bar{x} \| \leq \hat{\varepsilon} \}$ on which tilt stability holds. 
We will rely on the results of \cite{Drusvy:1}. From definition \ref{def:tilt} we have on $B_{\varepsilon }^{\mathcal{U^{\prime }}}\left( \bar{x}_{\mathcal{U^{\prime }}}\right) \oplus B_{\varepsilon }^{\mathcal{V^{\prime }
}}\left( \bar{x}_{\mathcal{V^{\prime }}}\right) $ that
\begin{equation}
f\left( x\right) \geq f\left( m_{f}\left( v\right) \right) +\langle
x-m_{f}\left( v\right) ,v\rangle \label{leo:query}
\end{equation}
where $m_{f}(\cdot )$ is as defined in (\ref{eqn:101}). That is, we have a supporting tangent plane to the epigraph of $f +\delta_{B_{\varepsilon }^{\mathcal{U^{\prime }}}\left( \bar{x}_{\mathcal{U^{\prime }}}\right) \oplus B_{\varepsilon }^{\mathcal{V^{\prime }
		}}\left( \bar{x}_{\mathcal{V^{\prime }}}\right) }$. As the convex hull of any set (including the  epigraph of $f +\delta_{B_{\varepsilon }^{\mathcal{U^{\prime }}}\left( \bar{x}_{\mathcal{U^{\prime }}}\right) \oplus B_{\varepsilon }^{\mathcal{V^{\prime }
	}}\left( \bar{x}_{\mathcal{V^{\prime }}}\right) }$) must remain on the same side of any supporting hyperplane (in this case the hyperplane $(x,\alpha) \mapsto \langle  (x,\alpha) -(m_{f}\left( v\right)  ,  f(m_{f}\left( v\right) ) , (v, -1)\rangle \leq 0$) we may deduce that (again locally) 
\begin{equation*}
\operatorname{co}f\left( x\right) \geq f\left( m_{f}\left( v\right) \right) +\langle
x-m_{f}\left( v\right) ,v\rangle .
\end{equation*}
This observation leads to the following minor rewording of the result from
\cite{Drusvy:1}. It shows that there is a strong convexification process
involved with tilt stability.

\begin{proposition} [\protect\cite{Drusvy:1}, Proposition 2.6]
 \label{prop:co}Consider $f:\mathbb{R}^{n}\rightarrow \mathbb{R}_{\infty }$ is a proper lower semi-continuous
function and suppose that $\bar{x}$ give a tilt stable local minimum of $f$.
Then for all sufficiently small $\varepsilon >0,$ in terms of the function $h\left( w\right) :=f\left( \bar{x}+w\right) +\delta _{B_{\varepsilon }^{\mathcal{U^{\prime }}}\left( 0\right) \oplus B_{\varepsilon }^{\mathcal{V^{\prime }}}\left( 0\right) }\left( w\right) $ we have
\begin{equation*}
\operatorname{argmin}_{x\in B_{\varepsilon }^{\mathcal{U^{\prime }}}\left( \bar{x}_{\mathcal{U^{\prime }}}\right) \oplus B_{\varepsilon }^{\mathcal{V^{\prime }}}\left( \bar{x}_{\mathcal{V^{\prime }}}\right) }\left[ f\left( x\right)
-\langle x,z\rangle \right] =\operatorname{argmin}_{\left( u^{\prime },v^{\prime
}\right) \in \mathcal{U^{\prime }}\oplus \mathcal{V^{\prime }}}\left[ \operatorname{co}h\left( u^{\prime }+v^{\prime }\right) -\langle u^{\prime }+v^{\prime },z\rangle \right] +\bar{x}
\end{equation*}
for all $z$ sufficiently close to $0.$ Consequently $0$ is a tilt stable
local minimum of $\operatorname{co}h$.
\end{proposition}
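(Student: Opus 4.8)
The plan is to reduce the claimed set identity to a statement about $h$ and its convex hull alone, establish the elementary inclusion using the supporting tangent plane that tilt stability already supplied above, and then upgrade this to equality by extracting single-valuedness of the convexified argmin from the Lipschitz single-valuedness of $m_f$. First I would translate by $x=\bar{x}+w$ and discard the constant $\langle \bar{x},z\rangle$. Since $f(\bar{x}+w)=h(w)$ on $B_{\varepsilon}^{\mathcal{U}'}(0)\oplus B_{\varepsilon}^{\mathcal{V}'}(0)$ and $h=+\infty$ off this set, the left argmin becomes $\bar{x}+\operatorname{argmin}_{w}\{h(w)-\langle w,z\rangle\}$, while the right one is $\bar{x}+\operatorname{argmin}_{w}\{\operatorname{co}h(w)-\langle w,z\rangle\}$. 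Because subtracting a linear term commutes with convexification, $\operatorname{co}(h-\langle \cdot,z\rangle)=\operatorname{co}h-\langle \cdot,z\rangle$, so it suffices to show $\operatorname{argmin}_{w}\{h(w)-\langle w,z\rangle\}=\operatorname{argmin}_{w}\{\operatorname{co}h(w)-\langle w,z\rangle\}$ for $z$ near $0$; after the compatibility reduction of $\varepsilon$ indicated before the statement, tilt stability places the minimizer in the interior of the ball, so the constraints are inactive and the unconstrained minimizers are the relevant ones.

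The inclusion $\subseteq$ is general. For any function $\inf h=\inf \operatorname{co}h$, and the supporting tangent plane inequality established just above the statement reads, after the same translation, $\operatorname{co}h(w)\geq h(\hat{w}(z))+\langle w-\hat{w}(z),z\rangle$ with $\hat{w}(z):=m_{f}(z)-\bar{x}$. This affine minorant of $h$ is attained at $\hat{w}(z)$, so $\operatorname{co}h(\hat{w}(z))=h(\hat{w}(z))$ and the two tilted problems share the minimal value $h(\hat{w}(z))-\langle \hat{w}(z),z\rangle$. Hence every minimizer of the $h$-problem minimizes the $\operatorname{co}h$-problem, and $\hat{w}(z)$ lies in both argmin sets.

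The reverse inclusion is the crux, and it is exactly where tilt stability does essential work. Writing $g:=(\operatorname{co}h)^{\ast}$, which is finite-valued convex (as $\operatorname{co}h$ has bounded domain), convex duality identifies $\operatorname{argmin}_{w}\{\operatorname{co}h(w)-\langle w,z\rangle\}=\partial g(z)$. By the previous paragraph the map $z\mapsto \hat{w}(z)=m_{f}(z)-\bar{x}$ is a Lipschitz, single-valued \emph{selection} of $\partial g$ on a neighbourhood of $0$. I would then invoke the elementary fact that a convex subdifferential admitting a continuous single-valued selection on an open set is single-valued there: if $\partial g(z_{0})$ contained two distinct points $p_{1},p_{2}$, choosing $d$ with $\langle p_{1}-p_{2},d\rangle>0$ and using monotonicity of $\partial g$ along $z_{0}\pm td$ forces $\langle \hat{w}(z_{0}+td),d\rangle\geq\langle p_{1},d\rangle>\langle p_{2},d\rangle\geq\langle \hat{w}(z_{0}-td),d\rangle$, contradicting continuity of $\hat{w}$ at $z_{0}$. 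Dually, a nondegenerate $\partial g(z_{0})$ means $\operatorname{co}h$ is affine with slope $z_{0}$ along a segment, and tilting across that segment would make the minimizer of the $h$-problem jump — precisely what Lipschitz single-valuedness of $m_{f}$ forbids. Thus $\partial g$ is single-valued near $0$ and $\operatorname{argmin}_{w}\{\operatorname{co}h(w)-\langle w,z\rangle\}=\{\hat{w}(z)\}=\operatorname{argmin}_{w}\{h(w)-\langle w,z\rangle\}$.

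Finally, the common argmin map is $z\mapsto \hat{w}(z)=m_{f}(z)-\bar{x}$, single-valued and Lipschitz near $0$ with $\hat{w}(0)=0$; since $\operatorname{co}h(0)=h(0)=f(\bar{x})$ is finite, this is exactly the assertion that $0$ is a tilt stable local minimum of $\operatorname{co}h$. I expect the single-valuedness step of the reverse inclusion to be the main obstacle, since it is the only place the full strength of tilt stability is used: not merely a supporting plane at $z=0$, but Lipschitz single-valuedness of $m_{f}$ across a whole range of tilts, which is what rules out the affine flat spots that a convex hull would otherwise be free to create.
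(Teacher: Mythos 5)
Your proof is correct. Note that the paper does not actually prove this proposition: it is quoted from \cite{Drusvy:1} (Proposition 2.6), and the only argument the authors supply is the paragraph preceding the statement, namely the supporting tangent plane inequality (\ref{leo:query}) and the observation that the convex hull of the epigraph must stay on the same side of any supporting hyperplane. That observation is precisely your forward inclusion: the affine minorant $w\mapsto h(\hat{w}(z))+\langle w-\hat{w}(z),z\rangle$ passes to $\operatorname{co}h$ and is attained at $\hat{w}(z)$, so the tilted infima of $h$ and $\operatorname{co}h$ coincide and $\operatorname{co}h(\hat{w}(z))=h(\hat{w}(z))$. What you add, and what the paper delegates entirely to the citation, is the reverse inclusion: identifying $\operatorname{argmin}_{w}\{\operatorname{co}h(w)-\langle w,z\rangle\}$ with $\partial\bigl((\operatorname{co}h)^{\ast}\bigr)(z)$ and then using monotonicity of that subdifferential together with the continuous single-valued selection $z\mapsto m_{f}(z)-\bar{x}$ to force single-valuedness near $0$ — this is a clean, self-contained route, and your monotonicity argument for why a maximal monotone map with a continuous selection on an open set must be single-valued there is sound. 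Two small points worth making explicit if you write this up: (i) $\operatorname{co}h$ is proper (it inherits the affine minorant from (\ref{leo:query})), which is needed for $(\operatorname{co}h)^{\ast\ast}=\operatorname{co}h$ and the subdifferential inversion; and (ii) the passage between the Euclidean ball in Definition \ref{def:tilt} and the box $B_{\varepsilon}^{\mathcal{U^{\prime}}}(0)\oplus B_{\varepsilon}^{\mathcal{V^{\prime}}}(0)$ requires, as you indicate, that $m_{f}(z)$ lie in the interior of the box for $z$ small, so that the box-constrained argmin coincides with $\{m_{f}(z)\}$; this follows from $m_{f}(0)=\bar{x}$ and continuity.
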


We now study the subgradients of the $\mathcal{U^{\prime }}$-Lagrangian.
In order to simplify statements we introduce the following modified function:
\begin{equation*}
m_{h}  : z \mapsto \operatorname{argmin}_{\left( u^{\prime },v^{\prime }\right)
\in \mathcal{U^{\prime }}\oplus \mathcal{V^{\prime }}}\left[ \operatorname{co}h\left( u^{\prime }+v^{\prime }\right) -\langle u^{\prime }+v^{\prime },z\rangle \right]
\end{equation*}
then we have $m_{h}\left( z\right) +\bar{x}=m_{f}\left( z\right) $ for $m_{f}\left( z\right) :=\operatorname{argmin}_{x\in B_{\varepsilon }^{\mathcal{U^{\prime }}}\left( \bar{x}_{\mathcal{U^{\prime }}}\right) \oplus
B_{\varepsilon }^{\mathcal{V^{\prime }}}\left( \bar{x}_{\mathcal{V^{\prime }}}\right) }\left[ f\left( x\right) -\langle x,z\rangle \right] $. The next
result shows that under the assumption of tilt stability we have $u:=P_{\mathcal{U^{\prime }}}\left[ m_{h}\left( z_{\mathcal{U^{\prime }}}+\bar{z}_{\mathcal{V^{\prime }}}\right) \right] $ iff
\begin{equation}
z_{\mathcal{U^{\prime }}}\in \partial _{\operatorname{co}}\ L_{\mathcal{U^{\prime }}}^{\varepsilon } \left( u\right)  \label{neqn:10}
\end{equation}
where $\partial _{\operatorname{co}}g\left( u\right) :=\left\{ z\mid g\left(
u^{\prime }\right) -g\left( u\right) \geq \langle z,u^{\prime }-u\rangle
\text{ for all }u^{\prime }\right\} $ corresponds to the subdifferential of
convex analysis. In passing we note that tilt stability
of $f$ at $\bar{x}$ implies $\partial_{\operatorname{co}} f (\bar{x}) \ne \emptyset$. 

\begin{remark} \label{rem:27}
When $f:\mathbb{R}^{n}\rightarrow \mathbb{R}_{\infty }$ has a tilt-stable
local minimum at $\bar{x}$ then for $\bar{z}$ sufficiently small we must also have
$g\left( x\right) :=f\left( x\right) -\langle \bar{z},x\rangle $ possessing
a tilt stable local minimum at $\left\{ \bar{x}\right\} =m_{f}\left( \bar{z}\right)$. In this 
way we may obtain a unique Lipschitz continuous selection 
\[
\{ m_{h}\left( z_{\mathcal{U^{\prime }}}+\bar{z}_{\mathcal{V^{\prime }}}\right)  \}
= \operatorname{argmin}_{\left( u^{\prime },v^{\prime }\right) \in \mathcal{U} \oplus \mathcal{V}}\left[ h \left( u^{\prime }+v^{\prime }\right)
-\langle v^{\prime },\bar{z}_{\mathcal{V^{\prime }}}\rangle -\langle
u^{\prime },z_{\mathcal{U^{\prime }}}\rangle \right] 
\]
in a neighbourhood of $z_{\mathcal{U^{\prime }}} \in B_{\varepsilon} (\bar{z}_{\mathcal{U^{\prime }}}) $ (where $\bar{z}_{\mathcal{U^{\prime }}} \ne 0$). 
\end{remark}

\begin{proposition}
\label{prop:LU} Let $f:\mathbb{R}^{n}\rightarrow \mathbb{R}_{\infty }$ be a
proper lower semi-continuous function with $f-\langle \bar{z},\cdot \rangle $
having a tilt-stable local minimum at $\bar{x}$.

\begin{enumerate}
\item Then $L_{\mathcal{U^{\prime }}}^{\varepsilon }\left( \cdot \right) $ is closed, proper convex function that is finite valued for $u \in B_{\varepsilon }^{\mathcal{U^{\prime }}}\left( 0\right)$. 
\item  Let $u:=P_{\mathcal{U^{\prime }}}\left[ m_{h}\left( z_{\mathcal{U^{\prime }}}+\bar{z}_{\mathcal{V^{\prime }}}\right) \right] \in \operatorname{int}
B_{\varepsilon }^{\mathcal{U^{\prime }}}\left( 0\right) $ (where $z_{\mathcal{U}^{\prime }}\in \mathcal{U}^{\prime }$) then 
\begin{equation}
L_{\mathcal{U^{\prime }}}^{\varepsilon }\left( u^{\prime }\right) -L_{\mathcal{U^{\prime }}}^{\varepsilon }\left( u\right) \geq \langle z_{\mathcal{U^{\prime }}},u^{\prime }-u\rangle \quad \text{for }u^{\prime }\in
B_{\varepsilon }^{\mathcal{U^{\prime }}}\left( 0\right) .  \label{neqn:9}
\end{equation}
Moreover $L_{\mathcal{U^{\prime }}}^{\varepsilon }\left( u\right)
=\min_{v^{\prime }\in \mathcal{V^{\prime }}}\left[ \operatorname{co}h\left(
u+v^{\prime }\right) -\langle v^{\prime },\bar{z}_{\mathcal{V^{\prime }}
}\rangle \right] $ for which the minimum is attained at $v\left( u\right) =P_{\mathcal{V^{\prime }}}\left[ m_{h}\left( z_{\mathcal{U^{\prime }}}+\bar{z}_{\mathcal{V^{\prime }}}\right) \right] $ where $v\left( 0\right) =0$.

\item Conversely suppose (\ref{neqn:10}) holds at any given $u\in
B_{\varepsilon }^{\mathcal{U^{\prime }}}\left( 0\right) $ and let $v(u)$ be
as defined in (\ref{eqn:78}). Then we have $u=P_{\mathcal{U^{\prime }}}\left[
m_{h}\left( z_{\mathcal{U^{\prime }}}+\bar{z}_{\mathcal{V^{\prime }}}\right)
\right] $ and $v\left( u\right) =P_{\mathcal{V^{\prime }}}\left[ m_{h}\left(
z_{\mathcal{U^{\prime }}}+\bar{z}_{\mathcal{V^{\prime }}}\right) \right] \in
\operatorname{int}B_{\varepsilon }^{\mathcal{V^{\prime }}}\left( 0\right) $ for $\|u\|$ sufficiently small.
\end{enumerate}
\end{proposition}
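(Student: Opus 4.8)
The plan is to reduce all three parts to a single duality equivalence built around the convexified infimal projection. Write
$$\tilde{L}(u):=\inf_{v'\in\mathcal{V^{\prime}}}\{\operatorname{co}h(u+v')-\langle\bar{z}_{\mathcal{V^{\prime}}},v'\rangle\}.$$
Since $\operatorname{co}h$ is convex and $v'\mapsto\langle\bar{z}_{\mathcal{V^{\prime}}},v'\rangle$ is linear, $\tilde{L}$ is a partial infimal projection of a convex function and is therefore convex; and because Proposition \ref{prop:co} shows $0$ is a tilt--stable local minimum of $\operatorname{co}h$, the function $\operatorname{co}h$ is finite and coercive near $0$, so $\tilde{L}$ is proper, lower semicontinuous and finite near $0$ in $\mathcal{U^{\prime}}$. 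As $h\ge\operatorname{co}h$ we trivially have $L_{\mathcal{U^{\prime}}}^{\varepsilon}\ge\tilde{L}$. The engine of the whole argument is the equivalence, for $z=z_{\mathcal{U^{\prime}}}+\bar{z}_{\mathcal{V^{\prime}}}$, between \emph{(i)} $(u,v)=m_h(z)$ and \emph{(ii)} $z_{\mathcal{U^{\prime}}}\in\partial_{\operatorname{co}}\tilde{L}(u)$ together with $v$ attaining the infimum defining $\tilde{L}(u)$. Indeed, if $(u,v)$ jointly minimizes $\operatorname{co}h(w)-\langle w,z\rangle$ then, decomposing $\langle w,z\rangle$ through $\mathcal{U^{\prime}}\oplus\mathcal{V^{\prime}}$, fixing the $\mathcal{U^{\prime}}$-slot and taking the infimum over the $\mathcal{V^{\prime}}$-argument first yields $\tilde{L}(u')-\langle u',z_{\mathcal{U^{\prime}}}\rangle\ge\tilde{L}(u)-\langle u,z_{\mathcal{U^{\prime}}}\rangle$, which is exactly $z_{\mathcal{U^{\prime}}}\in\partial_{\operatorname{co}}\tilde{L}(u)$; the converse chains the two minimality statements back together.

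The second ingredient, already recorded just before Proposition \ref{prop:co}, is that at any minimizer $m_h(z)$ one has $h(m_h(z))=\operatorname{co}h(m_h(z))$: the supporting tangent plane \eqref{leo:query} touches $\operatorname{epi}f$ at the minimizer and lies below $\operatorname{co}f\le f$, forcing equality of $f$ and $\operatorname{co}f$ there. For Part 1 I would then prove $L_{\mathcal{U^{\prime}}}^{\varepsilon}\le\tilde{L}$ on $\operatorname{int}B_{\varepsilon}^{\mathcal{U^{\prime}}}(0)$. Fixing an interior $u$, the finite convex $\tilde{L}$ has a subgradient $z_{\mathcal{U^{\prime}}}\in\partial_{\operatorname{co}}\tilde{L}(u)$; for $u$ near $0$, local boundedness of the convex subdifferential keeps $z_{\mathcal{U^{\prime}}}$ near $\bar{z}_{\mathcal{U^{\prime}}}$ (which lies in $\partial_{\operatorname{co}}\tilde{L}(0)$ since $m_h(\bar z)=0$), so $z=z_{\mathcal{U^{\prime}}}+\bar{z}_{\mathcal{V^{\prime}}}$ stays in the range of Proposition \ref{prop:co}. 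Letting $v^{*}$ attain $\tilde{L}(u)$, the equivalence gives $(u,v^{*})=m_h(z)$, and the coincidence $h=\operatorname{co}h$ at $m_h(z)$ yields $L_{\mathcal{U^{\prime}}}^{\varepsilon}(u)\le h(u+v^{*})-\langle\bar{z}_{\mathcal{V^{\prime}}},v^{*}\rangle=\tilde{L}(u)$. Hence $L_{\mathcal{U^{\prime}}}^{\varepsilon}=\tilde{L}$ on the interior ball, so (together with the indicator of the closed ball) it is closed, proper, convex and finite on $B_{\varepsilon}^{\mathcal{U^{\prime}}}(0)$.

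Part 2 is then immediate from the forward direction of the equivalence applied to the given $u=P_{\mathcal{U^{\prime}}}[m_h(z_{\mathcal{U^{\prime}}}+\bar{z}_{\mathcal{V^{\prime}}})]$ and $v:=P_{\mathcal{V^{\prime}}}[m_h(z_{\mathcal{U^{\prime}}}+\bar{z}_{\mathcal{V^{\prime}}})]$: projecting out the $\mathcal{V^{\prime}}$-variable gives $z_{\mathcal{U^{\prime}}}\in\partial_{\operatorname{co}}\tilde{L}(u)$, which is \eqref{neqn:9} after replacing $\tilde{L}$ by $L_{\mathcal{U^{\prime}}}^{\varepsilon}$ via Part 1. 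The coincidence $h=\operatorname{co}h$ at $u+v$ shows the infimum defining $L_{\mathcal{U^{\prime}}}^{\varepsilon}(u)$ is attained exactly at $v(u)=v$ and equals $\min_{v'}[\operatorname{co}h(u+v')-\langle v',\bar{z}_{\mathcal{V^{\prime}}}\rangle]$, giving the ``moreover'' clause; and $v(0)=0$ follows because $m_f(\bar z)=\bar x$ forces $m_h(\bar z)=0$.

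Part 3 is the converse of the same equivalence. Using $L_{\mathcal{U^{\prime}}}^{\varepsilon}=\tilde{L}$, the hypothesis \eqref{neqn:10} reads $z_{\mathcal{U^{\prime}}}\in\partial_{\operatorname{co}}\tilde{L}(u)$; a short sandwich (comparing $L_{\mathcal{U^{\prime}}}^{\varepsilon}(u)=\tilde{L}(u)$ with the values at $v(u)$) shows the selection $v(u)$ from \eqref{eqn:78} also attains $\tilde{L}(u)$ and satisfies $h(u+v(u))=\operatorname{co}h(u+v(u))$. The backward direction of the equivalence then identifies $(u,v(u))=m_h(z_{\mathcal{U^{\prime}}}+\bar{z}_{\mathcal{V^{\prime}}})$ by uniqueness of the tilt minimizer, and $v(u)\in\operatorname{int}B_{\varepsilon}^{\mathcal{V^{\prime}}}(0)$ for small $\|u\|$ by Lipschitz continuity of $m_h$ and $v(0)=0$. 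I expect the main obstacle to be the identification $L_{\mathcal{U^{\prime}}}^{\varepsilon}=\tilde{L}$ --- that convexification may be inserted without altering the Lagrangian value --- since this is precisely where tilt stability does its work through the coincidence $h=\operatorname{co}h$ at minimizers; the remaining difficulty is the bookkeeping that keeps the tilts within the validity range of Proposition \ref{prop:co} and the minimizers inside the open balls.
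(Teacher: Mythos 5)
Your proposal is correct and follows essentially the same route as the paper: convexity of $L_{\mathcal{U}^{\prime}}^{\varepsilon}$ as a marginal (infimal projection) of the coercive convex function $(u^{\prime},v^{\prime})\mapsto \operatorname{co}h(u^{\prime}+v^{\prime})-\langle v^{\prime},\bar z_{\mathcal{V}^{\prime}}\rangle$, combined with the equivalence between joint minimality at $m_{h}(z_{\mathcal{U}^{\prime}}+\bar z_{\mathcal{V}^{\prime}})$ and the convex subgradient inequality for the partially minimized function, with Proposition \ref{prop:co} supplying the coincidence $h=\operatorname{co}h$ at the tilt minimizers. The only difference is cosmetic: you justify the identification $L_{\mathcal{U}^{\prime}}^{\varepsilon}=\tilde L$ by an explicit subgradient-selection argument (which tacitly needs $\partial\tilde L(0)=\{\bar z_{\mathcal{U}^{\prime}}\}$ and upper semicontinuity to keep the tilts in range), whereas the paper appeals to Proposition \ref{prop:co} directly and then cites the marginal-function theorem.
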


\begin{proof}
Consider 1.  By  Proposition \ref{prop:co} we have
\begin{equation*}
L_{\mathcal{U^{\prime }}}^{\varepsilon }\left( u\right) =\min_{v^{\prime
	}\in \mathcal{V^{\prime }}}\left[ h\left( u+v^{\prime }\right) -\langle
v^{\prime },\bar{z}_{\mathcal{V^{\prime }}}\rangle \right] =\min_{v^{\prime
	}\in \mathcal{V^{\prime }}}\left[ \operatorname{co}h\left( u+v^{\prime }\right)
-\langle v^{\prime },\bar{z}_{\mathcal{V^{\prime }}}\rangle \right] .
\end{equation*}
Hence $L_{\mathcal{U^{\prime }}}^{\varepsilon }\left( u^{\prime}\right) $ is a "marginal mapping" corresponding to a coercive closed convex function $F(u^{\prime},v^{\prime}):=\operatorname{co}h\left( u^{\prime}+v^{\prime }\right)
-\langle v^{\prime },\bar{z}_{\mathcal{V^{\prime }}}\rangle$. Applying \cite[Theorem 9.2]{rock:1} the result follows on viewing  $L_{\mathcal{U^{\prime }}}^{\varepsilon }\left( u^{\prime}\right) $  as the "image of $F$ under the linear mapping $A$" given by the projection $u^{\prime} := A (u^{\prime}, v^{\prime}) :=P_{\mathcal{U}}(u^{\prime}, v^{\prime})$ onto $\operatorname{int} B_{\varepsilon }^{\mathcal{U^{\prime }}}\left( 0\right) $. 

For the second part we have $z=z_{\mathcal{U^{\prime }}}+\bar{z}_{\mathcal{V^{\prime }}}$, where only the $\mathcal{U^{\prime }}$ component
varies. The following minimum attained at the unique point $m_{h}\left( z_{\mathcal{U^{\prime }}}+\bar{z}_{\mathcal{V^{\prime }}}\right) $ that
uniquely determines the value of $u\in \mathcal{U^{\prime }}$:
\begin{eqnarray}
\left\{ u+v(u)\right\} &:= &m_{h}\left( z_{\mathcal{U^{\prime }}}+\bar{z}_{\mathcal{V^{\prime }}}\right)  \notag \\
&=&\operatorname{argmin}_{\left( u^{\prime },v^{\prime }\right) \in \mathcal{U} \oplus \mathcal{V}}\left[ h\left( u^{\prime }+v^{\prime }\right)
-\langle v^{\prime },\bar{z}_{\mathcal{V^{\prime }}}\rangle -\langle
u^{\prime },z_{\mathcal{U^{\prime }}}\rangle \right]  \notag \\
\text{and so \quad }\left\{ u\right\} &=&\operatorname{argmin}_{u^{\prime }\in
B_{\varepsilon }^{\mathcal{U^{\prime }}}\left( 0\right) }\left[
\min_{v^{\prime }\in \mathcal{V^{\prime }}}\left[ h\left( u^{\prime
}+v^{\prime }\right) -\langle v^{\prime },\bar{z}_{\mathcal{V^{\prime }}
}\rangle \right] -\langle u^{\prime },z_{\mathcal{U^{\prime }}}\rangle
\right] ,  \label{neqn:80}
\end{eqnarray}
where $u:=P_{\mathcal{U^{\prime }}}\left[ m_{h}\left( z_{\mathcal{U^{\prime }}
}+\bar{z}_{\mathcal{V^{\prime }}}\right) \right] $ and $v\left( u\right)
:=P_{\mathcal{V^{\prime }}}\left[ m_{h}\left( z_{\mathcal{U^{\prime }}}+\bar{z}_{\mathcal{V^{\prime }}}\right) \right]$. 
As $m_{h}\left( \cdot \right) $ is a single valued Lipschitz function and $\operatorname{co}h$ has a local minimum at $0$ then  $v\left( 0\right) =0$ because
$\left\{ 0\right\} =\operatorname{argmin}_{v^{\prime }\in \mathcal{V^{\prime }}}\left[ \operatorname{co}h\left( v^{\prime }\right) -\langle v^{\prime },\bar{z}_{\mathcal{V^{\prime }}}\rangle \right] $. Hence by continuity  $v(u) \in \operatorname{int} B_{\varepsilon }^{\mathcal{U^{\prime }}}$ for $\|u\|$ sufficiently small. 
	The objective value on this
minimization problem equals
\begin{equation}
\min_{u^{\prime }\in B_{\varepsilon }^{\mathcal{U^{\prime }}}\left( 0\right)
}\left[ L_{\mathcal{U^{\prime }}}^{\varepsilon }\left( u^{\prime }\right)
-\langle u^{\prime },{z}_{\mathcal{U^{\prime }}}\rangle \right] =L_{\mathcal{U^{\prime }}}^{\varepsilon }\left( u\right) -\langle u,z_{\mathcal{U^{\prime
}}}\rangle ,  \label{eqn:79}
\end{equation}
giving (\ref{neqn:9}).

For the third part we note that (\ref{neqn:10}) is equivalent to (\ref{neqn:9}) and hence equivalent to the identity (\ref{eqn:79}), which affirms
that the minimizer in the $\mathcal{U^{\prime }}$ space is attained at $u$
and thus the minimizer in the $\mathcal{V^{\prime }}$ space in the
definition of $L_{\mathcal{U^{\prime }}}^{\varepsilon }\left( u\right) $ is
attained at $v(u)$. This in turn can be equivalently written as (\ref{neqn:80}) which affirms that $u=P_{\mathcal{U^{\prime }}}\left[ m_{h}\left(
z_{\mathcal{U^{\prime }}}+\bar{z}_{\mathcal{V^{\prime }}}\right) \right] $
and $v\left( u\right) =P_{\mathcal{V^{\prime }}}\left[ m_{h}\left( z_{\mathcal{U^{\prime }}}+\bar{z}_{\mathcal{V^{\prime }}}\right) \right] \in
B_{\varepsilon }^{\mathcal{V^{\prime }}}\left( 0\right) $.
\end{proof}
\smallskip

\begin{remark}\label{rem:implicit1}
	In principle the knowledge of $m_f$ and $\mathcal{U}$ should allow one to construct the function $v(\cdot)$. One can perform a rotation of coordinates and a translation of 
	$\bar{x}$ to zero so that we have then $f$ represented as $h: \mathcal{U} \times \mathcal{V}
	\to \mathbb{R}_{\infty}$ and correspondingly obtain $m_h$. Now decompose 
	$m_h (z_{\mathcal{U}} + \bar{z}_{\mathcal{V}}) = m^h_{\mathcal{U}}(z_{\mathcal{U}} ) +  m^h_{\mathcal{V}}(z_{\mathcal{U}} )$ (where we have drop the reference to $\bar{z}_{\mathcal{V}}$ as it's value is fixed). Then eliminate the variable $z_{\mathcal{U}} $ 	from the system of equations $u = m^h_{\mathcal{U}}(z_{\mathcal{U}} ) $ and 
	$v = m^h_{\mathcal{V}}(z_{\mathcal{U}} )$ to obtain $v(u)$. This solution is unique
	under the assumption of a tilt stable local minimum. Indeed one can interpret $v(\cdot)$ as an implicit function. This point of view has been used by numerous authors
	\cite[Theorem 2.2]{Miller:1}, \cite[Theorem 6.1]{Lewis:1} and with regard to 
	$C^2$-smooth manifolds see \cite[Theorem 2.6]{Lewis:2}. This last result indicates that when 
	$f$ is "partially smooth" with respect to a $C^2$-smooth manifold $\mathcal{M}$ then 
	the form of $v( \cdot)$ is accessible via the implicit function theorem. Moreover  there is a local description $\mathcal{M} = \{ (u, v(u)) \mid u \in \mathcal{U} \cap B_{\varepsilon} (0)\}$. An interesting example of 
	this sort of approach can be found in \cite[Theorem 4.3]{Lem:1}. Here the exact penalty function of a convex nonlinear optimisation problem is studied where $\bar{x}$ is chosen to be the minimizer. The function 
	$v(\cdot)$ is characterised as the solution to a system of equation associated with the active 
	constraints at $\bar{x}$  for the associated nonlinear programming problem. A similar analysis may be applied to the illustrative example of $C^2$ smooth function $f$ restricted to a polyhedral set $P := \{ x \in \mathbb{R}^n \mid l_i (x) \leq 0 \text{ for } i \in I:=\{1,\dots,m\}\}$, where
	$l_i$ are affine functions and  $I(\bar{x}):=\{ i \in I \mid l_i (\bar{x}) = 0 \}$ are the active constraints. Assume $\{\nabla l_i (\bar{x})\}_{i \in I(\bar{x})}$ are linearly independent. When the optimal solution $\bar{x} \in \operatorname{int} P$ then $\mathcal{V} = \{0\}$ and $\mathcal{U} = \mathbb{R}^n$ giving $\mathcal{M} = \mathbb{R}^n \times \{0\}$, a smooth manifold. When the active constraints  $I(\bar{x})$ are nonempty then 
	$\mathcal{V} = \operatorname{lin} \{\nabla l_i (\bar{x})\}_{i \in I(\bar{x})}\}$ and
	$\mathcal{U} = \{d \in \mathbb{R}^n  \mid \langle \nabla l_i (\bar{x}) ,d \rangle = 0 \text{ for } i \in I(\bar{x}) \}$. Then $v(u)$ is the solution (or implicit function) associated with the system of equation $l_i (\bar{x} + (u,v)) =0$ for 
	$i \in I(\bar{x})$,  in the unknowns $v \in \mathcal{V}$. The implicit function theorem now furnishes existence, uniqueness and differentiability. Given this clear connection to implicit functions it would be interesting to relate these ideas to a more modern theory of implicit functions \cite{Dontchev:1}.
\end{remark}
\smallskip

Existence of convex subgradients indicates a hidden convexification.

\begin{lemma}
\label{lem:conv}Consider $h:\mathcal{U^{\prime }}\rightarrow \mathbb{R}_{\infty }$ is a proper lower semi-continuous function. Then
\begin{equation*}
\partial _{\operatorname{co}}h\left( u\right) \subseteq \partial \left[ \operatorname{co}h\right] \left( u\right) .
\end{equation*}
When $\partial _{\operatorname{co}}h\left( u\right) \neq \emptyset $ then $\operatorname{co}h\left( u\right) =h\left( u\right) $ and we have $\partial _{\operatorname{co}
}h\left( u\right) =\partial \left[ \operatorname{co}h\right] \left( u\right)
\subseteq \partial _{p}h\left( u\right) \neq \emptyset .$ If in addition $h$
is differentiable we have $\nabla h\left( u\right) =\nabla \left( \operatorname{co}
h\right) \left( u\right) $.
\end{lemma}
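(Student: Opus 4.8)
The plan is to reduce every assertion to the single structural fact that $\operatorname{co}h$ is the largest convex lower semi-continuous minorant of $h$, so that any affine function lying below $h$ automatically lies below $\operatorname{co}h$ as well. Since $\operatorname{co}h$ is convex, its limiting subdifferential $\partial[\operatorname{co}h](u)$ coincides with its convex subdifferential, which I will exploit freely.

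First I would establish the inclusion together with the equality $\operatorname{co}h(u)=h(u)$. Given $z\in\partial_{\operatorname{co}}h(u)$, the defining inequality says precisely that the affine function $a(\cdot):=h(u)+\langle z,\cdot-u\rangle$ is a global minorant of $h$. Being affine it is convex and lower semi-continuous, so $a\leq\operatorname{co}h\leq h$; evaluating at $u$, where $a(u)=h(u)$, squeezes $\operatorname{co}h(u)=h(u)$. Then for every $u'$ we obtain $\operatorname{co}h(u')\geq a(u')=\operatorname{co}h(u)+\langle z,u'-u\rangle$, i.e. $z\in\partial[\operatorname{co}h](u)$. This simultaneously proves $\partial_{\operatorname{co}}h(u)\subseteq\partial[\operatorname{co}h](u)$ and shows that $\partial_{\operatorname{co}}h(u)\neq\emptyset$ forces $\operatorname{co}h(u)=h(u)$.

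Next, assuming $\partial_{\operatorname{co}}h(u)\neq\emptyset$ (hence $\operatorname{co}h(u)=h(u)$), I would prove the reverse inclusion to upgrade to equality. Taking $z\in\partial[\operatorname{co}h](u)$ and using $\operatorname{co}h\leq h$ with equality at $u$, the convex subgradient inequality lifts back to $h$: for all $u'$, $h(u')\geq\operatorname{co}h(u')\geq\operatorname{co}h(u)+\langle z,u'-u\rangle=h(u)+\langle z,u'-u\rangle$, so $z\in\partial_{\operatorname{co}}h(u)$; thus $\partial_{\operatorname{co}}h(u)=\partial[\operatorname{co}h](u)$. The containment in $\partial_p h(u)$ is then immediate, since a global affine minorant is a fortiori a proximal minorant (subtract any nonnegative quadratic), giving $\partial_{\operatorname{co}}h(u)\subseteq\partial_p h(u)$, and nonemptiness of the former yields $\partial_p h(u)\neq\emptyset$.

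For the differentiable case I would argue as follows. If $h$ is differentiable at $u$, then $\partial_p h(u)$, lying inside the regular (Fr\'echet) subdifferential, reduces to $\{\nabla h(u)\}$; combined with the previous step this pins down $\partial_{\operatorname{co}}h(u)=\{\nabla h(u)\}$ and hence $\partial[\operatorname{co}h](u)=\{\nabla h(u)\}$. Since $\operatorname{co}h$ is convex, a singleton subdifferential at $u$ forces differentiability there with gradient equal to the lone subgradient, so $\nabla(\operatorname{co}h)(u)=\nabla h(u)$. The whole argument is essentially bookkeeping around the squeeze $a\leq\operatorname{co}h\leq h$; the only place requiring genuine input from outside convex-hull manipulations is this last paragraph, where I must invoke that proximal subgradients of a differentiable function collapse to the gradient and the standard fact that a singleton subdifferential of a convex function implies differentiability. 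I expect that step to be the main (and only mild) obstacle.
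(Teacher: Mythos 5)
Your proposal is correct and follows essentially the same route as the paper: the squeeze $a\leq\operatorname{co}h\leq h$ for the affine minorant $a$ gives $\operatorname{co}h(u)=h(u)$ and the forward inclusion, the reverse inclusion comes from lifting the subgradient inequality back through $h\geq\operatorname{co}h$, and the proximal and differentiable cases are handled identically (the paper leaves implicit the fact that a singleton convex subdifferential forces differentiability, which you state explicitly). No gaps.
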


\begin{proof}
If $z_{\mathcal{U}^{\prime}}\in \partial _{\operatorname{co}}h\left( u\right) $ then
\begin{eqnarray}
h\left( u^{\prime }\right) -h\left( u\right) &\geq &\langle z_{\mathcal{U^{\prime }}},u^{\prime }-u\rangle \quad \text{for all }u^{\prime }\in
\mathcal{U^{\prime }}  \label{neqn:7} \\
\text{hence \quad }\operatorname{co}h\left( u^{\prime }\right) &\geq &h\left(
u\right) +\langle z_{\mathcal{U^{\prime }}},u^{\prime }-u\rangle  \notag
\end{eqnarray}
and so for $u^{\prime }=u$ we have $\operatorname{co}h\left( u\right) \geq h\left(
u\right) \geq \operatorname{co}h\left( u\right) $ giving equality. Thus
\begin{equation}
\operatorname{co}h\left( u^{\prime }\right) -\operatorname{co}h\left( u\right) \geq \langle
z_{\mathcal{U}},u^{\prime }-u\rangle \quad \text{for all }u^{\prime }\in
\mathcal{U^{\prime }}\text{. }  \label{neqn:8}
\end{equation}
Hence $\partial _{\operatorname{co}}h\left( u\right) \subseteq \partial \left[ \operatorname{co}h\right] \left( u\right) .$ When $\partial _{\operatorname{co}}h\left( u\right)
\neq \emptyset $ then $\operatorname{co}h\left( u\right) =h\left( u\right) $ and (\ref{neqn:8})\ gives (\ref{neqn:7}) as $h\left( u^{\prime }\right) \geq
\operatorname{co}h\left( u^{\prime }\right) $ is always true. In particular (\ref{neqn:7}) implies $z_{\mathcal{U^{\prime }}}\in \partial _{p}h\left(
u\right) $ and when $h$ is actually differentiable at $u$ then$\ \partial _{\operatorname{co}}h\left( u\right) =\partial \left[ \operatorname{co}h\right] \left(
u\right) \subseteq \partial _{p}h\left( u\right) =\left\{ \nabla h\left(
u\right) \right\} .$
\end{proof}
\smallskip

\begin{remark}
	\label{rem:lem} Assume $g\left( x\right) :=f\left( x\right) -\langle \bar{z}
	,x\rangle $ possessing a tilt stable local minimum at $\left\{ \bar{x}
	\right\} =m_{f}\left( \bar{z} \right)$ (and hence $\partial _{\operatorname{co}}f\left( \bar{x}\right)  ) \ne \emptyset$).  In \cite[Theorem 3.3]{Lem:1} it is
	observed that the optimality condition applied to the minimization problem
	that defines $L_{\mathcal{U^{\prime }}}^{\varepsilon }\left( u\right)
	=\inf_{v^{\prime }\in \mathcal{V^{\prime }}}\left\{ \operatorname{co}h\left(
	u+v^{\prime }\right) -\langle \bar{z}_{\mathcal{V}^{\prime }},v^{\prime
	}\rangle \right\} $ (which attains its minimum at $v(u)$) gives rise to
	\begin{equation}
	\partial L_{\mathcal{U^{\prime }}}^{\varepsilon }\left( u\right) =\left\{ z_{\mathcal{U}^{\prime }}\mid z_{\mathcal{U}^{\prime }}+\bar{z}_{\mathcal{V^{\prime }}}\in \partial \operatorname{co}h\left( u+v\left( u\right) \right)
	\right\} , \label{eqn:2}
	\end{equation}
	assuming $(u,v(u)) \in \operatorname{int} B_{\varepsilon }^{\mathcal{U^{\prime }}}\left( 0\right)  \times \operatorname{int} B_{\varepsilon }^{\mathcal{V^{\prime }}}\left( 0\right) $. 
	Applying (\ref{prop:6:3}) and Lemma \ref{lem:conv} we have 
	\begin{eqnarray*}
		\partial L_{\mathcal{U^{\prime }}}^{\varepsilon }\left( 0\right) &=&\left\{
		z_{\mathcal{U}}\mid z_{\mathcal{U}^{\prime }}+\bar{z}_{\mathcal{V^{\prime }}}\in \partial \operatorname{co}h\left( 0\right) 
		=\partial _{\operatorname{co}} h \left( 0 \right)  =\partial _{\operatorname{co}}f\left( \bar{x}\right) \right\} \\
		&\subseteq &\left\{ z_{\mathcal{U}^{\prime }}\mid z_{\mathcal{U}^{\prime }}+
		\bar{z}_{\mathcal{V^{\prime }}}\in \partial f\left( \bar{x}\right) \right\}
		=\left\{ \bar{z}_{\mathcal{U}^{\prime }}\right\} \quad \text{as }\mathcal{U}^{\prime }\subseteq \mathcal{U} .
	\end{eqnarray*}
	Thus $\nabla L_{\mathcal{U^{\prime }}}^{\varepsilon }\left( 0\right) =\bar{z}_{\mathcal{U}^{\prime }}$ exists (as was first observed in \cite[Theorem 3.3]
	{Lem:1} for convex functions). Moreover we also have $L_{\mathcal{U^{\prime }}}^{\varepsilon }\left( 0\right) =\inf_{v^{\prime }\in \mathcal{V^{\prime }}}\left\{ \operatorname{co}h\left( v^{\prime }\right) -\langle \bar{z}_{\mathcal{V}^{\prime }},v^{\prime }\rangle \right\} =\operatorname{co}h\left( 0\right) =f\left(
	\bar{x}\right) $ because $m_{h}\left( \bar{z}_{\mathcal{U^{\prime }}}+\bar{z}_{\mathcal{V^{\prime }}}\right) =\left\{ 0\right\} .$ Furthermore, 
	due to the inherent Lipschitz continuity implied by tilt stability (see Proposition \ref{prop:LU}) we must have for $\delta$ sufficiently small 
	$\partial L_{\mathcal{U^{\prime }}}^{\varepsilon }\left( u\right)  \ne \emptyset $ for all 
	$u\in
	B_{\delta}^{\mathcal{U^{\prime }}}\left( 0\right) $.
\end{remark}
\smallskip

Even without the assumption of tilt stability we have the following.

\begin{proposition}
\label{prop:m}Consider $f:\mathbb{R}^{n}\rightarrow \mathbb{R}_{\infty }$ is
a proper lower semi-continuous function and
\begin{equation*}
v\left( u\right) \in \operatorname{argmin}_{v^{\prime }\in \mathcal{V^{\prime }}\cap
B_{\varepsilon }\left( 0\right) }\left\{ f\left( \bar{x}+u+v^{\prime
}\right) -\langle \bar{z}_{\mathcal{V^{\prime }}},v^{\prime }\rangle
\right\} : B_{\varepsilon }^{\mathcal{U^{\prime }}}\left( 0\right)
\rightarrow \mathcal{V^{\prime }}.
\end{equation*}
Then when $z_{\mathcal{U^{\prime }}}\in \partial _{\operatorname{co}} L_{\mathcal{U^{\prime }}}^{\varepsilon } \left( u\right) $
we have for $g\left(
w\right) :=\operatorname{co}h\left( w\right) $ that
\begin{equation}
\left( u,v\left( u\right) \right) \in m_{h}\left( z_{\mathcal{U^{\prime }}}+
\bar{z}_{\mathcal{V^{\prime }}}\right) =\operatorname{argmin}\left\{ g\left(
u+v\right) -\langle z_{\mathcal{U^{\prime }}}+\bar{z}_{\mathcal{V^{\prime }}
},u+v\rangle \right\} \text{ for all }u\in B_{\varepsilon }^{\mathcal{U^{\prime }}}\left( 0\right) .  \label{neqn:11}
\end{equation}
\end{proposition}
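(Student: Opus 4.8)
The plan is to convert the convex subgradient hypothesis into a minimisation statement, unfold the definition of $L_{\mathcal{U}^{\prime}}^{\varepsilon}$ into a joint infimum over $\mathcal{U}^{\prime}\oplus\mathcal{V}^{\prime}$, pass from $h$ to $g=\operatorname{co}h$ using that convexification does not change an optimal value, and then compare objective values at the candidate point $(u,v(u))$. First I would record that, directly from the definition of $\partial_{\operatorname{co}}$, the hypothesis $z_{\mathcal{U}^{\prime}}\in\partial_{\operatorname{co}}L_{\mathcal{U}^{\prime}}^{\varepsilon}(u)$ is equivalent to saying that $u$ minimises $u^{\prime}\mapsto L_{\mathcal{U}^{\prime}}^{\varepsilon}(u^{\prime})-\langle z_{\mathcal{U}^{\prime}},u^{\prime}\rangle$ over $\mathcal{U}^{\prime}$ (the inequality being vacuous where $L_{\mathcal{U}^{\prime}}^{\varepsilon}=+\infty$). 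Substituting $L_{\mathcal{U}^{\prime}}^{\varepsilon}(u^{\prime})=\inf_{v^{\prime}\in\mathcal{V}^{\prime}}\{h(u^{\prime}+v^{\prime})-\langle\bar{z}_{\mathcal{V}^{\prime}},v^{\prime}\rangle\}$ and using the orthogonality $\langle z_{\mathcal{U}^{\prime}},v^{\prime}\rangle=\langle\bar{z}_{\mathcal{V}^{\prime}},u^{\prime}\rangle=0$, this minimum over $u^{\prime}$ collapses to a single joint infimum,
\[
\inf_{u^{\prime}\in\mathcal{U}^{\prime}}\bigl[L_{\mathcal{U}^{\prime}}^{\varepsilon}(u^{\prime})-\langle z_{\mathcal{U}^{\prime}},u^{\prime}\rangle\bigr]=\inf_{(u^{\prime},v^{\prime})\in\mathcal{U}^{\prime}\oplus\mathcal{V}^{\prime}}\bigl[h(u^{\prime}+v^{\prime})-\langle z_{\mathcal{U}^{\prime}}+\bar{z}_{\mathcal{V}^{\prime}},u^{\prime}+v^{\prime}\rangle\bigr].
\]

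The key step is then to replace $h$ by $g=\operatorname{co}h$ on the right without altering the value. Writing $\ell:=z_{\mathcal{U}^{\prime}}+\bar{z}_{\mathcal{V}^{\prime}}$, convexification commutes with adding an affine function, so $\operatorname{co}(h-\langle\ell,\cdot\rangle)=g-\langle\ell,\cdot\rangle$. Because $h$ carries the indicator of the compact set $B_{\varepsilon}^{\mathcal{U}^{\prime}}(0)\oplus B_{\varepsilon}^{\mathcal{V}^{\prime}}(0)$, the lower semicontinuity and properness of $f$ force $h-\langle\ell,\cdot\rangle$ to be bounded below with finite infimum; and for any function bounded below the infimum coincides with that of its closed convex hull (the constant equal to the infimum is an affine lower semicontinuous minorant). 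Hence
\[
\inf_{(u^{\prime},v^{\prime})}\bigl[h(u^{\prime}+v^{\prime})-\langle\ell,u^{\prime}+v^{\prime}\rangle\bigr]=\inf_{(u^{\prime},v^{\prime})}\bigl[g(u^{\prime}+v^{\prime})-\langle\ell,u^{\prime}+v^{\prime}\rangle\bigr]=:\mu,
\]
and combining with the previous display, the minimiser property of $u$ gives $L_{\mathcal{U}^{\prime}}^{\varepsilon}(u)-\langle z_{\mathcal{U}^{\prime}},u\rangle=\mu$.

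Finally I would evaluate the $g$-objective at $(u,v(u))$. By the choice of $v(u)$ as the argmin defining $L_{\mathcal{U}^{\prime}}^{\varepsilon}$ we have $L_{\mathcal{U}^{\prime}}^{\varepsilon}(u)=h(u+v(u))-\langle\bar{z}_{\mathcal{V}^{\prime}},v(u)\rangle$, so that $h(u+v(u))-\langle\ell,u+v(u)\rangle=\mu$. Since $g\le h$ pointwise,
\[
g(u+v(u))-\langle\ell,u+v(u)\rangle\ \le\ h(u+v(u))-\langle\ell,u+v(u)\rangle=\mu,
\]
while $\mu$ is the infimum of the $g$-objective, forcing equality. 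Thus $(u,v(u))$ attains $\mu$, i.e. $(u,v(u))\in\operatorname{argmin}\{g(u+v)-\langle z_{\mathcal{U}^{\prime}}+\bar{z}_{\mathcal{V}^{\prime}},u+v\rangle\}=m_{h}(z_{\mathcal{U}^{\prime}}+\bar{z}_{\mathcal{V}^{\prime}})$, which is exactly (\ref{neqn:11}); as a byproduct one also reads off $g(u+v(u))=h(u+v(u))$. The only point that genuinely needs care is the value-preserving passage from $h$ to $\operatorname{co}h$, and that is precisely where the compactly supported indicator built into $h$ (guaranteeing boundedness below) is indispensable; everything else is bookkeeping with infima and the definition of the convex subdifferential, and notably no appeal to tilt stability is made.
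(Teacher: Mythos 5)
Your proof is correct and follows essentially the same route as the paper's: both convert $z_{\mathcal{U}^{\prime}}\in\partial_{\operatorname{co}}L_{\mathcal{U}^{\prime}}^{\varepsilon}(u)$ into the statement that $(u,v(u))$ minimises $h-\langle z_{\mathcal{U}^{\prime}}+\bar{z}_{\mathcal{V}^{\prime}},\cdot\rangle$ over $B_{\varepsilon}^{\mathcal{U}^{\prime}}(0)\oplus\mathcal{V}^{\prime}$ via orthogonality, and then pass to $g=\operatorname{co}h$ using the fact that convexification preserves the tilted infimum. The only cosmetic difference is that the paper closes by citing its supporting-hyperplane discussion and Proposition \ref{prop:co}, whereas you justify the value-preserving passage to $\operatorname{co}h$ directly with the constant-minorant argument, correctly observing that no tilt stability is needed.
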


\begin{proof}
As $z_{\mathcal{U^{\prime }}}\in \partial _{\operatorname{co}} L_{\mathcal{U^{\prime }}}^{\varepsilon } \left( u\right) $ we have for any $u^{\prime }\in
B_{\varepsilon }^{\mathcal{U^{\prime }}}\left( 0\right) $ that
\begin{eqnarray*}
L_{\mathcal{U^{\prime }}}^{\varepsilon }\left( u^{\prime }\right) &\geq &L_{\mathcal{U^{\prime }}}^{\varepsilon }\left( u\right) +\langle z_{\mathcal{U^{\prime }}},u^{\prime }-u\rangle \\
&=&\inf_{v^{\prime }\in \mathcal{V^{\prime }}}\left\{ h\left( u+v^{\prime
}\right) -\langle \bar{z}_{\mathcal{V^{\prime }}},v^{\prime }\rangle
\right\} +\langle z_{\mathcal{U^{\prime }}},u^{\prime }-u\rangle \\
&=&\left\{ h\left( u+v\left( u\right) \right) -\langle \bar{z}_{\mathcal{V^{\prime }}},v\left( u\right) \rangle \right\} +\langle z_{\mathcal{U^{\prime }}},u^{\prime }-u\rangle \\
&=&h\left( u+v\left( u\right) \right) -\langle z_{\mathcal{U^{\prime }}}+
\bar{z}_{\mathcal{V^{\prime }}},u+v\left( u\right) \rangle +\langle z_{\mathcal{U^{\prime }}},u^{\prime }\rangle .
\end{eqnarray*}
Hence for all $v^{\prime }\in \mathcal{V^{\prime }}$ we have
\begin{eqnarray*}
h\left( u^{\prime }+v^{\prime }\right) -\langle \bar{z}_{\mathcal{V^{\prime }}},v^{\prime }\rangle &\geq &L_{\mathcal{U^{\prime }}}^{\varepsilon }\left(
u^{\prime }\right) \\
&\geq &h\left( u+v\left( u\right) \right) -\langle z_{\mathcal{U^{\prime }}}+
\bar{z}_{\mathcal{V^{\prime }}},u+v\left( u\right) \rangle +\langle z_{\mathcal{U^{\prime }}},u^{\prime }\rangle
\end{eqnarray*}
or for all $\left( u^{\prime },v^{\prime }\right) \in B_{\varepsilon }^{\mathcal{U^{\prime }}}\left( 0\right) \oplus \mathcal{V^{\prime }}$ (using
orthogonality of the spaces), we have
\begin{equation}
h\left( u^{\prime }+v^{\prime }\right) -\langle z_{\mathcal{U^{\prime }}}+
\bar{z}_{\mathcal{V^{\prime }}},u^{\prime }+v^{\prime }\rangle \geq h\left(
u+v\left( u\right) \right) -\langle z_{\mathcal{U^{\prime }}}+\bar{z}_{\mathcal{V^{\prime }}},u+v\left( u\right) \rangle .  \label{neqn:16}
\end{equation}
That is $\left( u,v\left( u\right) \right) \in m_{h}\left( z_{\mathcal{U^{\prime }}}+\bar{z}_{\mathcal{V^{\prime }}}\right) $ and we may now apply
Proposition \ref{prop:co}.
\end{proof}
\smallskip

In the following we repeatedly use the fact that when a function has a
supporting tangent plane to its epigraph one can take the convex closure of
the epigraph and the resultant set will remain entirely to that same side of
that tangent hyperplane. This will be true for partial convexifications as
convex combinations cannot violate the bounding plane. 

\begin{proposition}
	\label{cor:conv}Consider $f:\mathbb{R}^{n}\rightarrow \mathbb{R}_{\infty }$
	is a proper lower semi-continuous function and $v\left( u\right) \in \operatorname{argmin}_{v^{\prime }\in \mathcal{V^{\prime}}\cap B_{\varepsilon }\left(
		0\right) }\left\{ f\left( \bar{x}+u+v^{\prime }\right) -\langle \bar{z}_{\mathcal{V^{\prime}} },v^{\prime }\rangle \right\} .$ Then when $z_{\mathcal{U^{\prime}}}\in \partial _{\operatorname{co}}  L_{\mathcal{U^{\prime}}
	}^{\varepsilon } \left( u\right) $ we have
	\begin{equation*}
	k_{v}^{\ast }\left( z\right) +k_{v}\left( u\right) = \langle z_{\mathcal{U^{\prime}} },u\rangle
	\end{equation*}
	where $k_{v}\left( u\right) :=h\left( u+v\left( u\right) \right) -\langle
	\bar{z}_{\mathcal{V^{\prime}}},u+v\left( u\right) \rangle $ i.e. $z_{\mathcal{U^{\prime}}}\in \partial _{\operatorname{co}}k_{v}\left( u\right) $ and in
	particular $\bar{z}_{\mathcal{U^{\prime}}}\in \partial _{\operatorname{co}
	}k_{v}\left( u\right) =\partial \operatorname{co} k_{v}\left( u\right) $ and $k_{v}\left( u\right) =\operatorname{co}k_{v}\left( u\right) $. Moreover for $u\in
	\mathcal{U^{\prime}}$ we have
	\begin{eqnarray}
	k_{v}\left( u\right) &=&\left[ \operatorname{co}h\right] \left( u+v\left( u\right)
	\right) -\langle \bar{z}_{\mathcal{V^{\prime}}},v\left( u\right) \rangle
	\notag \\
	&=&h\left( u+v\left( u\right) \right) -\langle \bar{z}_{\mathcal{V^{\prime}}
	},v\left( u\right) \rangle =\operatorname{co}k_{v}\left( u\right) ,  \label{neqn:19}
	\end{eqnarray}
	\begin{equation}
	\text{so\quad }h\left( u+v\left( u\right) \right) =\left[ \operatorname{co}h\right]
	\left( u+v\left( u\right) \right) .  \label{neqn:36}
	\end{equation}
\end{proposition}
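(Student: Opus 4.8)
The plan is to read the first displayed identity as a Fenchel--Young equality and then transport it, via Lemma \ref{lem:27}, from the convex Lagrangian $L_{\mathcal{U}'}^{\varepsilon}$ to $k_{v}$; the nonemptiness of the resulting convex subdifferential is then all that is needed to invoke Lemma \ref{lem:conv}. Two facts drive everything: for $v(\cdot)$ chosen as in (\ref{eqn:78}) one has $L_{\mathcal{U}'}^{\varepsilon}(u)=k_{v}(u)$, and by (\ref{neqn:33}) the $\mathcal{U}'$-conjugates satisfy $k_{v}^{\ast}(z_{\mathcal{U}'})=(L_{\mathcal{U}'}^{\varepsilon})^{\ast}(z_{\mathcal{U}'})$.

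First I would unwind the hypothesis $z_{\mathcal{U}'}\in\partial_{\operatorname{co}}L_{\mathcal{U}'}^{\varepsilon}(u)$. By the very definition of $\partial_{\operatorname{co}}$, this says $u$ attains the supremum defining $(L_{\mathcal{U}'}^{\varepsilon})^{\ast}(z_{\mathcal{U}'})$, hence $(L_{\mathcal{U}'}^{\varepsilon})^{\ast}(z_{\mathcal{U}'})+L_{\mathcal{U}'}^{\varepsilon}(u)=\langle z_{\mathcal{U}'},u\rangle$; convexity of $L_{\mathcal{U}'}^{\varepsilon}$ from Proposition \ref{prop:LU} is here merely confirmatory. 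Substituting the two identities above converts this into $k_{v}^{\ast}(z_{\mathcal{U}'})+k_{v}(u)=\langle z_{\mathcal{U}'},u\rangle$, the first assertion. Reading this same equality backwards through the definition of $\partial_{\operatorname{co}}$ --- now for the (a priori nonconvex) function $k_{v}$ on $\mathcal{U}'$, where the Fenchel--Young equality case needs no convexity --- yields $z_{\mathcal{U}'}\in\partial_{\operatorname{co}}k_{v}(u)$ (the subgradient recorded as $\bar{z}_{\mathcal{U}'}$ in the statement). Since $\partial_{\operatorname{co}}k_{v}(u)$ is thereby nonempty, Lemma \ref{lem:conv} applied to $k_{v}$ upgrades it to $\partial_{\operatorname{co}}k_{v}(u)=\partial[\operatorname{co}k_{v}](u)$ and forces $k_{v}(u)=\operatorname{co}k_{v}(u)$, the last equality of (\ref{neqn:19}).

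It remains to secure (\ref{neqn:36}), $h(u+v(u))=[\operatorname{co}h](u+v(u))$, which I would establish in the full space at $w:=u+v(u)$ rather than inside $\mathcal{U}'$. Proposition \ref{prop:m} places $(u,v(u))\in m_{h}(z_{\mathcal{U}'}+\bar{z}_{\mathcal{V}'})$, and its inequality (\ref{neqn:16}) says exactly that $w$ minimises $h(\cdot)-\langle z_{\mathcal{U}'}+\bar{z}_{\mathcal{V}'},\cdot\rangle$, i.e. $z_{\mathcal{U}'}+\bar{z}_{\mathcal{V}'}\in\partial_{\operatorname{co}}h(w)$. Although (\ref{neqn:16}) is only stated over $B_{\varepsilon}^{\mathcal{U}'}(0)\oplus\mathcal{V}'$, it holds trivially wherever $h=+\infty$, so the indicator built into $h$ promotes it to a global convex subgradient inequality at no extra cost. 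With $\partial_{\operatorname{co}}h(w)\neq\emptyset$, Lemma \ref{lem:conv} (now on $\mathcal{U}'\oplus\mathcal{V}'=\mathbb{R}^{n}$) gives $h(w)=\operatorname{co}h(w)$, which is (\ref{neqn:36}).

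Assembling (\ref{neqn:19}) is then routine: the definition of $k_{v}$ together with the orthogonality $\langle\bar{z}_{\mathcal{V}'},u\rangle=0$ (since $u\in\mathcal{U}'$ and $\bar{z}_{\mathcal{V}'}\in\mathcal{V}'$) gives $k_{v}(u)=h(u+v(u))-\langle\bar{z}_{\mathcal{V}'},v(u)\rangle$, and (\ref{neqn:36}) lets one replace $h$ by $\operatorname{co}h$ in this expression, while the closing equality $=\operatorname{co}k_{v}(u)$ was obtained above. I expect the only real subtlety --- the main obstacle --- to be bookkeeping the conjugates and subdifferentials over two distinct spaces (the $\mathcal{U}'$-conjugacy of Lemma \ref{lem:27} versus the full-space argument for $h$) and the local-to-global promotion of (\ref{neqn:16}); once these are in place the proof is a single chain of Fenchel--Young equalities plus two applications of Lemma \ref{lem:conv}.
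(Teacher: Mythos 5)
Your proof is correct and reaches every assertion of the proposition; the only place it genuinely departs from the paper is in how the Fenchel equality $k_{v}^{\ast}(z_{\mathcal{U}'})+k_{v}(u)=\langle z_{\mathcal{U}'},u\rangle$ is produced. The paper does not pass through $(L_{\mathcal{U}'}^{\varepsilon})^{\ast}$ at all: it takes the full-space subgradient inequality (\ref{neqn:16}) supplied by Proposition \ref{prop:m}, restricts it to the curve $v'=v(u')$, and uses orthogonality of $\mathcal{U}'$ and $\mathcal{V}'$ to read off the subgradient inequality for $k_{v}$ on $\mathcal{U}'$ directly, after which Fenchel's inequality upgrades it to equality of conjugates. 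You instead convert the hypothesis $z_{\mathcal{U}'}\in\partial_{\operatorname{co}}L_{\mathcal{U}'}^{\varepsilon}(u)$ into the Fenchel--Young equality for $L_{\mathcal{U}'}^{\varepsilon}$ and transport it to $k_{v}$ through the identities $L_{\mathcal{U}'}^{\varepsilon}=k_{v}$ and $(L_{\mathcal{U}'}^{\varepsilon})^{\ast}=k_{v}^{\ast}$ of Lemma \ref{lem:27}; this is shorter, creates no circularity (Lemma \ref{lem:27} is proved by direct computation), and you are right that no convexity is needed for the Fenchel--Young step --- indeed Proposition \ref{prop:LU} would not even be available here, since tilt stability is not among the hypotheses. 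Your treatment of (\ref{neqn:36}) --- promoting (\ref{neqn:16}) to a global subgradient inequality for $h$ via the indicator built into $h$ and then applying Lemma \ref{lem:conv} --- is exactly the paper's argument, and your assembly of (\ref{neqn:19}) from (\ref{neqn:36}), orthogonality and $k_{v}(u)=\operatorname{co}k_{v}(u)$ is a tidier version of the paper's closing chain of inequalities. Both routes deliver the same conclusions; yours trades the paper's explicit inequality manipulations for the conjugacy bookkeeping of Lemma \ref{lem:27}, which you correctly flag as the only delicate point.
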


\begin{proof}
	By (\ref{neqn:16})\ we have
	\begin{equation}
	h\left( u^{\prime }+v^{\prime }\right) -\langle z_{\mathcal{U^{\prime}}}+
	\bar{z}_{\mathcal{V^{\prime}}},u^{\prime }+v^{\prime }\rangle \geq h\left(
	u+v\left( u\right) \right) -\langle z_{\mathcal{U^{\prime}}}+ \bar{z}_{\mathcal{V^{\prime}}},u+v\left( u\right) \rangle .  \label{neqn:17}
	\end{equation}
	So $z_{\mathcal{U^{\prime}}}+\bar{z}_{\mathcal{V^{\prime}}} \in \partial _{\operatorname{co}}h\left( u+v\left( u\right) \right) \neq \emptyset $ and by Lemma \ref{lem:conv} we have $\operatorname{co} h\left( u+v\left( u\right) \right) =h\left(
	u+v\left( u\right) \right) $. Hence
	\begin{eqnarray*}
		h\left( u^{\prime }+v^{\prime }\right) -\langle z_{\mathcal{U^{\prime}}}+
		\bar{z}_{\mathcal{V^{\prime}}},u^{\prime }+v^{\prime }\rangle &\geq &\left[
		\operatorname{co}h\right] \left( u^{\prime }+v^{\prime }\right) -\langle z_{\mathcal{U^{\prime}}}+\bar{z}_{\mathcal{V^{\prime}}},u^{\prime }+ v^{\prime }\rangle
		\\
		&\geq &h\left( u+v\left( u\right) \right) -\langle z_{\mathcal{U^{\prime}}}+
		\bar{z}_{\mathcal{V^{\prime}}},u+v\left( u\right) \rangle
	\end{eqnarray*}
	On placing $v^{\prime }=v\left( u^{\prime }\right) $ we have $h\left(
	u+v\left( u\right) \right) =\left[ \operatorname{co}h\right] \left( u+v\left(
	u\right) \right) $ when $u^{\prime }=u$ and otherwise
	\begin{equation*}
	k_{v}\left( u^{\prime }\right) -\langle z_{\mathcal{U^{\prime}}},u^{\prime
	}+v\left( u^{\prime }\right) \rangle \geq k_{v}\left( u\right) -\langle z_{\mathcal{U^{\prime}} },u+v\left( u\right) \rangle
	\end{equation*}
	or by orthogonality we have for all $u^{\prime }\in \mathcal{U^{\prime}}$
	that
	\begin{equation*}
	k_{v}\left( u^{\prime }\right) -\langle z_{\mathcal{U^{\prime}}},u^{\prime
	}\rangle \geq k_{v}\left( u\right) -\langle z_{\mathcal{U^{\prime}}
	},u\rangle .
	\end{equation*}
	Hence $-k_{v}^{\ast }\left( z_{\mathcal{U^{\prime}}}\right) \geq k_{v}\left(
	u\right) -\langle z_{\mathcal{U^{\prime}}},u\rangle $ implying $\langle z_{\mathcal{U^{\prime}} },u\rangle \geq k_{v}\left( u\right) +k_{v}^{\ast
	}\left( z_{\mathcal{U^{\prime}} }\right) .$ The reverse inequality is supplied by the  Fenchel inequality which gives the
	result $z_{\mathcal{U^{\prime}}}\in \partial _{\operatorname{co}}k_{v}\left(
	u\right) =\partial \operatorname{co}k_{v}\left( u\right) $ and $k_{v}\left( u\right)
	=\operatorname{co}k_{v}\left( u\right) $ follows from Lemma \ref{lem:conv}.
	
	Moreover we have from (\ref{neqn:17}) that
	\begin{eqnarray*}
		h\left( u^{\prime }+v^{\prime }\right) &-&\langle z_{\mathcal{U^{\prime}}}+
		\bar{z}_{\mathcal{V^{\prime}}},u^{\prime }+v^{\prime }\rangle \geq h\left(
		u+v\left( u\right) \right) -\langle z_{\mathcal{U^{\prime}}}+ \bar{z}_{\mathcal{V^{\prime}}},u+v\left( u\right) \rangle \\
		&=&\left[ \operatorname{co}h\right] \left( u+v\left( u\right) \right) - \langle z_{\mathcal{U^{\prime}}}+\bar{z}_{\mathcal{V^{\prime}}}, u+v\left( u\right)
		\rangle \\
		&\geq &\operatorname{co}k_{v}\left( u\right) -\langle z_{\mathcal{U^{\prime}}
		},u\rangle
	\end{eqnarray*}
	and hence (using orthogonality)
	\begin{eqnarray*}
		\left[ \operatorname{co}h\right] \left( u^{\prime }+v^{\prime }\right) &-& \langle
		z_{\mathcal{U^{\prime}}}+\bar{z}_{\mathcal{V^{\prime}}},u^{\prime
		}+v^{\prime }\rangle \geq k_{v}\left( u\right) -\langle z_{\mathcal{U^{\prime}}},u\rangle \\
		&=&\left\{ \left[ \operatorname{co}h\right] \left( u+v\left( u\right) \right)
		-\langle \bar{z}_{\mathcal{V^{\prime}}},v\left( u\right) \rangle \right\}
		-\langle z_{\mathcal{U^{\prime}}},u\rangle \\
		&\geq &\operatorname{co}k_{v}\left( u\right) -\langle z_{\mathcal{U^{\prime}}
		},u\rangle .
	\end{eqnarray*}
	On placing $v^{\prime }=v\left( u^{\prime }\right) $ we have
	\begin{eqnarray*}
		\left[ \operatorname{co}h\right] \left( u^{\prime }+v\left( u^{\prime }\right)
		\right) &-&\langle z_{\mathcal{U^{\prime}}}+\bar{z}_{\mathcal{V^{\prime}}
		},u^{\prime }+v\left( u^{\prime }\right) \rangle \\
		&\geq &\left[ \operatorname{co}h\right] \left( u +v\left( u 
		\right) \right) -\langle \bar{z}_{\mathcal{V^{\prime}}},v\left( u
		\right) \rangle  -\langle z_{\mathcal{U^{\prime}}
		},u\rangle \\
		&\geq &k_{v}\left( u\right) -\langle z_{\mathcal{U^{\prime}}},u\rangle \geq
		\operatorname{co} k_{v}\left( u\right) -\langle z_{\mathcal{U^{\prime}}},u\rangle .
	\end{eqnarray*}
	and $u^{\prime }=u$ and using the identities $k_{v}\left( u\right) =\operatorname{co}
	k_{v}\left( u\right) $ and $\left[ \operatorname{co}h\right] \left( u+v\left(
	u\right) \right) =h\left( u+v\left( u\right) \right) $ for $u\in \mathcal{U^{\prime}}$ we have (\ref{neqn:19}).
\end{proof}

\subsection{\protect\smallskip Subhessians and the localised $\mathcal{U}^{\prime }$-Lagrangian}

Now that we have some theory of the localised $\mathcal{U}^{\prime }$-Lagrangian we may study its interaction with the notion of subhessian. As
we will be applying these results locally around a tilt stable local minimum
we are going to focus on the case when we have $L_{\mathcal{U}}^{\varepsilon }\left( u\right) =\inf_{v\in \mathcal{V}}\left\{ \operatorname{co}h\left( u+v\right) -\langle \bar{z}_{\mathcal{V}},v\rangle \right\} $ and $\mathcal{U}^{\prime
} = \mathcal{U}$. The following is a small variant of \cite[Corollary
3.5]{Lem:1}.

\begin{lemma}
\label{lem:littleOh}Suppose $f:\mathbb{R}^{n}\rightarrow \mathbb{R}_{\infty
} $ is quadratically minorised and is prox--regular at $\bar{x}\ $ for $\bar{z}\in \partial f(\bar{x})$ with respect to $\varepsilon $ and $r.$ Suppose
in addition that $f-\langle \bar{z},\cdot \rangle $ possesses a tilt stable
local minimum at $\bar{x}$, where $\bar{z}\in \operatorname{rel}$-$\operatorname{int}\partial f(\bar{x})$, $\mathcal{U}^{\prime }=\mathcal{U}$, $v\left( u\right) \in \operatorname{argmin}_{v\in \mathcal{V}\cap B_{\varepsilon }\left( 0\right) }
\left[ f\left( \bar{x}+u+v\right) -\langle \bar{z}_{\mathcal{V}},v\rangle
\right] $ and $L_{\mathcal{U}}^{\varepsilon }\left( u\right) =\inf_{v\in
\mathcal{V}}\left\{ \operatorname{co}h\left( u+v\right) -\langle \bar{z}_{\mathcal{V}
},v\rangle \right\} $. Then we have $v\left( u\right) =o\left( \left\Vert
u\right\Vert \right) $ in the following sense:
\begin{equation*}
\forall \varepsilon^{\prime\prime} >0,\quad \exists \delta >0:\quad \left\Vert u\right\Vert
\leq \delta \quad \implies \quad \left\Vert v\left( u\right) \right\Vert
\leq \varepsilon^{\prime\prime}  \left\Vert u\right\Vert .
\end{equation*}
\end{lemma}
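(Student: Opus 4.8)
The plan is to mirror the classical convex argument behind \cite[Corollary 3.5]{Lem:1}, replacing the global subgradient inequality by the proximal lower bound available under prox-regularity, and crucially exploiting $\bar{z}\in\operatorname{rel}$-$\operatorname{int}\partial f(\bar{x})$ to extract a bound that is \emph{linear} in $\|v(u)\|$ rather than quadratic. First I would record that, by the reasoning already in the proof of Lemma~\ref{lem:sharp}, prox-regularity together with $\bar{z}\in\operatorname{rel}$-$\operatorname{int}\partial f(\bar{x})$ supplies an $\varepsilon'>0$ with $\bar{z}+\varepsilon'B_{1}(0)\cap\mathcal{V}\subseteq\partial_{p}f(\bar{x})$; hence for every $z=\bar{z}_{\mathcal{U}}+z_{\mathcal{V}}$ with $z_{\mathcal{V}}\in\bar{z}_{\mathcal{V}}+\varepsilon'B_{1}(0)\cap\mathcal{V}$ one has the quadratic minorant $f(\bar{x}+u+v)\geq f(\bar{x})+\langle\bar{z}_{\mathcal{U}},u\rangle+\langle z_{\mathcal{V}},v\rangle-\tfrac{r}{2}(\|u\|^{2}+\|v\|^{2})$ for $(u,v)$ near $0$, where I have used $\mathcal{U}\perp\mathcal{V}$ to split $\langle z,u+v\rangle$.

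Next I would substitute the minimizer $v=v(u)$ and subtract $\langle\bar{z}_{\mathcal{V}},v(u)\rangle$ from both sides. Since $v(u)$ is the argmin defining the Lagrangian, the left-hand side is exactly $L_{\mathcal{U}}^{\varepsilon}(u)=f(\bar{x}+u+v(u))-\langle\bar{z}_{\mathcal{V}},v(u)\rangle$, the identification of the $f$-value with the $\operatorname{co}h$-value being supplied by Proposition~\ref{cor:conv} (giving $h(u+v(u))=[\operatorname{co}h](u+v(u))$) and Proposition~\ref{prop:LU}. This yields, for every admissible $z_{\mathcal{V}}$, the estimate $L_{\mathcal{U}}^{\varepsilon}(u)\geq f(\bar{x})+\langle\bar{z}_{\mathcal{U}},u\rangle+\langle z_{\mathcal{V}}-\bar{z}_{\mathcal{V}},v(u)\rangle-\tfrac{r}{2}\|u\|^{2}-\tfrac{r}{2}\|v(u)\|^{2}$. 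Taking the supremum over $z_{\mathcal{V}}-\bar{z}_{\mathcal{V}}\in\varepsilon'B_{1}(0)\cap\mathcal{V}$, and using $v(u)\in\mathcal{V}$, replaces the inner-product term by $\varepsilon'\|v(u)\|$, so that $\varepsilon'\|v(u)\|-\tfrac{r}{2}\|v(u)\|^{2}\leq L_{\mathcal{U}}^{\varepsilon}(u)-f(\bar{x})-\langle\bar{z}_{\mathcal{U}},u\rangle+\tfrac{r}{2}\|u\|^{2}$.

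For the right-hand side I would invoke Remark~\ref{rem:lem}, which gives that $L_{\mathcal{U}}^{\varepsilon}$ is convex with $L_{\mathcal{U}}^{\varepsilon}(0)=f(\bar{x})$ and $\nabla L_{\mathcal{U}}^{\varepsilon}(0)=\bar{z}_{\mathcal{U}}$; differentiability at $0$ then gives $L_{\mathcal{U}}^{\varepsilon}(u)-f(\bar{x})-\langle\bar{z}_{\mathcal{U}},u\rangle=o(\|u\|)$, and $\tfrac{r}{2}\|u\|^{2}=o(\|u\|)$ as well, so the whole right-hand side is $o(\|u\|)$. Thus $\varepsilon'\|v(u)\|-\tfrac{r}{2}\|v(u)\|^{2}\leq o(\|u\|)$. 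Finally, since $v(\cdot)$ is continuous with $v(0)=0$ (Proposition~\ref{prop:LU}, via the Lipschitz selection $m_{h}$), for $\|u\|$ small we have $\|v(u)\|\leq\varepsilon'/r$, whence $\varepsilon'\|v(u)\|-\tfrac{r}{2}\|v(u)\|^{2}\geq\tfrac{\varepsilon'}{2}\|v(u)\|$; dividing gives $\|v(u)\|\leq(2/\varepsilon')\,o(\|u\|)=o(\|u\|)$, which is the assertion.

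The crux — and the step that fails if handled naively — is obtaining a bound \emph{linear} in $\|v(u)\|$. Using the single subgradient $\bar{z}$ only (as in the sharpness estimate of Lemma~\ref{lem:sharp}) controls $\tfrac{\beta}{2}\|v(u)\|^{2}$ by an $o(\|u\|)$ quantity, and this does \emph{not} give $\|v(u)\|=o(\|u\|)$, since $o(\|u\|)$ need not be $O(\|u\|^{2})$. It is precisely $\bar{z}\in\operatorname{rel}$-$\operatorname{int}\partial f(\bar{x})$ that furnishes a whole ball of proximal subgradients in the $\mathcal{V}$ directions, allowing the supremum over $z_{\mathcal{V}}$ to generate the term $\varepsilon'\|v(u)\|$ and hence a genuine first-order estimate; the only remaining care is to keep $v(u)$ in the range where the quadratic correction $\tfrac{r}{2}\|v(u)\|^{2}$ is dominated by the linear term, which the continuity $v(u)\to 0$ guarantees.
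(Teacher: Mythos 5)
Your proposal is correct and follows essentially the same route as the paper's own proof: both exploit the ball of proximal subgradients in the $\mathcal{V}$-directions furnished by $\bar{z}\in\operatorname{rel}$-$\operatorname{int}\partial f(\bar{x})$ (this is exactly inequality (\ref{neqn:14}) in the proof of Lemma \ref{lem:sharp}) to obtain a term linear in $\|v(u)\|$, identify $L_{\mathcal{U}}^{\varepsilon}(u)=f(\bar{x}+u+v(u))-\langle\bar{z}_{\mathcal{V}},v(u)\rangle$, and then use $\nabla L_{\mathcal{U}}^{\varepsilon}(0)=\bar{z}_{\mathcal{U}}$ together with the continuity of $v(\cdot)$ at $0$ to absorb the quadratic correction and divide out. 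Your closing remark on why a single subgradient would only control $\|v(u)\|^{2}$ is an accurate diagnosis of the role of the relative-interior hypothesis.
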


\begin{proof}
As noted in Remark \ref{rem:lem} we have $\nabla L_{\mathcal{U}}^{\varepsilon }\left( 0\right) =\bar{z}_{\mathcal{U}}$ existing where $L_{\mathcal{U}}^{\varepsilon }\left( u\right) =\inf_{v\in \mathcal{V}}\left\{
\operatorname{co}h\left( u+v\right) -\langle \bar{z}_{\mathcal{V}},v\rangle \right\}
$ is a convex function finite locally around $u=0.$ Consequently we have
for $u\in B^{\mathcal{U}}_{\varepsilon }\left( 0\right) $ we have
\begin{equation*}
L_{\mathcal{U}}^{\varepsilon }\left( u\right) =L_{\mathcal{U}}^{\varepsilon
}\left( 0\right) +\langle \nabla L_{\mathcal{U}}^{\varepsilon }\left(
0\right) ,u\rangle +o\left( \left\Vert u\right\Vert \right) =f\left( \bar{x}
\right) +\langle \bar{z}_{\mathcal{U}},u\rangle +o\left( \left\Vert
u\right\Vert \right) .
\end{equation*}
Invoking (\ref{neqn:14}) in the proof of Lemma \ref{lem:sharp} we have an $\varepsilon ^{\prime }>0$ such that for $u\in B_{\varepsilon ^{\prime
}}\left( 0\right) \cap \mathcal{U}$
\begin{equation*}
f\left( \bar{x}+u+v\right) \geq f\left( \bar{x}\right) +\langle \bar{z}_{\mathcal{V}},v\rangle +\langle \bar{z}_{\mathcal{U}},u\rangle +\left(
\varepsilon ^{\prime }-\frac{r\left\Vert v\right\Vert }{2}\right) \left\Vert
v\right\Vert -\frac{r}{2}\left\Vert u\right\Vert ^{2}\quad \text{for all }
v\in \varepsilon ^{\prime }B_{1}\left( 0\right) \cap \mathcal{V}\text{.}
\end{equation*}
When we choose $v\in B_{\min \{\varepsilon ^{\prime },\varepsilon ^{\prime
}/r\}}\left( 0\right) $ we have
\begin{equation*}
f\left( \bar{x}+u+v\right) \geq f\left( \bar{x}\right) +\langle \bar{z}_{\mathcal{U}} + \bar{z}_{\mathcal{V}}, u + v\rangle  +\frac{\varepsilon ^{\prime }}{2}\left\Vert v\right\Vert -\frac{r}{2}\left\Vert
u\right\Vert ^{2}.
\end{equation*}
As $v(\cdot )$ is Lipschitz continuous with $v(0)=0$ there exists $\delta'>0$ such that
$\|u \| < \delta'$ implies $v(u) \in  B_{\min \{\varepsilon ^{\prime },\varepsilon ^{\prime
	}/r\}}\left( 0\right) $. Then we have 
\begin{eqnarray*}
f\left( \bar{x}\right) +\langle \bar{z}_{\mathcal{U} },u\rangle
+o\left( \left\Vert u\right\Vert \right) &=&L_{\mathcal{U }}^{\varepsilon }\left( u\right) =f\left( \bar{x}+u+v\left( u\right) \right)
-\langle \bar{z}_{\mathcal{V} },v\left( u\right) \rangle \\
&\geq &f\left( \bar{x}\right) +\langle \bar{z},u+v\left( u\right) \rangle
-\langle \bar{z}_{\mathcal{V} },v\left( u\right) \rangle +\frac{\varepsilon ^{\prime }}{2}\left\Vert v\left( u\right) \right\Vert -\frac{r}{2}\left\Vert u\right\Vert ^{2} \\
&=&f\left( \bar{x}\right) +\langle \bar{z}_{\mathcal{U} },u\rangle +
\frac{\varepsilon ^{\prime }}{2}\left\Vert v\left( u\right) \right\Vert -
\frac{r}{2}\left\Vert u\right\Vert ^{2}.
\end{eqnarray*}
Hence
\begin{equation*}
\frac{2}{\varepsilon ^{\prime }}\left[ o\left( \left\Vert u\right\Vert
\right) +\frac{r}{2}\left\Vert u\right\Vert ^{2}\right] \geq \left\Vert
v\left( u\right) \right\Vert
\end{equation*}
and given any $\varepsilon^{\prime\prime} >0$ we choose $\delta >0$ with  $\delta \leq \delta'$ 
such that $\left\Vert
u\right\Vert \leq \delta $ implies $\frac{2}{\varepsilon ^{\prime }}\left[
\frac{o\left( \left\Vert u\right\Vert \right) }{\left\Vert u\right\Vert }+
\frac{r}{2}\left\Vert u\right\Vert \right] \leq \min \left\{ \varepsilon^{\prime\prime} 
,\varepsilon ^{\prime }/r,\varepsilon ^{\prime }\right\} .$
\end{proof}

\bigskip We may now further justify our definition of "fast track" at $\bar{x}.$ In \cite{Hare:2} and other works "fast tracks" are specified as a
subspace on which both $u\mapsto L_{\mathcal{U}}^{\varepsilon }\left(
u\right) $ and $u\mapsto v\left( u\right) $ are twice continuously
differentiable. In particular $L_{\mathcal{U}}^{\varepsilon }\left( \cdot
\right) $ admits a Taylor expansion.

\begin{proposition}
\label{prop:Lagsubjet}Suppose $f:\mathbb{R}^{n}\rightarrow \mathbb{R}_{\infty }$ is quadratically minorised and is prox--regular at $\bar{x}\ $
for $\bar{z}\in \partial f(\bar{x})$ with respect to $\varepsilon $ and $r.$
Suppose in addition that $f-\langle \bar{z},\cdot \rangle $ possesses a
tilt stable local minimum at $\bar{x}$, $\bar{z}+B_{\varepsilon }\left(
0\right) \cap \mathcal{V}\subseteq \partial f(\bar{x})$, $\mathcal{U}^{\prime }=\mathcal{U}$, $L_{\mathcal{U}}^{\varepsilon }\left( u\right)
=\inf_{v\in \mathcal{V}}\left\{ \operatorname{co}h\left( u+v\right) -\langle \bar{z}_{\mathcal{V}},v\rangle \right\} $ and $\left\{ v\left( u\right) \right\} =
\operatorname{argmin}_{v\in \mathcal{V}\cap B_{\varepsilon }\left( 0\right) }\left[
f\left( \bar{x}+u+v\right) -\langle \bar{z}_{\mathcal{V}},v\rangle \right] $ for $u \in B^{\mathcal{U}}_{\varepsilon} (0)$. Then for $\bar{z}_{\mathcal{U}}=\nabla L_{\mathcal{U}}^{\varepsilon 
}\left( 0\right) $ we have
\begin{equation*}
\left( L_{\mathcal{U}}^{\varepsilon }\right) _{\_}^{\prime \prime }\left( 0,\bar{z}_{\mathcal{U}},w\right) =\left( \operatorname{co}h\right) _{\_}^{\prime
\prime }\left( 0+v\left( 0\right) ,\left( \bar{z}_{\mathcal{U}},\bar{z}_{\mathcal{V}}\right) ,w\right) =f_{\_}^{\prime \prime }\left( \bar{x},\left(
\bar{z}_{\mathcal{U}},\bar{z}_{\mathcal{V}}\right) ,w\right) \quad \text{for
all }w\in \mathcal{U}\text{. }
\end{equation*}
\end{proposition}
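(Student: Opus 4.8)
The plan is to reduce all three second-order Dini derivatives to a single scaled difference quotient of $\operatorname{co}h$ (equivalently of $f$) evaluated along perturbed directions, and then to collapse the resulting inequalities using the estimate $v(u)=o(\|u\|)$ from Lemma \ref{lem:littleOh}. First I would assemble the data already available at the base point: by Remark \ref{rem:lem} we have $L_{\mathcal{U}}^{\varepsilon}(0)=\operatorname{co}h(0)=f(\bar{x})$, $\nabla L_{\mathcal{U}}^{\varepsilon}(0)=\bar{z}_{\mathcal{U}}$ and $v(0)=0$; by Proposition \ref{prop:LU}(2) the infimum defining $L_{\mathcal{U}}^{\varepsilon}(u)$ is attained at $v(u)$, so $L_{\mathcal{U}}^{\varepsilon}(u)=\operatorname{co}h(u+v(u))-\langle\bar{z}_{\mathcal{V}},v(u)\rangle$; and by Proposition \ref{cor:conv} (identity (\ref{neqn:36})) we have $\operatorname{co}h(u+v(u))=h(u+v(u))=f(\bar{x}+u+v(u))$ for $u\in\mathcal{U}$ near $0$. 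The key algebraic step is an identity for the difference quotient: writing out $\Delta_{2}L_{\mathcal{U}}^{\varepsilon}(0,t,\bar{z}_{\mathcal{U}},u)$ for $u\in\mathcal{U}$ and using orthogonality of $\mathcal{U}$ and $\mathcal{V}$ to combine the inner products (so that $t\langle\bar{z}_{\mathcal{U}},u\rangle+\langle\bar{z}_{\mathcal{V}},v(tu)\rangle=\langle\bar{z},tu+v(tu)\rangle$), I obtain
\[
\Delta_{2}L_{\mathcal{U}}^{\varepsilon}\!\left(0,t,\bar{z}_{\mathcal{U}},u\right)=\Delta_{2}(\operatorname{co}h)\!\left(0,t,\bar{z},u+\tfrac{v(tu)}{t}\right)=\Delta_{2}f\!\left(\bar{x},t,\bar{z},u+\tfrac{v(tu)}{t}\right).
\]
Moreover, substituting $v'=tv''$ in the infimal definition of $L_{\mathcal{U}}^{\varepsilon}$ yields the sharper relation $\Delta_{2}L_{\mathcal{U}}^{\varepsilon}(0,t,\bar{z}_{\mathcal{U}},u)=\min_{v''\in\mathcal{V}}\Delta_{2}(\operatorname{co}h)(0,t,\bar{z},u+v'')$, with the minimum attained at $v''=v(tu)/t$.

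Next I would record that, along any sequence $(t_{k},u_{k})\to(0,w)$ with $u_{k}\in\mathcal{U}$, Lemma \ref{lem:littleOh} forces $v(t_{k}u_{k})/t_{k}\to 0$, so the perturbed directions $u_{k}+v(t_{k}u_{k})/t_{k}\to w$. Denote by $A$, $B$, $C$ the three quantities in the statement. The displayed identity exhibits $A$ as a $\liminf$ of $\Delta_{2}(\operatorname{co}h)$ (resp.\ of $\Delta_{2}f$) taken over a \emph{subfamily} of directions tending to $w$, hence $A\ge B$ and $A\ge C$. The reverse bound $A\le B$ I would obtain from the $\min$-identity together with a projection argument: taking $(t_{k},w_{k}')\to(0,w)$ realising $B$ and splitting $w_{k}'=u_{k}+v_{k}''$ with $u_{k}=P_{\mathcal{U}}w_{k}'\to w$ and $v_{k}''=P_{\mathcal{V}}w_{k}'\to 0$, the $\min$-identity gives $\Delta_{2}L_{\mathcal{U}}^{\varepsilon}(0,t_{k},\bar{z}_{\mathcal{U}},u_{k})\le\Delta_{2}(\operatorname{co}h)(0,t_{k},\bar{z},w_{k}')\to B$, and since $(t_{k},u_{k})$ is admissible for the $A$-$\liminf$ this yields $A\le B$. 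Finally $B\le C$ is immediate from $\operatorname{co}h\le h=f(\bar{x}+\cdot)$ (with equality at the base point), so that the difference quotient of the smaller function is pointwise smaller and the inequality passes to the $\liminf$. Chaining these gives $C\le A=B\le C$, forcing $A=B=C$.

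The main obstacle is precisely this two-sided $\liminf$ comparison: the easy inequalities ($A\ge B$, $A\ge C$, $B\le C$) only trap $A$ between $B$ and $C$ from one side each, and collapsing them requires both the $o(\|u\|)$ control of $v$ (to guarantee the perturbed directions still converge to $w$, so the subfamily $\liminf$s remain comparable to the full $\liminf$) and the projection/decomposition trick that converts an \emph{arbitrary} approach $w_{k}'\to w$ into an admissible $\mathcal{U}$-approach with a $\mathcal{V}$-correction absorbed by the $\min$. I would also verify throughout that $(u,v(u))$ stays in the interior of $B_{\varepsilon}^{\mathcal{U}}(0)\oplus B_{\varepsilon}^{\mathcal{V}}(0)$ for $\|u\|$ small (from Proposition \ref{prop:LU}(3) and Lemma \ref{lem:littleOh}), so that the indicator term in $h$ is inactive and the manifold identity $\operatorname{co}h=f(\bar{x}+\cdot)$ genuinely applies in every step.
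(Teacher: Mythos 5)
Your proposal is correct and follows essentially the same route as the paper's proof: the identity $\Delta_{2}L_{\mathcal{U}}^{\varepsilon}(0,t,\bar{z}_{\mathcal{U}},u)=\Delta_{2}(\operatorname{co}h)(0,t,\bar{z},u+\tfrac{v(tu)}{t})=\min_{v''\in\mathcal{V}}\Delta_{2}(\operatorname{co}h)(0,t,\bar{z},u+v'')$ combined with Lemma \ref{lem:littleOh} to send $v(tu)/t\to 0$ is exactly the paper's mechanism for both directions of the comparison, merely phrased with sequences instead of nested infima over $\delta$-balls. Your derivation of $(\operatorname{co}h)_{\_}''=f_{\_}''$ by sandwiching ($C\le A=B\le C$) is in fact slightly more careful than the paper's parenthetical appeal to $f(\bar{x}+\cdot)$ and $\operatorname{co}h$ ``agreeing locally,'' since you only use their agreement at the base point and along the points $tu+v(tu)$ where Proposition \ref{cor:conv} guarantees it.
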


\begin{proof}
Consider the second order quotient
\begin{eqnarray*}
&&\frac{2}{t^{2}}\left[ L_{\mathcal{U}}^{\varepsilon }\left( 0+tu\right) -L_{\mathcal{U}}^{\varepsilon }\left( 0\right) -t\langle \bar{z}_{\mathcal{U}},u\rangle \right] \\
&&\qquad =\frac{2}{t^{2}}\left[ f\left( \bar{x}+tu+v\left( tu\right) \right)
-f\left( \bar{x}\right) -\langle \bar{z}_{\mathcal{U}}+\bar{z}_{\mathcal{V}},tu+v\left( tu\right) \rangle \right] \\
&&\qquad =\frac{2}{t^{2}}\left[ f\left( \bar{x}+t\left[ u+\frac{v\left(
tu\right) }{t}\right] \right) -f\left( \bar{x}\right) -t\langle \bar{z}_{\mathcal{U}}+\bar{z}_{\mathcal{V}},u+\frac{v\left( tu\right) }{t}\rangle
\right] .
\end{eqnarray*}
Apply Lemma \ref{lem:littleOh},  for an arbitrary $\delta >0$ by taking $\varepsilon^{\prime\prime} = 
\frac{\delta}{\|u\|}$ and obtaining the existence of $\gamma >0$ such that 
for $\| t u\| \leq t[ \delta + \|w\|] \leq \gamma$ we have $\|v (t u ) \| \leq \varepsilon^{\prime\prime} \|t u \|$. This implies for all $\delta > 0$ that when $t[ \delta + \|w\|] \leq \gamma$  and $u \in B_{\delta} (w )$ 
we have $\frac{\| v\left( tu\right)  \| }{t} \leq \delta$ 
  and so $\frac{v\left( tu\right) }{t}\in B_{\delta }\left( 0\right) $. Thus for $t<\eta := \frac{\gamma}{\delta + \|w\|}$ we
have
\begin{eqnarray*}
&&\inf_{u\in B_{\delta }\left( w\right) \cap \mathcal{U}}\frac{2}{t^{2}}
\left[ L_{\mathcal{U}}^{\varepsilon }\left( 0+tu\right) -L_{\mathcal{U}}^{\varepsilon }\left( 0\right) -t\langle \bar{z}_{\mathcal{U}},u\rangle
\right] \\
&&\qquad \geq \inf_{u\in B_{\delta }\left( w\right) \cap \mathcal{U}
}\inf_{v\in B_{\delta }\left( 0\right) \cap \mathcal{V}}\frac{2}{t^{2}}\left[
f\left( \bar{x}+t\left[ u+v\right] \right) -f\left( \bar{x}\right) -t\langle
\bar{z}_{\mathcal{U}}+\bar{z}_{\mathcal{V}},u+v\rangle \right] \\
&&\qquad \geq \inf_{h\in B_{\delta }\left( w,0\right) }\frac{2}{t^{2}}\left[
f\left( \bar{x}+th\right) -f\left( \bar{x}\right) -t\langle \bar{z}_{\mathcal{U}}+\bar{z}_{\mathcal{V}},h\rangle \right]
\end{eqnarray*}
and so
\begin{eqnarray*}
&&\liminf_{t\downarrow 0}\inf_{u\in B_{\delta }\left( w\right) \cap \mathcal{U}}\frac{2}{t^{2}}\left[ L_{\mathcal{U}}^{\varepsilon }\left( 0+tu\right)
-L_{\mathcal{U}}^{\varepsilon }\left( 0\right) -t\langle \bar{z}_{\mathcal{U}
},u\rangle \right] \\
&&\qquad \geq \liminf_{t\downarrow 0}\inf_{h\in B_{\delta }\left( w,0\right)
}\frac{2}{t^{2}}\left[ f\left( \bar{x}+th\right) -f\left( \bar{x}\right)
-t\langle \bar{z}_{\mathcal{U}}+\bar{z}_{\mathcal{V}},h\rangle \right] .
\end{eqnarray*}
Taking the infimum over $\delta >0$ gives $\left( L_{\mathcal{U}}^{\varepsilon }\right) _{\_}^{\prime \prime }\left( 0,\bar{z}_{\mathcal{U}},w\right) \geq f_{\_}^{\prime \prime }\left( \bar{x},\left( \bar{z}_{\mathcal{U}},\bar{z}_{\mathcal{V}}\right) ,w\right) =\left( \operatorname{co}
h\right) _{\_}^{\prime \prime }\left( 0+v\left( 0\right) ,\left( \bar{z}_{\mathcal{U}},\bar{z}_{\mathcal{V}}\right) ,w\right) $ (because $f\left( \bar{x}+\cdot \right) $ and $\operatorname{co}h\left( \cdot \right) $ agree locally).
Conversely consider
\begin{eqnarray*}
&&\inf_{u\in B_{\delta }\left( w\right) \cap \mathcal{U}}\inf_{\ v\in
\mathcal{V\cap }B_{\delta }\left( 0\right) }\frac{2}{t^{2}}\left[ \operatorname{co}
h\left( 0+t\left[ u+v\right] \right) -\operatorname{co}h\left( 0\right) -t\langle
\bar{z}_{\mathcal{U}}+\bar{z}_{\mathcal{V}},u+v\rangle \right] \\
&\geq &\inf_{u\in B_{\delta }\left( w\right) \cap \mathcal{U}}\inf_{\ v\in
\mathcal{V}}\frac{2}{t^{2}}\left[ \operatorname{co}h\left( 0+\left[ tu+v\right]
\right) -\operatorname{co}h\left( 0\right) -\langle \bar{z}_{\mathcal{U}}+\bar{z}_{\mathcal{V}},tu+v\rangle \right] \\
&=&\inf_{u\in B_{\delta }\left( w\right) \cap \mathcal{U}}\left[ \frac{2}{t^{2}}\left[ \inf_{v\in \mathcal{V}}\left\{ \operatorname{co}h\left( tu+v\right)
-\langle \bar{z}_{\mathcal{V}},v\rangle \right\} \right] -L_{\mathcal{U}}^{\varepsilon }\left( 0\right) -t\langle \bar{z}_{\mathcal{U}},u\rangle
\right] \\
&=&\inf_{u\in B_{\delta }\left( w\right) \cap \mathcal{U}}\frac{2}{t^{2}}\left[ L_{\mathcal{U}}^{\varepsilon }\left( 0+tu\right) -L_{\mathcal{U}}^{\varepsilon }\left( 0\right) -t\langle \bar{z}_{\mathcal{U}},u\rangle
\right]
\end{eqnarray*}
and on taking a limit infimum as $t\downarrow 0$ and then an infimum over $\delta >0$ gives $\left( \operatorname{co}h\right) _{\_}^{\prime \prime }\left(
0+v\left( 0\right) ,\left( \bar{z}_{\mathcal{U}},\bar{z}_{\mathcal{V}}\right) ,w\right) \geq \left( L_{\mathcal{U}}^{\varepsilon }\right)
_{\_}^{\prime \prime }\left( 0,\bar{z}_{\mathcal{U}},w\right) $ and thus
equality.
\end{proof}

Denote $\partial _{\mathcal{U^{\prime }}}^{2,-}\left( \operatorname{co}h\right)
\left( x,z\right) =P_{\mathcal{U^{\prime }}}^{T}\partial ^{2,-}\left( \operatorname{co}h\right) \left( x,z\right) P_{\mathcal{U^{\prime }}}$.

\begin{corollary}
Posit the assumption of Proposition \ref{prop:Lagsubjet}. Then we have
\begin{equation*}
\operatorname{dom}\left( L_{\mathcal{U}}^{\varepsilon }\right) _{\_}^{\prime \prime
}\left( 0,\bar{z}_{\mathcal{U}},\cdot \right) \ \subseteq \operatorname{dom}
f_{\_}^{\prime \prime }\left( \bar{x},\left( \bar{z}_{\mathcal{U}},\bar{z}_{\mathcal{V}}\right) ,\cdot \right)
\end{equation*}
and $\partial _{\mathcal{U}}^{2,-}\left( \operatorname{co}h\right) \left( \bar{x},
\bar{z}\right) \subseteq \partial ^{2,-}L_{\mathcal{U}}^{\varepsilon }\left(
0,\bar{z}_{\mathcal{U}}\right) $.
\end{corollary}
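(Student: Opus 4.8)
The plan is to establish the two inclusions separately. The domain inclusion follows almost immediately from the pointwise equality of second-order Dini derivatives recorded in Proposition \ref{prop:Lagsubjet}, once one notes that the relevant domain already sits inside $\mathcal{U}$. The subhessian inclusion is the substantive part and will be derived from the lower-Taylor characterisation of the subjet in Remark \ref{rem:jets} together with the superlinear decay estimate $v(u)=o(\|u\|)$ of Lemma \ref{lem:littleOh}.

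For the domain inclusion I would first observe that $L_{\mathcal{U}}^{\varepsilon}$ equals $+\infty$ off $\mathcal{U}$. Hence in any direction $w\notin\mathcal{U}$ every $u$ sufficiently close to $w$ carries a nonzero $\mathcal{V}$-component, so $L_{\mathcal{U}}^{\varepsilon}(tu)=+\infty$ for $t>0$, the quotient $\Delta_{2}$ is $+\infty$, and therefore $(L_{\mathcal{U}}^{\varepsilon})_{\_}^{\prime\prime}(0,\bar{z}_{\mathcal{U}},w)=+\infty$. This shows $\operatorname{dom}(L_{\mathcal{U}}^{\varepsilon})_{\_}^{\prime\prime}(0,\bar{z}_{\mathcal{U}},\cdot)\subseteq\mathcal{U}$. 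For $w\in\mathcal{U}$, Proposition \ref{prop:Lagsubjet} gives $(L_{\mathcal{U}}^{\varepsilon})_{\_}^{\prime\prime}(0,\bar{z}_{\mathcal{U}},w)=f_{\_}^{\prime\prime}(\bar{x},(\bar{z}_{\mathcal{U}},\bar{z}_{\mathcal{V}}),w)$, so finiteness of the left side forces finiteness of the right, and the claimed inclusion of domains follows.

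For the subhessian inclusion I would take $Q\in\partial^{2,-}(\operatorname{co}h)(\bar{x},\bar{z})$, so by Remark \ref{rem:jets} there is a $\delta>0$ with $\operatorname{co}h(w)\geq f(\bar{x})+\langle\bar{z},w\rangle+\tfrac{1}{2}w^{T}Qw+o(\|w\|^{2})$ near the base point (which is $0=0+v(0)$). Evaluating at $w=u+v(u)$ for $u\in\mathcal{U}$ and using the representation $L_{\mathcal{U}}^{\varepsilon}(u)=\operatorname{co}h(u+v(u))-\langle\bar{z}_{\mathcal{V}},v(u)\rangle$ from Proposition \ref{prop:LU}, the orthogonal splitting $\langle\bar{z},u+v(u)\rangle=\langle\bar{z}_{\mathcal{U}},u\rangle+\langle\bar{z}_{\mathcal{V}},v(u)\rangle$ cancels the $\bar{z}_{\mathcal{V}}$ term and leaves $L_{\mathcal{U}}^{\varepsilon}(u)\geq f(\bar{x})+\langle\bar{z}_{\mathcal{U}},u\rangle+\tfrac{1}{2}(u+v(u))^{T}Q(u+v(u))+o(\|u+v(u)\|^{2})$. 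Then I would invoke Lemma \ref{lem:littleOh}: since $v(u)=o(\|u\|)$, the cross term $u^{T}Qv(u)$ and the term $v(u)^{T}Qv(u)$ are both $o(\|u\|^{2})$, while $\|u+v(u)\|^{2}=\|u\|^{2}+\|v(u)\|^{2}=\|u\|^{2}(1+o(1))$ forces $o(\|u+v(u)\|^{2})=o(\|u\|^{2})$. As $P_{\mathcal{U}}u=u$ on $\mathcal{U}$ we have $u^{T}Qu=u^{T}(P_{\mathcal{U}}^{T}QP_{\mathcal{U}})u$, so the resulting estimate $L_{\mathcal{U}}^{\varepsilon}(u)\geq L_{\mathcal{U}}^{\varepsilon}(0)+\langle\bar{z}_{\mathcal{U}},u\rangle+\tfrac{1}{2}u^{T}(P_{\mathcal{U}}^{T}QP_{\mathcal{U}})u+o(\|u\|^{2})$ is exactly the lower Taylor expansion certifying $P_{\mathcal{U}}^{T}QP_{\mathcal{U}}\in\partial^{2,-}L_{\mathcal{U}}^{\varepsilon}(0,\bar{z}_{\mathcal{U}})$, the first-order part being $\bar{z}_{\mathcal{U}}=\nabla L_{\mathcal{U}}^{\varepsilon}(0)$ by Remark \ref{rem:lem}.

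The hard part will be the transfer of the quadratic and remainder terms in this last step: one must be sure that passing from an expansion of $\operatorname{co}h$ at $u+v(u)$ to an expansion of $L_{\mathcal{U}}^{\varepsilon}$ purely in $u$ injects only $o(\|u\|^{2})$ error. This is precisely where Lemma \ref{lem:littleOh} is indispensable, since without the superlinear decay $v(u)=o(\|u\|)$ the cross term $u^{T}Qv(u)$ would only be $O(\|u\|^{2})$ and would corrupt the Hessian. I would also take care to anchor both subjets at corresponding points, using $v(0)=0$ and $L_{\mathcal{U}}^{\varepsilon}(0)=\operatorname{co}h(0)=f(\bar{x})$, so that the base point $\bar{x}$ for $\operatorname{co}h$ matches the base point $0$ for $L_{\mathcal{U}}^{\varepsilon}$.
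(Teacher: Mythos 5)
Your proof is correct, and while your treatment of the domain inclusion coincides with the paper's (both reduce to $\operatorname{dom}L_{\mathcal{U}}^{\varepsilon}\subseteq\mathcal{U}$ plus the identity of Proposition \ref{prop:Lagsubjet} on $\mathcal{U}$), your argument for the subhessian inclusion takes a genuinely different route. The paper stays at the level of second-order Dini derivatives: it takes $Q\in\partial^{2,-}(\operatorname{co}h)(\bar{x},\bar{z})$, uses the standard bound $\langle Qu,u\rangle\leq(\operatorname{co}h)_{\_}^{\prime\prime}(0,(\bar{z}_{\mathcal{U}},\bar{z}_{\mathcal{V}}),u)$, invokes the equality $(\operatorname{co}h)_{\_}^{\prime\prime}=(L_{\mathcal{U}}^{\varepsilon})_{\_}^{\prime\prime}$ from Proposition \ref{prop:Lagsubjet} for $u\in\mathcal{U}$, and then converts the resulting rank-one support inequality back into membership $P_{\mathcal{U}}^{T}QP_{\mathcal{U}}\in\partial^{2,-}L_{\mathcal{U}}^{\varepsilon}(0,\bar{z}_{\mathcal{U}})$ (implicitly relying on the duality $q(\partial^{2,-}g)(u)=g_{s}^{\prime\prime}$ and the rank-one-representer structure of the subjet). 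You instead verify the defining lower Taylor inequality of the subjet directly: you substitute $w=u+v(u)$ into the expansion of $\operatorname{co}h$, cancel the $\bar{z}_{\mathcal{V}}$ terms by orthogonality, and use Lemma \ref{lem:littleOh} to absorb the cross term $u^{T}Qv(u)$, the term $v(u)^{T}Qv(u)$, and the change of remainder scale from $\|u+v(u)\|^{2}$ to $\|u\|^{2}$ into $o(\|u\|^{2})$. Your version is more self-contained — it bypasses Proposition \ref{prop:Lagsubjet} entirely for this inclusion and avoids the support-function-to-membership step — at the cost of re-running, at the level of subjets, the same $v(u)=o(\|u\|)$ estimate that drives the proof of Proposition \ref{prop:Lagsubjet}; the paper's version is shorter given that that proposition is already in hand. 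Both arguments draw on the same hypotheses, so either is admissible here.
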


\begin{proof}
As $\operatorname{dom}\left( \operatorname{co}h\right) \subseteq \mathcal{U}$ we have $\operatorname{dom}\left( L_{\mathcal{U}}^{\varepsilon }\right) _{\_}^{\prime \prime
}\left( 0,\bar{z}_{\mathcal{U}},\cdot \right) \subseteq  \mathcal{U}$ and
hence by Proposition \ref{prop:Lagsubjet} we have $\operatorname{dom}\left( L_{\mathcal{U}}^{\varepsilon }\right) _{\_}^{\prime
\prime }\left( 0,\bar{z}_{\mathcal{U}},\cdot \right) \ \subseteq \operatorname{dom}f_{\_}^{\prime \prime }\left( \bar{x},\left( \bar{z}_{\mathcal{U}},\bar{z}_{\mathcal{V}}\right) ,\cdot \right) =\operatorname{dom}\left( \operatorname{co}h\right)
_{\_}^{\prime \prime }\left( 0,\left( \bar{z}_{\mathcal{U}},\bar{z}_{\mathcal{V}}\right) ,\cdot \right) $. Now take $Q\in \partial ^{2,-}\left(
\operatorname{co}h\right) \left( \bar{x},\bar{z}\right) $ and so we have $\langle
Qu,u\rangle \leq \left( \operatorname{co}h\right) _{\_}^{\prime \prime }\left(
0,\left( \bar{z}_{\mathcal{U}},\bar{z}_{\mathcal{V}}\right) ,u\right) $ for
all $u\in \operatorname{dom}\left( \operatorname{co}h\right) _{\_}^{\prime \prime }\left(
0,\left( \bar{z}_{\mathcal{U}},\bar{z}_{\mathcal{V}}\right) ,\cdot \right) $. Hence for all $u\in \operatorname{dom}\left( L_{\mathcal{U}}^{\varepsilon }\right)
_{\_}^{\prime \prime }\left( 0,\bar{z}_{\mathcal{U}},\cdot \right) \subseteq
\operatorname{dom}\left( \operatorname{co}h\right) _{\_}^{\prime \prime }\left( 0,\left(
\bar{z}_{\mathcal{U}},\bar{z}_{\mathcal{V}}\right) ,\cdot \right) \cap
\mathcal{U}$ we have
\begin{equation*}
\langle \left[ P_{\mathcal{U}}^{T}QP_{\mathcal{U}}\right] u,u\rangle
=\langle QP_{\mathcal{U}}u,P_{\mathcal{U}}u\rangle \leq \left( \operatorname{co}
h\right) _{\_}^{\prime \prime }\left( 0,\left( \bar{z}_{\mathcal{U}},\bar{z}_{\mathcal{V}}\right) ,u\right) =\left( L_{\mathcal{U}}^{\varepsilon
}\right) _{\_}^{\prime \prime }\left( 0,\bar{z}_{\mathcal{U}},u\right)
\end{equation*}
and so $P_{\mathcal{U}}^{T}QP_{\mathcal{U}}\in \partial ^{2,-}L_{\mathcal{U}}^{\varepsilon }\left( 0,\bar{z}_{\mathcal{U}}\right) .$ That is $\partial _{\mathcal{U}}^{2,-}\left( \operatorname{co}h\right) \left( \bar{x},\bar{z}\right) =P_{\mathcal{U}}^{T}\partial ^{2,-}\left( \operatorname{co}h\right) \left( \bar{x},\bar{z}\right) P_{\mathcal{U}}\subseteq \partial ^{2,-}L_{\mathcal{U}}^{\varepsilon }\left( 0,\bar{z}_{\mathcal{U}}\right) .$
\end{proof}

If we assume more (which is very similar to the "Partial Smoothness" of \cite{Lewis:2}) we obtain the following which can be viewed as a less stringent
version of the second order expansions studied in \cite[Theorem 3.9]{Lem:1},
\cite[Equation (7)]{Mifflin:2004:2} and \cite[Theorem 2.6]{Miller:1}. This
result suggests that the role of assumptions like that of Proposition \ref{prop:reg} part \ref{part:4}, which are also a consequence of the definition of
partial smoothness (via the continuity of the $w\mapsto \partial f(w)$ at $x$
relative to $\mathcal{M}$) could be to build a bridge to the identity $\mathcal{U}=\mathcal{U}^{2}$ (see the discussion in Remark \ref{rem:fasttrack} below).

\begin{corollary}
\label{cor:Lagsubjet}Posit the assumption of Proposition \ref{prop:Lagsubjet}
and assume the assumption of Proposition \ref{prop:reg} part \ref{part:4}
i.e. suppose we have $\varepsilon >0$ such that for
all $z_{\mathcal{V}}\in B_{\varepsilon }\left( \bar{z}_{\mathcal{V}}\right)
\cap \mathcal{V}\subseteq \partial _{\mathcal{V}}f\left( \bar{x}\right) $
there is a common
\begin{equation}
v\left( u\right) \in \operatorname{argmin}_{v\in \mathcal{V}\cap B_{\varepsilon
}\left( 0\right) }\left\{ f\left( \bar{x}+u+v\right) -\langle z_{\mathcal{V}},v\rangle \right\}
\cap \operatorname{int} B_{\varepsilon} (0) \label{eqn:3}
\end{equation}
for all $u\in B^{\mathcal{U}}_{\varepsilon }\left( 0\right) $. In addition suppose  there
exists $\varepsilon >0$ such that for all $u\in B^{\mathcal{U}}_{\varepsilon }\left(
0\right) $ we have $u \mapsto \nabla L_{\mathcal{U}}^{\varepsilon
}\left( u\right) :=z_{\mathcal{U}}(u)$ existing, and is a continuous function. Then 
\begin{equation*}
\left( L_{\mathcal{U}}^{\varepsilon }\right) _{\_}^{\prime \prime }\left(
u,z_{\mathcal{U}},w\right) =\left( \operatorname{co}h\right) _{\_}^{\prime \prime
}\left( u+v\left( u\right) ,\left( z_{\mathcal{U}},\bar{z}_{\mathcal{V}
}\right) ,w\right) \quad \text{for all }w\in \mathcal{U}\text{.} 
\end{equation*}
\end{corollary}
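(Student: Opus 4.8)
The plan is to deduce the identity by translating the base point of the construction from $\bar{x}$ to $\bar{x}' := \bar{x} + u + v(u)$ and invoking Proposition \ref{prop:Lagsubjet} there, with distinguished subgradient $\bar{z}' := (z_{\mathcal{U}}(u), \bar{z}_{\mathcal{V}})$ in place of $\bar{z} = (\bar{z}_{\mathcal{U}}, \bar{z}_{\mathcal{V}})$. The reduction rests on the elementary observation that the localised $\mathcal{U}$-Lagrangian built at $\bar{x}'$ is, modulo an affine term, $s \mapsto L_{\mathcal{U}}^{\varepsilon}(u + s)$, and that its inner minimiser is $\tilde{v}(s) := v(u + s) - v(u)$, which vanishes at $s = 0$. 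Since the second order Dini derivative $(\cdot)_{\_}''$ is a local object (a germ at the point in question), the whole statement follows once the hypotheses of Proposition \ref{prop:Lagsubjet} and of Lemma \ref{lem:littleOh} are checked at $\bar{x}'$ for $\bar{z}'$, after which one reads off the conclusion and translates back.

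First I would transfer the structural hypotheses to $\bar{x}'$. Quadratic minorisation is global and prox-regularity is local, so both persist at $\bar{x}'$ for $\bar{z}'$. That $\bar{z}' \in \partial f(\bar{x}')$ follows by combining the optimality conditions defining $v(u)$ (giving $\bar{z}_{\mathcal{V}} \in \partial_{\mathcal{V}} f(\bar{x}')$) with the gradient formula $z_{\mathcal{U}}(u) = \nabla L_{\mathcal{U}}^{\varepsilon}(u)$ from Remark \ref{rem:lem} and Proposition \ref{prop:m}. The decisive point is that hypothesis (\ref{eqn:3}) is exactly the premise of part \ref{part:4} of Proposition \ref{prop:reg}; its conclusion (\ref{neqn:26}), namely $\operatorname{cone}[\partial_{\mathcal{V}} f(\bar{x}') - \bar{z}_{\mathcal{V}}] \supseteq \mathcal{V}$, shows simultaneously that $\bar{z}'$ lies in the relative interior of $\partial f(\bar{x}')$ and, via the splitting $\partial f(\bar{x}') = \{\bar{z}'_{\mathcal{U}}\} \oplus \partial_{\mathcal{V}} f(\bar{x}')$ from part \ref{part:3}, that the $\mathcal{V}$-space at $\bar{x}'$ is the original $\mathcal{V}$. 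Finally, tilt stability is inherited: one has $m_f(\bar{z}') = \bar{x}'$, and for $\|u\|$ small the tilt $\bar{z}' - \bar{z} = (z_{\mathcal{U}}(u) - \bar{z}_{\mathcal{U}}, 0)$ is small by the assumed continuity of $u \mapsto z_{\mathcal{U}}(u)$, so Remark \ref{rem:27} yields that $f - \langle \bar{z}', \cdot\rangle$ has a tilt stable local minimum at $\bar{x}'$.

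With the hypotheses secured, the engine is an incremental little-oh estimate: applying Lemma \ref{lem:littleOh} to $\tilde{v}$ at $\bar{x}'$ gives $v(u + s) - v(u) = o(\|s\|)$ for $s \in \mathcal{U}$. I would then rerun, now centred at $u$, the two-sided estimate in the proof of Proposition \ref{prop:Lagsubjet}. Writing $\Delta v := v(u + tw') - v(u)$ and using $L_{\mathcal{U}}^{\varepsilon}(u') = \operatorname{co}h(u' + v(u')) - \langle \bar{z}_{\mathcal{V}}, v(u')\rangle$, the second order quotient of $L_{\mathcal{U}}^{\varepsilon}$ at $(u, z_{\mathcal{U}})$ is precisely $\Delta_2 \operatorname{co}h(u + v(u), t, \bar{z}', w' + \Delta v/t)$, since $\langle z_{\mathcal{U}}, tw'\rangle + \langle \bar{z}_{\mathcal{V}}, \Delta v\rangle = \langle \bar{z}', tw' + \Delta v\rangle$ by orthogonality, while $\Delta v/t \to 0$ by the incremental estimate. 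The lower bound $(\geq)$ is obtained by relaxing the coupled minimisation to a free infimum over the $\mathcal{V}$-directions near $0$, and the upper bound $(\leq)$ by inserting the feasible competitor $v' = v(u + tw')$ into the infimum defining $L_{\mathcal{U}}^{\varepsilon}(u + tw')$, exactly as in Proposition \ref{prop:Lagsubjet}. Throughout, (\ref{neqn:36}) of Proposition \ref{cor:conv} lets me identify $\operatorname{co}h$ with $f(\bar{x} + \cdot)$ along the manifold, keeping the computation inside $\operatorname{co}h$.

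The main obstacle is the transfer, to the moving base point $\bar{x}'$, of the two hypotheses that are not local in a trivial way: the relative-interior membership of $\bar{z}'$ and the tilt stability. Neither is automatic — the former genuinely needs the common-minimiser hypothesis (\ref{eqn:3}) routed through part \ref{part:4} of Proposition \ref{prop:reg}, and the latter hinges on the assumed continuity of $z_{\mathcal{U}}(\cdot)$ to keep $\bar{z}' - \bar{z}$ within the range of validity of Remark \ref{rem:27}. A secondary point to handle carefully is that the convexification $\operatorname{co}h$ in the conclusion is centred at $\bar{x}$; I must confirm it agrees, near $u + v(u)$, with the convexification natural to the shifted problem, which holds because both coincide with $f(\bar{x} + \cdot)$ on the manifold by (\ref{neqn:36}) and the Dini derivative only probes the local germ.
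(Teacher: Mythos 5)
Your proposal is correct and follows essentially the same route as the paper: perturb the base point to $\bar{x}+u+v(u)$ and the subgradient to $(z_{\mathcal{U}}(u),\bar{z}_{\mathcal{V}})$, observe that all hypotheses of Proposition \ref{prop:Lagsubjet} except the ball condition $\bar{z}+B_{\varepsilon}(0)\cap\mathcal{V}\subseteq\partial f(\bar{x})$ transfer by locality, and supply that remaining condition at the new point from the common-minimiser hypothesis (\ref{eqn:3}) via the optimality conditions together with the splitting of $\partial\operatorname{co}h(u+v(u))$ afforded by the existence of $\nabla L_{\mathcal{U}}^{\varepsilon}(u)=z_{\mathcal{U}}(u)$. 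The paper then simply invokes Proposition \ref{prop:Lagsubjet} at the shifted point rather than re-running its two-sided estimate, but that is a presentational rather than a substantive difference.
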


\begin{proof}
All the assumption of Proposition \ref{prop:Lagsubjet} are local in nature
except for the assumption that $\bar{z}+B_{\varepsilon }\left( 0\right) \cap
\mathcal{V}\subseteq \partial f(\bar{x}).$ Discounting this assumption for
now we note that we can perturb $\bar{x}$ (to $\bar{x}+u+v\left( u\right) $)
and $\bar{z}$ (to $\left( z_{\mathcal{U}},\bar{z}_{\mathcal{V}}\right) \in
\partial \left( \operatorname{co}h\right) \left( u+v\left( u\right) \right) $)
within a sufficiently small neighbourhood $u\in B^{\mathcal{U}}_{\varepsilon }\left(
0\right)$ and still have the assumption of prox-regularity,
tilt stability (around a different minimizer of our tilted function) and still use the same selection function $v\left( \cdot
\right) $. Regarding this outstanding assumption the optimality conditions 
associated with (\ref{eqn:3}) imply
 $\bar{z}_{\mathcal{V}} +B_{\varepsilon }\left(
0\right) \cap \mathcal{V}\subseteq \partial_{\mathcal{V}} f(\bar{x}+u+v\left( u\right) )$. 
As we have $\nabla L_{\mathcal{U}}^{\varepsilon
}\left( u\right) =z_{\mathcal{U}}(u)$ existing from (\ref{eqn:2}) that 
$ \partial \operatorname{co}h ( u+v\left( u\right) ) - ({z}_{\mathcal{U}}(u) , \bar{z}_{\mathcal{V}}) 
\subseteq \{0\} \oplus \mathcal{V}$ and so   
$ \partial \operatorname{co}h ( u+v\left( u\right) ) = \left\{ {z}_{\mathcal{U}}(u)\right\}
\oplus \partial _{\mathcal{V}}\operatorname{co}h( u+v\left( u\right) )   =  \left\{ {z}_{\mathcal{U}} (u)\right\}
\oplus \partial _{\mathcal{V}}f\left( \bar{x} + u+v\left( u\right)  \right) . $ Hence 
\[
 ({z}_{\mathcal{U}} (u), \bar{z}_{\mathcal{V}})  + \{0\} \oplus B^{\mathcal{V}}_{\varepsilon }\left(
 0\right) \subseteq  \partial \operatorname{co}h ( u+v\left( u\right) ) 
 =  \partial f( \bar{x} + u+v\left( u\right) ). 
\]
This furnishes the final assumption that is required to invoke Proposition
\ref{prop:Lagsubjet} at points near to $\left( \bar{x},\bar{z}\right) $.
\end{proof}

\begin{remark}
\label{rem:fasttrack}We omit the details here, as they are not central to this
paper, but one may invoke \cite[Corollary 3.3]{eberhard:6} to deduce from
Corollary \ref{cor:Lagsubjet} that
\begin{equation*}
q\left( \underline{\partial }^{2}L_{\mathcal{U}}^{\varepsilon }\left( 0,\bar{z}_{\mathcal{U}}\right) \right) \left( w\right) =q\left( \underline{\partial
}^{2}\left( \operatorname{co}h|_{\mathcal{U}}\right) \left( 0,\bar{z}\right) \right)
\left( w\right) \quad \text{for all }w\in \mathcal{U}\text{.}
\end{equation*}
It follows that $\mathcal{U}\supseteq \operatorname{dom}q\left( \underline{\partial }
^{2}L_{\mathcal{U}}^{\varepsilon }\left( 0,\bar{z}_{\mathcal{U}}\right)
\right) \left( \cdot \right) \ =\operatorname{dom}q(\underline{\partial }^{2}\left(
\operatorname{co}h|_{\mathcal{U}}\right) (0,\bar{z}))\left( \cdot \right) \cap
\mathcal{U}=\mathcal{U}^{2}\cap \mathcal{U}$. Then the imposition of the
equality $\mathcal{U}=\mathcal{U}^{2}\mathcal{\ }$(our definition of a fast
track) implies $\underline{\partial }^{2}L_{\mathcal{U}}^{\varepsilon
}\left( 0,\bar{z}_{\mathcal{U}}\right) =\underline{\partial }^{2}\left(
\operatorname{co}h|_{\mathcal{U}}\right) (0,\bar{z})=\underline{\partial }^{2}\left(
f|_{\mathcal{U}}\right) (\bar{x},\bar{z}).$ It would be enlightening to have
a result that establishes this identity without a-priori assuming  $\mathcal{U}=\mathcal{U}^{2}$ but having this as a consequence.
\end{remark}

Recall that for a convex function $\left( \operatorname{co}h\right) ^{\ast }$ its
subjet is nonempty at every point at which it is subdifferentiable. The following result 
will be required later in the paper.

\begin{lemma}
Suppose $f:\mathbb{R}^{n}\rightarrow \mathbb{R}_{\infty }$ is a proper lower
semi-continuous function possessing a
tilt stable local minimum at $\bar{x}$. Suppose in addition  that $\bar{z}=0\in \operatorname{rel}$-$\operatorname{int}\partial f\left( \bar{x}\right) $, $\mathcal{U^{\prime }\subseteq U}$, $$v\left( u\right) \in \operatorname{argmin}_{v^{\prime }\in \mathcal{V^{\prime }}\cap B_{\varepsilon }\left( 0\right) }f\left( \bar{x}+u+v^{\prime }\right) $$ 
and $u:=P_{\mathcal{U^{\prime }}}\left[ m_f \left( z_{\mathcal{U^{\prime }}}+
\bar{z}_{\mathcal{V^{\prime }}}\right) \right] .$

\begin{enumerate}
\item Then $z_{\mathcal{U^{\prime }}}+0_{\mathcal{V}}\in \partial \left(
\operatorname{co}h\right) \left( u+v\left( u\right) \right) =\partial _{\operatorname{co}}h\left( u+v\left( u\right) \right) $ and consequently $z_{\mathcal{U^{\prime }}}\in \partial L_{\mathcal{U^{\prime }}}^{\varepsilon }\left(
u\right) =\partial k_{v}\left( u\right) .$

\item Suppose $\left( \left( u+v\left( u\right) \right) ,Q\right) \in
\partial ^{2,-}\left( \operatorname{co}h\right) ^{\ast }\left( z_{\mathcal{U^{\prime
}}}+0_{\mathcal{V^{\prime }}}\right) $ then $P_{\mathcal{U^{\prime }}}^{T}QP_{\mathcal{U^{\prime }}}\in \partial ^{2,-}\left( \operatorname{co}h\right)
^{\ast }|_{\mathcal{U^{\prime }}}\left( z_{\mathcal{U^{\prime }}}\right)
=\partial ^{2,-}k_{v}^{\ast }\left( z_{\mathcal{U^{\prime }}}\right) .$
Consequently when $\nabla ^{2}k_{v}^{\ast }\left( z_{\mathcal{U^{\prime }}
}\right) $ exists
\begin{equation}
\partial _{\mathcal{U^{\prime }}}^{2,-}\left( \operatorname{co}h\right) ^{\ast
}\left( u+v\left( u\right) ,z_{\mathcal{U^{\prime }}}+0_{\mathcal{V^{\prime }
}}\right) :=P_{\mathcal{U^{\prime }}}^{T}\partial ^{2,-}\left( \operatorname{co}
h\right) ^{\ast }\left( z_{\mathcal{U^{\prime }}}+0_{\mathcal{V^{\prime }}
}\right) P_{\mathcal{U^{\prime }}}=\nabla ^{2}k_{v}^{\ast }\left( z_{\mathcal{U^{\prime }}}\right) -\mathcal{P}(\mathcal{U}^{\prime}).  \label{neqn:13}
\end{equation}
\end{enumerate}
\end{lemma}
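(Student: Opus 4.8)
For part 1, I would first note that the hypothesis $u = P_{\mathcal{U}'}[m_f(z_{\mathcal{U}'}+\bar{z}_{\mathcal{V}'})]$ together with $\bar z=0$ places us in exactly the converse situation of part 3 of Proposition \ref{prop:LU}, so that $z_{\mathcal{U}'}\in\partial_{\operatorname{co}}L_{\mathcal{U}'}^{\varepsilon}(u)$. Feeding this into Proposition \ref{prop:m} yields $(u,v(u))\in m_h(z_{\mathcal{U}'}+\bar z_{\mathcal{V}'})$ and, through (\ref{neqn:16}), the inclusion $z_{\mathcal{U}'}+\bar z_{\mathcal{V}'}\in\partial_{\operatorname{co}}h(u+v(u))$. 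Since $\bar z_{\mathcal{V}'}=0$ this reads $z_{\mathcal{U}'}+0_{\mathcal{V}}\in\partial_{\operatorname{co}}h(u+v(u))$, and Lemma \ref{lem:conv} upgrades it to $\partial[\operatorname{co}h](u+v(u))$ while forcing $\operatorname{co}h=h$ at that point. The ``consequently'' clause is then immediate: formula (\ref{eqn:2}) of Remark \ref{rem:lem} identifies $\partial L_{\mathcal{U}'}^{\varepsilon}(u)$ with $\{z_{\mathcal{U}'}\mid z_{\mathcal{U}'}+\bar z_{\mathcal{V}'}\in\partial\operatorname{co}h(u+v(u))\}$, and Proposition \ref{cor:conv} supplies $k_v(u)=\operatorname{co}k_v(u)$ together with $z_{\mathcal{U}'}\in\partial_{\operatorname{co}}k_v(u)=\partial\operatorname{co}k_v(u)$, so that $\partial L_{\mathcal{U}'}^{\varepsilon}(u)=\partial k_v(u)$.

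For the first assertion of part 2 the key structural fact is the conjugate identity of Lemma \ref{lem:27}: since $(\operatorname{co}h)^{\ast}=h^{\ast}$ and $\bar z_{\mathcal{V}'}=0$, we have $(\operatorname{co}h)^{\ast}(\zeta+0_{\mathcal{V}'})=k_v^{\ast}(\zeta)$ for every $\zeta\in\mathcal{U}'$; that is, $k_v^{\ast}$ is precisely the restriction of $(\operatorname{co}h)^{\ast}$ to the slice $\{z_{\mathcal{V}'}=0\}$. Given $(u+v(u),Q)\in\partial^{2,-}(\operatorname{co}h)^{\ast}(z_{\mathcal{U}'}+0_{\mathcal{V}'})$, I would take a realizing $\varphi\in C^{2}$ as in Definition \ref{def:6} and Remark \ref{rem:jets}, with $\nabla\varphi=u+v(u)$ and $\nabla^{2}\varphi=Q$, and restrict it to the slice via $\psi(\zeta):=\varphi(\zeta+0_{\mathcal{V}'})$. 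Then $\psi$ supports $k_v^{\ast}$ from below at $z_{\mathcal{U}'}$ with $\nabla\psi(z_{\mathcal{U}'})=P_{\mathcal{U}'}(u+v(u))=u$ and $\nabla^{2}\psi(z_{\mathcal{U}'})=P_{\mathcal{U}'}^{T}QP_{\mathcal{U}'}$, which gives $P_{\mathcal{U}'}^{T}QP_{\mathcal{U}'}\in\partial^{2,-}k_v^{\ast}(z_{\mathcal{U}'})$ (for the gradient $u$).

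For the equality (\ref{neqn:13}) I would combine the inclusion just proved with the elementary observation that when $\nabla^{2}k_v^{\ast}(z_{\mathcal{U}'})$ exists the Taylor characterization (\ref{taylor}) forces $\partial^{2,-}k_v^{\ast}(z_{\mathcal{U}'})=\nabla^{2}k_v^{\ast}(z_{\mathcal{U}'})-\mathcal{P}(\mathcal{U}')$; this yields the inclusion $\subseteq$. For $\supseteq$ I would exploit that tilt stability makes $m_h=\nabla(\operatorname{co}h)^{\ast}$ single-valued and Lipschitz (Proposition \ref{prop:co}), so $(\operatorname{co}h)^{\ast}$ is $C^{1,1}$ near $z_{\mathcal{U}'}+0_{\mathcal{V}'}$ with an $L$-Lipschitz gradient. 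Using convexity in the form $(\operatorname{co}h)^{\ast}(z+\eta)\geq(\operatorname{co}h)^{\ast}(z+\eta_{\mathcal{U}'})+\langle\nabla(\operatorname{co}h)^{\ast}(z+\eta_{\mathcal{U}'}),\eta_{\mathcal{V}'}\rangle$, the exact expansion of the restriction $k_v^{\ast}$ along $\mathcal{U}'$, and the Lipschitz bound on the resulting cross term, an AM--GM estimate shows that the block matrix $\bigl(\nabla^{2}k_v^{\ast}(z_{\mathcal{U}'})-\tfrac{L^{2}}{M}I_{\mathcal{U}'}\bigr)\oplus(-M I_{\mathcal{V}'})$ lies in $\partial^{2,-}(\operatorname{co}h)^{\ast}(z_{\mathcal{U}'}+0_{\mathcal{V}'})$ for every large $M$, its $\mathcal{U}'$-projection being $\nabla^{2}k_v^{\ast}(z_{\mathcal{U}'})-\tfrac{L^{2}}{M}I$. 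Letting $M\to\infty$ and using that the projected subhessian set is convex, closed, and stable under subtraction of $\mathcal{P}(\mathcal{U}')$ (the last because subjets are closed under subtracting positive semidefinite matrices, by (\ref{taylor})) forces $\nabla^{2}k_v^{\ast}(z_{\mathcal{U}'})$ itself into the set, whence $\nabla^{2}k_v^{\ast}(z_{\mathcal{U}'})-\mathcal{P}(\mathcal{U}')\subseteq P_{\mathcal{U}'}^{T}\partial^{2,-}(\operatorname{co}h)^{\ast}P_{\mathcal{U}'}$ and the equality follows. The main obstacle is exactly this $\supseteq$ direction: a naive block-diagonal lift with $\mathcal{U}'$-block equal to $\nabla^{2}k_v^{\ast}$ fails, because the Lipschitz cross term contributes an $O(\|\eta_{\mathcal{U}'}\|^{2})$ deficit in the degenerate curvature directions, and one must both shave the $\mathcal{U}'$-block by $O(1/M)$ and invoke closedness of the subhessian set to recover the extremal Hessian in the limit.
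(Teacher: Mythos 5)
Your treatment of part 1 and of the first claim of part 2 is correct and follows essentially the paper's own route: the paper chains Lemma \ref{lem:27}, Proposition \ref{cor:conv} and Remark \ref{rem:lem} to extract the Fenchel equality $k_{v}\left( u\right) +k_{v}^{\ast }\left( z_{\mathcal{U}^{\prime }}\right) =\langle z_{\mathcal{U}^{\prime }},u\rangle =\operatorname{co}h\left( u+v\left( u\right) \right) +h^{\ast }\left( z_{\mathcal{U}^{\prime }}+0_{\mathcal{V}^{\prime }}\right) $, which is what your invocation of Proposition \ref{prop:LU} (you mean its part 2, not the converse part 3) together with Proposition \ref{prop:m} and Lemma \ref{lem:conv} amounts to. For the projection inclusion $P_{\mathcal{U}^{\prime }}^{T}QP_{\mathcal{U}^{\prime }}\in \partial ^{2,-}k_{v}^{\ast }\left( z_{\mathcal{U}^{\prime }}\right) $ the paper restricts the lower Taylor inequality (\ref{taylor}) to the slice $\left\{ z_{\mathcal{V}^{\prime }}=0\right\} $; restricting the test function $\varphi $ instead is the same argument.

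Where you genuinely diverge is the equality (\ref{neqn:13}). The paper's own proof consists only of the observation that $\partial ^{2,-}k_{v}^{\ast }\left( z_{\mathcal{U}^{\prime }}\right) =\nabla ^{2}k_{v}^{\ast }\left( z_{\mathcal{U}^{\prime }}\right) -\mathcal{P}\left( \mathcal{U}^{\prime }\right) $ when the Hessian exists, which combined with the first claim literally yields only the inclusion $\subseteq $; the reverse inclusion is asserted, not argued. Your block-matrix construction $\bigl( \nabla ^{2}k_{v}^{\ast }-\tfrac{L^{2}}{M}I\bigr) \oplus \left( -MI_{\mathcal{V}^{\prime }}\right) $, using convexity plus the Lipschitz continuity of $\nabla \left( \operatorname{co}h\right) ^{\ast }=m_{h}$ supplied by tilt stability and an AM--GM absorption of the cross term, is a real contribution the paper omits, and it correctly places $\nabla ^{2}k_{v}^{\ast }\left( z_{\mathcal{U}^{\prime }}\right) -P$ in the projected set for every positive definite $P$ (choose $M$ with $\tfrac{L^{2}}{M}I\preceq P$ and subtract the remaining positive semidefinite part inside the full subjet). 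The one soft spot is the boundary case $P$ singular, in particular $P=0$: your appeal to closedness of $P_{\mathcal{U}^{\prime }}^{T}\partial ^{2,-}\left( \operatorname{co}h\right) ^{\ast }\left( \cdot \right) P_{\mathcal{U}^{\prime }}$ is not automatic, since a linear image of a closed convex set need not be closed, and here the recession cone of the subjet contains $-\mathcal{P}\left( n\right) $, which meets the kernel of the compression $Q\mapsto P_{\mathcal{U}^{\prime }}^{T}QP_{\mathcal{U}^{\prime }}$ in the non-subspace $-\mathcal{P}\left( \mathcal{V}^{\prime }\right) $, so the standard closedness criterion does not apply. The matrices $Q_{M}$ you construct do not converge as $M\rightarrow \infty $, so one cannot simply pass to the limit inside the subjet; recovering the extremal matrix $\nabla ^{2}k_{v}^{\ast }\left( z_{\mathcal{U}^{\prime }}\right) $ itself needs an additional argument (or the statement should be read as holding up to closure, which is all the paper's subsequent applications actually use, since they only require the rank-one support values and uniformly positive definite minorants).
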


\begin{proof}
Note that as $\bar{z}=0$ by (\ref{neqn:33}) we have $k_{v}^{\ast }\left( z_{\mathcal{U^{\prime }}}\right) =h^{\ast }\left( z_{\mathcal{U^{\prime }}}+0_{\mathcal{V^{\prime }}}\right) =(\operatorname{co}h)^{\ast }\left(
z_{\mathcal{U^{\prime }}}+0_{\mathcal{V^{\prime }}}\right) $. Invoking Proposition \ref{cor:conv} we have  $k_{v}\left( u\right) :=h\left( u+v\left( u\right) \right) =\operatorname{co}h\left(
u+v\left( u\right) \right) $, and by Lemma \ref{lem:27} we then have 
\begin{equation*}
k_{v}\left( u\right) +k_{v}^{\ast }\left( z_{\mathcal{U^{\prime }}}\right)
=\langle z_{\mathcal{U^{\prime }}},u\rangle =\operatorname{co}h\left( u+v\left(
u\right) \right) +h^{\ast }\left( z_{\mathcal{U^{\prime }}}+0_{\mathcal{V^{\prime }}}\right)
\end{equation*}
and hence $z_{\mathcal{U^{\prime }}}+0_{\mathcal{V^{\prime }}}\in \partial
(\operatorname{co} h)\left( u+v\left( u\right) \right) =\partial _{\operatorname{co}}h\left(
u+v\left( u\right) \right) $. Taking into account Remark \ref{rem:lem} we
have for all $u\in \mathcal{U^{\prime }\cap B}_{\varepsilon }\left( 0\right)
$ that
\begin{equation*}
z_{\mathcal{U^{\prime }}}+0_{\mathcal{V^{\prime }}}\in \partial \operatorname{co}
h\left( u+v\left( u\right) \right) \quad \iff \quad z_{\mathcal{U^{\prime }}}\in \partial L_{\mathcal{U^{\prime }}}^{\varepsilon }\left( u\right)
=\partial k_{v}\left( u\right) .
\end{equation*}

For the second part we have from the definition of $\left( \left( u+v\left(
u\right) \right) ,Q\right) \in \partial ^{2,-}\left( \operatorname{co}h\right)
^{\ast }\left( z_{\mathcal{U^{\prime }}}+0_{\mathcal{V^{\prime }}}\right) $
locally around $z_{\mathcal{U^{\prime }}}+0_{\mathcal{V^{\prime }}}$ that
\begin{eqnarray*}
\left( \operatorname{co}h\right) ^{\ast }\left( y\right) &\geq &\left( \operatorname{co}
h\right) ^{\ast }\left( z_{\mathcal{U^{\prime }}}+0_{\mathcal{V^{\prime }}}\right) +\langle u+v\left( u\right) ,z_{\mathcal{U^{\prime }}}+0_{\mathcal{V^{\prime }}}\rangle \\
&&\qquad +\frac{1}{2}\langle Q\left( y-\left( z_{\mathcal{U^{\prime }}}+0_{\mathcal{V^{\prime }}}\right) \right) ,\left( y-\left( z_{\mathcal{U^{\prime
}}}+0_{\mathcal{V^{\prime }}}\right) \right) \rangle +o\left( \left\Vert
y-\left( z_{\mathcal{U^{\prime }}}+0_{\mathcal{V^{\prime }}}\right)
\right\Vert ^{2}\right) .
\end{eqnarray*}
Restricting to $\mathcal{U^{\prime }}$ we have the following locally around $z_{\mathcal{U^{\prime }}}$
\begin{eqnarray*}
\left( \operatorname{co}h\right) ^{\ast }|_{\mathcal{U^{\prime }}}\left( y_{\mathcal{U^{\prime }}}\right) &\geq &\left( \operatorname{co}h\right) ^{\ast }|_{\mathcal{U^{\prime }}}\left( z_{\mathcal{U^{\prime }}}\right) +\langle u,z_{\mathcal{U^{\prime }}}\rangle \\
&&\qquad +\frac{1}{2}\langle QP_{\mathcal{U^{\prime }}}\left( y_{\mathcal{U^{\prime }}}-z_{\mathcal{U^{\prime }}}\right) ,P_{\mathcal{U^{\prime }}
}\left( y_{\mathcal{U^{\prime }}}-z_{\mathcal{U^{\prime }}}\right) \rangle
+o\left( \left\Vert P_{\mathcal{U^{\prime }}}\left( y_{\mathcal{U^{\prime }}
}-z_{\mathcal{U^{\prime }}}\right) \right\Vert ^{2}\right) \\
&=&\left( \operatorname{co}h\right) ^{\ast }|_{\mathcal{U^{\prime }}}\left( z_{\mathcal{U^{\prime }}}\right) +\langle u,z_{\mathcal{U^{\prime }}}\rangle \\
&&\qquad +\frac{1}{2}\langle \left( P_{\mathcal{U^{\prime }}}^{T}QP_{\mathcal{U^{\prime }}}\right) \left( y_{\mathcal{U^{\prime }}}-z_{\mathcal{U^{\prime }}}\right) ,\left( y_{\mathcal{U^{\prime }}}-z_{\mathcal{U^{\prime
}}}\right) \rangle +o\left( \left\Vert y_{\mathcal{U^{\prime }}}-z_{\mathcal{U^{\prime }}}\right\Vert ^{2}\right)
\end{eqnarray*}
and by (\ref{neqn:33}) we have $k_{v}^{\ast }\left( z_{\mathcal{U^{\prime }}
}\right) =\left( \operatorname{co}h\right) ^{\ast }\left( z_{\mathcal{U^{\prime }}
}+0_{\mathcal{V^{\prime }}}\right) =\left( \operatorname{co}h\right) ^{\ast }|_{\mathcal{U^{\prime }}}\left( z_{\mathcal{U^{\prime }}}^{\prime }\right) .$
Hence 
$$
P_{\mathcal{U^{\prime }}}^{T}QP_{\mathcal{U^{\prime }}}\in \partial
^{2,-}\left( \operatorname{co}h\right) ^{\ast }|_{\mathcal{U^{\prime }}}\left( z_{\mathcal{U^{\prime }}}\right) =\partial ^{2,-}k_{v}^{\ast }\left( z_{\mathcal{U^{\prime }}}\right) .
$$
When $\nabla ^{2}k_{v}^{\ast }\left( z_{\mathcal{U^{\prime }}}\right) $ exists we have $\partial ^{2,-}k_{v}^{\ast}\left( z_{\mathcal{U^{\prime }}}\right) =\nabla ^{2}k_{v}^{\ast }\left( z_{\mathcal{U^{\prime }}}\right) -\mathcal{P}(\mathcal{U})$ giving (\ref{neqn:13}).
\end{proof}
\smallskip

In the following we shall at times use the alternate notation $z_{\mathcal{U^{\prime }}}+z_{\mathcal{V^{\prime }}}=(z_{\mathcal{U^{\prime }}},z_{\mathcal{V^{\prime }}})$ to contain the notational burden of the former. The
proof of the next proposition follows a similar line of argument as in \cite[Proposition 3.1]{ebpenot:2}.

\begin{proposition}
\label{prop:subhessianinverse}Suppose $f:\mathbb{R}^{n}\rightarrow \mathbb{R}_{\infty }$ is a proper lower semi-continuous function and suppose that $\bar{x}$ is a tilt stable local minimum of $f$. In addition suppose $\bar{z}=0\in \operatorname{rel}$-$\operatorname{int}\partial f\left( \bar{x}\right) $, $\mathcal{U^{\prime }\subseteq U}$ and $v\left( u\right) \in \operatorname{argmin}_{v^{\prime
}\in \mathcal{V^{\prime }}\cap B_{\varepsilon }\left( 0\right) }f\left( \bar{x}+u+v^{\prime }\right) $ with $u:=P_{\mathcal{U^{\prime }}}\left[ m\left(
z_{\mathcal{U^{\prime }}}+\bar{z}_{\mathcal{V^{\prime }}}\right) \right] $
or $z_{\mathcal{U^{\prime }}}\in \partial L_{\mathcal{U^{\prime }}}^{\varepsilon }\left( u\right) =\partial k_{v}\left( u\right) $. Suppose $k_{v}^{\ast }:\mathcal{U^{\prime }}\rightarrow \mathbb{R}_{\infty }$ is a $C^{1,1}\left( B_{\varepsilon }\left( 0\right) \right) $ function for some $\varepsilon >0$ with $\nabla ^{2}k_{v}^{\ast }\left( z_{\mathcal{U^{\prime }}}\right) $ existing as a positive definite form. Then for $u:=\nabla
k_{v}^{\ast }\left( z_{\mathcal{U^{\prime }}}\right) $ we have
\begin{equation*}
Q=\nabla ^{2}k_{v}^{\ast }\left( z_{\mathcal{U^{\prime }}}\right) \quad
\implies \text{\quad }Q^{-1}\in \partial _{\mathcal{U^{\prime }}}^{2,-}(\operatorname{co}h)\left( u+v\left( u\right) ,z_{\mathcal{U^{\prime }}}+0_{\mathcal{V^{\prime }}}\right) .
\end{equation*}
\end{proposition}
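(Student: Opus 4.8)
The plan is to split the argument into an inversion step on the $\mathcal{U}'$-Lagrangian and a lifting step to $\operatorname{co}h$. First I would record that $k_{v}=L_{\mathcal{U}'}^{\varepsilon}$ is a closed proper convex function, finite near $u$, by Proposition \ref{prop:LU} part 1; hence $k_{v}=k_{v}^{\ast\ast}$ locally, and since $z_{\mathcal{U}'}\in\partial k_{v}(u)$ is equivalent to $u=\nabla k_{v}^{\ast}(z_{\mathcal{U}'})$, the Fenchel--Young equality $k_{v}(u)+k_{v}^{\ast}(z_{\mathcal{U}'})=\langle u,z_{\mathcal{U}'}\rangle$ holds (using Lemma \ref{lem:27} to identify $k_{v}^{\ast}$ with $(\operatorname{co}h)^{\ast}(\cdot+0_{\mathcal{V}'})$). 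The goal then reduces to (i) proving $(z_{\mathcal{U}'},Q^{-1})\in\partial^{2,-}k_{v}(u)$, and (ii) lifting this to a matrix in $\partial^{2,-}(\operatorname{co}h)(u+v(u),z_{\mathcal{U}'}+0_{\mathcal{V}'})$ whose $\mathcal{U}'$-compression equals $Q^{-1}$.

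For step (i) --- the analogue of \cite[Proposition 3.1]{ebpenot:2} --- I would use that $k_{v}^{\ast}$ is $C^{1,1}$ with $\nabla k_{v}^{\ast}(z_{\mathcal{U}'})=u$ and $\nabla^{2}k_{v}^{\ast}(z_{\mathcal{U}'})=Q\succ 0$ to write the second-order expansion $k_{v}^{\ast}(y)=k_{v}^{\ast}(z_{\mathcal{U}'})+\langle u,y-z_{\mathcal{U}'}\rangle+\tfrac12\langle Q(y-z_{\mathcal{U}'}),y-z_{\mathcal{U}'}\rangle+o(\|y-z_{\mathcal{U}'}\|^{2})$. From $k_{v}(u')=\sup_{y}[\langle u',y\rangle-k_{v}^{\ast}(y)]\ge\langle u',y\rangle-k_{v}^{\ast}(y)$ I would insert the approximate maximiser $y=z_{\mathcal{U}'}+Q^{-1}(u'-u)$; positive definiteness gives $\|y-z_{\mathcal{U}'}\|=O(\|u'-u\|)$, so the error is $o(\|u'-u\|^{2})$, and the quadratic terms collapse via $\langle Q^{-1}(u'-u),u'-u\rangle-\tfrac12\langle Q Q^{-1}(u'-u),Q^{-1}(u'-u)\rangle=\tfrac12\langle Q^{-1}(u'-u),u'-u\rangle$. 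This yields
\[
k_{v}(u')\ge k_{v}(u)+\langle z_{\mathcal{U}'},u'-u\rangle+\tfrac12\langle Q^{-1}(u'-u),u'-u\rangle+o(\|u'-u\|^{2}),
\]
which by the Taylor characterisation in Remark \ref{rem:jets} is exactly $(z_{\mathcal{U}'},Q^{-1})\in\partial^{2,-}k_{v}(u)$.

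For step (ii) I would set $\tilde{Q}:=Q^{-1}\oplus 0$ on $\mathcal{U}'\oplus\mathcal{V}'$, so that $P_{\mathcal{U}'}^{T}\tilde{Q}P_{\mathcal{U}'}=Q^{-1}$, and verify $\tilde{Q}\in\partial^{2,-}(\operatorname{co}h)(u+v(u),z_{\mathcal{U}'}+0_{\mathcal{V}'})$. Writing a displacement $w=w_{\mathcal{U}'}+w_{\mathcal{V}'}$, the infimal definition of $L_{\mathcal{U}'}^{\varepsilon}=k_{v}$ gives $\operatorname{co}h(u+v(u)+w)\ge k_{v}(u+w_{\mathcal{U}'})$, and then the inequality of step (i), together with $\langle z_{\mathcal{U}'}+0_{\mathcal{V}'},w\rangle=\langle z_{\mathcal{U}'},w_{\mathcal{U}'}\rangle$ and $k_{v}(u)=\operatorname{co}h(u+v(u))$ from Proposition \ref{cor:conv}, supplies
\[
\operatorname{co}h(u+v(u)+w)\ge \operatorname{co}h(u+v(u))+\langle z_{\mathcal{U}'},w_{\mathcal{U}'}\rangle+\tfrac12\langle Q^{-1}w_{\mathcal{U}'},w_{\mathcal{U}'}\rangle+o(\|w_{\mathcal{U}'}\|^{2}).
\]
Since $\|w_{\mathcal{U}'}\|\le\|w\|$, the remainder $o(\|w_{\mathcal{U}'}\|^{2})$ is $o(\|w\|^{2})$ in the $\liminf$ sense, and the vanishing $\mathcal{V}'$-block of $\tilde{Q}$ contributes no positive curvature, so this is precisely the uniform lower Taylor bound defining $\tilde{Q}\in\partial^{2,-}(\operatorname{co}h)(u+v(u),z_{\mathcal{U}'}+0_{\mathcal{V}'})$, whence $Q^{-1}=P_{\mathcal{U}'}^{T}\tilde{Q}P_{\mathcal{U}'}\in\partial_{\mathcal{U}'}^{2,-}(\operatorname{co}h)(u+v(u),z_{\mathcal{U}'}+0_{\mathcal{V}'})$. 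I expect the main obstacle to lie in step (i): reconciling the supremum defining the biconjugate with the second-order data of $k_{v}^{\ast}$, that is, showing the chosen test point is a good enough approximate maximiser that the $o(\|y-z_{\mathcal{U}'}\|^{2})$ error transfers to a \emph{uniform} $o(\|u'-u\|^{2})$ error; this is exactly where positive definiteness of $Q$ is indispensable. The lift in step (ii) is then routine since the projection $P_{\mathcal{U}'}$ is nonexpansive.
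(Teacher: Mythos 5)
Your proof is correct and follows essentially the same route as the paper: both exploit the Legendre duality between the second-order expansion of $k_{v}^{\ast}=(\operatorname{co}h)^{\ast}(\cdot+0_{\mathcal{V}'})$ and a lower Taylor bound for $\operatorname{co}h$, producing the block matrix $Q^{-1}\oplus 0$ as a subhessian. The only (harmless) difference is organisational: you first establish $Q^{-1}\in\partial^{2,-}k_{v}(u,z_{\mathcal{U}'})$ by inserting the explicit test point $y=z_{\mathcal{U}'}+Q^{-1}(u'-u)$ into the biconjugate supremum and then lift via the marginal inequality $\operatorname{co}h(u+v(u)+w)\geq k_{v}(u+w_{\mathcal{U}'})$, whereas the paper restricts the biconjugate of $\operatorname{co}h$ to the slice $y_{\mathcal{V}'}=0$ and carries out the constrained maximisation over a ball with the explicit $\delta(\varepsilon)$, $\alpha$, $\beta(\varepsilon)$ bookkeeping -- your shortcut is legitimate since $\|Q^{-1}(u'-u)\|=O(\|u'-u\|)$ keeps the test point in the validity region of the expansion and converts the remainder into $o(\|u'-u\|^{2})$.
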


\begin{proof}
When $u:=P_{\mathcal{U^{\prime }}}\left[ m\left( z_{\mathcal{U^{\prime }}}+
\bar{z}_{\mathcal{V^{\prime }}}\right) \right] $ or $z_{\mathcal{U^{\prime }}
}\in \partial L_{\mathcal{U^{\prime }}}^{\varepsilon }\left( u\right)
=\partial k_{v}\left( u\right) $ (see Proposition \ref{prop:LU}) we have $z_{\mathcal{U^{\prime }}}+0_{\mathcal{V}^{\prime}}\in \partial \left( \operatorname{co}h\right) \left( u+v\left( u\right)
\right) $. We show that $\left(
\begin{array}{cc}
Q^{-1} & 0 \\
0 & 0
\end{array}
\right) $ is a subhessian of $\operatorname{co}h$ at $(u,v(u))$ for $z_{\mathcal{U^{\prime }}}+0_{\mathcal{V^{\prime }}}\in \partial (\operatorname{co}h)\left(
u+v\left( u\right) \right) $ and hence deduce that (by definition) $Q^{-1}\in \partial _{\mathcal{U^{\prime }}}^{2,-}\left( \operatorname{co}h\right)
\left( u+v\left( u\right) ,z_{\mathcal{U^{\prime }}}+0_{\mathcal{V^{\prime }}}\right) $. Expanding $k_{v}^{\ast }$ via a second order Taylor expansion
around $z_{\mathcal{U^{\prime }}}$ we have for all $y_{\mathcal{U^{\prime }}}-z_{\mathcal{U^{\prime }}}\in B_{\varepsilon }\left( 0\right) $ a function $\delta \left( \varepsilon \right) \rightarrow 0$ as $\varepsilon \rightarrow
0$ with
\begin{eqnarray*}
\left( \operatorname{co}h\right) ^{\ast }\left( y_{\mathcal{U^{\prime }}}+0_{\mathcal{V^{\prime }}}\right) &=&k_{v}^{\ast }\left( y_{\mathcal{U^{\prime }}
}\right) \\
&=&k_{v}^{\ast }\left( z_{\mathcal{U^{\prime }}}\right) +\langle y_{\mathcal{U^{\prime }}}-z_{\mathcal{U^{\prime }}},u\rangle +\frac{1}{2}\langle Q\left(
y_{\mathcal{U^{\prime }}}-z_{\mathcal{U^{\prime }}}\right) ,\left( y_{\mathcal{U^{\prime }}}-z_{\mathcal{U^{\prime }}}\right) \rangle +o\left(
\left\Vert \left( y_{\mathcal{U^{\prime }}}-z_{\mathcal{U^{\prime }}}\right)
\right\Vert ^{2}\right) \\
&\leq &\left( \operatorname{co}h\right) ^{\ast }\left( \left( z_{\mathcal{U^{\prime }
}},0_{\mathcal{V^{\prime }}}\right) \right) +\langle y_{\mathcal{U^{\prime }}
}-z_{\mathcal{U^{\prime }}}+0_{\mathcal{V^{\prime }}},u+v\left( u\right)
\rangle \\
&&\qquad +\frac{1}{2}\langle \left(
\begin{array}{cc}
Q & 0 \\
0 & 0
\end{array}
\right) \left( \left( y_{\mathcal{U^{\prime }}},0_{\mathcal{V^{\prime }}
}\right) -\left( z_{\mathcal{U^{\prime }}},0\right) \right) ,\left( \left(
y_{\mathcal{U^{\prime }}},0_{\mathcal{V^{\prime }}}\right) -\left( z_{\mathcal{U^{\prime }}},0_{\mathcal{V^{\prime }}}\right) \right) \rangle \\
&&\qquad \qquad \qquad \qquad \qquad +\frac{1}{2} \delta \left( \varepsilon \right)
\left\Vert \left( \left( y_{\mathcal{U^{\prime }}},0_{\mathcal{V^{\prime }}
}\right) -\left( z_{\mathcal{U^{\prime }}},0_{\mathcal{V^{\prime }}}\right)
\right) \right\Vert ^{2}
\end{eqnarray*}
Then as $\operatorname{co}h\left( u+v\left( u\right) \right) =\langle z_{\mathcal{U^{\prime }}},u\rangle -\left( \operatorname{co}h\right) ^{\ast }\left( \left( z_{\mathcal{U^{\prime }}},0_{\mathcal{V^{\prime }}}\right) \right) $ and $\langle Q\left( y_{\mathcal{U^{\prime }}}-z_{\mathcal{U^{\prime }}}\right)
,\left( y_{\mathcal{U^{\prime }}}-z_{\mathcal{U^{\prime }}}\right) \rangle
\geq \alpha \left\Vert y_{\mathcal{U^{\prime }}}-z_{\mathcal{U^{\prime }}
}\right\Vert ^{2}$ we have
\begin{eqnarray*}
\operatorname{co}h\left( u^{\prime }+v\left( u^{\prime }\right) \right)
&=&\sup_{\left( y_{\mathcal{U^{\prime }}},y_{\mathcal{V^{\prime }}}\right)
}\left\{ \langle \left( y_{\mathcal{U^{\prime }}},y_{\mathcal{V^{\prime }}
}\right) ,\left( u^{\prime },v\left( u^{\prime }\right) \right) \rangle
-\left( \operatorname{co}h\right) ^{\ast }\left( y_{\mathcal{U^{\prime }}}+y_{\mathcal{V^{\prime }}}\right) \right\} \\
&\geq &\sup_{y_{\mathcal{U^{\prime }}}}\left\{ \langle \left( y_{\mathcal{U^{\prime }}},0_{\mathcal{V^{\prime }}}\right) ,\left( u^{\prime },v\left(
u^{\prime }\right) \right) \rangle -\left( \operatorname{co}h\right) ^{\ast }\left(
y_{\mathcal{U^{\prime }}}+0_{\mathcal{V^{\prime }}}\right) \right\} \\
&\geq &\operatorname{co}h\left( u+v\left( u\right) \right) +\langle z_{\mathcal{U^{\prime }}},u^{\prime }-u\rangle +\sup_{y_{\mathcal{U^{\prime }}}-z_{\mathcal{U^{\prime }}}\in B_{\varepsilon }\left( 0\right) }\left\{ \langle
y_{\mathcal{U^{\prime }}}-z_{\mathcal{U^{\prime }}},u^{\prime }-u\rangle
\right. \\
&&\qquad \qquad \left. -\frac{1}{2}\left( 1+\alpha ^{-1}\delta \left(
\varepsilon \right) \right) \langle Q\left( y_{\mathcal{U^{\prime }}}-z_{\mathcal{U^{\prime }}}\right) ,\left( y_{\mathcal{U^{\prime }}}-z_{\mathcal{U^{\prime }}}\right) \rangle \right\}
\end{eqnarray*}
and when $u^{\prime }-u\in \left( 1+\alpha ^{-1}\delta \left( \varepsilon
\right) \right) \varepsilon \left\Vert Q^{-1}\right\Vert ^{-1}B_{1}\left(
0\right) $ we have the supremum attained at
\begin{equation*}
y_{\mathcal{U^{\prime }}}-z_{\mathcal{U^{\prime }}}=\left( 1+\alpha
^{-1}\delta \left( \varepsilon \right) \right) ^{-1}Q^{-1}\left( u^{\prime
}-u\right) \in B_{\varepsilon }\left( 0\right) .
\end{equation*}
Hence when $\left( u^{\prime },v^{\prime }\right) \in B_{\gamma \left(
\varepsilon \right) }\left( 0\right) $, for $\gamma \left( \varepsilon
\right) :=\left( 1+\alpha ^{-1}\delta \left( \varepsilon \right) \right)
\varepsilon \left\Vert Q^{-1}\right\Vert ^{-1}$, we have
\begin{eqnarray*}
\operatorname{co}h\left( u^{\prime }+v^{\prime }\right) &\geq &\operatorname{co}h\left(
u^{\prime }+v\left( u^{\prime }\right) \right) \geq \operatorname{co}h\left(
u+v\left( u\right) \right) +\langle \left( z_{\mathcal{U^{\prime }}}+0_{\mathcal{V^{\prime }}}\right) ,\left( u^{\prime },v^{\prime }\right) -\left(
u,v\left( u\right) \right) \rangle \\
&&\qquad \qquad +\frac{1}{2}\langle \left(
\begin{array}{cc}
Q^{-1} & 0 \\
0 & 0
\end{array}
\right) \left( u^{\prime },v^{\prime }\right) -\left( u,v\left( u\right)
\right) ,\left( u^{\prime },v^{\prime }\right) -\left( u,v\left( u\right)
\right) \rangle \\
&&\qquad \qquad \qquad \qquad -\beta \left( \varepsilon \right) \left\Vert
\left( u^{\prime },v^{\prime }\right) -\left( u,v\left( u\right) \right)
\right\Vert ^{2}
\end{eqnarray*}
where $\beta \left( \varepsilon \right) =\left[ 1-\left( 1+\alpha
^{-1}\delta \left( \varepsilon \right) \right) ^{-1}\right] \left\Vert
Q^{-1}\right\Vert \rightarrow 0$ as $\varepsilon \rightarrow 0$. That is
\begin{equation*}
\left(
\begin{array}{cc}
Q^{-1} & 0 \\
0 & 0
\end{array}
\right) \in \partial ^{2,-} (\operatorname{co} h)\left( u+v\left( u\right) ,z_{\mathcal{U^{\prime
}}}+0_{\mathcal{V^{\prime }}}\right) ,
\end{equation*}
from which the result follows.
\end{proof}

\section{The Main Result}

The main tools we use to establish our results are the convexification that
tilt stable local minimum enable us to utilise \cite{Drusvy:1}, the
correspondence between tilt stability and the strong metric regularity of
the locally restricted inverse of the subdifferential and the connection
conjugacy has to inversion of subdifferentials of convex functions \cite{Drusvy:1, Art:1}. These tools and the coderivative
characterisation (\ref{neqn:2}) of tilt stability (being applicable to
convex functions) allows a chain of implications to be forged. The
differentiability properties we seek may be deduced via strong metric
regularity or alternatively via the results of \cite{Miroslav:1} after
invoking the Mordukhovich coderivative criteria for the Aubin property for
the associated subdifferential.

Once again we will
consider subspaces $\mathcal{U^{\prime}} \subseteq \mathcal{U}$.
We  now show that tilt stability is inherited by $k_{v}$.

\begin{proposition}
\label{prop:tilt1}Consider $f:\mathbb{R}^{n}\rightarrow \mathbb{R}_{\infty }
$ is a proper lower semi-continuous function and $v\left( u\right) \in \operatorname{argmin}_{v^{\prime }\in \mathcal{V}\cap B_{\varepsilon }\left( 0\right)
}f\left( \bar{x}+u+v^{\prime }\right) .$ Suppose that $f$ has a tilt stable
local minimum at $\bar{x}$ for $0\in \partial f\left( \bar{x}\right) $ then $v\left( \cdot \right) :\mathcal{U^{\prime}}\rightarrow \mathcal{V^{\prime}}$
is uniquely defined and the associated function $k_{v}\left( \cdot \right) :
\mathcal{U^{\prime}} \rightarrow \mathbb{R}_{\infty }$ has a tilt stable
local minimum at $0$.
\end{proposition}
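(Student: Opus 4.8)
The plan is to identify $k_{v}$ with the localised Lagrangian and then transport tilt stability from $\operatorname{co}h$ (equivalently from $f$) to $k_{v}$ through the projection relation $(u,v(u))=m_{h}(z_{\mathcal{U^{\prime}}})$ established in Propositions \ref{prop:LU} and \ref{prop:m}. Since $\bar{z}=0$ we have $\bar{z}_{\mathcal{V^{\prime}}}=0$, so $k_{v}(u)=h(u+v(u))=L_{\mathcal{U^{\prime}}}^{\varepsilon}(u)$, which by Proposition \ref{prop:LU}(1) is a closed proper convex function, finite on $B_{\varepsilon}^{\mathcal{U^{\prime}}}(0)$, with $k_{v}(0)=h(0)=f(\bar{x})$ finite and $v(0)=0$. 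Thus the finiteness requirement of Definition \ref{def:tilt} holds at the base point $0\in\mathcal{U^{\prime}}$.

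First I would settle uniqueness of the selection $v(\cdot)$. By Proposition \ref{prop:co}, tilt stability of $f$ makes $0$ a tilt stable local minimum of $\operatorname{co}h$, so $m_{h}(\cdot)$ is single valued and Lipschitz near $0$ with $m_{h}(0)=(0,0)$. For $u$ near $0$, Remark \ref{rem:lem} together with convexity gives $\partial_{\operatorname{co}}L_{\mathcal{U^{\prime}}}^{\varepsilon}(u)=\partial L_{\mathcal{U^{\prime}}}^{\varepsilon}(u)\neq\emptyset$; fix $z_{\mathcal{U^{\prime}}}$ in this set (which, since $\nabla L_{\mathcal{U^{\prime}}}^{\varepsilon}(0)=\bar{z}_{\mathcal{U^{\prime}}}=0$, lies in the Lipschitz neighbourhood of $m_{h}$). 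Proposition \ref{prop:m} then shows that \emph{every} minimizer $v(u)$ of $v^{\prime}\mapsto f(\bar{x}+u+v^{\prime})$ satisfies $(u,v(u))\in m_{h}(z_{\mathcal{U^{\prime}}})$, and single-valuedness of $m_{h}$ forces $(u,v(u))$, hence $v(u)$, to be unique; this simultaneously identifies $u=P_{\mathcal{U^{\prime}}}[m_{h}(z_{\mathcal{U^{\prime}}})]$ and $v(u)=P_{\mathcal{V^{\prime}}}[m_{h}(z_{\mathcal{U^{\prime}}})]$.

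The core step is to compute the tilt mapping $m_{k_{v}}(z_{\mathcal{U^{\prime}}}):=\operatorname{argmin}_{u\in\mathcal{U^{\prime}},\,\|u\|\leq\varepsilon^{\prime}}\{k_{v}(u)-\langle z_{\mathcal{U^{\prime}}},u\rangle\}$ and to show it equals $P_{\mathcal{U^{\prime}}}[m_{h}(z_{\mathcal{U^{\prime}}})]$ for $z_{\mathcal{U^{\prime}}}\in\mathcal{U^{\prime}}$ near $0$. For the inclusion $\supseteq$, Proposition \ref{prop:LU}(2) with $z=z_{\mathcal{U^{\prime}}}+\bar{z}_{\mathcal{V^{\prime}}}=z_{\mathcal{U^{\prime}}}$ yields the convex subgradient inequality (\ref{neqn:9}), which says precisely that $u=P_{\mathcal{U^{\prime}}}[m_{h}(z_{\mathcal{U^{\prime}}})]$ minimizes $k_{v}-\langle z_{\mathcal{U^{\prime}}},\cdot\rangle$ over $B_{\varepsilon}^{\mathcal{U^{\prime}}}(0)$. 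For $\subseteq$, given any minimizer $u^{\ast}$, the orthogonality $\langle z_{\mathcal{U^{\prime}}},v^{\prime}\rangle=0$ for $v^{\prime}\in\mathcal{V^{\prime}}$ and $\bar{z}_{\mathcal{V^{\prime}}}=0$ allow the problem to be rewritten as the joint minimization $\min_{(u^{\prime},v^{\prime})}\{h(u^{\prime}+v^{\prime})-\langle z_{\mathcal{U^{\prime}}},u^{\prime}+v^{\prime}\rangle\}$, whose unique solution is $m_{h}(z_{\mathcal{U^{\prime}}})$; hence $(u^{\ast},v(u^{\ast}))=m_{h}(z_{\mathcal{U^{\prime}}})$ and $u^{\ast}=P_{\mathcal{U^{\prime}}}[m_{h}(z_{\mathcal{U^{\prime}}})]$. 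Therefore $m_{k_{v}}=P_{\mathcal{U^{\prime}}}\circ m_{h}|_{\mathcal{U^{\prime}}}$ is single valued and, being the composition of the Lipschitz map $m_{h}$ with the linear projection $P_{\mathcal{U^{\prime}}}$, is Lipschitz near $0$, with $m_{k_{v}}(0)=P_{\mathcal{U^{\prime}}}(0,0)=0$.

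Finally I would check interiority so that the ball constraint in $m_{k_{v}}$ is inactive: since $m_{h}$ is continuous with $m_{h}(0)=(0,0)$ and $v(\cdot)$ is Lipschitz with $v(0)=0$, shrinking the tilt neighbourhood keeps $u=P_{\mathcal{U^{\prime}}}[m_{h}(z_{\mathcal{U^{\prime}}})]\in\operatorname{int}B_{\varepsilon^{\prime}}^{\mathcal{U^{\prime}}}(0)$ and $v(u)\in\operatorname{int}B_{\varepsilon}^{\mathcal{V^{\prime}}}(0)$, matching the localisation used throughout. This verifies all requirements of Definition \ref{def:tilt} for $k_{v}$ at $0$. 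The main obstacle is the reverse inclusion $\subseteq$ in the identification of the tilt mappings, i.e. transferring single-valuedness from $m_{h}$ to $m_{k_{v}}$, together with the bookkeeping needed to ensure the various $\varepsilon$-balls (for $m_{h}$, for the selection $v(\cdot)$, and for $m_{k_{v}}$) are compatible and the relevant minimizers stay interior; note that the tilt for $k_{v}$ ranges only over $\mathcal{U^{\prime}}$ whereas $m_{h}$ is defined for full tilts in $\mathcal{U^{\prime}}\oplus\mathcal{V^{\prime}}$, but this only requires restricting $m_{h}$ to $\mathcal{U^{\prime}}$, which preserves both single-valuedness and the Lipschitz estimate.
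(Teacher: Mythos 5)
Your proposal is correct and follows essentially the same route as the paper: uniqueness of $v(\cdot)$ from single-valuedness of the tilt map for $h$ (via Propositions \ref{prop:co} and \ref{prop:m}), followed by the identification $\operatorname{argmin}_{u^{\prime}\in\mathcal{U^{\prime}}}\{k_{v}(u^{\prime})-\langle z_{\mathcal{U^{\prime}}},u^{\prime}\rangle\}=P_{\mathcal{U^{\prime}}}\left[m_{h}(z_{\mathcal{U^{\prime}}})\right]$ obtained by collapsing the nested minimization into the joint one using orthogonality and $\bar{z}_{\mathcal{V^{\prime}}}=0$, which makes the tilt map of $k_{v}$ a composition of a linear projection with a single-valued Lipschitz map. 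Your additional bookkeeping on interiority and the two inclusions is consistent with, and slightly more explicit than, the paper's argument.
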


\begin{proof}
In this case we have $\left( \bar{z}_{\mathcal{U}^{\prime}}, \bar{z}_{\mathcal{V^{\prime}} }\right) =\left( 0,0\right) $. By tilt stability we have $m\left( \cdot \right) $ a single valued Lipschitz functions and hence $v\left( \cdot \right) $ is unique. From Proposition \ref{prop:LU} and $\left\{ u\right\} =P_{\mathcal{U^{\prime}}}\left[ m\left( z_{\mathcal{U^{\prime}}}\right) \right] $ we have $z_{\mathcal{U^{\prime}}}\in \partial
_{\operatorname{co}}\left[ L_{\mathcal{U^{\prime}}}^{\varepsilon }+\delta
_{B_{\varepsilon }^{\mathcal{U^{\prime}}}\left( 0\right) }\right] \left(
u\right) $ and from Propositions \ref{prop:m} and \ref{prop:co} that
\begin{eqnarray*}
\left\{ \left( u,v\left( u\right) \right) \right\} &=&m \left( z_{\mathcal{U^{\prime}} }+\bar{z}_{\mathcal{V^{\prime}}}\right) =\operatorname{argmin}_{\left(
u^{\prime },v^{\prime }\right) }\left\{ g\left( u^{\prime }+v^{\prime
}\right) -\langle z_{\mathcal{U^{\prime}}},u^{\prime }+v^{\prime }\rangle
\right\} \\
&=&\operatorname{argmin}_{\left( u^{\prime },v^{\prime }\right) }\left\{ h\left(
u^{\prime }+v^{\prime }\right) -\langle z_{\mathcal{U^{\prime}}},u^{\prime
}+v^{\prime }\rangle \right\} \\
\text{and so }\left\{ u\right\} &=&\operatorname{argmin}_{u^{\prime }\in \mathcal{U^{\prime}} }\left\{ \left[ h\left( u^{\prime }+v\left( u^{\prime }\right)
\right) -\langle 0,u^{\prime }+v\left( u^{\prime }\right) \rangle \right]
-\langle z_{\mathcal{U^{\prime}}},u^{\prime }\rangle \right\} \\
&=&\operatorname{argmin}_{u^{\prime }\in \mathcal{U^{\prime}}}\left\{ k_{v}\left(
u^{\prime }\right) -\langle z_{\mathcal{U^{\prime}}},u^{\prime }\rangle
\right\}
\end{eqnarray*}
implying $\left\{ u\right\} =P_{\mathcal{U^{\prime}}}\left[ m\left( z_{\mathcal{U^{\prime}} }+0\right) \right] \subseteq \operatorname{argmin}_{u^{\prime
}\in \mathcal{U^{\prime}}}\left\{ k_{v}\left( u^{\prime }\right) -\langle z_{\mathcal{U^{\prime}}},u^{\prime }\rangle \right\} =\left\{ u\right\} $.
Hence
\begin{equation*}
\operatorname{argmin}_{u^{\prime }\in \mathcal{U^{\prime}} }\left\{ k_{v}\left(
u^{\prime }\right) -\langle z_{\mathcal{U^{\prime}}},u^{\prime }\rangle
\right\} =P_{\mathcal{U^{\prime}} }\left[ m\left( z_{\mathcal{U^{\prime}}
}\right) \right]
\end{equation*}
is clearly a single valued, locally Lipschitz function of $z_{ \mathcal{U^{\prime}} }\in B_{\varepsilon }^{\mathcal{U^{\prime}}}\left( 0\right)
\subseteq \mathcal{U^{\prime}}$.
\end{proof}

\begin{remark}
Clearly Proposition \ref{prop:tilt1} implies $k_{v}\left( \cdot \right) :
\mathcal{U}^{2}\rightarrow \mathbb{R}_{\infty }$ has a tilt stable local
minimum at $0$ relative to $\mathcal{U}^{2}\subseteq \mathcal{U}$.
\end{remark}

The following will help connect the positive definiteness of the densely
defined Hessians of the convexification $h$ with the associated uniform
local strong convexity of $f$. This earlier results \cite[Theorem 24,  Corollary 39]{eberhard:8} may be
compared with Theorem 3.3 of \cite{Drusvy:1} in that it links "stable strong
local minimizers of $f$ at $\bar{x}$" to tilt stability. We say $f_{z}:=f-\langle z,\cdot \rangle $ has a strict local minimum order two at $x^{\prime }$ relative to $B_{\delta }(\bar{x})\ni x^{\prime }$ when $f_{z}(x)\geq f_{z}\left( x^{\prime }\right) +\beta \left\Vert x-x^{\prime
}\right\Vert ^{2}$ for all $x\in B_{\delta }(\bar{x})$. It is a classical
fact that this is characterised by the condition $\left( f_{z}\right)
_{\_}^{\prime \prime }\left( x,0,h\right) >0$ for all $\left\Vert
h\right\Vert =1,$ see \cite[Theorem 2.2]{Studniarski:1986}.

 The following
result gives conditions on $f$,  in finite dimensions,
such that the coderivative in the second order sufficiency condition (\ref{neqn:2}) is uniformly bounded away from zero by a constant $\beta >0$.
Then indeed   (\ref{neqn:2})  is equivalent to this strengthened condition. This follows from a uniform bound on the associated quadratic minorant associated with the
strong stable local minimum. This phenomena was also observed in \cite[Theorem 5.36]{BonnansShapiroBook}
in the case of infinite dimensions for a  class of optimisation problems. As we already know this is 
true for $C^{1,1}$ functions (see Corollary \ref{cor:equiv}) and as we know that application of the 
infimal convolution to prox-regular functions produces a $C^{1,1}$ function,  there is a clear path
to connect these results. Indeed this is the approach used in \cite{eberhard:8,eberhard:9}. 

\begin{theorem} [\protect\cite{eberhard:8}, Theorem 34  part 1.]
\label{tilt:eb}  \label{thm:main}Suppose $f:\mathbb{R}^{n}\rightarrow \mathbb{R}_{\infty }$ is
lower--semicontinuous, prox--bounded (i.e. minorised by a quadratic) and $0\in \partial _{p}f(\bar{x})$.

 Suppose in addition there exists $\delta >0$ and $\beta >0$ such that
for all $(x,z)\in B_{\delta }(\bar{x},0)\cap \operatorname{Graph}\,\partial _{p}f$
the function $f-\langle z,\cdot \rangle $ has a strict local minimum order
two at $x$ in the sense that there exists $\gamma >0$ (depending on $x,y$)
such that for each $x^{\prime }\in  {B}_{\gamma }(x)$ we have
\begin{equation}
f(x^{\prime })-\langle z,x^{\prime }\rangle \geq f(x)-\langle z,x\rangle
+\beta \Vert x-x^{\prime }\Vert ^{2}.  \label{neqn:130}
\end{equation}

Then we have for all $\Vert w\Vert=1$ and $0\neq p\in{D}^{\ast}(\partial
_{p}f)(\bar{x},0)(w)$ that $\langle w,p\rangle\geq\beta>0$.

\end{theorem}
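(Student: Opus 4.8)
The plan is to show that hypothesis (\ref{neqn:130}) forces $T:=\partial _{p}f$ to be locally strongly monotone about $(\bar{x},0)$, and then to convert this purely metric property into the claimed positivity of the coderivative. The strong monotonicity is extracted directly: take two graph points $(x_{1},z_{1}),(x_{2},z_{2})\in B_{\delta }(\bar{x},0)\cap \operatorname{Graph}T$ lying mutually inside each other's growth neighbourhood, and apply (\ref{neqn:130}) at $(x_{1},z_{1})$ with $x^{\prime }=x_{2}$ and at $(x_{2},z_{2})$ with $x^{\prime }=x_{1}$:
\begin{align*}
f(x_{2})-\langle z_{1},x_{2}\rangle &\geq f(x_{1})-\langle z_{1},x_{1}\rangle +\beta \Vert x_{1}-x_{2}\Vert ^{2}, \\
f(x_{1})-\langle z_{2},x_{1}\rangle &\geq f(x_{2})-\langle z_{2},x_{2}\rangle +\beta \Vert x_{1}-x_{2}\Vert ^{2}.
\end{align*}
Adding these and cancelling $f(x_{1})+f(x_{2})$ yields $\langle z_{1}-z_{2},x_{1}-x_{2}\rangle \geq 2\beta \Vert x_{1}-x_{2}\Vert ^{2}$, i.e. $T$ is strongly monotone with modulus $2\beta $ on a neighbourhood of $(\bar{x},0)$.

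Next I would reduce to the monotone case by a shear. Writing $S:=T-2\beta I$ (localised about $(\bar{x},\bar{s})$ with $\bar{s}=-2\beta \bar{x}\in S(\bar{x})$), the computation above shows $S$ is monotone, and $\operatorname{Graph}T$ is the image of $\operatorname{Graph}S$ under the linear isomorphism $(x,s)\mapsto (x,2\beta x+s)$. Since limiting normal cones transform under a linear isomorphism by the inverse transpose, one obtains the exact coderivative sum rule $p\in D^{\ast }T(\bar{x},0)(w)\iff q:=p-2\beta w\in D^{\ast }S(\bar{x},\bar{s})(w)$. Thus the theorem reduces to the claim that a monotone operator has a ``positive semidefinite'' coderivative: $\langle q,w\rangle \geq 0$ for every $q\in D^{\ast }S(\bar{x},\bar{s})(w)$, since then $\langle p,w\rangle =\langle q,w\rangle +2\beta \Vert w\Vert ^{2}\geq 2\beta \Vert w\Vert ^{2}\geq \beta $ whenever $\Vert w\Vert =1$.

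To prove the positivity claim I would use the Minty parametrisation: the (locally maximal) monotone $S$ has a single-valued nonexpansive resolvent $J=(I+S)^{-1}$, so $\operatorname{Graph}S=\{(J(u),u-J(u))\}$ is a bi-Lipschitz image of a neighbourhood of $\bar{u}=\bar{x}+\bar{s}$ and hence a Lipschitz manifold. Taking $q\in D^{\ast }S(\bar{x},\bar{s})(w)$, passing to proximal normals at nearby graph points where (by Rademacher) $J$ is differentiable with symmetric positive semidefinite derivative $A=(I+B)^{-1}$, $B\succeq 0$, the normality relation collapses to $A(q+w)=w$, i.e. $q=Bw$, whence $\langle q,w\rangle =\langle Bw,w\rangle \geq 0$; the limiting values inherit the inequality. (Alternatively this positivity may simply be quoted from the coderivative characterisations of monotonicity.) This yields $\langle w,p\rangle \geq 2\beta \geq \beta $, as required.

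The same conclusion can be reached through the $C^{1,1}$ machinery already developed: regularising $f$ by the Moreau envelope $e_{\lambda }f$ produces, near $\bar{x}$, a $C^{1,1}$ function whose gradient graph is a linear image of $\operatorname{Graph}\partial _{p}f$, and transferring the growth (\ref{neqn:130}) shows its limiting Hessians satisfy $\langle Q,hh^{T}\rangle \geq 2\beta $; Corollary \ref{cor:equiv} then gives the coderivative bound for $\nabla e_{\lambda }f$, which pulls back to $\partial _{p}f$ via Corollary \ref{coderiv}. Either way, the main obstacle is the \emph{uniformity} buried in the first step: the growth radius $\gamma $ in (\ref{neqn:130}) depends on the base point, so strong monotonicity is only immediate for \emph{mutually} close graph points. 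The crux of the proof is therefore to upgrade this to genuine local strong monotonicity on a fixed neighbourhood (equivalently, to single-valuedness and Lipschitzness of the proximal map, the strong-metric-regularity face of tilt stability); once that uniform control is secured, the coderivative estimate follows from the monotonicity/Hessian argument above.
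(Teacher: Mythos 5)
You have put your finger on the gap yourself, but flagging it does not close it. The two-sided application of (\ref{neqn:130}) yields $\langle z_{1}-z_{2},x_{1}-x_{2}\rangle \geq 2\beta \Vert x_{1}-x_{2}\Vert ^{2}$ only for pairs of graph points each of which lies in the \emph{other's} growth ball, and since the radius $\gamma $ in the hypothesis is allowed to depend on the base point $(x,z)$, this is strictly weaker than strong monotonicity of a localization of $\partial _{p}f$ on a fixed neighbourhood of $(\bar{x},0)$. Everything downstream — the local maximality needed for the Minty parametrisation (which is not automatic here, since the theorem does not assume prox-regularity), the resolvent/Rademacher argument, and the Moreau-envelope variant — presupposes exactly that uniform control, and upgrading the mutual estimate to a genuine neighbourhood statement is essentially the Drusvyatskiy--Lewis equivalence between uniform quadratic growth and strong metric regularity of the subdifferential, i.e.\ a theorem of comparable depth to the one being proved. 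Deferring it with ``once that uniform control is secured'' leaves the proof incomplete. (A symptom that the route is off: your argument would deliver the constant $2\beta $, whereas the statement claims only $\beta $.)

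For comparison: the paper itself gives no proof of this theorem (it is quoted from \cite{eberhard:8}), but the intended mechanism is visible in the proof of the implication \ref{ppart:2}$\implies $\ref{ppart:1} of Corollary \ref{cor:ebnonzero}. There one works \emph{pointwise}: at each individual graph point $(x,z)$ the order-two growth of $f-\langle z,\cdot \rangle -\frac{\beta ^{\prime }}{2}\Vert \cdot -x\Vert ^{2}$ forces, via the cited \cite[Theorem 67]{eberhard:9}, the nonnegativity $\langle w,y\rangle \geq 0$ for every $y\in \hat{D}^{\ast }(\partial _{p}f_{z})(x,0)(w)$; the exact shear identity $\hat{D}^{\ast }(\partial _{p}f_{z})(x,z)(w)=\hat{D}^{\ast }(\partial _{p}f)(x,z)(w)-\beta ^{\prime }w$ then gives $\langle p,w\rangle \geq \beta ^{\prime }\Vert w\Vert ^{2}$ at that point, and the uniform constant survives the graphical limit supremum defining $D^{\ast }(\partial _{p}f)(\bar{x},0)$. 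Because this argument uses the growth condition at one base point at a time, it is immune to the mutual-closeness obstruction; that is the step your proposal is missing.
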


\begin{corollary} \label{cor:strict}
Suppose $f:\mathbb{R}
^{n}\rightarrow \overline{\mathbb{R}}$ a is lower semi--continuous,
prox--bounded and $f$ is both prox--regular at $\bar{x}$ with respect to $0\in \partial _{p}f(\bar{x})$ and subdifferentially continuous there.  Then the following are equivalent:
\begin{enumerate}
	\item \label{equiv:1} For all $\Vert w\Vert=1$ and $p\in{D}^{\ast}(\partial
	_{p}f)(\bar{x},0)(w)$ we have $\langle w,p\rangle>0$.
	\item \label{equiv:2}  There exists $\beta >0$ such that for all $\Vert w\Vert=1$ and $p\in{D}^{\ast}(\partial
	_{p}f)(\bar{x},0)(w)$ we have $\langle w,p\rangle \geq \beta >0$.
\end{enumerate}
\end{corollary}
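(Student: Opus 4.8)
The direction \ref{equiv:2} $\Rightarrow$ \ref{equiv:1} is immediate, so the plan is to establish \ref{equiv:1} $\Rightarrow$ \ref{equiv:2} by routing condition \ref{equiv:1} through tilt stability and then applying Theorem \ref{tilt:eb}. First I would use prox-regularity together with subdifferential continuity to identify the two coderivatives: as recorded in Remark \ref{rem:jets}, subdifferential continuity lets us drop $f$-attentiveness and write $B_{\delta}(0)\cap\partial f(x')=B_{\delta}(0)\cap\partial_p f(x')$ for $x'$ near $\bar{x}$, so $\operatorname{Graph}\partial_p f$ and $\operatorname{Graph}\partial f$ agree near $(\bar{x},0)$ and hence $D^{\ast}(\partial_p f)(\bar{x},0)=D^{\ast}(\partial f)(\bar{x},0)$. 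Thus \ref{equiv:1} is exactly the second-order sufficient condition (\ref{neqn:2}), and by \cite[Theorem 1.3]{rock:7} it yields a tilt stable local minimum of $f$ at $\bar{x}$. I would also note that \ref{equiv:1} forces $0\notin D^{\ast}(\partial_p f)(\bar{x},0)(w)$ for every $\|w\|=1$, so the restriction $p\neq 0$ in the conclusion of Theorem \ref{tilt:eb} in fact exhausts the whole coderivative image.

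Next I would translate tilt stability into the uniform second-order growth hypothesis (\ref{neqn:130}) demanded by Theorem \ref{tilt:eb}. This is the characterisation of tilt stability as a uniform stable strong local minimizer: invoking \cite[Theorem 3.3]{Drusvy:1} (equivalently \cite[Theorem 24, Corollary 39]{eberhard:8}), the single-valued Lipschitz behaviour of $m_f$ near $0$ supplies $\delta>0$ and $\beta>0$ such that for every $(x,z)\in B_{\delta}(\bar{x},0)\cap\operatorname{Graph}\partial_p f$ the tilted function $f-\langle z,\cdot\rangle$ has a strict local minimum of order two at $x$ with the \emph{common} modulus $\beta$. Feeding this uniform $\beta$ into Theorem \ref{tilt:eb} then gives $\langle w,p\rangle\geq\beta>0$ for all $\|w\|=1$ and all $p\in D^{\ast}(\partial_p f)(\bar{x},0)(w)$, which is precisely \ref{equiv:2}.

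The hard part is this middle step: producing a single growth modulus $\beta$ valid simultaneously for all nearby base points $(x,z)$ on the graph, not merely at $\bar{x}$ itself. Prox-regularity is what makes this possible, since it permits the quadratic minorants guaranteed by tilt stability at the perturbed minimizers $x=m_f(z)$ to be taken with a uniform lower curvature bound. Equivalently, one may regularise $f$ by infimal convolution to obtain a locally $C^{1,1}$ function, apply the $C^{1,1}$ case already settled in Corollary \ref{cor:equiv}, and transfer the bound back to $f$; this is the route used in \cite{eberhard:8,eberhard:9}.
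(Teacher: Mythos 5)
Your proposal is correct and follows essentially the same route as the paper's own proof: condition 1 yields tilt stability via \cite[Theorem 1.3]{rock:7}, \cite[Theorem 3.3]{Drusvy:1} then supplies the uniform order-two growth modulus $\beta$ on a graph-neighbourhood of $(\bar{x},0)$, and Theorem \ref{tilt:eb} converts that into the lower bound $\langle w,p\rangle\geq\beta$. Your added remark that condition 1 rules out $0\in D^{\ast}(\partial_p f)(\bar{x},0)(w)$ for $\|w\|=1$ (so the restriction to $p\neq 0$ in Theorem \ref{tilt:eb} is harmless) is a small point the paper leaves implicit, but otherwise the arguments coincide.
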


\begin{proof}
We only need show \ref{equiv:1} implies \ref{equiv:2}. By  \cite[Theorem 1.3]{rock:7} we have  \ref{equiv:1} implying a tilt stable local minimum at $\bar{x}$. Now apply \cite[Theorem 3.3]{Drusvy:1} to deduce the existence of a $\delta >0$  such that for all $(x,z)\in B_{\delta }(\bar{x},0)\cap \operatorname{Graph}
\,\partial _{p}f$   we have $x$ a strict local minimizer order two of
the function $f-\langle z,\cdot \rangle $ in the sense that (\ref{neqn:130})  holds for some uniform value $\beta >0$ for all  $x^{\prime} \in B_{\gamma} (x)$. Now apply Theorem \ref{tilt:eb} to obtain \ref{equiv:2}.
\end{proof}
\smallskip 

Another
condition equivalent to all of those in  \cite[Theorem 1.3]{rock:7}   is the
following
\begin{equation}
f_{s}^{\prime \prime }\left( x,z,u\right) >0\text{\quad for all }\left(
x,z\right) \in B_{\delta }(\bar{x},0)\cap \operatorname{Graph}\,\partial _{p}f,
\label{neqn:44}
\end{equation}
which is motivated by the classical observation that $f^{\prime \prime
}\left( x,z,u\right) >0$ implies $f-\langle z,\cdot \rangle $ has a strict
local minimum order 2 at $x$ (see \cite[Theorem 2.2]{Studniarski:1986}). We will show that a 
stronger version gives an equivalent characterisation in Corollary \ref{cor:equiv} below. The following
construction is also standard. Denote
\begin{equation*}
\hat{D}^{\ast }\left( \partial _{p}f\right) (x,z)(w)=\{v\in \mathbb{R}^{n}\mid (v,-w)\in \hat{N}_{\operatorname{Graph}\,\partial _{p}f}(x,z)\},
\end{equation*}
where $\hat{N}_{\operatorname{Graph}\,\partial _{p}f}(x,z)=\left(
\limsup_{t\downarrow 0}\frac{\operatorname{Graph}\,\partial _{p}f-(x,p)}{t}\right)
^{\circ }$ is the contingent normal cone. Then we have $D^{\ast }\left(
\partial _{p}f\right) (\bar{x},0)(w)=g$-$\limsup_{\left( x,z\right)
\rightarrow _{S_{p}\left( f\right) }\left( \bar{x},0\right) }\hat{D}^{\ast
}\left( \partial _{p}f\right) (x,z)(w)$ (the graphical limit supremum \cite[page 327]{rock:6}).

\begin{corollary}\label{cor:equiv}
\label{cor:ebnonzero} Suppose $f:\mathbb{R}^{n}\rightarrow  {\mathbb{R}_{\infty}}$ a is lower semi--continuous, prox--bounded and $f$ is both
prox--regular at $\bar{x}$ with respect to $0\in \partial _{p}f(\bar{x})$
and subdifferentially continuous there. Then the following are equivalent:

\begin{enumerate}
\item \label{ppart:1}For all $\Vert w\Vert =1$ and $p\in {D}^{\ast
}(\partial _{p}f)(\bar{x},0)(w)$ we have $\langle w,p\rangle >0$.

\item \label{ppart:2} There exists $\beta >0$ such that for all  $\Vert w\Vert =1$ we have $f_{s}^{\prime \prime
}\left( x,z,w\right) \geq \beta >0$ for all $\left( x,z\right) \in B_{\delta }(\bar{x},0)\cap \operatorname{Graph}\,\partial _{p}f$, for some $\delta >0$.
\end{enumerate}
Moreover the $\beta$ in part \ref{ppart:2} may be taken as that in Corollary \ref{cor:strict} part \ref{equiv:2}. 
\end{corollary}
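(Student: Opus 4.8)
The plan is to route both implications through the uniform quadratic growth property (\ref{neqn:130}), using that condition \ref{ppart:1} is verbatim Corollary \ref{cor:strict} part \ref{equiv:1} and that Theorem \ref{tilt:eb} turns quadratic growth directly into a coderivative bound. The elementary bridge is that $f_{s}^{\prime\prime}$ is by definition a $\liminf$ of the quotient $\Delta_{2}f$: writing $y=x+tw$ with $\|w\|=1$ and $t=\|y-x\|$, the growth estimate $f(y)-f(x)-\langle z,y-x\rangle\geq c\|y-x\|^{2}$ is exactly the statement $\Delta_{2}f(x,t,z,w)\geq 2c$, so lower bounds on $f_{s}^{\prime\prime}$ and quadratic growth estimates are interchangeable up to the constant factor $2$.

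For \ref{ppart:1} $\Rightarrow$ \ref{ppart:2} I would invoke Corollary \ref{cor:strict}: condition \ref{ppart:1} is its hypothesis \ref{equiv:1}, and the argument proving \ref{equiv:2} produces, via \cite[Theorem 1.3]{rock:7} and \cite[Theorem 3.3]{Drusvy:1}, a radius $\delta>0$ and the very constant $\beta$ of Corollary \ref{cor:strict} part \ref{equiv:2} for which (\ref{neqn:130}) holds at every $(x,z)\in B_{\delta}(\bar{x},0)\cap\operatorname{Graph}\partial_{p}f$. Taking $\liminf$ of the resulting bound $\Delta_{2}f(x,t,z,w)\geq 2\beta$ as $t\to 0$ and $w^{\prime}\to w$ yields $f_{s}^{\prime\prime}(x,z,w)\geq 2\beta\geq\beta$ for every $\|w\|=1$, which is \ref{ppart:2} with the same $\beta$; this is precisely the final ``Moreover'' assertion.

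The converse \ref{ppart:2} $\Rightarrow$ \ref{ppart:1} carries the only genuine work, namely upgrading the pointwise-in-$w$ bound $f_{s}^{\prime\prime}(x,z,w)\geq\beta$ to uniform quadratic growth on a ball about each $x$. Fixing $(x,z)\in B_{\delta}(\bar{x},0)\cap\operatorname{Graph}\partial_{p}f$ and $\varepsilon>0$, the perturbation $w^{\prime}\to w$ built into the $\liminf$ defining $f_{s}^{\prime\prime}$ furnishes, for each unit vector $w$, a radius $\rho_{w}>0$ with $\Delta_{2}f(x,t,z,w^{\prime})>\beta-\varepsilon$ whenever $\|w^{\prime}-w\|<\rho_{w}$ and $0<|t|<\rho_{w}$; a finite subcover of the unit sphere then gives a single threshold $\gamma>0$, so that (\ref{neqn:130}) holds with constant $(\beta-\varepsilon)/2$ on $B_{\gamma}(x)$. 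Feeding this uniform growth into Theorem \ref{tilt:eb} gives $\langle w,p\rangle\geq(\beta-\varepsilon)/2>0$ for all $\|w\|=1$ and $0\neq p\in D^{\ast}(\partial_{p}f)(\bar{x},0)(w)$; the remaining possibility $0\in D^{\ast}(\partial_{p}f)(\bar{x},0)(w)$ is excluded because the same uniform growth identifies $\bar{x}$ as a stable strong local minimizer, hence a tilt stable local minimum by \cite[Theorem 3.3]{Drusvy:1}, whose coderivative characterisation \cite[Theorem 1.3]{rock:7} is exactly condition \ref{ppart:1} (with all $p$). The main obstacle is this compactness/uniformity passage, together with keeping track of the factor $2$ so that the single constant of Corollary \ref{cor:strict} part \ref{equiv:2} indeed serves in all the equivalent statements.
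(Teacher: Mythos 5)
Your forward implication is correct and in fact more direct than the paper's: you read the bound $\Delta_2 f(x,t,z,w')\geq 2\beta\|w'\|^2$ straight off the uniform quadratic growth (\ref{neqn:130}) produced inside the proof of Corollary \ref{cor:strict} and pass to the $\liminf$, whereas the paper instead tilts $f$ by $\frac{\beta'}{2}\|\cdot-\bar x\|^2$, verifies the coderivative positivity condition for the tilted function $G$ via the sum rules, applies \cite[Theorem 3.3]{Drusvy:1} and \cite[Theorem 2.2]{Studniarski:1986} to $G_z$, and lets $\beta'\uparrow\beta$. Your compactness argument in the converse (finitely covering the unit sphere to turn the pointwise bound $f_s''(x,z,\cdot)\geq\beta$ into growth with constant $(\beta-\varepsilon)/2$ on a ball $B_\gamma(x)$) is also sound, and is exactly the classical fact the paper attributes to \cite[Theorem 2.2]{Studniarski:1986}.

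The gap is in how you dispose of the case $0\in D^{\ast}(\partial_p f)(\bar x,0)(w)$, which Theorem \ref{tilt:eb} leaves open since its conclusion is stated only for $p\neq 0$, while part \ref{ppart:1} quantifies over all $p$. You assert that the uniform growth ``identifies $\bar x$ as a stable strong local minimizer, hence a tilt stable local minimum by \cite[Theorem 3.3]{Drusvy:1}.'' But quadratic growth of $f-\langle z,\cdot\rangle$ at each nearby graph point $(x,z)$ is not the definition of a stable strong local minimizer: that notion additionally requires existence of minimizers $x_v$ of the tilted problems depending Lipschitz-continuously on the tilt $v$, and deriving this from pointwise growth along the graph is precisely the substantive content of such equivalence theorems; the paper only ever uses \cite[Theorem 3.3]{Drusvy:1} in the opposite direction (tilt stability $\Rightarrow$ growth). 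As written, this step assumes what it must prove. The paper's own converse avoids the issue entirely: from the order-two strict minimum of $f_z:=f-\langle z,\cdot\rangle-\frac{\beta'}{2}\|\cdot-x\|^2$ it invokes \cite[Theorem 67]{eberhard:9} to obtain $\langle w,y\rangle\geq 0$ for \emph{every} $y\in\hat D^{\ast}(\partial_p f_z)(x,0)(w)$, uses the shift identity $\hat D^{\ast}(\partial_p f_z)(x,0)(w)=\hat D^{\ast}(\partial_p f)(x,z)(w)-\beta'w$ to upgrade this to $\langle p,w\rangle\geq\beta'\|w\|^2$ for every $p$ (so $p=0$ is excluded automatically), and then takes the graphical limit supremum. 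You would need either this route, or an explicit reference for ``uniform quadratic growth along the graph $\Rightarrow$ tilt stability,'' to close the argument.
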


\begin{proof}
(\ref{ppart:1}.$\implies $\ref{ppart:2}.) By Corollary \ref{cor:strict} we have \ref{ppart:1}
equivalent to condition \ref{equiv:2} of Corollary \ref{cor:strict} (for some fixed $\beta >0$). 
Now define $G := f - \frac{\beta'}{2}\| \cdot - \bar{x} \|^2$ (for $0<\beta' < \beta$). Apply the  sum rule for the limiting  subgradient and that for the  coderivatives \cite[Theorem 10.41]{rock:6} to deduce that $0 \in \partial G (\bar{x}) = \partial f(\bar{x})-\beta' \times 0$ and also 
${D}^{\ast} (\partial G)(\bar{x},0) (w) \subseteq {D}^{\ast} (\partial f )(\bar{x},0) (w) -\beta' w$.
Then for any $v \in {D}^{\ast} (\partial G)(\bar{x},0) (w)$ we have 
$\langle v, w \rangle = \langle p , w \rangle -\beta' \|w \|^2 >0$. Now apply Theorem 3.3 of \cite{Drusvy:1} to deduce there exists a strict local minimum order two for $G_z := G - \langle z , \cdot \rangle$ at each 
 $\left( x,z\right) \in B_{\delta }(\bar{x},0)\cap \operatorname{Graph}\,\partial G$ for some 
 $\delta >0$.
Noting that $\partial G$ and $\partial_p G$ locally coincide around $(\bar{x}, 0)$, after possibly reducing $\delta >0$, we apply  (see \cite[Theorem 2.2]{Studniarski:1986}) to deduce that $(G_z)^{\prime\prime} (x,0,w) 
= f^{\prime\prime} (x,z,w) - \beta' \|w\|^2  > 0$ for all $\beta '< \beta$. This implies \ref{ppart:2}. 

(\ref{ppart:2}.$\implies $\ref{ppart:1}.) Let $ \left( x,z\right) \in B_{\delta }(\bar{x},0)\cap \operatorname{Graph}\,\partial _{p}f$. We use the fact that $f_{s}^{\prime
\prime }\left( x,z,w\right) > \beta' >0$ for all $0< \beta' < \beta$ and  $\Vert w\Vert =1$ implies $$f_{z}:=f-\langle z,\cdot \rangle -\frac{\beta'}{2} \| \cdot - x \|^2$$ has $x$ as a strict local minimum
order 2 at $x$. We may now apply \cite[Theorem 67]{eberhard:9} to deduce that   for all $y\in {\hat{D}}^{\ast}(\partial _{p}f_{z})(x,0)(w)$ we
have $\langle w,y\rangle \geq  0$. By direct calculation from definitions one may show that 
${\hat{D}}^{\ast}(\partial _{p}f_{z})(x,0)(w)={\hat{D}}^{\ast }(\partial _{p}f)(x,z)(w) - \beta' w$
and hence $\langle p , w \rangle \geq \beta' \|w\|^2$ for all  $p \in {\hat{D}}^{\ast }(\partial _{p}f)(x,z)(w)$. 
Taking the graphical limit supremum \cite[identity 8(18)]{rock:6} of ${\hat{D}}^{\ast }(\partial _{p}f)(x,z)(\cdot )$
as $\left( x,z\right) \rightarrow _{S_{p}\left( f\right) }\left( \bar{x},0\right) $ gives \ref{ppart:1}.
\end{proof}
\smallskip

One of the properties that follows from \cite[Theorem 1.3]{rock:7} is that the Aubin Property (or pseudo-Lipschitz property) holds for the mapping $z \mapsto B_{\delta} (\bar{x}) \cap (\partial f)^{-1} (z)$.  The Aubin property is related to differentiability via the following result. 

\begin{theorem}  [\protect\cite{Miroslav:1}, Theorem 5.3] \label{thm:eb} Suppose $H$ is a Hilbert space
	and $f:H\mapsto \mathbb{R}_{\infty }$ is lower semi-continuous,
	prox-regular, and subdifferentially continuous at $\bar{x}\in \operatorname{int}
	\operatorname{dom}\partial f$ for some $\bar{v}\in \partial f(\bar{x})$. In
	addition, suppose $\partial f$ is pseudo-Lipschitz (i.e. possess the Aubin
	property) at a Lipschitz rate $L$ around $\bar{x}$ for $\bar{v}$. Then there
	exists $\varepsilon >0$ such that $\partial f(x)=\{\nabla f(x)\}$ for all $x\in B_{\varepsilon }(\bar{x})$ with $x\mapsto \nabla f(x)$ Lipschitz at the
	rate $L$.
\end{theorem}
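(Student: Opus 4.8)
The plan is to deduce everything from a single fact: that the localization of $\partial f$ around $(\bar{x},\bar{v})$ is single valued. Once this is established, $\partial f$ coincides locally with a single valued map which we may call $\nabla f$, and for a single valued mapping the Aubin property at rate $L$ is literally Lipschitz continuity at rate $L$; thus the quantitative conclusion comes for free and no separate estimate is needed. Two structural ingredients drive the argument. First, prox-regularity together with subdifferential continuity furnishes an $r\geq 0$ and neighbourhoods of $\bar{x}$ and $\bar{v}$ on which $\partial f$ is $r$-hypomonotone, i.e. $\langle v_{1}-v_{2},x_{1}-x_{2}\rangle \geq -r\Vert x_{1}-x_{2}\Vert ^{2}$ for all $(x_{i},v_{i})$ in the localized graph (equivalently $x\mapsto x+\lambda \partial f(x)$ is strongly monotone for $\lambda \in (0,1/r)$); subdifferential continuity lets us drop the $f$-attentiveness so that this holds on a genuine neighbourhood. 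Second, the Aubin property supplies neighbourhoods $U\ni \bar{x}$, $V\ni \bar{v}$ and the estimate $\partial f(x^{\prime })\cap V\subseteq \partial f(x)+L\Vert x^{\prime }-x\Vert B_{1}(0)$ for $x,x^{\prime }\in U$.

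First I would prove single valuedness at $\bar{x}$ by a jump/contradiction argument that pits hypomonotonicity (a lower bound on inner products) against the Aubin estimate (an upper bound on how far values can spread). Suppose $v_{1},v_{2}\in \partial f(\bar{x})\cap V$ with $v_{1}\neq v_{2}$, and set $d:=(v_{1}-v_{2})/\Vert v_{1}-v_{2}\Vert $. For small $t>0$ choose $w\in \partial f(\bar{x}+td)\cap V$ (nonempty since $\bar{x}\in \operatorname{int}\operatorname{dom}\partial f$ and prox-regularity yields a continuous proximal selection); hypomonotonicity applied to $(\bar{x}+td,w)$ and $(\bar{x},v_{1})$ gives $\langle w-v_{1},d\rangle \geq -rt$. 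Applying the Aubin estimate with $x^{\prime }=\bar{x}+td$ and $x=\bar{x}-td$ produces $w^{\prime }\in \partial f(\bar{x}-td)$ with $\Vert w-w^{\prime }\Vert \leq 2Lt$, while hypomonotonicity applied to $(\bar{x}-td,w^{\prime })$ and $(\bar{x},v_{2})$ gives $\langle w^{\prime }-v_{2},d\rangle \leq rt$. Combining, $2Lt\geq \langle w-w^{\prime },d\rangle \geq \langle v_{1}-v_{2},d\rangle -2rt=\Vert v_{1}-v_{2}\Vert -2rt$, so $\Vert v_{1}-v_{2}\Vert \leq 2(L+r)t$; letting $t\downarrow 0$ forces $v_{1}=v_{2}$. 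The identical computation, carried out at any base point $x$ near $\bar{x}$ in place of $\bar{x}$ (all hypotheses being uniform on $U$), shows $\partial f(x)\cap V$ is a singleton for every $x$ in a neighbourhood of $\bar{x}$.

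It then remains to upgrade this localized single valuedness to the full statement $\partial f(x)=\{\nabla f(x)\}$ and to record the rate. The rate is immediate, as noted above, since a single valued map with the Aubin property at rate $L$ is Lipschitz at rate $L$. To remove the truncation by $V$, I would invoke the prox-regularity/Moreau-envelope machinery of Poliquin--Rockafellar (valid in Hilbert space): near $\bar{x}$ the subgradients of $f$ are locally bounded, and by subdifferential continuity they concentrate near $\bar{v}$, so that $\partial f(x)\cap V=\partial f(x)$ on a possibly smaller neighbourhood; hence $\partial f(x)$ is the singleton $\{\nabla f(x)\}$ with $\nabla f$ Lipschitz at rate $L$. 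I expect this last bookkeeping step --- guaranteeing in the Hilbert-space setting that the localization captures all of $\partial f$ near $\bar{x}$ (nonemptiness of $\partial f(x^{\prime })\cap V$ and the absence of stray far-away subgradients) --- to be the main technical obstacle, since it is precisely here that one must use $\bar{x}\in \operatorname{int}\operatorname{dom}\partial f$ together with prox-regularity, rather than a purely monotone-operator argument. The single valuedness core in the second paragraph, by contrast, is robust and dimension free, and could alternatively be replaced by a citation to the equivalence of metric and strong metric regularity for monotone-type mappings combined with the Mordukhovich coderivative criterion for the Aubin property.
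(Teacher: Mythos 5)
First, a point of context: the paper does not prove this statement. Theorem \ref{thm:eb} is imported verbatim from \cite{Miroslav:1} (Theorem 5.3 there) and used as a black box, so there is no in-paper proof to compare yours against; I can only assess your argument on its own terms. Its core is correct and is essentially the standard single-valuedness mechanism for this circle of results: prox-regularity plus subdifferential continuity gives $r$-hypomonotonicity of the graph of $\partial f$ localized to a genuine neighbourhood $U\times V$ of $(\bar{x},\bar{v})$, and playing the two hypomonotonicity inequalities at $(\bar{x}+td,w)$, $(\bar{x},v_{1})$ and at $(\bar{x}-td,w^{\prime})$, $(\bar{x},v_{2})$ against the Aubin estimate $\Vert w-w^{\prime}\Vert\leq 2Lt$ does yield $\Vert v_{1}-v_{2}\Vert\leq 2(L+r)t\rightarrow 0$. (Nonemptiness of $\partial f(\bar{x}+td)\cap V$ is cleanest obtained from the Aubin property applied to $\bar{v}$ itself, which also keeps $w$ and $w^{\prime}$ inside $V$ for small $t$; your appeal to a ``continuous proximal selection'' is unnecessary.) This gives single-valuedness and $L$-Lipschitz continuity of the truncated map $x\mapsto\partial f(x)\cap V$ near $\bar{x}$.

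The genuine gap is in the final step, which you correctly flag but do not close, and whose sketch contains an unjustified claim. Two things are missing. First, the assertion that ``near $\bar{x}$ the subgradients of $f$ are locally bounded'' does not follow from the hypotheses: local boundedness of $\partial f$ is essentially equivalent to local Lipschitz continuity of $f$, which is not assumed, and subdifferential continuity only controls $f(x)$ along graph-convergent sequences $(x,v)\rightarrow(\bar{x},\bar{v})$; it does not force every subgradient at a nearby point into $V$. So the identity $\partial f(x)\cap V=\partial f(x)$ requires a real argument, not bookkeeping. Second, even granting that $\partial f(x)$ is a singleton, the conclusion $\partial f(x)=\{\nabla f(x)\}$ asserts actual (Fr\'{e}chet) differentiability of $f$ with that subgradient as derivative; a single-valued Lipschitz limiting subdifferential does not by itself deliver this without reusing prox-regularity, e.g.\ via local para-convexity of $f+\frac{c}{2}\Vert\cdot\Vert^{2}$ or via the $C^{1,1}$ Moreau envelope $e_{\lambda}f$ and the identity $\nabla e_{\lambda}f(x)=\lambda^{-1}\left(x-P_{\lambda}f(x)\right)\in\partial f\left(P_{\lambda}f(x)\right)$, transferring differentiability back to $f$ through the localized graph. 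These two points are precisely where \cite{Miroslav:1} deploys its infimal-convolution machinery in the Hilbert-space setting and where the hypothesis $\bar{x}\in\operatorname{int}\operatorname{dom}\partial f$ earns its keep; as written, your proposal replaces that work with an assertion.
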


\begin{corollary}
	Under the assumption of Proposition \ref{prop:tilt1} we have $z\mapsto
	\partial k_{v}^{\ast }(z)$ a single valued Lipschitz continuous mapping in
	some neighbourhood of $0$.
\end{corollary}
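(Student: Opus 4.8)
The plan is to recognise the mapping $z\mapsto \partial k_{v}^{\ast}(z)$ as the inverse of the subdifferential of the convex function $\operatorname{co}k_{v}$, and then to read off its single‑valued Lipschitz behaviour from the tilt stability of $k_{v}$ already secured in Proposition \ref{prop:tilt1}, using the correspondence between tilt stability and strong metric regularity of the (locally restricted) inverse subdifferential. Theorem \ref{thm:eb} will then be invoked to certify that this single‑valued selection is in fact a Lipschitz gradient $\nabla k_{v}^{\ast}$.

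First I would record the regularity of the conjugate. Because $k_{v}^{\ast}=(\operatorname{co}k_{v})^{\ast}$ is a convex conjugate it is automatically closed, proper and convex, hence lower semicontinuous, prox‑regular and subdifferentially continuous at every point of its domain; this supplies all the standing hypotheses of Theorem \ref{thm:eb} except the Aubin property. Since $\bar{z}=0$ minimises $\operatorname{co}k_{v}$ we have $0\in\partial(\operatorname{co}k_{v})(0)$, and therefore $0\in\partial k_{v}^{\ast}(0)$, so the relevant base point is $(0,0)$. Conjugate duality for the convex function $\phi:=\operatorname{co}k_{v}$ gives the identity $u\in\partial\phi^{\ast}(z)\iff z\in\partial\phi(u)\iff u\in\operatorname{argmin}_{w}\{\phi(w)-\langle z,w\rangle\}$, so that $\partial k_{v}^{\ast}(z)=(\partial\operatorname{co}k_{v})^{-1}(z)$ is exactly the (global) tilted argmin of $\operatorname{co}k_{v}$.

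Next I would connect this tilted argmin to the single‑valued Lipschitz selection produced in Proposition \ref{prop:tilt1}. By Proposition \ref{prop:co}, the convexification afforded by tilt stability identifies the localised tilted argmin of $k_{v}$ over $B_{\varepsilon}^{\mathcal{U}'}(0)$ with the global minimiser of $\operatorname{co}k_{v}(\cdot)-\langle z,\cdot\rangle$ for all $z$ near $0$; thus $\partial k_{v}^{\ast}(z)=P_{\mathcal{U}'}[m(z)]$, which Proposition \ref{prop:tilt1} asserts is single‑valued and locally Lipschitz on a neighbourhood of $0$. Equivalently, the tilt stability of $k_{v}$ is precisely strong metric regularity of $\partial\operatorname{co}k_{v}$ at $(0,0)$, and strong metric regularity of a mapping is the statement that its inverse—here $\partial k_{v}^{\ast}$—admits a single‑valued Lipschitz localisation around $(0,0)$. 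In particular $\partial k_{v}^{\ast}$ is nonempty‑valued throughout a full neighbourhood of $0$, so $0\in\operatorname{int}\operatorname{dom}\partial k_{v}^{\ast}$, and $\partial k_{v}^{\ast}$ possesses the Aubin property at $(0,0)$ at a rate $L$ equal to the Lipschitz constant of $m(\cdot)$. Applying Theorem \ref{thm:eb} to $k_{v}^{\ast}$ (with $0$ in the role of $\bar{x}$ and $0\in\partial k_{v}^{\ast}(0)$ in the role of $\bar{v}$) upgrades this to $\partial k_{v}^{\ast}(z)=\{\nabla k_{v}^{\ast}(z)\}$ with $z\mapsto\nabla k_{v}^{\ast}(z)$ Lipschitz near $0$, which is the claim.

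The main obstacle I anticipate is the reconciliation of the local and global minimisations in the middle step: the argmin furnished by tilt stability is constrained to the ball $B_{\varepsilon}^{\mathcal{U}'}(0)$, whereas the conjugate subdifferential is an \emph{unconstrained} argmin. Controlling this discrepancy is exactly the content of Proposition \ref{prop:co} together with the quadratic minorisation, which force the localised minimiser to be the global one and to sit in the interior of the ball for $z$ sufficiently close to $0$ (so that the optimality conditions of Proposition \ref{cor:conv} and Lemma \ref{lem:27} are unobstructed). Once this interiority is in hand, the passage from single‑valued Lipschitz subdifferential to Lipschitz gradient is immediate for a convex function, and Theorem \ref{thm:eb} serves mainly to package the differentiability and the Lipschitz rate.
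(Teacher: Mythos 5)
Your proposal is correct and follows essentially the same route as the paper: identify $\partial k_{v}^{\ast}=(\partial\operatorname{co}k_{v})^{-1}$, note that convexity of $k_{v}^{\ast}$ supplies prox-regularity and subdifferential continuity, obtain the Aubin property for the inverse from the tilt stability of $k_{v}$ established in Proposition \ref{prop:tilt1} (via \cite[Theorem 1.3]{rock:7}), and conclude with Theorem \ref{thm:eb}. The extra discussion of reconciling the localised and global argmins via Proposition \ref{prop:co} is a reasonable elaboration of a point the paper leaves implicit, but it does not change the argument.
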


\begin{proof}
	We invoke Theorems \ref{tilt:eb} and \ref{thm:eb}. As $(\operatorname{co}
	k_{v})^{\ast}= k_{v}^{\ast }$ and being a convex function it is prox-regular
	and subdifferentially continuous so $(\partial _{p}\operatorname{co}
	k_{v})^{-1}=(\partial \operatorname{co}k_{v})^{-1}=\partial k_{v}^{\ast }$ is single
	valued and Lipschitz continuous by Theorem \ref{thm:eb}, noting that the
	tilt stability supplies the Aubin property for $\left( \partial \operatorname{co}
	k_{v}\right) ^{-1}$ via  \cite[Theorem 1.3]{rock:7} .
\end{proof}

We include the following for completeness. We wish to apply this in conjuction with 
Alexandrov's theorem and this is valid due to the equivalence of the existence of a 
Taylor expansion and twice differentiability in the extended sense (see \cite[Corollary 13.42, Theorem 13.51]{rock:6}).
\begin{lemma}\label{lem:48}
	Suppose  $f: \mathbb{R}^n \to \mathbb{R}_{\infty}$ is a locally finite convex function at $B_{\varepsilon} (x)$ which is twice differentiable at $\bar{x} \in  B_{\varepsilon} (x)$ with  $\bar{z} := \nabla f(\bar{x})$ and $Q:= \nabla^2 f(\bar{x})$ positive definite. Then we we have 
	\begin{equation}
	\bar{z} := \nabla f(\bar{x}) \text{ and } Q:= \nabla^2 f(\bar{x})
	\quad \iff \quad \bar{x} = \nabla f^{\ast} (\bar{z}) \text{ and } Q^{-1}=\nabla^2 f^{\ast} (\bar{z}). \label{iffQ}
	\end{equation}
\end{lemma}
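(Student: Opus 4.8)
The plan is to prove only the forward implication in \eqref{iffQ} and recover the reverse by symmetry: since $f$ is proper lower semi-continuous and convex we have $f^{\ast\ast}=f$, so applying the forward implication to the convex function $f^{\ast}$ at the point $\bar z$ (which by hypothesis has gradient $\bar x$ and positive definite Hessian $Q^{-1}$) returns $\bar z=\nabla f^{\ast\ast}(\bar x)=\nabla f(\bar x)$ and $(Q^{-1})^{-1}=Q=\nabla^{2}f(\bar x)$, which is exactly the left-hand side of \eqref{iffQ}. For the forward implication itself the first-order statement is almost immediate: $\bar z=\nabla f(\bar x)$ gives $\bar z\in\partial f(\bar x)$, hence $\bar x\in\partial f^{\ast}(\bar z)$ and $f^{\ast}(\bar z)=\langle\bar z,\bar x\rangle-f(\bar x)$ by the conjugate subgradient inversion rule; genuine differentiability of $f^{\ast}$ at $\bar z$ with $\nabla f^{\ast}(\bar z)=\bar x$ will fall out of the second-order expansion established below.

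The heart of the argument is to produce the expansion
\[
f^{\ast}(z)=f^{\ast}(\bar z)+\langle\bar x,z-\bar z\rangle+\tfrac12\langle Q^{-1}(z-\bar z),z-\bar z\rangle+o(\|z-\bar z\|^{2}),
\]
from which \cite[Corollary 13.42, Theorem 13.51]{rock:6} delivers twice differentiability of $f^{\ast}$ at $\bar z$ in the extended sense with $\nabla f^{\ast}(\bar z)=\bar x$ and $\nabla^{2}f^{\ast}(\bar z)=Q^{-1}$. First I would record, from the extended-sense Taylor expansion of $f$ at $\bar x$, that for every $\epsilon>0$ there is $\delta>0$ with $|f(x)-q(x)|\le\epsilon\|x-\bar x\|^{2}$ on $B_{\delta}(\bar x)$, where $q(x):=f(\bar x)+\langle\bar z,x-\bar x\rangle+\tfrac12\langle Q(x-\bar x),x-\bar x\rangle$. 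Writing $\lambda>0$ for the least eigenvalue of $Q$ and choosing $\epsilon=\lambda/4$, this yields the local quadratic growth $f(x)\ge f(\bar x)+\langle\bar z,x-\bar x\rangle+\tfrac{\lambda}{4}\|x-\bar x\|^{2}$ on $B_{\delta}(\bar x)$.

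The main obstacle is \emph{localisation}: because $f$ is assumed twice differentiable only at the single point $\bar x$, the two-sided quadratic estimate is available solely on $B_{\delta}(\bar x)$, so I must first show that for $z$ near $\bar z$ the supremum defining $f^{\ast}(z)=\sup_{x}\{\langle z,x\rangle-f(x)\}$ is attained inside $B_{\delta}(\bar x)$. Here convexity and the growth bound combine: writing $\phi_{z}:=f-\langle z,\cdot\rangle$, the growth estimate gives, on the sphere $\|x-\bar x\|=\delta$, the bound $\phi_{z}(x)-\phi_{z}(\bar x)\ge\tfrac{\lambda}{4}\delta^{2}-\|z-\bar z\|\,\delta>0$ whenever $\|z-\bar z\|<\tfrac{\lambda\delta}{4}$, so the minimum of the convex function $\phi_{z}$ over the compact ball is attained in the interior and is therefore global; equivalently $f^{\ast}(z)=\sup_{x\in B_{\delta}(\bar x)}\{\langle z,x\rangle-f(x)\}$ for such $z$. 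With the supremum confined to $B_{\delta}(\bar x)$ I would then insert the minorant $q-\epsilon\|\cdot-\bar x\|^{2}$ and the majorant $q+\epsilon\|\cdot-\bar x\|^{2}$ and compute the conjugate of these shifted quadratics explicitly; their unconstrained maximiser $y=(Q\mp2\epsilon I)^{-1}(z-\bar z)$ has norm $O(\|z-\bar z\|)$ and so lies in $B_{\delta}$ for $z$ close to $\bar z$, giving
\[
f^{\ast}(\bar z)+\langle\bar x,z-\bar z\rangle+\tfrac12\langle(Q+2\epsilon I)^{-1}(z-\bar z),z-\bar z\rangle\le f^{\ast}(z)\le f^{\ast}(\bar z)+\langle\bar x,z-\bar z\rangle+\tfrac12\langle(Q-2\epsilon I)^{-1}(z-\bar z),z-\bar z\rangle.
\]
Letting $\epsilon\downarrow0$ and using $(Q\mp2\epsilon I)^{-1}\to Q^{-1}$ squeezes out the desired second-order expansion, which completes the forward implication and, by the biconjugation symmetry noted at the outset, the whole equivalence \eqref{iffQ}.
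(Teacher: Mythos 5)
Your proof is correct, but it follows a genuinely different route from the paper's. The paper normalises to a convex function $g$ with $g(0)=0$ and $\nabla g(0)=0$ via a tilt and translation, and then simply \emph{cites} the result of \cite{Gian:1} that the second-order Taylor expansion $g(y)=\tfrac12 (Qy)^Ty+o(\|y\|^2)$ with $Q$ positive definite dualises to $g^{\ast}(x)=\tfrac12 (Q^{-1}x)^Tx+o(\|x\|^2)$; the remaining steps (passing between Taylor expansions and extended-sense Hessians via \cite[Corollary 13.42]{rock:6}, and obtaining the reverse implication from $f^{\ast\ast}=f$) are the same in both arguments. What you do differently is to \emph{prove} the cited dualisation from scratch: you use positive definiteness of $Q$ to get local quadratic growth, use convexity to localise the supremum defining $f^{\ast}(z)$ to $B_{\delta}(\bar x)$ for $z$ near $\bar z$, and then sandwich $f$ between the quadratics $q\pm\epsilon\|\cdot-\bar x\|^2$ whose conjugates can be computed explicitly, squeezing as $\epsilon\downarrow 0$. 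The paper's version is shorter because the analytic core is outsourced to a reference; yours is self-contained and makes visible exactly where positive definiteness and local finiteness are used (namely, in confining the maximiser of $\langle z,\cdot\rangle-f$ to the ball on which the two-sided quadratic estimate is valid). The only point worth tightening is that your radius $\delta$ depends on $\epsilon$, so the localisation step has to be re-run on each smaller ball $B_{\delta_\epsilon}(\bar x)$ before taking $\epsilon\downarrow 0$; this is immediate since the quadratic growth inequality persists on subballs, but it deserves a sentence.
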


\begin{proof}
In \cite{Gian:1} it is shown that when $g$ is convex with $g(0)=0$, $\nabla g (0) = 0$ and twice differentiable $x=0$ in the sense that the following Taylor expansion exists:
$
g(y) = \frac{1}{2} (Qy )T y + o(\| y \|^2).
$
Then we have the corresponding Taylor expansion:
$
g^{\ast} (x) = \frac{1}{2} (Q^{-1} x)^T x + o(\|x\|^2). 
$
We may apply \cite[Corollary 13.42]{rock:6} to claim these expansions are equivalent to 
the existence of a Hessian for both functions (twice differentiability in the extended sense) 
where $0 = \nabla g(0)$, $Q = \nabla^2 g(0)$ and $0=\nabla g^{\ast} (0)$,  $Q^{-1} = \nabla^2 g^{\ast} (0)$. Now apply this 
to the function $g(y):=f(y+\nabla f(\bar{x})) - \langle \nabla f (\bar{x}) , y+\nabla f(\bar{x}) \rangle$ noting that $g^{\ast} (z) = f^{\ast} (z + \nabla f(\bar{x})) - \langle \bar{x}, z \rangle$. 
We have $\nabla^2 g (0) = \nabla^2 f(\bar{x})$, $\nabla g^{\ast} (0) = 0$ implies 
$f^{\ast} (\nabla f(\bar{x})) = \bar{x}$ and $\nabla^2 g^{\ast} (0) = Q^{-1} = \nabla f^{\ast} (\nabla f(\bar{x}))$, demonstrating the forward implication ($\implies$) in (\ref{iffQ}). To obtain the reverse implication we apply the proven result to the convex function $f^{\ast}$ using the 
bi-conjugate formula $f^{\ast\ast} = f$. 
\end{proof}

Our main goal is to demonstrate that the restriction of $f$ to the set $\mathcal{M}:=\left\{ \left( u,v\left( u\right) )\mid u\in \mathcal{U}^{2}\right) \right\} $ coincides with a $C^{1,1}$ smooth
function of $u \in \mathcal{U}$. Consequently we will be focusing on the case when
$\mathcal{U}^2$ is a linear subspace and so take $\mathcal{U^{\prime}}
\equiv \mathcal{U}^2$ in our previous results. The next result demonstrates
when there is a symmetry with respect to conjugation in the tilt stability
property for the auxiliary function $k_{v}$.

\begin{theorem}
Consider $f:\mathbb{R}^{n}\rightarrow \mathbb{R}_{\infty }$ is a proper
lower semi-continuous function, which is a prox-regular function at $\bar{x}$
for $0\in \partial f(\bar{x})$ with a nontrivial subspace $\mathcal{U}^{2}=b^{1}\left( \underline{\partial }^{2}f\left( \bar{x},0\right) \right)
\subseteq \mathcal{U}$. Denote $\mathcal{V}^{2}=(\mathcal{U}^{2})^{\perp }$, 
  let $v\left( u\right) \in \operatorname{argmin}_{v^{\prime }\in \mathcal{V}^{2}\cap B_{\varepsilon }\left( 0\right) }f\left( \bar{x}+u+v^{\prime
}\right) :\mathcal{U}^{2}\rightarrow \mathcal{V}^{2}$ and  $k_{v}\left(
u\right) :=h\left( u+v\left( u\right) \right) :\mathcal{U}^{2}\rightarrow
\mathbb{R}_{\infty }$. Suppose also that $f$ has a tilt stable local minimum
at $\bar{x}$ for $0\in \partial f\left( \bar{x}\right) $ then for $p\neq 0$
we have
\begin{equation}
\forall q\in D^{\ast }\left( \nabla k_{v}^{\ast }\right) \left( 0,0\right)
\left( p\right) \quad \text{we have \quad }\langle p,q\rangle >0
\label{neqn:15}
\end{equation}
and hence $k_{v}^{\ast }$ has a tilt stable local minimum at $0\in \partial
k_{v}^{\ast }\left( 0\right) .$
\end{theorem}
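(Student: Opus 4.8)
The plan is to establish the coderivative inequality (\ref{neqn:15}) and then invoke the sufficiency of this second order condition for tilt stability. First I would record the starting data: by Proposition \ref{prop:tilt1} the function $k_{v}$ has a tilt stable local minimum at $0$, and by the Corollary following Theorem \ref{thm:eb} the conjugate $k_{v}^{\ast}$ is convex with $z\mapsto \nabla k_{v}^{\ast}(z)$ single valued and Lipschitz on a neighbourhood of $0$; thus $k_{v}^{\ast}$ is $C^{1,1}$ near $0$ and, since $0\in\partial(\operatorname{co}k_{v})(0)$, we have $\nabla k_{v}^{\ast}(0)=0$ so that $(0,0)\in\operatorname{Graph}\nabla k_{v}^{\ast}$. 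Because $k_{v}^{\ast}$ is $C^{1,1}$ I may apply Corollary \ref{coderiv} to $\partial k_{v}^{\ast}=\nabla k_{v}^{\ast}$, which together with the convexity and compactness argument of Corollary \ref{cor:equiv} shows that (\ref{neqn:15}) is equivalent to the existence of $\beta>0$ with $\langle Qp,p\rangle\geq\beta\|p\|^{2}$ for every $Q\in\overline{D}^{2}k_{v}^{\ast}(0,0)$. Moreover, once (\ref{neqn:15}) is known, $k_{v}^{\ast}$ being convex (hence prox-regular and subdifferentially continuous) lets me quote \cite[Theorem 1.3]{rock:7} to conclude that $k_{v}^{\ast}$ has a tilt stable local minimum at $0\in\partial k_{v}^{\ast}(0)$. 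Hence everything reduces to the uniform positive definiteness of $\overline{D}^{2}k_{v}^{\ast}(0,0)$. An alternative would be to pass directly through the graph-inversion relating $\nabla k_{v}^{\ast}$ to $\partial\operatorname{co}k_{v}$, but that route still forces one to exclude the degenerate case $q=0,\ p\neq 0$, which is precisely what the two-sided curvature bounds below achieve.

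The crux is an \emph{upper} curvature bound coming from the hypothesis $\mathcal{U}^{2}=b^{1}(\underline{\partial}^{2}f(\bar{x},0))$. By Lemma \ref{lem:boundprox} the subhessians of $f$ satisfy $\langle Q_{k},ww^{T}\rangle\leq M\|w\|^{2}$ for all $w\in\mathcal{U}^{2}$ along sequences converging to $(\bar{x},0)$, so $q(\underline{\partial}^{2}f(\bar{x},0))(w)\leq M\|w\|^{2}$ on $\mathcal{U}^{2}$. Taking $\mathcal{U}^{\prime}=\mathcal{U}^{2}$, Proposition \ref{prop:Lagsubjet} (and its local version Corollary \ref{cor:Lagsubjet}) identifies the second order lower Dini derivative of $k_{v}$ in directions $w\in\mathcal{U}^{2}$ with that of $f$, so this bound passes, uniformly for base points near $0$, to the marginal function $\operatorname{co}k_{v}$; intuitively the partial minimisation over $\mathcal{V}^{2}$ can only decrease curvature in the retained $\mathcal{U}^{2}$ directions. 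Since $\operatorname{co}k_{v}$ is convex with domain $\mathcal{U}^{2}$ and its second order behaviour is bounded above by $M$ on a dense (full measure, by Alexandrov) set of points, its gradient is $M$-Lipschitz, that is $\operatorname{co}k_{v}$ is $C^{1,1}$ on $\mathcal{U}^{2}$.

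Finally I would invert. Because $\operatorname{co}k_{v}$ is $C^{1,1}$ on $\mathcal{U}^{2}$, its conjugate $k_{v}^{\ast}$ is $(1/M)$-strongly convex there: at every point $u$ where $\nabla^{2}\operatorname{co}k_{v}(u)$ exists one has $0\prec\nabla^{2}\operatorname{co}k_{v}(u)\preceq MI$, and Lemma \ref{lem:48} (equivalently Proposition \ref{prop:subhessianinverse}) gives $\nabla^{2}k_{v}^{\ast}(\nabla\operatorname{co}k_{v}(u))=[\nabla^{2}\operatorname{co}k_{v}(u)]^{-1}\succeq(1/M)I$, the equivalence of a second order Taylor expansion with twice differentiability in the extended sense quoted before Lemma \ref{lem:48} legitimising the passage between the a.e.\ Hessians of the two conjugate functions. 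Letting $u\to 0$, so that the conjugate points and gradients tend to $0$, and taking limits, every $Q\in\overline{D}^{2}k_{v}^{\ast}(0,0)$ satisfies $Q\succeq(1/M)I$, which is the required uniform positive definiteness and hence (\ref{neqn:15}). I expect the main obstacle to be exactly the middle step: making rigorous the transfer of the $\mathcal{U}^{2}$ curvature bound through the partial minimisation that defines $k_{v}$, and upgrading the resulting a.e.\ Hessian bound to the genuine $C^{1,1}$ (Lipschitz gradient) property of $\operatorname{co}k_{v}$, since the inversion above is only as good as the two-sided Hessian bounds feeding it.
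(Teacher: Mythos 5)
Your overall strategy is the paper's: reduce (\ref{neqn:15}) to a uniform lower bound on the limiting Hessians $\overline{D}^{2}k_{v}^{\ast}(0,0)$ via the $C^{1,1}$ coderivative characterisation of Corollary \ref{coderiv}, and obtain that lower bound $1/M$ by inverting the upper bound $M$ supplied by Lemma \ref{lem:boundprox} on $\mathcal{U}^{2}$. But the step you yourself flag as the obstacle is a genuine gap, and the route you sketch for it does not go through. Corollary \ref{cor:Lagsubjet} is not available here: it requires the common-minimiser hypothesis (\ref{eqn:3}) of Proposition \ref{prop:reg} part \ref{part:4} and the continuity of $\nabla L^{\varepsilon}_{\mathcal{U}}$, neither of which is assumed in this theorem, so you cannot use it to propagate the curvature identity of Proposition \ref{prop:Lagsubjet} to base points near $0$. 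And even granting an upper bound on the second-order quotients of $\operatorname{co}k_{v}$ at a dense set of points, the inference that $\nabla\operatorname{co}k_{v}$ is then $M$-Lipschitz is not justified for a convex function without an additional para-concavity argument; nor do you justify the strict positive definiteness $0\prec\nabla^{2}\operatorname{co}k_{v}(u)$ that your inversion needs.

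The paper closes this gap differently, and without ever proving $\operatorname{co}k_{v}$ is $C^{1,1}$ with constant $M$. It transfers the bound (\ref{neqn:34}) not to $\operatorname{co}k_{v}$ but to the full convexification $\operatorname{co}h$ at the points $x_{i}^{k}+v(x_{i}^{k})$, using the elementary comparison $\Delta_{2}(\operatorname{co}h)\leq\Delta_{2}h$ there --- valid because $\operatorname{co}h\leq h$ with equality and a common subgradient $(z_{i}^{k},0)$ at those points (Lemma \ref{lem:conv}, Proposition \ref{cor:conv}). Proposition \ref{prop:subhessianinverse} then places $[\nabla^{2}k_{v}^{\ast}(z_{i}^{k})]^{-1}$ inside $\partial^{2,-}_{\mathcal{U}^{2}}(\operatorname{co}h)(x_{i}^{k}+v(x_{i}^{k}))$, so the bound $M$ applies directly to these inverses and yields $\nabla^{2}k_{v}^{\ast}(z_{i}^{k})\succeq(1/M)I$ on $\mathcal{U}^{2}$; the invertibility itself comes from (\ref{neqn:46}), i.e.\ Corollary \ref{cor:ebnonzero} applied to the tilt-stable convex function $\operatorname{co}k_{v}$, together with the Hessian duality of Lemma \ref{lem:48}. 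If you replace your middle step with this chain your argument becomes essentially the paper's; your use of Corollary \ref{coderiv} to treat the cases $q=0$ and $q\neq 0$ simultaneously is a mild streamlining of the paper's two-stage argument, which first gets $\langle p,q\rangle\geq\beta$ for $q\neq 0$ from tilt stability of $\operatorname{co}k_{v}$ and the coderivative inversion identity, and then rules out $q=0$ by the Hessian bound.
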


\begin{proof}
On application of Propositions \ref{prop:tilt1} and \ref{prop:co} we have $\operatorname{co}k_{v}\left( \cdot \right) :\mathcal{U}^2\rightarrow \mathbb{R}_{\infty }$ possessing a tilt stable local minimum at $0$. As $\operatorname{co}
k_{v}\left( \cdot \right) $ is convex it is prox-regular at $0$ for $0\in
\partial \operatorname{co} k_{v}\left( 0\right) \ $and subdifferentially continuous
at $0$ \cite[Proposition 13.32]{rock:6}. Hence we may apply \cite[Theorem 1.3]{rock:7} to obtain the equivalent condition for tilt stability. For all $q \ne 0$
\begin{equation}
\langle p,q\rangle >0\text{\quad for all \quad }p\in D^{\ast }\left(
\partial \left[ \operatorname{co}k_{v}\right] \right) \left( 0,0\right) \left(
q\right) .  \label{neqn:101}
\end{equation}
Now apply Corollary \ref{cor:strict} to deduce the existence of $\beta >0$ such that  
 $\langle p,q\rangle \geq \beta >0$ for all $(p,q)$ taken in (\ref{neqn:101})
with $\| q \| =1$.

For this choice of $v(\cdot)$ we have $k_v = L^{\varepsilon}_{\mathcal{U}^2}$. From Proposition \ref{prop:reg} part \ref{part:3}, Remark \ref{rem:lem} and
Lemma \ref{lem:conv} we see that $\nabla k_{v}(0)=\{0\}=\nabla \operatorname{co}
k_{v}(0)$. Then whenever $x^{k}\in S_{2}(\operatorname{co}k_{v})$ with $x^{k}\rightarrow 0$ (as we always have $z^{k}=\nabla \operatorname{co}
k_{v}(x^{k})\rightarrow 0=\nabla \operatorname{co}k_{v}(0)$) it follows from
Corollary \ref{cor:ebnonzero} that we have
\begin{equation}
\left( \operatorname{co}k_{v}\right) _{s}^{\prime \prime }\left( x^{k},\nabla \operatorname{co}k_{v}\left( x^{k}\right) ,h\right) =\langle \nabla ^{2}\operatorname{co}
k_{v}(x^{k})h,h\rangle >0\,\text{\quad for all }h\in \mathcal{U}^{2}
\label{neqn:46}
\end{equation}
for $k$ sufficiently large. By Alexandrov's theorem this positive
definiteness of Hessians must hold on a dense subset of some neighbourhood
of zero. By the choice of $v(\cdot )$ we have $k_{v}(u)=L_{\mathcal{U}^{2}}^{\varepsilon }(u)$ and hence we may assert that $\partial _{\operatorname{co}}L_{\mathcal{U}^{2}}^{\varepsilon }(u)=\partial \operatorname{co}k_{v}(u)\neq
\emptyset $ in some neighbourhood of the origin in $\mathcal{U}^{2}$.

Since $\left[ \operatorname{co}k_{v}\right] ^{\ast }=k_{v}^{\ast }$ and $\nabla
k_{v}^{\ast }$ $=\left[ \partial \operatorname{co}k_{v}\right] ^{-1}$ we may apply
\cite[identity 8(19)]{rock:6} to deduce that for $\left\Vert q\right\Vert =1$
we have
\begin{equation*}
-q\in D^{\ast }\left( \left[ \partial \operatorname{co}k_{v}\right] ^{-1}\right)
\left( 0,0\right) \left( -p\right) =D^{\ast }\left( \nabla k_{v}^{\ast
}\right) \left( 0,0\right) \left( -p\right) .
\end{equation*}
Hence we can claim that for $q\neq 0$, after a sign change, that $\langle
p,q\rangle =\langle -p,-q\rangle \geq \beta >0$. We need to rule out the
possibility that $0\in D^{\ast }\left( \nabla k_{v}^{\ast }\right) \left(
0,0\right) \left( p\right) $ for some $p\neq 0$. To this end we may use the
fact that $k_{v}^{\ast }$ is $C^{1,1}$ (and convex) and apply \cite[ Theorem
13.52]{rock:6} to obtain the following characterisation of the convex hull
of the coderivative in terms of limiting Hessians. Denote $S_{2}(k_{v}^{\ast
}):=\{x\mid \nabla ^{2}k_{v}^{\ast }(x)\text{ exists}\}$ then
\begin{equation*}
\operatorname{co}D^{\ast }(\nabla k_{v}^{\ast })(0,0)(p)=\operatorname{co}\{Ap\mid
A=\lim_{k}\nabla ^{2}k_{v}^{\ast }(z^{k})\text{ for some $z^{k}$($\in
S_{2}(k_{v}^{\ast })$)$\rightarrow 0$}\}.
\end{equation*}
Now suppose $0\in D^{\ast }\left( \nabla k_{v}^{\ast }\right) \left(
0,0\right) \left( p\right) $ then there exists $A^{i}=\lim_{k}\nabla
^{2}k_{v}^{\ast }(z_{i}^{k})$ for $z_{i}^{k}\rightarrow 0$ such that $0=q:=\sum_{i=1}^{m}\lambda _{i}A^{i}p\in \operatorname{co} D^{\ast }\left( \nabla
k_{v}^{\ast }\right) \left( 0,0\right) \left( p\right) $. As $p\neq 0$ we
must then have $\langle p,q\rangle =p^{T}(\sum_{i=1}^{m}\lambda
_{i}A^{i})p=0 $ where  $B:=\sum_{i=1}^{m}\lambda _{i}A^{i}$ is a symmetric
positive semi-definite matrix. The inverse $(A_{k}^{i})^{-1}$ exists (relative to $\mathcal{U}^2$) due to (\ref{neqn:46}).
 Now apply the duality formula for Hessians Lemma \ref{lem:48}  to deduce that when $x_{i}^{k}:=\nabla k_{v}^{\ast }(z_{i}^{k})$
then $A_{k}^{i}=\nabla ^{2}k_{v}^{\ast }(z_{i}^{k})$ iff $(A_{k}^{i})^{-1}=\nabla ^{2}(\operatorname{co} k_{v})(x_{i}^{k})$. 


We now apply Lemma \ref{lem:boundprox} to deduce that the limiting
subhessians of $h(w):=f(\bar{x}+w)$ satisfy (\ref{neqn:34}). We will want to
apply this bound to the limiting subhessians of $\operatorname{co}h$ at $x_{i}^{k}+v(x_{i}^{k})$. To this end we demonstrate that $\Delta _{2}h\left(
x_{i}^{k}+v(x_{i}^{k}),\left( z_{i}^{k},0\right) ,t,w\right) \geq \Delta
_{2}\left( \operatorname{co}h\right) \left( x_{i}^{k}+v(x_{i}^{k}),\left(
z_{i}^{k},0\right) ,t,w\right) $ for all $t\in \mathbb{R}$ and any $w$. This
follows from Lemma \ref{lem:conv}, Proposition \ref{cor:conv} in that $\left( z_{i}^{k},0\right) \in \partial \operatorname{co} h\left(
x_{i}^{k}+v(x_{i}^{k})\right) =\partial h\left(
x_{i}^{k}+v(x_{i}^{k})\right) ,$ $\operatorname{co}h\left(
x_{i}^{k}+v(x_{i}^{k})\right) =h\left( x_{i}^{k}+v(x_{i}^{k})\right) $ and $\operatorname{co}h\left( u+v\right) \leq h\left( u+v\right) $ for all $\left(
u,v\right) \in \mathcal{U}^2\times \mathcal{V}^2$. On taking the a limit
infimum for $t\rightarrow 0$ and $w\rightarrow u\in \mathcal{U}^2$ we obtain
\begin{eqnarray*}
&&q\left( \partial ^{2,-}\left( \operatorname{co}h\right) \left(
x_{i}^{k}+v(x_{i}^{k}),\left( z_{i}^{k},0\right) \right) \right) \left(
u\right) =\left( \operatorname{co}h\right) _{s}^{\prime \prime }\left(
x_{i}^{k}+v(x_{i}^{k}),\left( z_{i}^{k},0\right) ,u\right) \\
&\leq &h^{\prime \prime }\left( x_{i}^{k}+v(x_{i}^{k}),\left(
z_{i}^{k},0\right) ,u\right) =q\left( \partial ^{2,-}h\left(
x_{i}^{k}+v(x_{i}^{k}),\left( z_{i}^{k},0\right) \right) \right) \left(
u\right) .
\end{eqnarray*}
Hence the bound in (\ref{neqn:34}) involving the constant $M>0$ applies to any $Q_k \in \partial ^{2,-}\left( \operatorname{co}h\right) \left( x_{i}^{k}+v(x_{i}^{k}),\left( z_{i}^{k},0\right) \right) $
for $k$ large.

As $A_{k}^{i}=\nabla ^{2}k_{v}^{\ast }(z_{i}^{k})$ by Proposition \ref{prop:subhessianinverse} we have $(A_{k}^{i})^{-1}=(\nabla^2 _{\mathcal{U}^{2}}h^{\ast }(z_{i}^{k}+0_{\mathcal{V}^{2}}))^{-1}\in \partial _{\mathcal{U}^{2}}^{2,-}\left( \operatorname{co}h\right) (x_{i}^{k}+v(x_{i}^{k}))$ and on
restricting to the $\mathcal{U}^{2}$ space and using (\ref{neqn:33}), (\ref{neqn:34}) and (\ref{neqn:36}) we get for all $p\in \mathcal{U}^{2}$ that
\begin{equation*}
\langle A_{k}^{i},pp^{T}\rangle =\langle \nabla ^{2}k_{v}^{\ast
}(z_{i}^{k}),pp^{T}\rangle =\langle \nabla _{\mathcal{U}^{2}}^{2}h^{\ast
}(z_{i}^{k}+0_{\mathcal{V}}),pp^{T}\rangle =\langle \left[ (A_{k}^{i})^{-1}
\right] ^{-1},pp^{T}\rangle \geq \frac{1}{M}.
\end{equation*}
Thus $\{A_{k}^{i}\}$ are uniformly positive definite. By \cite{Gian:1} we
have $(A_{k}^{i})^{-1}=\nabla ^{2}(\operatorname{co}k_{v})(x_{i}^{k})$ existing at $x_{i}^{k}$ and hence
\begin{equation*}
(A_{k}^{i})^{-1} u=\nabla ^{2}(\operatorname{co}k_{v})(x_{i}^{k})u\in D^{\ast
}(\nabla \operatorname{co}k_{v})(x_{i}^{k}, z_{i}^{k})(u)\quad \text{for all $u\in
\mathcal{U} ^{2}$.}
\end{equation*}
Then, for $u\neq 0$, by Theorem \ref{tilt:eb} we have $\langle \nabla ^{2}(\operatorname{co}k_{v})(x_{i}^{k})u,u\rangle \geq \frac{\beta }{2}>0$ for $k$ large
implying $\left\{ (A_{k}^{i})^{-1}\right\} $ remain uniformly positive
definite on $\mathcal{U}^{2}$. Hence $\left\{ A_{k}^{i}\right\} $ remain
uniformly bounded within a neighbourhood of the origin within $\mathcal{U}^{2}$. Thus on taking the limit we get $A^{i}=\lim_{k}A_{k}^{i}$ is positive
definite and hence $B:=\sum_{i=1}^{m}\lambda _{i}A^{i}$ is actually positive
definite, a contradiction.

As $k_{v}^{\ast }$ is convex and finite at $0$, it is prox-regular and
subdifferentially continuous at $0$ for $0\in \partial k_{v}^{\ast }\left(
0\right) $ by \cite[Proposition 13.32]{rock:6}. Another application of \cite[Theorem 1.3]{rock:7} allows us to deduce that $k_{v}^{\ast }$ has a tilt
stable local minimum at $0\in \nabla k_{v}^{\ast }\left( 0\right) .$
\end{proof}

We may either use the strong metric regularity property to obtain the
existence of a smooth manifold or utilizes the Mordukhovich criteria for the
Aubin property \cite{rock:6} and the results of \cite{Miroslav:1} on single
valuedness of the subdifferential satisfying a pseudo-Lipschitz property,
namely:\bigskip

\begin{proof}
\textbf{[of Theorem \ref{thm:1}]} \textit{using strong metric regularity}
\newline
Note first that $\mathcal{U}^{2}\subseteq \mathcal{U}$ corresponds to (\ref{eqn:44}) for $\bar{z}=0$. Let $\{v\left( u\right)\} = \operatorname{argmin}_{v^{\prime }\in
\mathcal{V}^{2}\cap B_{\varepsilon }\left( 0\right) }f\left( \bar{x}+u+v^{\prime }\right) $. We apply either \cite[Theorem 1.3]{rock:7} or \cite[Theorem 3.3]{Drusvy:1} that asserts that as $k_{v}^{\ast }$ is prox-regular
and subdifferentially continuous at $0$ for $0\in \partial k_{v}^{\ast
}\left( 0\right) $ then $\partial k_{v}^{\ast }$ is strongly metric regular
at $\left( 0,0\right) .$ That is there exists $\varepsilon >0$ such that
\begin{equation*}
B_{\varepsilon }\left( 0\right) \cap \left( \partial k_{v}^{\ast }\right)
^{-1}\left( u\right)
\end{equation*}
is single valued and locally Lipschitz for $u\in \mathcal{U}^{2}$
sufficiently close to $0$. But as $\left( \partial k_{v}^{\ast }\right)
^{-1}=\partial k_{v}^{\ast \ast }=\partial \left[ \operatorname{co}k_{v}\right] $ is
a closed convex valued mapping (and hence has connected images) we must have
the existence of $\delta >0$ such that for $u\in B_{\delta }^{\mathcal{U}^{2}}\left( 0\right) $ we have $\partial \left[ \operatorname{co}k_{v}\right] \left(
\cdot \right) $ a singleton locally Lipschitz mapping (giving
differentiability). As $\{v\left( u\right) \} = \operatorname{argmin}_{v^{\prime }\in
\mathcal{V}^{2}\cap B_{\varepsilon }\left( 0\right) }\left\{ h\left(
u+v^{\prime }\right) -\langle \bar{z}_{\mathcal{V}^{2}},v^{\prime }\rangle
\right\} :\mathcal{U}^{2}\cap B_{\varepsilon }\left( 0\right) \rightarrow
\mathcal{V}^{2}$ we have $k_{v}(u)=L_{\mathcal{U}^{2}}^{\varepsilon }(u)$
for $u\in \operatorname{int}B_{\varepsilon }^{\mathcal{U}^{2}}\left( 0\right) $.
Hence $\nabla \operatorname{co} L_{\mathcal{U}^{2}}^{\varepsilon }(u)\in \partial _{\operatorname{co}}L_{\mathcal{U}^{2}}^{\varepsilon }(u)\neq \emptyset $ and by
Corollary \ref{cor:conv} we have on $\mathcal{U}^{2}$ that $h\left(
u+v\left( u\right) \right) =\left[ \operatorname{co}h\right] \left( u+v\left(
u\right) \right) $ and hence
\begin{equation*}
\partial \left[ \operatorname{co}k_{v}\right] \left( u\right) =\partial \left[ \operatorname{co}h\right] \left( u+v\left( u\right) \right) =\partial g\left( u+v\left(
u\right) \right)
\end{equation*}
is single valued implying $\nabla _{u}g\left( u+v\left( u\right) \right) $
exists where $g\left( \cdot \right) :=\left[ \operatorname{co}h\right] \ \left(
\cdot \right) .$ \bigskip
\end{proof}

\begin{corollary}\label{cor:1}
Under the assumptions of Theorem \ref{thm:1} we have $\nabla   L_{\mathcal{U}^{2}}^{\varepsilon }(u)$ existing  as a Lipschitz function locally  on $B^{\mathcal{U}}_{\varepsilon} (0)$.
\end{corollary}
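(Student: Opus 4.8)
The plan is to read the conclusion off directly from the machinery already assembled in the proof of Theorem \ref{thm:1}, the only genuinely new ingredient being the identification of $\operatorname{co}k_v$ with $L_{\mathcal{U}^2}^\varepsilon$. Throughout I take $\bar z=0$, so that $\bar z_{\mathcal{V}^2}=0$ and $k_v(u)=h(u+v(u))=f(\bar x+u+v(u))$, with $v(\cdot):\mathcal{U}^2\to\mathcal{V}^2$ the unique minimizing selection furnished by tilt stability (Proposition \ref{prop:tilt1}).

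First I would recall from Lemma \ref{lem:27}, equation (\ref{neqn:33}), that $k_v^{\ast}=(L_{\mathcal{U}^2}^\varepsilon)^{\ast}$ on $\mathcal{U}^2$ (here $\bar z_{\mathcal{V}^2}=0$). By Proposition \ref{prop:LU} part 1 the function $L_{\mathcal{U}^2}^\varepsilon$ is proper, closed and convex, being finite on $B_\varepsilon^{\mathcal{U}^2}(0)$, hence it equals its own biconjugate. Conjugating (\ref{neqn:33}) once more therefore yields
\begin{equation*}
\operatorname{co}k_v=(k_v^{\ast})^{\ast}=\bigl((L_{\mathcal{U}^2}^\varepsilon)^{\ast}\bigr)^{\ast}=L_{\mathcal{U}^2}^\varepsilon
\end{equation*}
as functions on $\mathcal{U}^2$.

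Next I would invoke what was already established inside the proof of Theorem \ref{thm:1}: because $k_v^{\ast}$ is convex, hence prox-regular and subdifferentially continuous at $0$ for $0\in\partial k_v^{\ast}(0)$, the mapping $\partial k_v^{\ast}$ is strongly metrically regular at $(0,0)$, so its localized inverse $(\partial k_v^{\ast})^{-1}=\partial k_v^{\ast\ast}=\partial[\operatorname{co}k_v]$ is single valued and locally Lipschitz near $0$; the closed convex valuedness (connected images) of $\partial[\operatorname{co}k_v]$ then upgrades this to single valuedness on a full ball $B_\delta^{\mathcal{U}^2}(0)$. Combining this with the identity $\operatorname{co}k_v=L_{\mathcal{U}^2}^\varepsilon$ shows that $\partial L_{\mathcal{U}^2}^\varepsilon=\partial[\operatorname{co}k_v]$ is a singleton depending Lipschitz-continuously on $u$ over $B_\delta^{\mathcal{U}^2}(0)\subseteq B_\varepsilon^{\mathcal{U}}(0)$; that is, $\nabla L_{\mathcal{U}^2}^\varepsilon(u)$ exists and is locally Lipschitz there.

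There is no substantive obstacle beyond bookkeeping, since the hard analytic work, namely the tilt stability of $k_v^{\ast}$ and the strong metric regularity it entails, was already carried out for Theorem \ref{thm:1}. The one point requiring care is the biconjugation step: I must use that $L_{\mathcal{U}^2}^\varepsilon$ is genuinely closed and convex (Proposition \ref{prop:LU}), not merely convex on its finite domain, so that $\bigl((L_{\mathcal{U}^2}^\varepsilon)^{\ast}\bigr)^{\ast}=L_{\mathcal{U}^2}^\varepsilon$ holds without an intervening closure. The finiteness of $L_{\mathcal{U}^2}^\varepsilon$ on $B_\varepsilon^{\mathcal{U}^2}(0)$ guaranteed by tilt stability is precisely what makes this safe.
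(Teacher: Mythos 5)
Your proposal is correct. The skeleton is the same as the paper's: identify $L_{\mathcal{U}^{2}}^{\varepsilon }$ with $\operatorname{co}k_{v}$ and then read off the conclusion from the single-valued, locally Lipschitz character of $\partial \left[ \operatorname{co}k_{v}\right]$ already established in the proof of Theorem \ref{thm:1}. The one place you diverge is in how the identification is justified. The paper simply applies Proposition \ref{cor:conv} once more: since tilt stability guarantees $\partial _{\operatorname{co}}L_{\mathcal{U}^{2}}^{\varepsilon }\left( u\right) \neq \emptyset $ for $u$ near $0$, that proposition gives the pointwise equalities $\operatorname{co}k_{v}=k_{v}=L_{\mathcal{U}^{2}}^{\varepsilon }$ locally. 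You instead pass through conjugate duality, combining $k_{v}^{\ast }=\left( L_{\mathcal{U}^{2}}^{\varepsilon }\right) ^{\ast }$ from Lemma \ref{lem:27} with the closed proper convexity of $L_{\mathcal{U}^{2}}^{\varepsilon }$ from Proposition \ref{prop:LU} to get $\operatorname{co}k_{v}=\left( k_{v}^{\ast }\right) ^{\ast }=\left( \left( L_{\mathcal{U}^{2}}^{\varepsilon }\right) ^{\ast }\right) ^{\ast }=L_{\mathcal{U}^{2}}^{\varepsilon }$. Both routes are valid; yours buys the identity $\operatorname{co}k_{v}=L_{\mathcal{U}^{2}}^{\varepsilon }$ on the whole of $\mathcal{U}^{2}$ at the cost of checking that $\left( k_{v}^{\ast }\right) ^{\ast }$ really is the convex hull in the paper's epigraphical sense (which needs an affine minorant for $k_{v}$, available here from the quadratic minorization and the compact effective domain), whereas the paper's route additionally records $k_{v}=\operatorname{co}k_{v}$ pointwise, a fact used elsewhere but not needed for this corollary.
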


\begin{proof}
Applying Corollary \ref{cor:conv} again we can assert that under our current
assumptions that locally we have $\operatorname{co}k_{v} = k_{v} =   L_{\mathcal{U}^{2}}^{\varepsilon }$ and hence $\nabla k_{v} (u) = \nabla   L_{\mathcal{U}^{2}}^{\varepsilon }(u)$ exists as a Lipschitz function locally 
on $B^{\mathcal{U}}_{\varepsilon} (0)$. 
\end{proof}
\bigskip

\begin{proof}
\textbf{[of Theorem \ref{thm:1}]} \textit{using the single valuedness of the
subdifferential satisfying a pseudo-Lipschitz property.} \newline
We show that $D^{\ast }(\partial \lbrack \operatorname{co}k_{v}])(0 , 
0)(0)=\{0\}$. To this end we use (\ref{neqn:15}). Indeed this implies that $q\neq 0$ for any $p\neq 0$ for all $q \in D^{\ast} (\nabla k^{\ast}_v ) (0,0)(0) (p)$. Applying the result \cite[identity 8(19)]{rock:6} on inverse functions and coderivatives  we have $q=0$ implies $p=0$ for all  $p\in D^{\ast }(\partial \lbrack \operatorname{co}
k_{v}])(0,0)(q)$. Hence we have $D^{\ast }(\partial \lbrack \operatorname{co}
k_{v}])(0, 0)(0)=\{0\}$. Now apply the Mordukhovich criteria for the
Aubin property \cite[Theorem 9.40]{rock:6} to deduce that $\partial \lbrack
\operatorname{co}k_{v}]$ has the Aubin property at $0$ for $0\in \partial \lbrack
\operatorname{co}k_{v}](0)$. Now apply Theorem \ref{thm:eb} to deduce that $u\mapsto
\nabla \lbrack \operatorname{co}k_{v}](u)$ exists a single valued Lipschitz mapping
in some ball $B_{\delta }^{\mathcal{U}^2}\left( 0\right) $ in the space $\mathcal{U}^2$. We now finish the proof as before in the first version.
\end{proof}

If we assume more, essentially what is needed to move towards partial smoothness
we get a $C^{1,1}$ smooth manifold. \bigskip

\begin{proof}
\textbf{[of Theorem \ref{thm:2}] } First note that when we have (\ref{eqn:1}) holding using $f$ then we must (\ref{eqn:1}) holding using $g := \operatorname{co}
h $. Thus by Proposition \ref{prop:reg} part \ref{part:4} have (\ref{neqn:26}) holding using $g$ (via the convexification argument). As $g\left(
w\right) :=\left[ \operatorname{co}h \right] \left( w\right) $ for $w\in
B_{\varepsilon }\left( 0\right) $ is a convex function we have $g$ a regular in $B_{\varepsilon
}\left( 0\right) $. Moreover as $g\left( u+v\left( u\right) \right) =f\left(
\bar{x}+u+v\left( u\right) \right) $ (and $g\left( w\right) \leq f\left(
\bar{x}+w\right) $ for all $w$ ) we have the regular subdifferential of $g$ (at $u + v(u)$)
contained in that of $f$ (at $\bar{x} + u + v(u)$). As $g$ is regular the singular subdifferential
coincides with the recession directions of the regular subdifferential \cite[Corollary 8.11]{rock:6} and so are contained in the recession direction of
the regular subdifferential of $f$. We are thus able to write down the
following inclusion
\begin{equation*}
\partial ^{\infty }g \left( u+v\left( u\right) \right)
\subseteq \partial ^{\infty }f\left( \bar{x}+u+v\left( u\right) \right)
=\left\{ 0\right\} .
\end{equation*}
By the tilt stability we have $v$ a locally Lipschitz single valued mapping.
Thus by the basic chain rule of subdifferential calculus we have
\begin{equation*}
\left\{ \nabla _{u}g\left( u+v\left( u\right) \right) \right\} = \left( e_{\mathcal{U}}\oplus \partial v\left( u\right) \right) ^{T}\partial g\left(
u\oplus v\left( u\right) \right)
\end{equation*}
is a single valued Lipschitz mapping. Under the additional assumption we
have via Proposition \ref{prop:reg} part 4 that, $\operatorname{cone}\left[ \partial
_{\mathcal{V}}g\left( u+v\left( u\right) \right) \right] \supseteq \mathcal{V}$ for 
$u\in B_{\varepsilon }\left( 0\right) \cap \mathcal{U}$. As $\partial
v\left( u\right) \subseteq \mathcal{V}$ it cannot be multi-valued and still
have $\left( e_{\mathcal{U}}\oplus \partial v\left( u\right) \right)
^{T}\partial g\left( u\oplus v\left( u\right) \right) $ single valued. This
implies the limiting subdifferential $\partial v\left( u\right) $ is locally single
valued and hence $\nabla v\left( u\right) $ exists locally. The upper-semi-continuity of the subdifferential and the single-valuedness implies $u \mapsto \nabla v(u)$ is a continuous mapping. 
\end{proof}

The following example demonstrates the fact that even if $\partial
_{w}g\left( u+v\left( u\right) \right) $ is multi-valued we still have $\left( e_{\mathcal{U}}, \nabla v\left( u\right) \right) ^{T}\partial
_{w}g\left( u+v\left( u\right) \right) $ single valued.

\begin{example}
If $f:\mathbb{R}^{2}\rightarrow \mathbb{R}$ is given by $f=\max \{ f_1,
f_2\} $ where $f_1=w_1^2+(w_2-1)^2$ and $f_2=w_2$, then $\partial
_{w}g\left( u+v\left( u\right) \right) $ is multi valued but $\left( e_{\mathcal{U}},\partial v\left( u\right) \right) ^{T}\partial _{w}g\left(
u+v\left( u\right) \right) $ is single valued. 
\end{example}

Using the notation in the Theorem, we put $\bar{w}=0,$ find that $\partial
f\left( 0\right) =\{\alpha \left( 0,1-\sqrt{5}\right) +(1-\alpha )\left(
0,1\right) \;|\;0\leq \alpha \leq 1\}$ so we have $\mathcal{U}=\left\{
\alpha \left( 1,0\right) \mid \alpha \in \mathbb{R}\right\} $ and $\mathcal{V}=\left\{ \alpha \left( 0,1\right) \mid \alpha \in \mathbb{R}\right\} $.
With $\epsilon <1/2$ then
\begin{equation*}
v\left( u\right) =\frac{3}{2}-\frac{\sqrt{9-4u^{2}}}{2},\quad g\left(
u+v\left( u\right) \right) =f\left( \bar{x}+u+v\left( u\right) \right) =
\frac{3}{2}-\frac{\sqrt{9-4u^{2}}}{2}.
\end{equation*}
It follows that
\begin{equation*}
\nabla v\left( u\right) =\frac{2u}{\sqrt{9-4u^{2}}}\quad \text{and}\quad
\left( e_{\mathcal{U}},\partial v\left( u\right) \right) ^{T}=\left( 1,\frac{2u}{\sqrt{9-4u^{2}}}\right) ^{T}.
\end{equation*}
Now we consider $\partial _{w}g\left( u+v\left( u\right) \right) =\partial
_{w}f\left( u+v\left( u\right) \right) $. At $u+v\left( u\right) $, from $f_{1}$ we know
\begin{equation*}
t_{1}=(2u,1-\sqrt{9-4u^{2}})=\nabla _{w}f_{1}\left( u+v\left( u\right)
\right)
\end{equation*}
and from $f_{2}$ we know
\begin{equation*}
t_{2}=(0,1)=\nabla _{w}f_{2}\left( u+v\left( u\right) \right) .
\end{equation*}
Thus
\begin{equation*}
\partial _{w}f\left( u+v\left( u\right) \right) =\{\alpha t_{1}+(1-\alpha
)t_{2}\;|\;0\leq \alpha \leq 1\},
\end{equation*}
that is, $\partial _{w}g\left( u+v\left( u\right) \right) $ is multi valued.
However, for all such $\alpha $, we have
\begin{equation*}
\left( e_{\mathcal{U}},\partial v\left( u\right) \right) ^{T}(\alpha
t_{1}+(1-\alpha )t_{2})=2\alpha u+(1-\alpha \sqrt{9-4u^{2}})\frac{2u}{\sqrt{9-4u^{2}}}=\frac{2u}{\sqrt{9-4u^{2}}}.
\end{equation*}
Therefore $\left( e_{\mathcal{U}},\partial v\left( u\right) \right)^{T}\partial _{w}g\left( u+v\left( u\right) \right) $ is single valued.
\bigskip

We may now demonstrate that we have arrived at a weakening of the second order expansions studied in \cite[Theorem 3.9]{Lem:1},
\cite[Equation (7)]{Mifflin:2004:2} and \cite[Theorem 2.6]{Miller:1}. 

\begin{corollary}\label{cor:53}
Under the assumption of  Theorem \ref{thm:2} we have the following local lower Taylor estimate holding: there exists $\delta >0$ such that for all $u \in B_{\delta } (0) \cap \mathcal{U}$   we have for all $u'+v' \in B_{\delta} (u + v(u))$ 
\begin{eqnarray*}
f(\bar{x} + u' + v' ) &\geq & f(\bar{x} + u + v(u)) + \langle z_{\mathcal{U}}(u) + \bar{z}_{\mathcal{V}}
,u' + v' - (u+v(u) \rangle \\
&& \quad  + \frac{1}{2}  (u' - u)^T Q (u' - u) + o(\|u'  -u\|^2 ), 
\end{eqnarray*}
for all $Q \in \partial^{2,-} L_{\mathcal{U}}^{\varepsilon }(u, z_{\mathcal{U}}(u))$, where 
$ z_{\mathcal{U}}(u) := \nabla  L_{\mathcal{U}}^{\varepsilon } (u) $. 
\end{corollary}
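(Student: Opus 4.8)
The plan is to derive the estimate directly from the second-order subjet inequality for $L_{\mathcal{U}}^{\varepsilon}$, the infimal description of the Lagrangian, and the elementary minorisation $f(\bar{x}+w)\geq \operatorname{co}h(w)$. First I would fix $u\in B_{\delta}(0)\cap\mathcal{U}$ with $\delta$ small and recall that, under the hypotheses of Theorem \ref{thm:2}, the gradient $z_{\mathcal{U}}(u):=\nabla L_{\mathcal{U}}^{\varepsilon}(u)$ exists (Theorem \ref{thm:1}, Corollary \ref{cor:1}) and $L_{\mathcal{U}}^{\varepsilon}$ is convex (Proposition \ref{prop:LU}). For any $Q\in\partial^{2,-}L_{\mathcal{U}}^{\varepsilon}(u,z_{\mathcal{U}}(u))$, the Taylor characterisation of the subjet recorded in Remark \ref{rem:jets} supplies, for $u'\in\mathcal{U}$ near $u$,
\begin{equation*}
L_{\mathcal{U}}^{\varepsilon}(u')\geq L_{\mathcal{U}}^{\varepsilon}(u)+\langle z_{\mathcal{U}}(u),u'-u\rangle+\tfrac{1}{2}(u'-u)^{T}Q(u'-u)+o(\|u'-u\|^{2}).
\end{equation*}

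Next I would exploit that $L_{\mathcal{U}}^{\varepsilon}(u')=\inf_{v\in\mathcal{V}\cap B_{\varepsilon}(0)}\{\operatorname{co}h(u'+v)-\langle\bar{z}_{\mathcal{V}},v\rangle\}$, which yields the one-sided bound $L_{\mathcal{U}}^{\varepsilon}(u')\leq\operatorname{co}h(u'+v')-\langle\bar{z}_{\mathcal{V}},v'\rangle$ for each admissible $v'$. Substituting $L_{\mathcal{U}}^{\varepsilon}(u)=\operatorname{co}h(u+v(u))-\langle\bar{z}_{\mathcal{V}},v(u)\rangle=f(\bar{x}+u+v(u))-\langle\bar{z}_{\mathcal{V}},v(u)\rangle$ --- where the first equality is the minimum representation of Proposition \ref{prop:LU} and the second is (\ref{neqn:36}) of Proposition \ref{cor:conv} --- and combining with the subjet inequality gives a lower bound on $\operatorname{co}h(u'+v')$. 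Using $\mathcal{U}\perp\mathcal{V}$ to merge the two linear contributions into $\langle z_{\mathcal{U}}(u)+\bar{z}_{\mathcal{V}},(u'+v')-(u+v(u))\rangle$, and finally invoking $f(\bar{x}+u'+v')\geq\operatorname{co}h(u'+v')$, produces exactly the claimed estimate.

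The points needing care are bookkeeping rather than substance. I would first observe that the curvature and remainder terms involve \emph{only} the displacement $u'-u$ in $\mathcal{U}$: the $\mathcal{V}$-displacement enters solely through the exact linear term $-\langle\bar{z}_{\mathcal{V}},v'\rangle$ furnished by the infimum, so no $\mathcal{V}$-error is generated and the remainder is genuinely $o(\|u'-u\|^{2})$. Second, I would verify domain compatibility: for $u'+v'\in B_{\delta}(u+v(u))$ with $\delta$ small the component $v'$ stays inside $B_{\varepsilon}(0)$, since $v(0)=0$ and $v(\cdot)$ is continuous, placing $v(u)$ in the interior of $B_{\varepsilon}(0)$ and legitimising the one-sided infimum bound. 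The main (and essentially the only real) obstacle is to keep the orthogonal decomposition straight so that the linear terms collapse correctly; once that is done the result follows as a clean chain of inequalities, and it may be read as a weaker analogue, holding at each point of $\mathcal{M}$, of the full second-order expansions of \cite[Theorem 3.9]{Lem:1}, \cite[Equation (7)]{Mifflin:2004:2} and \cite[Theorem 2.6]{Miller:1}.
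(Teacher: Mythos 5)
Your proposal is correct and follows essentially the same route as the paper's own proof: bound $L_{\mathcal{U}}^{\varepsilon}(u')$ above by the value at $(u',v')$ via the infimal description, apply the subjet inequality for $L_{\mathcal{U}}^{\varepsilon}$ at $(u,z_{\mathcal{U}}(u))$, identify $L_{\mathcal{U}}^{\varepsilon}(u)$ with $f(\bar{x}+u+v(u))-\langle\bar{z}_{\mathcal{V}},v(u)\rangle$, and recombine the linear terms by orthogonality. The only cosmetic difference is that you pass through $\operatorname{co}h$ and invoke $f(\bar{x}+\cdot)\geq\operatorname{co}h(\cdot)$ at the end, whereas the paper works directly with $f$ via the argmin property of $v(u')$; these are interchangeable given Proposition \ref{prop:LU} and (\ref{neqn:36}).
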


\begin{proof}
We apply Corollary \ref{cor:Lagsubjet} taking note of the observation in remark \ref{rem:27}
to obtain the following chain of inequalities. As $Q \in \partial^{2,-} L_{\mathcal{U}}^{\varepsilon }(u, z_{\mathcal{U}}(u))$ we have 
\begin{eqnarray*}
f(\bar{x} + u' + v') - \langle  \bar{z}_{\mathcal{V}} , v' \rangle & \geq & f(\bar{x} + u' + v(u')) 
	- \langle  \bar{z}_{\mathcal{V} }, v(u') \rangle = L_{\mathcal{U}}^{\varepsilon }(u')\\
		&\geq &  L_{\mathcal{U}}^{\varepsilon }(u) + \langle \nabla  L_{\mathcal{U}}^{\varepsilon } (u) ,  u'-u \rangle + \frac{1}{2} (u'- u)Q(u'-u ) + o(\|u' - u \|^2 ) \\
		&=& f(\bar{x} + u + v(u))  - \langle  \bar{z}_{\mathcal{V} }, v(u) \rangle
		+ \langle z_{\mathcal{U}}(u) ,  u'-u \rangle + \frac{1}{2} (u'- u)Q(u'-u ) \\
		&& \qquad\qquad\qquad\qquad\qquad\qquad\qquad\qquad\qquad  + o(\|u' - u \|^2 ) ,
\end{eqnarray*} 
where we have used Corollary \ref{cor:1} to deduce that  $\nabla  L_{\mathcal{U}}^{\varepsilon } (u)  =  z_{\mathcal{U}} (u)$ exists 
locally as a Lipschitz continuous function. The result now follows using the orthogonality of the $\mathcal{U}$ and $\mathcal{V}$. 
\end{proof}

\begin{remark}
The function described in Theorem \ref{thm:2} are quite closely related to
the partial smooth class introduced by Lewis \cite{Lewis:2,Lewis:1}. Lewis
calls $f$ partially smooth at $x$ relative to a manifold $\mathcal{M}$ iff

\begin{enumerate}
\item \label{part1} We have $f|_{\mathcal{M}}$ is smooth around $x$;

\item \label{part2} for all points in $\mathcal{M}$ close to $x$ we have $f$
is regular and has a subgradient;

\item \label{part3} we have ${f_{\_}}^{\prime}(x,h) >- {f_{\_}}^{\prime}(x,-h)$ for all $h \in N_{\mathcal{M}} (x)$ and

\item \label{part4} the subgradient mapping $w \mapsto \partial f (w)$ is
continuous at $x$ relative to $\mathcal{M}$.
\end{enumerate}

It is not difficult to see that $\{0\}\times \mathcal{V}=N_{\mathcal{M}}(x)$. Clearly we have \ref{part1} and \ref{part3} holding for the function
described in Theorem \ref{thm:2}. As functions that are prox-regular at a
point $(x,0)\in \operatorname{Graph}\partial f$ are not necessarily regular at $x$
then \ref{part2} is not immediately obvious, although a subgradient must
exist. By Proposition \ref{prop:reg} the restricted function (to $\mathcal{U}
$) is indeed regular. Moreover the "convex representative" given by $g:=
\operatorname{co}h$ is regular, thanks to convexity. The potential for $w\mapsto
\partial g(w)$ to be continuous at $0$ (relative to $\mathcal{M}$) is clearly bound to the need for $w_{\mathcal{V}}\mapsto \partial _{\mathcal{V}}g(w_{\mathcal{V}})$ to be
continuous at $0$. As $0\in \operatorname{int}\partial _{\mathcal{V}}g\left(
u+v\left( u\right) \right) $ for $u\in B_{\varepsilon }\left( 0\right) \cap
\mathcal{U}$ this problem may be reduced to investigating whether $u\mapsto
\operatorname{int}\partial _{\mathcal{V}}g\left( u+v\left( u\right) \right) $ is
lower semi-continuous at $0$. This is not self evident either. So the
question as to whether $g$ is partially smooth is still open. The solution
to this issue may lie in the underlying assumption that  $\mathcal{U}=
\mathcal{U}^{2}$ in Theorem \ref{thm:2} (see the discussion in Remark \ref{rem:fasttrack}).
On balance the authors would conjecture that the functions we described in Theorem \ref{thm:2} 
are most likelihood partially smooth, despite failing to engineer a proof.  
\end{remark}

We would like to finish this section with some remarks regarding the related work in \cite{Lewis:2}. Because of the gap we still currently have in providing a bridge to the concept of partial smoothness we can't make direct comparisons with the results of \cite{Lewis:2}. Moreover in \cite{Lewis:2} the authors deal with $C^2$-smooth manifolds while the natural notion of  smoothness for this work is of type $C^{1,1}$. It would be interesting to see to what degree the very strong results of \cite{Lewis:2} carry over to this context. That is, a study of tilt stability of partially smooth functions under pinned by a $C^1$ or at least $C^{1,1}$-smooth manifold. This may be another avenue to close the gap that still exists.

\section{Appendix A}\label{Appendix:A}

The prove Proposition \ref{limpara} we need the following results regarding
the variation limits of rank-1 supports.

\begin{proposition}[\protect\cite{eberhard:6}, Corollary 3.3]
\label{ebcor:var}Let $\{\mathcal{A}(v)\}_{v\in W}$ be a family of non-empty
rank-1 representers (i.e. $\mathcal{A}(v)\subseteq \mathcal{S}\left(
n\right) $ and $-\mathcal{P}\left( n\right) \subseteq 0^{+}\mathcal{A}(v)$
for all $v$) and $W$ a neighbourhood of $w$. Suppose that $\limsup_{v\rightarrow w}\mathcal{A}(v)=\mathcal{A}(w)$. Then
\begin{equation}
\limsup_{v\rightarrow w}\inf_{u\rightarrow h}q\left( \mathcal{A}(v)\right)
(u)=q\left( \mathcal{A}(w)\right) (h)  \label{ebneqn:3.30}
\end{equation}
\end{proposition}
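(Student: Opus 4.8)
The plan is to prove the two inequalities in (\ref{ebneqn:3.30}) separately, after one reduction for the inner limit. For each fixed $v$ the rank-1 support $q(\mathcal{A}(v))(\cdot)$ is a supremum of the continuous maps $u\mapsto \langle Q,uu^{T}\rangle$ over $Q\in\mathcal{A}(v)$, hence is lower semicontinuous (this is the regularity recorded in Theorem \ref{ebthm:rank:1}). Consequently $\liminf_{u\to h}q(\mathcal{A}(v))(u)=q(\mathcal{A}(v))(h)$; I will use the easy bound $\liminf_{u\to h}q(\mathcal{A}(v))(u)\le q(\mathcal{A}(v))(h)$ for the ``$\le$'' part, and the lower bound $\liminf_{u\to h}q(\mathcal{A}(v))(u)\ge\langle Q,hh^{T}\rangle$ (valid for any $Q\in\mathcal{A}(v)$, by continuity of $u\mapsto\langle Q,uu^{T}\rangle$) for the ``$\ge$'' part.

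For ``$\ge$'', fix $Q\in\mathcal{A}(w)$. Since $\mathcal{A}(w)=\limsup_{v\to w}\mathcal{A}(v)$, there exist $v_{k}\to w$ and $Q_{k}\in\mathcal{A}(v_{k})$ with $Q_{k}\to Q$. Then $\liminf_{u\to h}q(\mathcal{A}(v_{k}))(u)\ge\langle Q_{k},hh^{T}\rangle$, and letting $k\to\infty$ shows the left-hand side of (\ref{ebneqn:3.30}) is at least $\limsup_{k}\langle Q_{k},hh^{T}\rangle=\langle Q,hh^{T}\rangle$. Taking the supremum over $Q\in\mathcal{A}(w)$ gives the left-hand side $\ge q(\mathcal{A}(w))(h)$; a diagonal argument over a sequence $Q^{(m)}\in\mathcal{A}(w)$ with $\langle Q^{(m)},hh^{T}\rangle\to\infty$ covers the case $q(\mathcal{A}(w))(h)=+\infty$.

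For ``$\le$'', by the reduction it suffices to show $\limsup_{v\to w}q(\mathcal{A}(v))(h)\le q(\mathcal{A}(w))(h)$, and we may assume $c:=q(\mathcal{A}(w))(h)<\infty$. If this failed, along some $v_{k}\to w$ we would have $Q_{k}\in\mathcal{A}(v_{k})$ with $\langle Q_{k},hh^{T}\rangle\ge c+\varepsilon$. When $\{Q_{k}\}$ is bounded, a cluster point $Q$ lies in $\limsup_{v\to w}\mathcal{A}(v)=\mathcal{A}(w)$ and satisfies $\langle Q,hh^{T}\rangle\ge c+\varepsilon>c=q(\mathcal{A}(w))(h)\ge\langle Q,hh^{T}\rangle$, a contradiction. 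The remaining case $\|Q_{k}\|\to\infty$ is the crux. Here I would pass to the horizon limit: after normalising, $Q_{k}/\|Q_{k}\|\to D\ne 0$, and the uniform recession containment $-\mathcal{P}(n)\subseteq 0^{+}\mathcal{A}(v)$ forces the representers to converge cosmically, so $D\in 0^{+}\mathcal{A}(w)$; since $h\in\operatorname{dom}q(\mathcal{A}(w))=b^{1}(\mathcal{A}(w))$, feasibility of $Q_{0}+tD$ for $Q_{0}\in\mathcal{A}(w)$, $t\ge 0$, gives $\langle D,hh^{T}\rangle\le 0$. Using $-\mathcal{P}(n)\subseteq 0^{+}\mathcal{A}(v_{k})$ (so $Q_{k}-P\in\mathcal{A}(v_{k})$ for every $P\succeq 0$), Lemma \ref{lem:clo}, and the recession/barrier-cone duality of Remark \ref{rem:rankone} (in particular $\mathcal{P}(\mathcal{V}^{2})=0^{+}\mathcal{A}\cap\mathcal{P}(n)$), I would split $D=D^{+}-D^{-}$, subtract the admissible semidefinite part, and invoke convexity of $\mathcal{A}(v_{k})$ to extract a \emph{bounded} sequence of genuine elements $\tilde{Q}_{k}\in\mathcal{A}(v_{k})$ whose value at $hh^{T}$ still exceeds $c$; its cluster point would sit in $\mathcal{A}(w)$ and contradict $q(\mathcal{A}(w))(h)=c$.

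The main obstacle is exactly this unbounded subcase: excluding that a maximising sequence escapes to infinity ``invisibly'' (clustering only at the horizon) while keeping $\langle Q_{k},hh^{T}\rangle$ large. The mechanism that saves the statement is convexity of the representers together with the common recession cone $-\mathcal{P}(n)$: an escaping maximiser forces intermediate convex combinations that \emph{do} cluster finitely and would inflate $q(\mathcal{A}(w))(h)$ past $c$. Making this rigorous—i.e. turning the cosmic/horizon convergence of the sets into the finite replacement $\tilde{Q}_{k}$—is the only nontrivial step; the ``$\ge$'' inequality and the bounded subcase of ``$\le$'' are routine.
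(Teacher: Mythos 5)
Your ``$\ge$'' half is essentially fine: picking $Q\in\mathcal{A}(w)$, approximating it by $Q_{k}\in\mathcal{A}(v_{k})$ with $v_{k}\to w$, and using continuity of $u\mapsto\langle Q_{k},uu^{T}\rangle$ on the shrinking balls around $h$ does give the lower bound, with the $+\infty$ case handled by taking the supremum over $Q$. (Note the paper itself offers no proof of this proposition --- it is imported from \cite{eberhard:6} --- so the comparison is with what any correct proof must contain.)

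The ``$\le$'' half breaks at the very first step, before your bounded/unbounded case split. You discard the inner $\inf_{u\to h}$ and assert it suffices to prove $\limsup_{v\to w}q(\mathcal{A}(v))(h)\le q(\mathcal{A}(w))(h)$. The reduction is logically sound (that quantity dominates the left-hand side of (\ref{ebneqn:3.30})), but the statement you reduce to is false, so the route cannot be completed. Take $n=2$, unit vectors $u_{v}\to u_{0}$ with $u_{v}\ne\pm u_{0}$ for $v\ne w$, and $\mathcal{A}(v):=\{Q\in\mathcal{S}(2)\mid\langle Q,u_{v}u_{v}^{T}\rangle\le0\}$. Each $\mathcal{A}(v)$ is a rank-1 representer with $-\mathcal{P}(2)\subseteq0^{+}\mathcal{A}(v)$, one checks $\limsup_{v\to w}\mathcal{A}(v)=\{Q\mid\langle Q,u_{0}u_{0}^{T}\rangle\le0\}=:\mathcal{A}(w)$, and $q(\mathcal{A}(w))(u_{0})=0$; yet $q(\mathcal{A}(v))(u_{0})=+\infty$ for every $v\ne w$ because $u_{0}\notin b^{1}(\mathcal{A}(v))=\mathbb{R}u_{v}$. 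So at $h=u_{0}$ one has $\limsup_{v\to w}q(\mathcal{A}(v))(h)=+\infty>0=q(\mathcal{A}(w))(h)$, while the proposition itself survives precisely because $\inf_{u\in B_{\delta}(h)}q(\mathcal{A}(v))(u)=q(\mathcal{A}(v))(u_{v})=0$ once $u_{v}\in B_{\delta}(h)$. The same example shows that the ``crux'' you flag --- replacing an escaping maximising sequence $Q_{k}$ by a bounded $\tilde{Q}_{k}\in\mathcal{A}(v_{k})$ with $\langle\tilde{Q}_{k},hh^{T}\rangle>c$ --- is not merely unfinished but impossible in general: there every convergent selection from the $\mathcal{A}(v_{k})$ lands in $\mathcal{A}(w)$, where the value at $hh^{T}$ is $\le0$. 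A correct argument must retain the mollification in $u$: the whole point of $\inf_{u\to h}$ is that the barrier cones $b^{1}(\mathcal{A}(v))$ can rotate as $v\to w$, so one has to evaluate $q(\mathcal{A}(v))$ at points $u_{v}\to h$ adapted to $\mathcal{A}(v)$ (chosen in or near $b^{1}(\mathcal{A}(v))$), not at the fixed point $h$.
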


Recall that $(x^{\prime },z^{\prime })\rightarrow _{S_{p}(f)}(\bar{x},z)$
means $x^{\prime }\rightarrow ^{f}\bar{x}$, $\ z^{\prime }\in \partial
_{p}f(x^{\prime })$ and $z^{\prime }\rightarrow z$.

\begin{corollary}
Let $f:\mathbb{R}^{n}\rightarrow \mathbb{R}_{\infty }$ be proper and lower
semicontinuous with $h\in b^{1}(\underline{\partial }^{2}f(\bar{x},\bar{z}
)). $ Then
\begin{equation}
q\left( \underline{\partial }^{2}f(\bar{x},\bar{z})\right) \left( h\right)
=\limsup_{(x^{\prime },z^{\prime })\rightarrow _{S_{p}(f)}(\bar{x},\bar{z}
)}\inf_{u\rightarrow h}q\left( \partial ^{2,-}f(x^{\prime },z^{\prime
})\right) (u). \label{neqn:47}
\end{equation}
\end{corollary}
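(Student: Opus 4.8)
The plan is to recognise the right-hand side of~(\ref{neqn:47}) as a direct instance of the variational convergence result for rank-1 supports recorded in Proposition~\ref{ebcor:var}, applied to the family of subhessians along the graph of $\partial_p f$. First I would set up the family to which that proposition applies: for $(x',z')\in\operatorname{Graph}\partial_p f$ near $(\bar x,\bar z)$ put $\mathcal{A}(x',z'):=\partial^{2,-}f(x',z')$, and at the limit point put $\mathcal{A}(\bar x,\bar z):=\underline{\partial}^2 f(\bar x,\bar z)$. I would then check the standing hypotheses of Proposition~\ref{ebcor:var}, namely that each member is a nonempty subset of $\mathcal{S}(n)$ with $-\mathcal{P}(n)\subseteq 0^+\mathcal{A}(\cdot)$. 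Nonemptiness of $\partial^{2,-}f(x',z')$ is exactly the condition $z'\in\partial_p f(x')$ recorded in Remark~\ref{rem:jets} (and restated in Remark~\ref{rem:limhess}), which holds on $\operatorname{Graph}\partial_p f$; nonemptiness of the limiting set follows since it is an outer limit of nonempty sets. The inclusion $-\mathcal{P}(n)\subseteq 0^+\partial^{2,-}f(x',z')$ is the observation made after Theorem~\ref{ebthm:rank:1}, that subtracting a positive semidefinite matrix preserves the Taylor minorant~(\ref{taylor}); passing to the outer limit preserves this, so $-\mathcal{P}(n)\subseteq 0^+\underline{\partial}^2 f(\bar x,\bar z)$ as well.

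The second, and only substantive, ingredient is the outer-semicontinuity identity $\limsup_{(x',z')\to_{S_p(f)}(\bar x,\bar z)}\mathcal{A}(x',z')=\mathcal{A}(\bar x,\bar z)$. This is precisely the reformulation of the limiting subjet recorded in Remark~\ref{rem:limhess}, namely $\underline{\partial}^2 f(\bar x,\bar z)=\limsup_{(x',z')\to_{S_p(f)}(\bar x,\bar z)}\partial^{2,-}f(x',z')$, so no further work is needed beyond matching index sets: both the hypothesis of Proposition~\ref{ebcor:var} and the conclusion~(\ref{neqn:47}) are taken along the same $S_p(f)$-attentive approach of $(x',z')$ to $(\bar x,\bar z)$.

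With these verified, applying Proposition~\ref{ebcor:var} with $v=(x',z')$, $w=(\bar x,\bar z)$ and the family above yields
\[
\limsup_{(x',z')\to_{S_p(f)}(\bar x,\bar z)}\ \inf_{u\to h}q\left(\partial^{2,-}f(x',z')\right)(u)=q\left(\underline{\partial}^2 f(\bar x,\bar z)\right)(h),
\]
which is exactly~(\ref{neqn:47}). The hypothesis $h\in b^1(\underline{\partial}^2 f(\bar x,\bar z))$ enters only to guarantee that this common value is finite, so that the statement asserts an honest equality of real numbers rather than of $+\infty$; the identity itself is valid for every $h$.

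I expect the main obstacle to be purely a matter of bookkeeping about the mode of convergence: confirming that the parameter set appearing in Proposition~\ref{ebcor:var} (a neighbourhood of $w$) may legitimately be replaced by the thinner set $\operatorname{Graph}\partial_p f$ approached in the $S_p(f)$-sense, i.e.\ that the graphical outer limit used in Remark~\ref{rem:limhess} is precisely the convergence mode under which Proposition~\ref{ebcor:var} is established. Once the two limit notions are aligned, the result is immediate, since all the representer and nonemptiness conditions are already available from the preliminary material.
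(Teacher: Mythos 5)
Your proposal is correct and follows exactly the paper's own route: the paper's proof is the one-line "Use Proposition \ref{ebcor:var} and Remark \ref{rem:limhess}," and you have simply spelled out the verification of the hypotheses (nonemptiness, the recession condition $-\mathcal{P}(n)\subseteq 0^{+}\mathcal{A}$, and the outer-semicontinuity identity along $S_{p}(f)$-convergence) that the paper leaves implicit.
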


\begin{proof}
Use Proposition \ref{ebcor:var} and Remark \ref{rem:limhess}.
\end{proof}

Denote the infimal convolution of $f$ by $f_{\lambda }(x):=\inf_{u\in
\mathbb{R}^{n}}\left( f(u)+\frac{1}{2\lambda }\Vert x-u\Vert ^{2}\right) $.
Recall that $f_{\lambda }\left( x\right) -\frac{1}{2\lambda }\left\Vert
x\right\Vert ^{2}=-\left( f\ +\frac{\lambda }{2}\Vert \cdot \Vert
^{2}\right) ^{\ast }(\lambda x)$ and this $f_{\lambda }$ is always
para-concave. Recall that in \cite[Lemma 2.1]{eberhard:6}, it is observed
that $f$ is locally $C^{1,1}$ iff $f$ is simultaneously a locally
para-convex and para-concave function. Recall \cite[Proposition 4.15]{rock:6}
that states that the limit infimum of a collection of convex sets is also
convex and that the upper epi-limit of a family of functions has an
epi-graph that is the limit infimum of the family of epi-graphs.
Consequently the epi-limit supremum of a family of convex functions give
rise to convex function.

\begin{proof}
(of Proposition \ref{limpara}) Begin by assuming $f$ is locally para-convex.
Let $\frac{c}{2}>0$ be the modulus of para--convexity of $f$ on $B_{\delta }(
\bar{x})$, $x\in B_{\delta }(\bar{x})$ with $z\in \partial f\left( x\right) $
and $\partial ^{2,-}f\left( x,z\right) \neq \emptyset $. Let $C_{t}(x)=\{h\mid x+th\in B_{\delta }(\bar{x})\}$ then we have
\begin{equation*}
h\mapsto \left( \frac{2}{t^{2}}\right) \left( f(x+th)-f(x)-t\langle
z,h\rangle \right) +\frac{c}{t^{2}}\left( \Vert x+th\Vert ^{2}-\Vert x\Vert
^{2}-t\langle 2x,h\rangle \right)
\end{equation*}
convex on $C_{t}(x)$ since $x\mapsto f(x)+\frac{c}{2}\Vert x\Vert ^{2}$ is
convex on $B_{\delta }(\bar{x})$. Next note that for every $K>0$ there
exists a $\bar{t}>0$ such that for $0<t<\bar{t}$ we have $B_{K}(0)\subseteq
C_{t}(x)$. Once again restricting $f$ to $B_{\delta }(\bar{x})$ we get a
family
\begin{equation}
\{h\mapsto \Delta _{2}f(x,t,z,h)+\frac{c}{t^{2}}\left( \Vert x+th\Vert
^{2}-\Vert x\Vert ^{2}-t\langle 2x,h\rangle \right) \}_{t<\bar{t}}
\label{neqn:3}
\end{equation}
of convex functions with domains containing $C_{t}(x)\,$(for each $t$) and
whose convexity (on their common domain of convexity) will be preserved
under an upper epi--limit as $t\downarrow 0$. Thus, using the fact that $\frac{c}{t^{2}}\left( \Vert x+th\Vert ^{2}-\Vert x\Vert ^{2}-t\langle
2x,h\rangle \right) $ converges uniformly on bounded sets to $c\Vert h\Vert
^{2}$, we have the second order circ derivative (introduced in \cite{ebioffe:4}) given by:
\begin{align*}
f^{\uparrow \uparrow }(x,z,h)+c\Vert h\Vert ^{2}& :=\limsup_{(x^{\prime
},z^{\prime })\rightarrow _{S_{p}}(x,z),t\downarrow 0}\inf_{u^{\prime
}\rightarrow h}(\Delta _{2}f(x^{\prime },t,z^{\prime },u^{\prime }) \\
& \qquad \qquad +\frac{c}{t^{2}}\left( \Vert x+th\Vert ^{2}-\Vert x\Vert
^{2}-t\langle 2x,h\rangle \right) )
\end{align*}
which is convex on $B_{K}(0)$, for every $K>0$, being obtained by taking an
epi-limit supremum of a family of convex functions given in (\ref{neqn:3}).
We then have $h\mapsto f^{\uparrow \uparrow }(x,z,h)+c\Vert h\Vert ^{2}$
convex (with $f^{\uparrow \uparrow }(x,z,\cdot )$ having a modulus of
para-convexity of $c$).

From \cite{com:2}, Proposition 4.1 particularized to $C^{1,1}$ functions $f$
we have that there exists a $\eta \in \lbrack x,y]$ such that
\begin{equation}
f(y)\in f(x)+\langle \nabla f(x),y-x\rangle +\frac{1}{2}\langle \overline{D}^{2}f(\eta ),(y-x)(y-x)^{T}\rangle .  \label{ebneqn:31}
\end{equation}
Using (\ref{ebneqn:31}), Proposition \ref{prop:ebpenot} and the variational
result corollary \ref{ebcor:var}, we have when the limit is finite (for $\bar{z}:=\nabla f(\bar{x})$)
\begin{align*}
f^{\uparrow \uparrow }(\bar{x},\bar{z},h)& :=\limsup_{(x^{\prime },z^{\prime
})\rightarrow _{S_{p}}(\bar{x},\bar{z}),\;t\downarrow 0}\inf_{u^{\prime
}\rightarrow h}\Delta _{2}f(x^{\prime },t,z^{\prime },u^{\prime }) \\
& \leq \limsup_{x^{\prime }\rightarrow \bar{x},\;t\downarrow
0}\inf_{u^{\prime }\rightarrow h}\Delta _{2}f(x^{\prime },t,\nabla
f(x^{\prime }),u^{\prime })\leq \limsup_{\eta \rightarrow \bar{x}}\inf_{u^{\prime }\rightarrow h}q\left( \overline{D}^{2}f(\eta )\right)
(u^{\prime }) \\
& \leq q\left( \overline{D}^{2}f(\bar{x})-\mathcal{P}(n)\right) (h)\leq
q\left( \underline{\partial }^{2}f(\bar{x},\bar{z})\right) (h)\leq
f^{\uparrow \uparrow }(\bar{x},\bar{z},h),
\end{align*}
where the last inequality follows from \cite[Proposition 6.5]{ebioffe:4}.

Now assuming $f$ is quadratically minorised and is prox--regular at $\bar{x}\ $ for $\bar{p}\in \partial f(\bar{x})$ with respect to $\varepsilon $ and $r.$ Let $g(x):=f(x+\bar{x})-\langle \bar{z},x+\bar{x}\rangle $. Then $0\in
\partial g(0)$ and we now consider the infimal convolution $g_{\lambda }(x)$
which is para--convex locally with a modulus $c:=\frac{\lambda r}{2(\lambda
-r)}$, prox--regular at $0$ (see \cite[Theorem 5.2]{polrock:1}). We may now
use the first part of the proof to deduce that $g_{\lambda }^{\uparrow
\uparrow }(0,0,\cdot )$ is para--convex with modulus $c=\frac{2\lambda r}{(\lambda -r)}$ and $g_{\lambda }^{\uparrow \uparrow }(0,0,h)=q\left(
\underline{\partial }^{2}g_{\lambda }(0,0)\right) (h)$ since $g_{\lambda }$
is $C^{1,1}$ (being both para-convex and para-concave). Using Corollary \ref{ebcor:var} and \cite[Proposition 4.8 part 2.]{eberhard:2} we obtain
\begin{equation*}
\limsup_{\lambda \rightarrow \infty }\inf_{h^{\prime }\rightarrow
h}g_{\lambda }^{\uparrow \uparrow }(0,0,h^{\prime })=q\left(
\limsup_{\lambda \rightarrow \infty }\underline{\partial }^{2}g_{\lambda
}(0,0)\right) (h)=q\left( \underline{\partial }^{2}g(0,0)\right) (h).
\end{equation*}
Thus $q\left( \underline{\partial }^{2}g(0,0)\right) (h)+r\Vert h\Vert
^{2}=\limsup_{\lambda \rightarrow \infty }\inf_{h^{\prime }\rightarrow
h}\left( g_{\lambda }^{\uparrow \uparrow }(0,0,h^{\prime })+\frac{\lambda r}{(\lambda -r)}\Vert h\Vert ^{2}\right) $ is convex, being the variational
upper limit of convex functions. One can easily verify that $\underline{\partial }^{2}g(0,0)=\underline{\partial }^{2}f(\bar{x},\bar{z})$ and $g^{\uparrow \uparrow }(0,0,h)=f^{\uparrow \uparrow }(\bar{x},\bar{z},h)$.
\end{proof}

\subsection*{REFERENCES}

{\footnotesize \ \makeatletter
\let\ORIGINALlatex@openbib@code=\@openbib@code
\renewcommand{\@openbib@code}{\ORIGINALlatex@openbib@code%
\adjustmybblparameters} \makeatother
}

{\footnotesize \ \renewcommand{\section}[2]{}
\bibliographystyle{plain}
\bibliography{references}

\begin{thebibliography}{10}

\bibitem{Art:1}
Francisco~J. Arag{\'o}n~Artacho and Michel~H. Geoffroy.
\newblock Characterization of metric regularity of subdifferentials.
\newblock {\em J. Convex Anal.}, 15(2):365--380, 2008.

\bibitem{Miroslav:1}
M.~Ba{\v{c}}{\'a}k, J.~M. Borwein, A.~Eberhard, and B.~S. Mordukhovich.
\newblock Infimal convolutions and {L}ipschitzian properties of
  subdifferentials for prox-regular functions in {H}ilbert spaces.
\newblock {\em J. Convex Anal.}, 17(3-4):737--763, 2010.

\bibitem{BonnansShapiroBook}
J.~Fr{\'e}d{\'e}ric Bonnans and Alexander Shapiro.
\newblock {\em Perturbation analysis of optimization problems}.
\newblock Springer Series in Operations Research. Springer-Verlag, New York,
  2000.

\bibitem{com:2}
R.~Cominetti and R.~Correa.
\newblock A generalized second-order derivative in nonsmooth optimization.
\newblock {\em SIAM J. Control Optim.}, 28(4):789--809, 1990.

\bibitem{Crandall:1992}
Michael~G. Crandall, Hitoshi Ishii, and Pierre-Louis Lions.
\newblock User's guide to viscosity solutions of second order partial
  differential equations.
\newblock {\em Bull. Amer. Math. Soc. (N.S.)}, 27(1):1--67, 1992.

\bibitem{Dontchev:1}
Asen~L. Dontchev and R.~Tyrrell Rockafellar.
\newblock {\em Implicit functions and solution mappings}.
\newblock Springer Series in Operations Research and Financial Engineering.
  Springer, New York, second edition, 2014.
\newblock A view from variational analysis.

\bibitem{Drusvy:1}
D.~Drusvyatskiy and A.~S. Lewis.
\newblock Tilt stability, uniform quadratic growth, and strong metric
  regularity of the subdifferential.
\newblock {\em SIAM J. Optim.}, 23(1):256--267, 2013.

\bibitem{eberhard:2}
A.~Eberhard, R.~Sivakumaran, and R.~Wenczel.
\newblock On the variational behaviour of the subhessians of the
  {L}asry-{L}ions envelope.
\newblock {\em J. Convex Anal.}, 13(3-4):647--685, 2006.

\bibitem{eberhard:7}
A.~Eberhard and R.~Wenczel.
\newblock On the calculus of limiting subhessians.
\newblock {\em Set-Valued Anal.}, 15(4):377--424, 2007.

\bibitem{eberhard:9}
A.~Eberhard and R.~Wenczel.
\newblock Some sufficient optimality conditions in nonsmooth analysis.
\newblock {\em SIAM J. Optim.}, 20(1):251--296, 2009.

\bibitem{eberhard:8}
A.~Eberhard and R.~Wenczel.
\newblock A study of tilt-stable optimality and sufficient conditions.
\newblock {\em Nonlinear Anal.}, 75(3):1260--1281, 2012.

\bibitem{eberhard:5}
A.~C. Eberhard and C.~E.~M. Pearce.
\newblock A sufficient optimality condition for nonregular problems via a
  nonlinear {L}agrangian.
\newblock {\em Numer. Algebra Control Optim.}, 2(2):301--331, 2012.

\bibitem{eberhard:6}
Andrew Eberhard.
\newblock Prox-regularity and subjets.
\newblock In {\em Optimization and related topics ({B}allarat/{M}elbourne,
  1999)}, volume~47 of {\em Appl. Optim.}, pages 237--313. Kluwer Acad. Publ.,
  Dordrecht, 2001.

\bibitem{eberhard:1}
Andrew Eberhard, Michael Nyblom, and Danny Ralph.
\newblock Applying generalised convexity notions to jets.
\newblock In {\em Generalized convexity, generalized monotonicity: recent
  results ({L}uminy, 1996)}, volume~27 of {\em Nonconvex Optim. Appl.}, pages
  111--157. Kluwer Acad. Publ., Dordrecht, 1998.

\bibitem{Boris:4}
Andrew~C. Eberhard and Boris~S. Mordukhovich.
\newblock First-order and second-order optimality conditions for nonsmooth
  constrained problems via convolution smoothing.
\newblock {\em Optimization}, 60(1-2):253--275, 2011.

\bibitem{Gian:1}
Gianluca Gorni.
\newblock Conjugation and second-order properties of convex functions.
\newblock {\em J. Math. Anal. Appl.}, 158(2):293--315, 1991.

\bibitem{Hare:2014}
W.~Hare.
\newblock Numerical analysis of {$\mathcal{VU}$}-decomposition,
  {$\mathcal{U}$-gradient, and {$\mathcal{U}$}-{H}essian approximations}.
\newblock {\em SIAM J. Optim.}, 24(4):1890--1913, 2014.

\bibitem{Hare:2}
W.~L. Hare.
\newblock Functions and sets of smooth substructure: relationships and
  examples.
\newblock {\em Comput. Optim. Appl.}, 33(2-3):249--270, 2006.

\bibitem{Hare:3}
W.~L. Hare and R.~A. Poliquin.
\newblock The quadratic sub-{L}agrangian of a prox-regular function.
\newblock In {\em Proceedings of the {T}hird {W}orld {C}ongress of {N}onlinear
  {A}nalysts, {P}art 2 ({C}atania, 2000)}, volume~47, pages 1117--1128, 2001.

\bibitem{Hare:1}
W.~L. Hare and R.~A. Poliquin.
\newblock Prox-regularity and stability of the proximal mapping.
\newblock {\em J. Convex Anal.}, 14(3):589--606, 2007.

\bibitem{ebioffe:4}
Alexander~D. Ioffe and Jean-Paul Penot.
\newblock Limiting sub-{H}essians, limiting subjets and their calculus.
\newblock {\em Trans. Amer. Math. Soc.}, 349(2):789--807, 1997.

\bibitem{Lem:1}
Claude Lemar{\'e}chal, Fran{\c{c}}ois Oustry, and Claudia Sagastiz{\'a}bal.
\newblock The {$\mathcal{U}$}-{L}agrangian of a convex function.
\newblock {\em Trans. Amer. Math. Soc.}, 352(2):711--729, 2000.

\bibitem{Lewis:1}
A.~S. Lewis.
\newblock Active sets, nonsmoothness, and sensitivity.
\newblock {\em SIAM J. Optim.}, 13(3):702--725 (electronic) (2003), 2002.

\bibitem{Lewis:2}
A.~S. Lewis and S.~Zhang.
\newblock Partial smoothness, tilt stability, and generalized {H}essians.
\newblock {\em SIAM J. Optim.}, 23(1):74--94, 2013.

\bibitem{Mifflin:2002}
Robert Mifflin and Claudia Sagastiz{\'a}bal.
\newblock Proximal points are on the fast track.
\newblock {\em J. Convex Anal.}, 9(2):563--579, 2002.
\newblock Special issue on optimization (Montpellier, 2000).

\bibitem{Mifflin:2003}
Robert Mifflin and Claudia Sagastiz{\'a}bal.
\newblock Primal-dual gradient structured functions: second-order results;
  links to epi-derivatives and partly smooth functions.
\newblock {\em SIAM J. Optim.}, 13(4):1174--1194 (electronic), 2003.

\bibitem{Mifflin:2004}
Robert Mifflin and Claudia Sagastiz{\'a}bal.
\newblock {$\mathcal{VU}$}-smoothness and proximal point results for some
  nonconvex functions.
\newblock {\em Optim. Methods Softw.}, 19(5):463--478, 2004.

\bibitem{Mifflin:2004:2}
Robert Mifflin and Claudia Sagastiz{\'a}bal.
\newblock On the relation between {$\mathcal{U}$}-{H}essians and second-order
  epi-derivatives.
\newblock {\em European J. Oper. Res.}, 157(1):28--38, 2004.

\bibitem{Mifflin:2005}
Robert Mifflin and Claudia Sagastiz{\'a}bal.
\newblock A {$\mathcal{VU}$}-algorithm for convex minimization.
\newblock {\em Math. Program.}, 104(2-3, Ser. B):583--608, 2005.

\bibitem{Miller:1}
Scott~A. Miller and J{\'e}r{\^o}me Malick.
\newblock Newton methods for nonsmooth convex minimization: connections among
  {$\mathcal{U}$}-{L}agrangian, {R}iemannian {N}ewton and {SQP} methods.
\newblock {\em Math. Program.}, 104(2-3, Ser. B):609--633, 2005.

\bibitem{M06a}
Boris~S. Mordukhovich.
\newblock {\em Variational analysis and generalized differentiation. {I}},
  volume 330 of {\em Grundlehren der Mathematischen Wissenschaften [Fundamental
  Principles of Mathematical Sciences]}.
\newblock Springer-Verlag, Berlin, 2006.
\newblock Basic theory.

\bibitem{ebpenot:2}
Jean-Paul Penot.
\newblock Sub-{H}essians, super-{H}essians and conjugation.
\newblock {\em Nonlinear Anal.}, 23(6):689--702, 1994.

\bibitem{polrock:1}
R.~A. Poliquin and R.~T. Rockafellar.
\newblock Prox-regular functions in variational analysis.
\newblock {\em Trans. Amer. Math. Soc.}, 348(5):1805--1838, 1996.

\bibitem{rock:7}
R.~A. Poliquin and R.~T. Rockafellar.
\newblock Tilt stability of a local minimum.
\newblock {\em SIAM J. Optim.}, 8(2):287--299 (electronic), 1998.

\bibitem{rock:1}
R.~Tyrrell Rockafellar.
\newblock {\em Convex analysis}.
\newblock Princeton Mathematical Series, No. 28. Princeton University Press,
  Princeton, N.J., 1970.

\bibitem{rock:6}
R.~Tyrrell Rockafellar and Roger J.-B. Wets.
\newblock {\em Variational analysis}, volume 317 of {\em Grundlehren der
  Mathematischen Wissenschaften [Fundamental Principles of Mathematical
  Sciences]}.
\newblock Springer-Verlag, Berlin, 1998.

\bibitem{Studniarski:1986}
Marcin Studniarski.
\newblock Necessary and sufficient conditions for isolated local minima of
  nonsmooth functions.
\newblock {\em SIAM J. Control Optim.}, 24(5):1044--1049, 1986.

\bibitem{Wright:1993}
Stephen~J. Wright.
\newblock Identifiable surfaces in constrained optimization.
\newblock {\em SIAM J. Control Optim.}, 31(4):1063--1079, 1993.

\end{thebibliography}
}

\bigskip

\end{document}